\newcommand{\@dotsep}{4.5}
\renewcommand*{\dotfill}{
  \leavevmode\leaders
  \hbox{$\m@th \mkern \@dotsep mu\hbox{.}\mkern \@dotsep mu$}
  \hfill\kern\z@
}
\def\@tocline#1#2#3#4#5#6#7{\relax
  \ifnum #1>\c@tocdepth % then omit
  \else
    \par \addpenalty\@secpenalty\addvspace{#2}%
    \begingroup \hyphenpenalty\@M
    \@ifempty{#4}{%
      \@tempdima\csname r@tocindent\number#1\endcsname\relax
    }{%
      \@tempdima#4\relax
    }%
    \parindent\z@ \leftskip#3\relax \advance\leftskip\@tempdima\relax
    \rightskip\@pnumwidth plus4em \parfillskip-\@pnumwidth
    #5\leavevmode\hskip-\@tempdima #6\nobreak\relax
    \dotfill\hbox to\@pnumwidth{\@tocpagenum{#7}}\par
    \nobreak
    \endgroup
  \fi}
\def\l@figure{\@tocline{0}{3pt plus2pt}{0pt}{2.8pc}{}}
\newtheorem{theorem}{Theorem}[section]
\newtheorem{corollary}[theorem]{Corollary}
\newtheorem{proposition}[theorem]{Proposition}
\newtheorem{lemma}[theorem]{Lemma}
\newtheorem{definition}[theorem]{Definition}
\newtheorem{exm}[theorem]{Example}
\newtheorem{convention}[theorem]{Convention}
\newcommand{\R}{\ensuremath{\mathbb{R}}}
\newcommand{\Z}{\ensuremath{\mathbb{Z}}}
\newcommand\alphas{\mbox{\boldmath$\alpha$}}
\newcommand\betas{\mbox{\boldmath$\beta$}}
\newcommand\F{\mathbb F}
\def\x{\mathbf{x}}
\def\y{\mathbf{y}}
\def\X{\mathbb{X}}
\def\O{\mathbb{O}}
\def\gen{\mathfrak{S}}
\def\k{\mathbf{k}}
\def\remark{{\bf {\bigskip}{\noindent}Remark. }}
\def\bar{\overline}
\newcommand{\bdy}{\partial}
\newcommand{\az}{\mathcal{A}(\zz)}
\newcommand{\cala}{\mathcal{A}}
\newcommand{\ainf}{\mathcal{A}_\infty}
\newcommand{\gr}{\textrm{gr}}
\newcommand{\gens}{\mathfrak S}
\newcommand{\sss}{{\bf s}}
\newcommand{\ttt}{{\bf t}}
\newcommand{\xx}{{\bf x}}
\newcommand{\yy}{{\bf y}}
\DeclareMathOperator{\Int}{Int}
\DeclareMathOperator{\Inv}{Inv}
\DeclareMathOperator{\inv}{inv}
\DeclareMathOperator{\ind}{ind}
\DeclareMathOperator{\id}{id}
\newcommand{\HH}{\mathcal{H}}
\newcommand{\zz}{\mathcal Z}
\newcommand{\D}{\mathcal{D}}
\newcommand{\I}{\mathcal{I}}
\newcommand{\T}{\mathcal{T}}
\newcommand{\C}{\mathbb C}
\newcommand{\OO}{\mathbb O}
\newcommand{\XX}{\mathbb X}
\newcommand{\xxx}{\mathbf{x}}
\newcommand{\yyy}{\mathbf{y}}
\newcommand{\zzz}{\mathbf{z}}
\newcommand{\www}{\mathbf{w}}
\newcommand{\balpha}{\boldsymbol\alpha}
\newcommand{\bbeta}{\boldsymbol\beta}
\newcommand{\bgamma}{\boldsymbol\gamma}
\newcommand{\ct}{\mathit{CT}}
\newcommand{\cfd}{\mathit{CFD}}
\newcommand{\cfta}{\mathit{CFAT}}
\newcommand{\cftd}{\mathit{CFTD}}
\newcommand{\cftda}{\mathit{CFDTA}}
\newcommand{\cfk}{\mathit{CFK}}
\newcommand{\cft}{\mathit{CFT}}
\newcommand{\hfk}{\mathit{HFK}}
\newcommand{\cfdhat}{\widehat{\cfd}}
\newcommand{\cfkhat}{\widehat{\cfk}}
\newcommand{\hfkhat}{\widehat{\hfk}}
\newcommand{\cftahat}{\widehat{\cfta}}
\newcommand{\cftdhat}{\widehat{\cftd}}
\newcommand{\cfkt}{\widetilde{\cfk}}
\newcommand{\cftt}{\widetilde{\cft}}
\newcommand{\cftdahat}{\widehat{\cftda}}
\newcommand{\cftat}{\widetilde{\cfta}}
\newcommand{\cftdt}{\widetilde{\cftd}}
\newcommand{\cftdat}{\widetilde{\cftda}}
\newcommand{\cfkm}{\cfk^-}
\newcommand{\hfkm}{\hfk^-}
\newcommand{\ctm}{\ct^-}
\newcommand{\brho}{\boldsymbol\rho}
\def\P{\mathcal{P}}
\def\Pp{\boldsymbol{\P}}
\def\I{\mathcal{I}}
\def\a{\mathbf{a}}
\def\e{\boldsymbol{\epsilon}}
\def\ah{\mathbf{a}_{\frac12}}
\def\b{\mathbf{b}}
\def\bh{\mathbf{b}_{\frac12}}
\def\n{\mathbf{n}}
\def\Inv{\mathrm{Inv}}
\def\stphi{(S,T,\phi)}
\def\H{\mathcal{H}}
\def\E{\mathcal{E}}
\def\Aa{\mathcal{A}}
\def\TO{T_\O}
\def\SO{S_\O}
\def\SX{S_\X}
\def\TX{T_\X}
\def\CDA^-{\mathit{CDA}^-}
\def\CATA{\mathit{CATA}}
\def\CDTD{\mathit{CDTD}}
\def\AA{\ensuremath{\mathit{AA}}}
\def\DD{\ensuremath{\mathit{DD}}}
\def\DA{\ensuremath{\mathit{DA}}}
\def\AD{\ensuremath{\mathit{AD}}}
\begin{document}

\title[{Combinatorial Tangle Floer Homology}]{Combinatorial Tangle Floer Homology}

\author{Ina Petkova}
\address {Department of Mathematics, Columbia University\\ New York, NY 10027}
\email {ina@math.columbia.edu}
\urladdr{\href{http://math.columbia.edu/~ina}{http://math.columbia.edu/\~{}ina}}
\author[Vera V\'ertesi]{Vera V\'ertesi}
\address{Institut de Recherche Math\'ematique Avanc\'ee \\Universit\'e de Strasbourg}
\email{vertesi@math.unistra.fr}

\keywords{tangles, knot Floer homology, bordered Floer homology, TQFT}
\subjclass[2010]{57M27; 57R58}
\maketitle

\begin{abstract}
In this paper we extend the idea of bordered Floer homology to knots and links in $S^3$: Using a specific Heegaard diagram, we construct gluable combinatorial invariants of tangles in $S^3$, $D^3$ and $I\times S^2$. The special case of $S^3$ gives back  a stabilized version of knot Floer homology.
\end{abstract}

%%%%%%%%%%%%%%%%%%%%%%%%%%%%%%%%%%%%%%%%%%%%%%%%%%%%%%%

%%%%%%%%%%%%%%%%%%%%%%%%%%%%%%%%%%%%%%%%%%%%%%%%%%%%%%%
%\begin{abstract}
%%%%%%%%%%%%%%%%%%%%%%%%%%%%%%%%%%%%%%%%%%%%%%%%%%%%%%%

%%%%%%%%%%%%%%%%%%%%%%%%%%%%%%%%%%%%%%%%%%%%%%%%%%%%%%%	
%\end{abstract}
%%%%%%%%%%%%%%%%%%%%%%%%%%%%%%%%%%%%%%%%%%%%%%%%%%%%%%%

%%%%%%%%%%%%%%%%%%%%%%%%%%%%%%%%%%%%%%%%%%%%%%%%%%%%%%%

\tableofcontents

% Workaround: suppress putting list of figures in main TOC
\let\contentsname\listfigurename
\listoffigures

%%%%%%%%%%%%%%%%%%%%%%%%%%%%%%%%%%%%%%%%%%%%%%%%%%%%%%%

%%%%%%%%%%%%%%%%%%%%%%%%%%%%%%%%%%%%%%%%%%%%%%%%%%%%%%%
% !TEX root = ../tanglefloer.tex
%%%%%%%%%%%%%%%%%%%%%%%%%%%%%%%%%%%%%%%%%%%%%%%%%%%%%%%

\section{Introduction} % (fold)
\label{sec:introduction}
%%%%%%%%%%%%%%%%%%%%%%%%%%%%%%%%%%%%%%%%%%%%%%%%%%%%%%%

%%%%%%%%%%%%%%%%%%%%%%%%%%%%%%%%%%%%%%%%%%%%%%%%%%%%%%%
% section introduction 
%%%%%%%%%%%%%%%%%%%%%%%%%%%%%%%%%%%%%%%%%%%%%%%%%%%%%%%

Knot Floer homology is a categorification of the Alexander polynomial, defined by Ozsv\'ath and Szab\'o \cite{hfk}, and independently by Rasmussen \cite{jrth}, in the early 2000s. To a knot or a link one associates a filtered graded  chain complex over the field of two elements  $\F_2$ or over a polynomial ring $\F_2[U_1, \ldots, U_n]$. The filtered chain homotopy type of this complex is a powerful invariant of the knot. For example, it detects genus \cite{hfkg}, fiberedness \cite{pgf, ynf}, and gives a bound on the four-ball genus \cite{tau}. The definition of knot Floer homology is based on finding a Heegaard diagram presentation for the knot and defining a chain complex by counting  certain pseudo-holomorphic curves in a symmetric product of the Heegaard surface. Suitable choices of Heegaard diagrams (for example, grid diagrams as in \cite{mos,most}, or nice diagrams as in \cite{sarkarwang}) lead to combinatorial descriptions of knot Floer homology. However, in its nature  knot Floer homology is a  ``global" invariant -- one needs a picture of the entire knot to define it -- and local modifications are only partially understood, see for example \cite{hfk, oszskein, mskein}.

Around the same time that knot Floer homology came to life, Khovanov introduced another knot invariant, a categorification of the Jones polynomial now known as Khovanov homology \cite{kh1}.  Khovanov's construction is somewhat simpler in nature, as one builds a chain complex generated by the different resolutions of the knot. Khovanov homology has an extension to tangles \cite{kh3}, thus local modifications can be understood on a categorical level. 

In this paper, we extend knot Floer homology by defining a combinatorial Heegaard Floer type invariant for tangles. Note that a similar extension exists for Heegaard Floer homology, which is an invariant of closed $3$-manifolds, generalizing it to manifolds with boundary \cite{bfh2}; this extension is called bordered Floer homology.

\subsection{Tangle Floer invariants}

\begin{figure}[h]
\centering
     \includegraphics[scale=1]{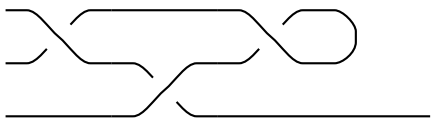} 
      \vskip .2 cm
       \caption[A projection of a tangle in $S^2\times I$.]{\textbf{A projection of a tangle in $S^2\times I$.}}\label{fig:tangle}
\end{figure}

A tangle (see Figure \ref{fig:tangle} and Subsection \ref{ssec:tangles} for  precise definitions) is a properly embedded 1--manifold in $D^3$ or $I\times S^2$. Inspired by \cite{LOT}, we define: 
\begin{itemize}
\item[-] a differential graded algebra $\Aa(\P)$ for any finite set of signed points $\P$ on the equator of $S^2$;
\item[-] a right type $\mathit{A}$ module $\widehat{\mathit{CFTA}}(\T)$ over $\Aa(\partial\T)$ for any tangle $\T$ in $D^3$;
\item[-] a left type $\mathit{D}$ module $\widehat{\mathit{CFDT}}(\T)$ over $\Aa(-\partial\T)$ for any tangle $\T$ in $D^3$;
\item[-] a left-right $\Aa(-\partial^0\T)$-$\Aa(\partial^1\T)$ type $\mathit{DA}$ bimodule $\widehat{\mathit{CFDTA}}(\T)$ for any tangle $\T$ in $I\times S^2$.
\end{itemize}
The above (bi)modules are topological invariants of the tangle. (See Theorems \ref{thm:ainv}, \ref{thm:dinv} and \ref{thm:dainv} for the precise statements.)
\begin{theorem}\label{thm:simpleinv}
For a tangle $\T$ in $D^3$ the type $\mathit{A}$ equivalence class of the module $\widehat{\mathit{CFTA}}(\T)$ is a topological invariant of $\T$, and the  type $\mathit{D}$ equivalence class of  the module $\widehat{\mathit{CFDT}}(\T)$ is a topological invariant of $\T$. For a tangle $\T$ in $S^2\times I$ the  type $\mathit{DA}$ equivalence class of the bimodule $\widehat{\mathit{CFDTA}}(\T)$ is a topological invariant of $\T$.
\end{theorem}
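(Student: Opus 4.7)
The plan is to prove invariance by the standard two-step Reidemeister-theorem-plus-move-invariance strategy, adapted to the combinatorial tangle setting: first identify a finite list of local moves on the underlying (grid-like) Heegaard diagrams that connect any two diagrams representing the same tangle, and then exhibit an explicit type $\mathit{A}$, $\mathit{D}$, or $\mathit{DA}$ equivalence realizing each such move.

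First, I would show that two combinatorial tangle diagrams presenting isotopic tangles (in $D^3$ or $I\times S^2$, rel boundary) are connected by a finite sequence of elementary moves. By analogy with the grid diagram/nice diagram proofs for $\hfkhat$ of \cite{mos, most, sarkarwang}, these should be commutations of adjacent rows/columns in the interior, plus (de)stabilizations; additional moves local to $\partial \T$ will be needed to verify that the construction is independent of how the diagram meets the parametrized boundary circle(s). Since the ambient manifolds are simple ($D^3$ or $I\times S^2$), the cellular/combinatorial tangle Reidemeister theorem should be straightforward, with the main subtlety being the interaction with the boundary parametrization defining $\Aa(\P)$.

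Next, for each elementary move I would produce the required equivalence of (bi)modules. For interior commutations this is typically done by a triangle-counting argument mimicking the handleslide invariance in bordered Floer homology \cite{LOT}; combinatorially, one builds an explicit chain map by considering the middle genuses and shows that it is a type $\mathit{A}$ (resp.\ $\mathit{D}$, $\mathit{DA}$) morphism with a prescribed homotopy inverse. For (de)stabilizations, one should be able to identify the module before stabilization with a direct summand or a deformation retract of the stabilized module, using the cancellation lemma on the contractible subcomplex introduced by the extra generator pair. The boundary-parametrization moves should induce identity or tautological isomorphisms, provided the algebra $\Aa(\P)$ is defined intrinsically from $\P$.

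The main obstacle, as usual in bordered theories, will be the stabilization and commutation steps: verifying that the chain maps produced are honest type $\mathit{A}$/$\mathit{D}$/$\mathit{DA}$ morphisms, i.e.\ that they respect the algebra actions and the higher $\ainf$-structure (for type $\mathit{A}$) or the $\delta^1$ structure (for type $\mathit{D}$), rather than merely $\F_2$-chain maps. In the combinatorial model this reduces to a finite, explicit but intricate case check of how the idempotents and Reeb-chord-like outputs behave under the local modification; I would isolate this verification into a technical lemma for each move. Once all moves are handled, invariance under ambient isotopy, and hence the theorem, follows by concatenation of the equivalences along any sequence of moves connecting two given diagrams of the tangle.
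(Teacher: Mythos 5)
Your proposal takes a genuinely different route from the paper, and in its current form it has a real gap. The paper does \emph{not} prove Theorem \ref{thm:simpleinv} by a combinatorial move-by-move argument. Instead it represents the tangle by a multipointed bordered Heegaard diagram (Section \ref{sec:hdiagrams}), shows via Morse theory (and, for type $2$ and two-boundary diagrams, a separate handleslide argument, since the naive Morse-theoretic statement fails there) that any two such diagrams for a fixed tangle are related by isotopies, handleslides, and index one/two and zero/three (de)stabilizations, defines the (bi)modules by counting pseudoholomorphic curves (Sections \ref{sec:moduli}--\ref{sec:hd_modules}), and proves invariance under Heegaard moves by the standard analytic continuation/triangle-map machinery of \cite{bfh2}; see Theorems \ref{thm:ainv}, \ref{thm:dinv}, \ref{thm:dainv}, and \ref{thm:invariance}. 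The combinatorial structures are then identified with the holomorphic invariants of particular nice diagrams (plumbings of bordered grid diagrams), which is how Theorem \ref{thm:welldef} is deduced. Note also that invariance only holds up to tensoring with copies of $V=\F_2\oplus\F_2$ coming from index zero/three stabilizations; any proof must track these factors, which your proposal does not mention.

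The purely combinatorial strategy you outline is exactly the one the authors describe as a hoped-for alternative and explicitly state they were unable to complete: Section \ref{subsec:sampleproof} carries out only the generalized commutation move, which yields invariance under Reidemeister II and III. The remaining moves of Figure \ref{fig:relations} --- Reidemeister I, the zig-zag (cap--cup cancellation) move, the crossing-cap/cup slides, insertion of straight strands, and sliding vertically stacked tangles past each other --- are precisely where the combinatorial approach is open, and your plan treats them as a ``finite, explicit but intricate case check'' without indicating how the required type $A$/$D$/$\DA$ homotopy equivalences would actually be constructed (the zig-zag and straight-strand moves change the number of elementary pieces and hence the size of the tensor product, so they are not local cancellations of a single generator pair). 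There is also a prior issue your first step elides: one must show that the invariant is independent of the choice of decomposition into elementary pieces in the sense of Lemma \ref{lem:decomp}, not merely of the tangle projection. So while your route may well be viable (and would be a valuable combinatorial proof, potentially extending to the minus version), as written it asserts rather than supplies the key constructions, and it does not constitute a proof of the theorem; the paper's argument goes through the analytic theory instead.
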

Furthermore, the invariants behave well under compositions of tangles. (See Theorem  \ref{thm:cft_pairing} and Corollary \ref{cor:hat_pairing} for the precise statement.)  \footnote{In each of the equivalences in Theorems \ref{thm:simplepairing} and \ref{thm:simpleknot}, the left hand side should also be tensored with $V^{\otimes(|\T_1|+|\T_2| - |\T_1\circ \T_2|)}$, where $V = \F_2\oplus \F_2$ has one summand in bigrading $(0,0)$ and the other summand in bigrading $(-1, -1)$. This is discussed in the full statements of the theorems, and omitted here for simplicity.}
\begin{theorem}\label{thm:simplepairing}
Suppose that $\T_1$ and $\T_2$ are tangles in $S^2\times I$ such that $\partial^1\T_1=-\partial^0\T_2$. Then up to type $\mathit{DA}$ equivalence
\[ \widehat{\mathit{CFDTA}}(\T_1\circ\T_2)\simeq \widehat{\mathit{CFDTA}}(\T_1)\widetilde{\otimes} \widehat{\mathit{CFDTA}}(\T_2).\]
\end{theorem}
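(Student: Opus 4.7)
The plan is to adapt the pairing theorem from bordered Heegaard Floer homology of Lipshitz--Ozsv\'ath--Thurston to the combinatorial/tangle setting. Since the invariants $\cftdahat$ are built from a specific family of combinatorial Heegaard diagrams associated to tangles, the proof should reduce to a direct identification of generators and differentials in a glued diagram, without the holomorphic-curve time-dilation argument needed in the smooth bordered theory.

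First, I would fix combinatorial Heegaard diagrams $\HH_1$ and $\HH_2$ representing $\T_1$ and $\T_2$, chosen so that their boundaries carry matching parametrizations by the common point set $\P=\partial^1\T_1=-\partial^0\T_2$. By the definitions of $\cftdahat$ set up in the earlier sections, such standard diagrams exist and can be glued along $\P$ to form a Heegaard diagram $\HH=\HH_1\cup_\P \HH_2$ representing $\T_1\circ\T_2$, together with some extra basepoint data introduced along the seam. Verifying that this glued diagram is a valid combinatorial Heegaard diagram for the composite tangle in $I\times S^2$, and that the extra basepoints account exactly for the $V^{\otimes(|\T_1|+|\T_2|-|\T_1\circ\T_2|)}$ correction factor mentioned in the full statement (each merged basepoint introducing a $V=\F_2\oplus\F_2$ tensor stabilization), is the first step.

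Next, I would identify generators and differentials. Generators of $\cftdahat(\T_1\circ\T_2)$ read off from $\HH$ correspond to pairs $(\x_1,\x_2)$ of generators on the two sides whose idempotents along $\P$ are complementary, giving a natural bijection with $\cftdahat(\T_1)\dtens \cftdahat(\T_2)$ as an $\F_2$-module (after accounting for the $V$ factor). For the differential, each combinatorially rigid domain in $\HH$ decomposes along the seam into a sub-domain on each side; each sub-domain's boundary meets $\P$ in an ordered sequence of intervals, which reads off an algebra element of $\Aa(\P)$. By construction of $\Aa(\P)$, the sequences read from the $A$-side of $\T_1$ and the $D$-side of $\T_2$ pair dually, and the combinatorial count of rigid domains in $\HH$ factors as a sum over matched pairs of rigid sub-domains whose outputs and inputs coincide. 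This is precisely the box tensor product differential.

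The main obstacle lies in the careful bookkeeping along the seam. One must show that (i) every contributing rigid domain in $\HH$ decomposes uniquely into a matched pair of sub-domains, with no spurious boundary degenerations beyond those already counted by the $DA$ tensor product; (ii) idempotents, Alexander and Maslov gradings, and $U$-powers transport correctly across the seam, so that the bijection is grading-preserving; and (iii) the $V^{\otimes(|\T_1|+|\T_2|-|\T_1\circ\T_2|)}$ stabilization factor agrees on the nose with the extra closed circles or merged basepoints produced by the gluing. Once these checks are in place, Corollary \ref{cor:hat_pairing} for the $\cftahat\otimes\cftdhat$ pairing follows by specializing one of the boundaries to be closed, where the $DA$ box tensor collapses to the standard pairing of a type $A$ module with a type $D$ module.
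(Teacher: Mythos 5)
Your proposal is correct and follows essentially the same route as the paper: the paper proves Theorem \ref{thm:cft_pairing} by passing to \emph{nice} diagrams (so that only bigons and rectangles contribute and both factors are bounded), identifying generators of the glued diagram with pairs having complementary idempotents along the seam, and matching each rectangle crossing the seam with a (type $A$ action, type $D$ output) pair; Corollary \ref{cor:hat_pairing} then supplies exactly the $V^{\otimes(|\T_1|+|\T_2|-|\T_1\circ\T_2|)}$ bookkeeping from the excess interior basepoints, and the $\widetilde{\otimes}$ formulation handles the unbounded case via invariance. Your outline, including the reduction to a direct combinatorial identification without the time-dilation argument, matches this.
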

Thus, the above definitions give a functor from the category of oriented tangles $\mathcal{OTAN}$ to the category of bigraded type $\DA$ bimodules up to type $\DA$ equivalence. In other words, our invariant behaves like  a  $(0+1)$-dimensional TQFT.\footnote{Note that it is not a proper TQFT as the target is not the category of vector spaces, and the functor does not respect the monoidal structure of the categories. In fact there is no obvious monoidal structure on the category of type $\DA$ structures.}

Note that there are analogs of Theorem \ref{thm:simplepairing}  if one  of the tangles  is in $D^3$. When $\T_1$ and $\T_2$ are both in $D^3$, then their composition $\T_1\circ\T_2$ is a knot (or a link), and we recover knot Floer homology:
\begin{theorem}\label{thm:simpleknot}
Suppose that $\T_1$ and $\T_2$ are tangles in $D^3$ with $\partial\T_1=-\partial\T_2$, and let $K=\T_1\circ\T_2$ be their composition. Then 
\[\widehat{\mathit{CFK}}(K)\otimes W\simeq \widehat{\mathit{CFTA}}(\T_1)\widetilde{\otimes}  \widehat{\mathit{CFDT}}(\T_2)\]
where $W=\F_2\oplus \F_2$ with Maslov and Alexander  bigradings $(M,A)=(0,0)$ and $(-1,0)$. 
\end{theorem}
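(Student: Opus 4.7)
The plan is to apply the pairing theorem in the case where both tangles live in $D^3$, and then to identify the resulting chain complex with a stabilization of ordinary knot Floer homology. The key input is the analog of Theorem~\ref{thm:simplepairing} for $D^3$ tangles, alluded to in the paragraph preceding the statement.

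First I would set up the gluing on the Heegaard-diagram level: gluing the tangle Heegaard diagrams $\HH_1$ for $\T_1$ and $\HH_2$ for $\T_2$ along their common boundary parametrization yields a closed multi-pointed Heegaard diagram $\HH = \HH_1 \cup_\partial \HH_2$ for $S^3$, with the basepoints $\ws, \zs$ marking the link $K = \T_1 \circ \T_2$. By the $D^3$ version of the pairing theorem, the box tensor product $\widehat{\mathit{CFTA}}(\T_1) \widetilde{\otimes} \widehat{\mathit{CFDT}}(\T_2)$ is chain-homotopy equivalent to the knot Floer chain complex $\widehat{\mathit{CFK}}(\HH, \ws, \zs)$ of this glued diagram. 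This step reduces the theorem to comparing the multi-pointed invariant computed from $\HH$ with the standard invariant $\widehat{\mathit{CFK}}(K)$.

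Next I would apply the standard destabilization principle for multi-pointed knot Floer homology: adding an extra matched pair of basepoints $(w,z)$ to a Heegaard diagram for $(S^3,K)$ tensors the chain complex with $V = \F_2 \oplus \F_2$, where the two summands sit in bigradings $(0,0)$ and $(-1,0)$. By counting the basepoints of the glued diagram $\HH$ against a minimal multi-pointed Heegaard diagram for $K$, the construction of the tangle Heegaard diagrams in the paper should produce exactly one excess pair, contributing a single tensor factor $W$. This would identify $\widehat{\mathit{CFK}}(\HH, \ws, \zs)$ with $\widehat{\mathit{CFK}}(K) \otimes W$ and complete the chain of equivalences.

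The main obstacle will be the bookkeeping needed to justify the bigraded identification: verifying that precisely one excess $(w,z)$ pair appears (and not a number depending on the number of tangle endpoints or the number of link components), and that the Alexander and Maslov bigradings of the two generators of $W$ agree with the values $(0,0)$ and $(-1,0)$ stated in the theorem. A secondary point is that the $D^3$ version of Theorem~\ref{thm:simplepairing} must be established in the form used here; depending on how it is formulated, the excess basepoint stabilization may already be absorbed into the footnote factor $V^{\otimes(|\T_1|+|\T_2| - |\T_1 \circ \T_2|)}$, in which case the heart of the argument is showing that this exponent equals one when $K$ is a knot (and more generally matches the extra $V$-factor needed to recover the standard $\widehat{\mathit{CFK}}$ for a link).
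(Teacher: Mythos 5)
Your overall strategy (pairing theorem plus basepoint bookkeeping) is the same as the paper's, which deduces the statement from Theorem~\ref{thm:cft_pairing}(4) and Corollary~\ref{cor:hat_pairing}(4). But there is a genuine gap in how you account for the factor $W$. In the paper's gluing construction, when a type~$1$ and a type~$2$ diagram are glued to form a closed surface, the $\zzz$ decorations are deleted and replaced by a new pair $X'$, $O'$ placed in \emph{the same region}. By Lemma~\ref{lemma:gluehd} the resulting closed diagram therefore represents not $K$ but $K\cup U$, where $U$ is a split unknot. The factor $W=\F_2\oplus\F_2$ in bigradings $(0,0)$ and $(-1,0)$ is exactly the effect of this split unknot component: the second generator has Alexander grading $0$ (not $-1$) precisely because $U$ is a separate component. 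Your proposal instead attributes $W$ to "one excess matched pair of basepoints" on $K$ itself, and asserts that such a stabilization contributes a summand in bigrading $(-1,0)$. That is incorrect: an index zero/three stabilization (an extra $(w,z)$-pair on an existing component) contributes $V$ with bigradings $(0,0)$ and $(-1,-1)$, since both the Maslov and the Alexander grading drop. So as written your argument produces the wrong internal grading, and it conflates two different phenomena.

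Relatedly, your closing suggestion that "the heart of the argument is showing that this exponent equals one when $K$ is a knot" is off target. The exponent $|\T_1|+|\T_2|-|K|$ counts the excess \emph{interior} basepoint pairs coming from the components of the two tangles and contributes $V^{\otimes(|\T_1|+|\T_2|-|K|)}$ (the factor hidden in the footnote); it is independent of, and in addition to, the single $W$ coming from the split unknot. Both factors are present even in the simplest case of two arcs closing up to an unknot, where the exponent is $1$ and one still gets an extra $W$. To repair the proof you need the observation that the glued diagram is a diagram for $K\cup U$, the known fact that $\cfkhat(K\cup U)\simeq\cfkhat(K)\otimes W$ with $W$ in bigradings $(0,0)$ and $(-1,0)$, and separately the $V$-stabilization count for the interior basepoints.
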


The combinatorial description of the invariants depends on the use of a certain Heegaard diagram associated to the tangle (See Figure \ref{fig:HD}.) This diagram is ``nice'' in the sense of Sarkar-Wang \cite{sarkarwang}. 
\begin{figure}[h]
 \centering
       \includegraphics[scale=.6]{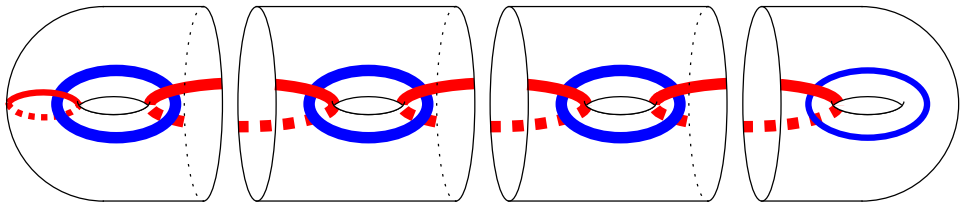} 
       \vskip .2 cm
       \caption[A Heegaard diagram associated to a tangle.]{\textbf{A Heegaard diagram associated to a tangle.} The thick lines denote parallel $\alpha$ and $\beta$ curves. The number of twice punctured tori in the middle depends on how complicated the tangle is. This figure shows the Heegaard diagram for a closed link. Diagrams for tangles can be obtained by deleting one or both of the once punctured tori from the sides.}\label{fig:HD}
\end{figure}
The use of this diagram enables a purely combinatorial description of the generators, as partial matchings of a bipartite graph associated to the tangle. (See Figure \ref{fig:HD} for an example.)
\begin{figure}[h]
\centering
    \includegraphics[scale=1]{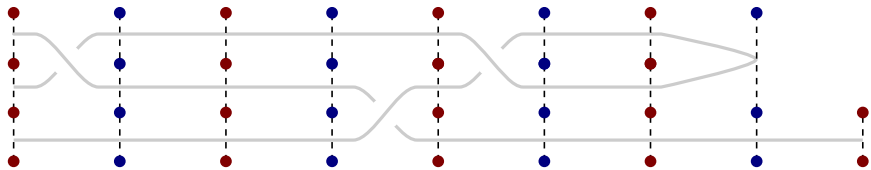} 
      \vskip .2 cm
       \caption[The bipartite graph associated to the tangle of Figure \ref{fig:tangle}.]{\textbf{The bipartite graph associated to the tangle of Figure \ref{fig:tangle}.} The edges (not drawn) are between the consecutive vertex-sets.}\label{fig:bipartite}
\end{figure}

In this paper, we develop two versions of the invariants: one over $\F_2$, which we call a \emph{tilde} version, and an enhanced, \emph{minus}, version over $\F_2[U_1, \ldots, U_n]$. As  Theorem \ref{thm:simpleknot} depends only on a Heegaard diagram description, it holds for both versions. However, we currently only have proofs for the ``tilde'' versions of Theorems \ref{thm:simpleinv} and \ref{thm:simplepairing}. This is due to the fact that our proofs rely on analytic techniques. In Subsection \ref{subsec:sampleproof} we give evidence for the existence of completely combinatorial proofs of Theorems \ref{thm:simpleinv} and \ref{thm:simplepairing} in the ``minus'' version.

We also develop an ungraded ``tilde" version of tangle Floer homology for tangles in arbitrary manifolds with boundary $S^2$ or $S^2\coprod S^2$. Versions of the above theorems hold in this more general case too, see Theorems \ref{thm:dinv}, \ref{thm:ainv}, \ref{thm:dainv}, \ref{thm:cft_pairing}, and Corollary \ref{cor:hat_pairing}.

This TQFT-like description of knot Floer homology allows one to localize questions in Heegaard Floer homology. For instance, in a subsequent note we show that there is a skein exact sequence for tangles. The theory has the potential to help understand the change of knot Floer homology under more complicated local modifications such as mutations, or, for example, help understand the rank of the knot Floer homology of periodic knots.

We hope that our construction may provide a new bridge between Khovanov homology and knot Floer homology. Rasmussen \cite{Ras1} conjectures a spectral sequence connecting the two. It is possible that a relationship between the two theories can be found for simple tangles, and used to prove the conjecture.

The Jones polynomial can be defined in the Reshetikhin-Turaev way, using the vector representation of the quantum algebra $U_q(\mathfrak{sl}_2)$, and since Khovanov's seminal work on categorifying the Jones polynomial, a program for categorification of quantum groups has begun. 
Similarly to the Jones polynomial construction, one can see the Alexander polynomial as a quantum invariant coming from the vector representation $V$ of $U_q(\mathfrak{gl}(1|1))$, see \cite{s1, v1}. However, the categorification $\hfkhat$ of the Alexander polynomial has not yet been understood on a representation theory level. 
In a future paper we show that the decategorification of tangle Floer homology is a tensor power of the vector representation of $U_q(\mathfrak{gl}(1|1))$. We believe that we can build on the structures from this paper to obtain a full categorification of the tensor powers of the vector representation of $U_q(\mathfrak{gl}(1|1))$.

\subsection{Further remarks} Knot Floer homology is defined by counting holomorphic curves in a symmetric product of a Heegaard surface, and for different versions, the projection of those curves to the Heegaard surface is allowed or not allowed to cross two special sets of basepoints $\XX$ and $\OO$.
We develop a theory for tangles that counts curves which cross only $\OO$. While it is hard to define  invariants that count curves which  cross both $\XX$ and $\OO$, it is straightforward to modify the definitions to count curves that cross $\XX$ or $\OO$, but not both. Further, the invariants defined in this paper can be extended over  $\Z$. 

The structures defined in Section \ref{sec:combinatorial} are completely combinatorial, and an algorithm could be programmed to compute the invariants for simple tangles and obtain the knot Floer homology of some new knots. Knots with periodic behavior and knots with low bridge number relative to their grid number are especially suitable.

\subsection{Organization}
After a brief introduction of the relevant algebraic structures in Section \ref{sec:preliminaries}, we turn to defining the invariants from a diagrammatic viewpoint in Section \ref{sec:combinatorial}. In Section \ref{sec:borderedgrid}, we describe the same invariants 
 using a class of diagrams called bordered grid diagrams, as this approach is more suited for some of the proofs and provides a bridge between Section \ref{sec:borderedgrid} and Sections \ref{sec:circles}-\ref{sec:hd_pairing}. Finally, the definitions of the tangle invariants are given in Section \ref{sec:tangleinv}, and their relation to knot Floer homology is proved in Section \ref{sec:apps}. 

Sections  \ref{sec:circles}-\ref{sec:hd_pairing} are devoted to proving invariance by building up a complete homolomorphic theory for tangles in 3--manifolds. The geometric structures (marked spheres) associated to the algebras are introduced in 
Section \ref{sec:circles}, then Section \ref{sec:hdiagrams} describes the various Heegaard diagrams corresponding to tangles in 3--manifolds. The moduli spaces corresponding to these Heeegaard diagrams are defined in Section \ref{sec:moduli}. Then the definitions of the general invariants are given in Section \ref{sec:hd_modules}. The gradings from Section \ref{ssec:grading}  are extended to the  general setting in Section \ref{ssec:morse_gradings}. Section \ref{sec:hd_pairing}  contains the full statements and proofs of Theorems \ref{thm:simplepairing} and \ref{thm:simpleknot}.

\subsection*{Acknowledgments}
We would like to thank  Ko Honda, Mikhail Khovanov, Antony Licata, Robert Lipshitz, Peter Ozsv\'ath, Zolt\'an Szab\'o, and Yin Tian for helpful discussions.  We also thank  Vladimir Fock, Paolo Ghiggini, Eli Grigsby, Ciprian Manolescu, and Andr\'as Stipsicz  for useful comments. 
 IP was partially supported by the AMS-Simons Travel Grant.
VV was supported by ERC Geodycon, OTKA grant number NK81203 and NSF grant number 1104690. 
This collaboration began in the summer of 2013 during the CRM conference ``Low-dimensional topology after Floer". We thank the organizers and CRM for their hospitality.
We also thank the referee for many helpful suggestions.

%%%%%%%%%%%%%%%%%%%%%%%%%%%%%%%%%%%%%%%%%%%%%%%%%%%%%%%

%%%%%%%%%%%%%%%%%%%%%%%%%%%%%%%%%%%%%%%%%%%%%%%%%%%%%%%
% !TEX root = ../tanglefloer.tex
%%%%%%%%%%%%%%%%%%%%%%%%%%%%%%%%%%%%%%%%%%%%%%%%%%%%%%%
\section{Preliminaries} % (fold)
\label{sec:preliminaries}
%%%%%%%%%%%%%%%%%%%%%%%%%%%%%%%%%%%%%%%%%%%%%%%%%%%%%%%

%%%%%%%%%%%%%%%% End Main Theorem Leg %%%%%%%%%%%%%%%%%%%%%

% subsection acknowledgements (end)
%%%%%%%%%%%%%%%%%%%%%%%%%%%%%%%%%%%%%%%%%%%%%%%%%%%%%%%
% section introduction (end)
%%%%%%%%%%%%%%%%%%%%%%%%%%%%%%%%%%%%%%%%%%%%%%%%%%%%%%%

\subsection{Modules, bimodules, and tensor products}

In this paper, we work with the same types of algebraic structures used in bordered Floer homology; cf. \cite{bfh2, bimod}. Below we recall the main definitions. For more detail, see \cite[Section 2]{bimod}.

Let $A$ be a unital differential graded algebra with differential $d$ and multiplication $\mu$ over a base ring ${\bf k}$. In this paper, ${\bf k}$ will always be a direct sum of copies of $\F_2 = \Z/2\Z$. For the algebras we define in the later sections, the base ring for all modules and tensor products is the ring of idempotents.

A \emph{(right) $\cala_\infty$-module over $A$}, or a \emph{type $A$ structure over $A$} is a graded module $M$ over ${\bf k}$, equipped with maps 
$$m_i: M\otimes A[1]^{\otimes (i-1)}\to M[1],$$ 
for $i \geq 1$,  satisfying the compatibility conditions
\begin{align*}
0&= \sum_{i+j = n+1}m_i(m_j(x, a_1, \cdots, a_{j-1}), \cdots , a_{n-1})\\
&+ \sum_{i=1}^{n-1} m_n(x, a_1,\cdots, a_{i-1}, d(a_i),\cdots, a_{n-1})\\
&+ \sum_{i=1}^{n-2} m_{n-1}(x, a_1,\cdots, a_{i-1}, (\mu(a_i,a_{i+1})),\cdots, a_{n-1}).
\end{align*}
A type $A$ structure is \emph{strictly unital} if $m_2(x, 1) = x$ and $m_i(x, a_1,\cdots, a_{i-1})=0$ if $i>2$ and some $a_j \in {\bf k}$. We assume all type $A$ structures to be strictly unital. 

We say that $M$ is \emph{bounded} if  $m_i=0$ for all sufficiently large $i$.

A \emph{(left) type $D$ structure}  over $A$ is a graded ${\bf k}$-module $N$, equipped with a homogeneous map
$$\delta:N\to (A\otimes N)[1]$$
satisfying the compatibility condition
$$(d\otimes \id_N)\circ \delta + (\mu\otimes \id_N)\circ (\id_A\otimes \delta)\circ \delta= 0.$$
We can define maps
$$\delta_k:N\to (A^{\otimes k}\otimes N)[k]$$
inductively by
$$\delta_k = \left\{  \begin{array}{ll}
\id_N & \textrm{ for } k=0\\
(\id_A\otimes \delta_{k-1})\circ \delta & \textrm{ for } k\geq 1 
\end{array}\right.$$

 A type $D$ structure is  \emph{bounded} if for any $x \in N$, $\delta_i(x) = 0$ for all sufficiently large $i$.
 
 One can similarly define left type $A$ structures and right type $D$ structures. 

If $M$ is a right $\cala_\infty$-module over $A$ and $N$ is a left type $D$ structure, and at least one of them is bounded, we can define the \emph{box tensor product} $M\boxtimes N$ to be the  vector space $M\otimes N$ with differential 
$$\bdy: M\otimes N \to (M\otimes N)[1]$$
defined by 
$$\bdy = \sum_{k=1}^{\infty}(m_k\otimes \id_N)\circ(\id_M\otimes \delta_{k-1}).$$
The boundedness condition guarantees that the above sum is finite. In that case $\bdy^2= 0$ and $M\boxtimes N$ is a graded chain complex. In general (boundedness is not required), one can think of a type $D$ structure as a left $\ainf$-module, and take an $\ainf$ tensor product $\widetilde\otimes$, see \cite[Section 2.2]{bfh2}.

Given  unital differential graded algebras $A$ and $B$ over  ${\bf k}$ and  ${\bf j}$ with differential and multiplication $d_A$, $d_B$, $\mu_A$, and $\mu_B$,  respectively, four types of bimodules can be defined in a similar way: type $\DD$, $\AA$, $\DA$, and $\AD$. See \cite[Section 2.2.4]{bimod}.

An \emph{$\ainf$-bimodule} or \emph{type $\AA$ bimodule over $A$ and $B$} is a graded  $({\bf k}, {\bf j})$-bimodule $M$, together with degree $0$  maps
$$m_{i,1,j}:A[1]^{\otimes i}\otimes M\otimes B[1]^{\otimes j}\to M[1]$$
subject to compatibility conditions analogous to those for $A$ structures, see \cite[Equation 2.2.38]{bimod}.

We assume all $\AA$ bimodules to be \emph{strictly unital}, i.e. $m_{1,1,0}(1,x) = x = m_{0,1,1}(x,1)$ and $m_{i,1,j}(a_1, \ldots, a_i, x, b_1, \ldots, b_j) = 0$ if $i+j>1$ and some $a_i$ or $b_j$ lies in ${\bf k}$ or ${\bf j}$.

A \emph{type $\DA$ bimodule over $A$ and $B$} is a graded  $({\bf k}, {\bf j})$-bimodule $M$, together with degree $0$, $({\bf k}, {\bf j})$-linear maps
$$\delta^1_{1+j}: M\otimes B[1]^{\otimes j}\to A\otimes M[1],$$
satisfying a compatibility condition combining those for $A$ and $D$ structures, see \cite[Definition 2.2.42]{bimod}. 
 
 A \emph{type $\AD$ structure} can be defined similarly, with the roles of $A$ and $B$ interchanged.

A \emph{type $\DD$ structure over $A$ and $B$} is a type $D$ structure over $A\otimes_{\F_2}B^{\mathrm{op}}$. In other words, it is a graded $({\bf k}, {\bf j})$-bimodule $M$ and a degree $0$ map $\delta^1:A\otimes M\otimes B[1],$ again with an appropriate compatibility condition.

Note that when $A$ or $B$ is the trivial algebra $\{1\}$, we get a left or a right $A$ or $D$ structure over the other algebra. 

There are notions of boundedness for bimodules similar to those for one-sided modules. There are various tensor products for the various compatible pairs of bimodules. We assume that one of the factors is bounded, and briefly lay out the general description. For details, see \cite[Section 2.3.2]{bimod}.

Let $M$ and $N$ be two structures such that $M$ is module or bimodule with a right  type $A$ action by an algebra $\Aa$, and $N$ is a left type $D$ structure over $\Aa$, or a type $\DA$ or type $\DD$ structure over $\Aa$ on the left and some algebra on the right, with $M$ right bounded or $N$ left bounded. As a chain complex, define 
$$M\boxtimes N = \mathcal F(M)\boxtimes \mathcal F(N),$$
where  $\mathcal F(M)$ forgets the left action on $M$, i.e. turns $M$ into a right type $A$ structure over $\Aa$, and  $\mathcal F(N)$ forgets the right action on $N$, i.e. turns $N$ into a left type $D$ structure over $\Aa$. Endow $M\boxtimes N$ with the bimodule structure maps arising from the left action on $M$ and the right action on $N$. Note that this also makes sense when $M$ is a right type $A$ structure, or $N$ is a left type $D$ structure.

 In general (boundedness is not required), one can think of $N$ as a structure with a left $\Aa$ action, by considering $\Aa\boxtimes N$ (where $\Aa$ is viewed as a bimodule over itself), and take an $\ainf$ tensor product $M\widetilde\otimes N := M\widetilde\otimes (\Aa\boxtimes N)$. Whenever they are both defined, the two tensor products yield equivalent structures, see \cite[Proposition 2.3.18]{bimod}. 

For definitions of morphisms of type $A$, $D$, $\AA$, $\AD$, $\DA$, and $\DD$ structures, and for definitions of the respective types of homotopy equivalences, see \cite[Section 2]{bimod}.

\subsection{Tangles}\label{ssec:tangles}

In this paper we only consider tangles in 3--manifolds with boundary  $S^2$ or $S^2\coprod S^2$, or in closed 3--manifolds.

\begin{definition}\label{def:sphere}
An \emph{$n$-marked sphere} $\mathcal S$ is a sphere $S^2$ with $n$ oriented points $t_1,\dots,t_n$  on its equator $S^1\subset S^2$ numbered respecting the orientation of $S^1$. 
\end{definition}

\begin{definition}\label{tangle_def}
A \emph{marked $2n$-tangle}  $\T$ in an oriented $3$--manifold $Y$ with $\bdy Y \cong S^2$ is a properly embedded $1$--manifold $T$ with $(\bdy Y, \bdy T)$ identified with a $2n$-marked sphere $\mathcal S$.

A \emph{marked $(m,n)$-tangle} $\T$ in an oriented $3$--manifold $Y$ with two boundary components  $\partial^0 Y\cong S^2$ and $\partial^1
Y\cong S^2$ is a properly embedded 1--manifold $T$ with $(\partial^0
Y,\partial^0 Y\cap\partial T)$ and $(\partial^1 Y,\partial^1
Y\cap\partial T)$ each identified with an $m$-marked sphere and an
$n$-marked sphere.
We denote $\bdy T$ along with the ordering information by $\bdy \T = \bdy^0\T\coprod \bdy^1\T$.

\end{definition}

We denote the number of connected components of a tangle $\T$ by $|\T|$. Note that we allow for a tangle to also  have closed components. 

Given a marked sphere $\mathcal S = (S^2, t_1, \ldots, t_n)$, we denote $(-S^2, -t_1, \ldots, -t_n)$ by $-\mathcal S$. 
 If  $\T_1$ and $\T_2$ are two marked tangles in $3$--manifolds $Y_1$ and $Y_2$, where  a component of $(\bdy Y_1, \bdy T_1)$ is identified with a marked sphere $\mathcal S$ and a component of  $(\bdy Y_2, \bdy T_2)$ is identified with  $-\mathcal S$, we can form the union $\T_1\cup_{\mathcal S} \T_2$ by  identifying $Y_1$ and $Y_2$ along these two boundary components.

For a pair $(Y, \T)$, if a component $\bdy^i Y$ of the boundary of $Y$ is identified with $\mathcal S = (S^2, t_1, \ldots, t_n)$, so that $\bdy^i \T$ is the ordered set of points $(t_1, \ldots, t_n)$, we use $-\bdy^i\T$ to denote $(-t_1, \ldots, -t_n)$. So we can glue two tangles $\T_1$ and $\T_2$ along boundary components $\bdy^i\T_1$ and $\bdy^j\T_2$ exactly when $\bdy^i\T_1 = -\bdy^j\T_2$.

 In most of this paper, we only consider tangles in product spaces, where the identification of the boundary with a marked sphere is implied, and the ordering in $\bdy \T$ encodes all the information.

Tangles in subsets of $S^3=\R^3\cup\{*\}$ for example in $D^3$, $I\times S^2$ or $S^3$ itself can be given by their projection to $(-\infty,c]\times \R$ or $[d,\infty)\times \R$, $[c,d]\times \R$
or $\R^2$. We can always arrange a projection to be smooth and to have no triple points, and to have only transverse intersections. 

\begin{definition}
A tangle $\T$ is \emph{elementary} if it contains at most one double point or vertical tangency (a tangency of the form $\{f\}\times \R$). 
\end{definition}
Thus an elementary tangle can consist of straight strands (as on the first picture of Figure \ref{fig:elemtangle}),  can have one crossing (as on the second pictures of Figures \ref{fig:elemtangle} and \ref{fig:coelemtangle}), can be a cap (as on the third picture of Figure \ref{fig:elemtangle}),
or can be a cup (as on the last picture of Figure \ref{fig:elemtangle}).
The above examples are tangles in $[c,d]\times \R$. There is no elementary tangle projection in 
$\R^2$,   an elementary tangle projection in $(-\infty,c]\times \R$ is a single cap, and  an elementary tangle projection in $[d,\infty)\times \R$ is a single cup.

The following two propositions are well known to tangle theorists, and we do not rely on them in the paper, so we only include outlines of their proofs. 

\begin{proposition}
Any tangle projection is the concatenation of elementary tangles.
\end{proposition}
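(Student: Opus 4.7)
The plan is to reduce the statement to a Morse-theoretic / general-position argument on the projection, then cut the domain by finitely many vertical lines that separate the critical $x$-coordinates.

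First I would put the projection into generic position. After a small ambient isotopy of the tangle (which does not change its isotopy class), I may assume that the first-coordinate function $\pi_1$ restricted to $\T$ is Morse, that its critical values are all distinct and correspond to isolated vertical tangencies (cups or caps), that the finitely many double points are transverse and pairwise have distinct first coordinates, and that no double point shares its first coordinate with a critical value of $\pi_1|_\T$. These are all open dense conditions, so they can be arranged simultaneously.

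Now let $x_1 < x_2 < \dots < x_N$ be the finite list of first coordinates at which something happens, namely crossings or vertical tangencies. Since each $x_i$ is either a single transverse double point or a single Morse critical value, I can interleave these with regular values $c = s_0 < s_1 < \dots < s_N = d$ so that $s_{i-1} < x_i < s_i$. Each vertical line $\{s_i\} \times \R$ is transverse to $\T$ and meets it in a finite set of points, which equips $\{s_i\} \times \R$ with the structure of a marked sphere (after one-point compactification) compatible with the definitions above. The slab $\T_i := \T \cap ([s_{i-1}, s_i]\times\R)$ contains by construction at most one crossing or vertical tangency, so it qualifies as an elementary tangle. The concatenation $\T_1 \circ \T_2 \circ \cdots \circ \T_N$ reassembles $\T$ because the slabs agree along the vertical slicing lines by construction.

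The one step that needs a small verification (and is the only place the argument could go wrong) is the claim that a slab with no critical first-coordinate is isotopic to the ``straight strands'' elementary tangle: here $\pi_1|_\T$ is regular throughout the slab, so by the implicit function theorem each component of the slab's intersection with $\T$ is the graph of a smooth function over $[s_{i-1}, s_i]$, and a fibered isotopy straightens these graphs to horizontal strands while fixing the boundary slices pointwise. The domain cases $(-\infty, c]\times\R$, $[d, \infty)\times\R$, and $\R^2$ are handled by first truncating at a sufficiently large (resp.\ small) regular value outside the convex hull of the critical first-coordinates, reducing to the slab situation on the interior with a single cap (or cup, or both) capping off the complementary half-space(s).
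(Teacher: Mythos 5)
Your argument is correct and is essentially the paper's proof, which consists of exactly the same observation: isotope the projection so that all double points and vertical tangencies have distinct horizontal coordinates, then slice by vertical lines between consecutive critical values. You have simply filled in the general-position and straightening details that the paper leaves as an outline.
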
 
\begin{proof} If necessary, one can isotope each tangency and/or double point slightly to the left or right, so that no two have the same horizontal coordinate.
\end{proof} 

Further:
\begin{proposition} The concatenations of two sequences of elementary tangles represent isotopic tangles if and only if they are related by a finite sequence of the  moves depicted in Figure \ref{fig:relations}.\end{proposition}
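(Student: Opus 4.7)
The plan is to adapt the classical proof of Reidemeister's theorem to tangle projections that may include cups, caps and vertical tangencies. One direction is immediate: each of the moves depicted in Figure \ref{fig:relations} is visibly realized by an ambient isotopy of the underlying 3--manifold, so two concatenations related by such moves produce isotopic tangles.

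For the converse, let $\T$ and $\T'$ be two tangles each presented as a concatenation of elementary tangles, and suppose $\{\Phi_s\}_{s\in[0,1]}$ is an ambient isotopy carrying $\T$ to $\T'$. I would first perturb $\Phi_s$ (rel boundary) so that for all but finitely many $s \in [0,1]$ the projection of $\Phi_s(\T)$ is a regular tangle projection and the horizontal coordinates of its crossings and vertical tangencies are all distinct. At the finitely many exceptional times only one of the following local events can occur: (i) two regular branches become tangent and then cross, creating or destroying two crossings; (ii) a strand develops a self-tangency, creating or destroying a monogon; (iii) three branches pass simultaneously through one point; (iv) two neighboring critical points (crossings, cups, caps, or vertical tangencies) exchange horizontal coordinates; (v) a vertical tangency collides with a crossing or another tangency, creating or cancelling a cup-cap pair.

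Events (i)--(iii) are precisely the three Reidemeister moves, events of type (iv) correspond to the commutation-type moves in Figure \ref{fig:relations} (sliding an elementary tangle vertically past another elementary tangle occurring in an adjacent horizontal strip), and events of type (v) correspond to the cup/cap--crossing and zig-zag cancellation moves in Figure \ref{fig:relations}. Concatenating the local models recorded at each exceptional time produces the required finite sequence of moves of Figure \ref{fig:relations} from $\T$ to $\T'$.

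The main obstacle is the bookkeeping of the local models: one must verify that the generic singularities of a one-parameter family of planar projections (of 1--manifolds embedded in a 3--manifold with $S^2$-boundary) really are exhausted by (i)--(v) above, and that each is visible as one of the listed moves. This is a classical and somewhat tedious transversality and case analysis; since the proposition is not used elsewhere in the paper, we treat it as standard and only indicate the argument.
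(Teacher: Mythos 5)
Your argument is essentially the paper's: the paper also proves the converse by a genericity analysis of the isotopy, phrased as elementary Morse theory on the height function (projection to the horizontal coordinate), with the Reidemeister moves accounting for changes in the combinatorics of the diagram and the remaining moves accounting for births/deaths and reorderings of critical points. As a sketch of a standard fact this is fine, and your classification (i)--(v) of codimension-one events matches the paper's case analysis.

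One point your event list does not cover: the move of introducing or removing straight strands relates two \emph{different decompositions of the same projection} into elementary pieces (e.g.\ three cuts versus four cuts, one piece being trivial). No singular event of the one-parameter family occurs here, so your argument as written would conclude that two such sequences are related by the empty sequence of moves, which is false as a statement about sequences of elementary tangles. You need to add a final step normalizing the cut structure, which is exactly where the straight-strand move of Figure \ref{fig:relations} enters; the paper handles this by noting that introducing straight strands amounts to taking one extra cut near a boundary. With that addition the two proofs coincide.
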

\begin{figure}[h]
 \centering
   \labellist
       \pinlabel $T$ at 709 192
       \pinlabel $T$ at 789 192
       \pinlabel $T$ at 861 192
       \pinlabel $T$ at 744 120
       \pinlabel $T$ at 825 120
       \pinlabel $S$ at 704 68
       \pinlabel $S$ at 865 68
             \endlabellist
       \includegraphics[scale=.48]{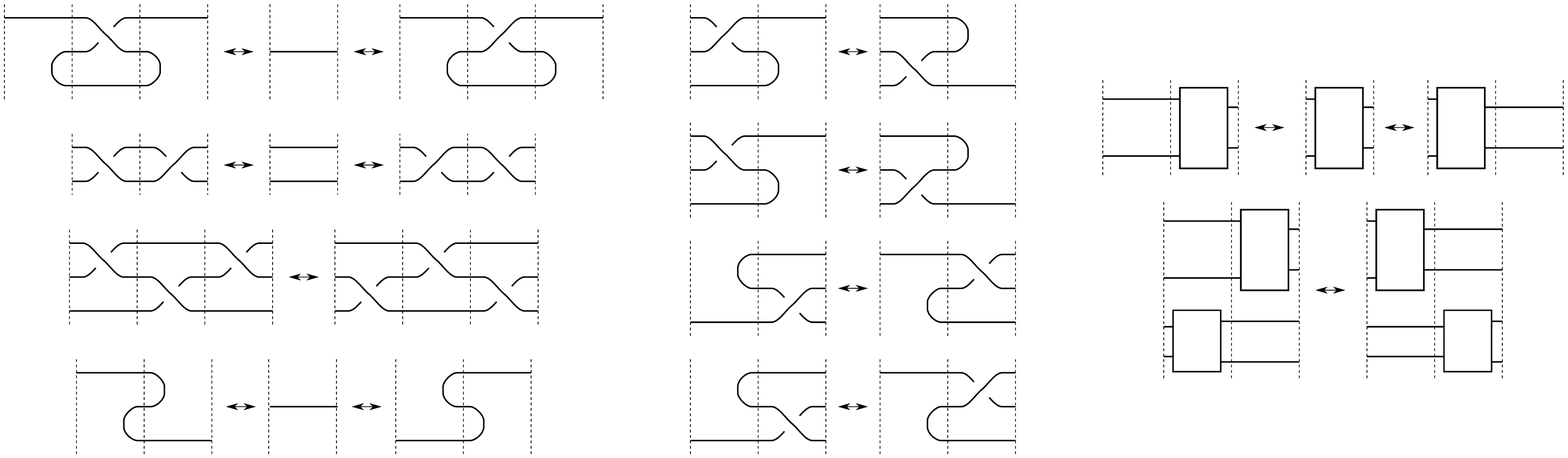} 
       \vskip .2 cm
       \caption[Relations of elementary tangles.]{\textbf{Relations of elementary tangles.} In all diagrams there may be additional horizontal straight strands running above and/or below what is shown. Left column (top to bottom): Reidemeister I move, Reidemeister II move, Reidemeister III move, ``zig-zag'' move. Middle column: crossing-cap/cup slide moves. Right column (top to bottom): introducing straight strands to either side of a tangle or removing them, and sliding two vertically-stacked tangles past each other.}\label{fig:relations}
\end{figure}

\begin{proof}
The three Reidemeister moves are the standard moves that change the combinatorics of the diagram. 

Using elementary Morse theory one can see that the other four types of moves are exactly the moves needed to move between two isotopic diagrams with the same combinatorics. Look at the height function obtained by projecting the tangle to the $x$-coordinate. 
The zig-zag move corresponds to canceling an index 0 critical point with an index 1 critical point or introducing a pair of such critical points.
 The crossing-cup slide moves are isotopies that do not change the Morse function, but slide a strand over or under a critical point. 
 Introducing straight strands simply means taking one extra cut near one of the boundaries of a tangle.
Sliding two vertically-stacked tangles past each other corresponds to moving through a one-parameter family of Morse functions that changes the relative heights for the two disjoint tangles. 
\end{proof}

In this paper, we define a (bi)module for each elementary tangle explicitly, and then define a (bi)module for any tangle by decomposing it into elementary pieces and taking the tensor product of the associated (bi)modules. We prove invariance of the decomposition using analytic techniques (the bordered Heegaard diagrams associated to isotopic tangles are related by Heegaard moves). 
We hope to also find a completely combinatorial proof, i.e. we wish to show directly that the moves from Figure \ref{fig:relations} result in homotopy equivalent tensor products. As a first step, in Section \ref{subsec:sampleproof}  we show invariance under the Reidemeister II and III moves.

%%%%%%%%%%%%%%%%%%%%%%%%%%%%%%%%%%%%%%%%%%%%%%%%%%%%%%%

%%%%%%%%%%%%%%%%%%%%%%%%%%%%%%%%%%%%%%%%%%%%%%%%%%%%%%%
% !TEX root = ../tanglefloer.tex
%%%%%%%%%%%%%%%%%%%%%%%%%%%%%%%%%%%%%%%%%%%%%%%%%%%%%%%

\section{Generalized strand modules and algebras} % (fold)
\label{sec:combinatorial}
%%%%%%%%%%%%%%%%%%%%%%%%%%%%%%%%%%%%%%%%%%%%%%%%%%%%%%%
The aim of this paper is to give a 0+1 $\mathrm{TQFT}$-like description of knot Floer homology. The  description is based on a special kind of Heegaard diagram  associated to a knot (or a link) disjoint union an unknot. 

Given a tangle $\T$, by cutting it into elementary tangles like the ones in Figure \ref{fig:elemtangle}, we can put it on a Heegaard diagram like the one depicted on Figure \ref{fig:HD}, where the genus of the diagram is the number of elementary pieces. The parts of the Heegaard diagram corresponding to the elementary pieces are depicted on Figures \ref{fig:borderedgrid} and \ref{fig:borderedgrid2}. Note that the Heegaard diagram is obtained by gluing together a once punctured torus, some twice punctured tori, and another once punctured torus.  In the sequel, we will associate 
an algebra to each cut of the tangle, a left type $A$ module
 and a right type $D$ structure to the once punctured tori, and a type $\mathit{DA}$ bimodule to each of the twice punctured tori.

In this section, we will describe the algebras, modules, and  bimodules from a purely combinatorial viewpoint, with no mention of Heegaard diagrams. In Section \ref{sec:borderedgrid}, we relate these structures to bordered diagrams.

In the sequel, we define generalized strand algebras and modules whose structure depends on the extra information, encoded in a structure we will refer to as  \emph{shadow}. We define the \emph{``minus''}-version of the theory, and the \emph{``tilde''}-version can be obtained by setting the $U_O$'s to $0$. In this section we describe the modules and algebras via strand diagrams, but some of the notions feel more natural in the bordered grid diagram reformulation (see Section \ref{sec:borderedgrid}). The reader who is familiar with the strand algebras of \cite{bfh2} should be able to understand the main idea of the definitions just by looking at the examples and the figures. 

Although in this paper the main theorem is only proved for the ``tilde''-version, we have strong evidence that it holds for the ``minus'' version as well. This  is why we develop both versions, but at first reading one can ignore the $U$-powers (i.e. set $U_O=0$) and work in the ``tilde''-version.
%%%%%%%%%%%%%%%%%%%%%%%%%%%%%%%%%%%%%%%%%%%%%%%%%%%%%%%
% SHADOWS
%%%%%%%%%%%%%%%%%%%%%%%%%%%%%%%%%%%%%%%%%%%%%%%%%%%%%%%
\subsection{Type $\mathit AA$  structures -- Shadows}\label{ssec:shadows}
The objects underlying all structures are shadows:
\begin{definition}\label{def:shadow}
For $n,m\in \mathbb{N}$, fix sets of integers $\a=\{1,\dots,n\}$ and $\b=\{1,\dots, m\}$, and sets of half-integers $\ah=\{1\frac12,\dots,n-\frac12\}$ and $\bh=\{1\frac12,\dots,m-\frac12\}$. Let $(\SX,\TX,\xi)$ and $(\SO,\TO,\omega)$ be triples such that $\SX,\TO\subset \ah$ and $\TX,\SO\subset \bh$,  $|\TX|=|\SX|$ and $|\TO|=|\SO|$, and $\xi\colon \SX\to \TX$ and $\omega\colon \SO\to \TO$ are two bijections.
The quadruple $\P=(m, n, \xi, \omega)$ is called a \emph{shadow}. 
\end{definition}
Note that  $\TX, \SX$ and $\TO, \SO$ are suppressed from the notation. 
See Figure \ref{fig:shadow} for diagrams of shadows associated to elementary tangles (c.f.\  subsection \ref{ssec:graphtangles}). 
\begin{figure}[h]
 \centering
       \includegraphics
       [scale=1.3]
       {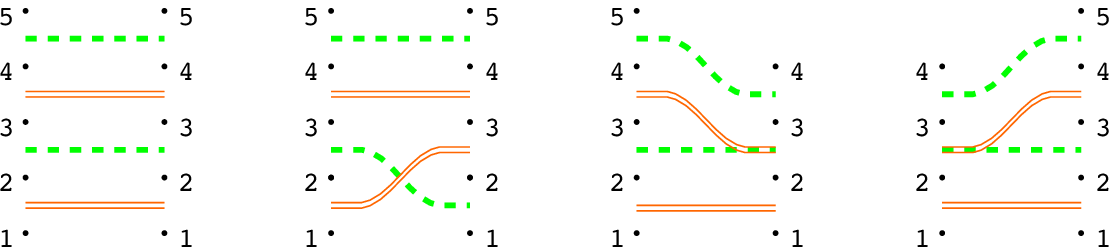} 
       \vskip .2 cm
       \caption[Examples of shadows.]{\textbf{Examples of shadows.} On each diagram $\b$ and $\bh$ are on the left hand side, while $\a$ and $\ah$ are on the right hand side. Double (orange) lines connect $\{1\}\times \{s_X\}$ with $\{0\}\times \{\xi s_X\}$ (for $s_X\in S_\X$) and dashed (green) lines connect $\{0\}\times \{s_O\}$ with $\{1\}\times \{\omega s_O\}$ (for $s_O\in S_\O$).}\label{fig:shadow}
\end{figure}
The information in the subsets $\SX,\TO\subset \ah$ and $\TX,\SO\subset \bh$ can be encoded as follows:
\begin{definition}
The \emph{boundaries} of a shadow $\P$ are defined as 
\begin{align*}
\e^0&=\e^0(\P)=(\epsilon_1^0,\dots,\epsilon_{m-1}^0) \in (2^{\{\pm 1 \}})^{m-1} \quad\textrm{ and} \\
\e^1&=\e^1(\P) = (\epsilon_1^1,\dots,\epsilon_{n-1}^1)\in(2^{\{\pm 1 \}})^{n-1}
\end{align*}
as follows. For a point 
$j+\frac12\in \bh$, the subset $\epsilon_j^0\subset \{\pm 1\}$ contains $-1$ if and only if $j+\frac12 \in \SO$, and $+1\in\epsilon_j^0$ if and only if $j\in \TX$. Similarly, for $j+\frac12 \in \ah$ define $\epsilon_j^1\subset \{\pm 1\}$ by $+1\in \epsilon_j^1$ if and only if $j+\frac12\in \TO$, and $-1\in\epsilon_j^0$ if and only if $j+\frac12 \in \SX$. 
\end{definition}
By reversing the above process, we can recover $\SX,\TO\subset \ah$ and $\TX,\SO\subset \bh$ from $\e^0$ and $\e^1$ by setting 
$\SX=\{j+\frac12\in\ah : -1\in \epsilon_j^1\}$, $\TX=\{j+\frac12\in\bh : +1\in \epsilon_j^0\}$, $\SO=\{j+\frac12 \in\bh: -1\in \epsilon_j^0\}$ and $\TO=\{j+\frac12\in\ah : +1\in \epsilon_j^0\}$.
The following shadows 
 will play an important role in our discussion.
%%%%%%%%%%%%%%%%%%%%%%%%%%%%%%%%%%%%%%%%%%%%%%%%%%%%
%    MAIN EXAMPLES
%%%%%%%%%%%%%%%%%%%%%%%%%%%%%%%%%%%%%%%%%%%%%%%%%%%%

\begin{exm}[Straight lines]\label{exm:straight}
For $\e^0 =(\epsilon_j^0)_{j=1}^k \in\{\pm 1\}^{k}$ let $\e^1=-\e^0$ and define $\SX,\TX, \SO$ and $\TO$ as in the previous paragraph. Consider the shadow ${}_{\e^0}\E_{\e^1}=(k+1,k+1,\id_{\SX},\id_{\SO})$. See the first picture of Figure \ref{fig:shadow} for $k=4$ and $\e^0=(+1,-1,+1,-1)$.
\end{exm}

%%%%%%%%%%%%%%%%%%%%%%%%%%%%%%%%%%%%%%%%%%%%%%%%%%%%
The next three examples correspond to elementary tangles.
\begin{exm}[Crossing]\label{exm:crossing} For $\e^0=(\epsilon_j^0)_{j=1}^{k} \in\{\pm 1\}^{k}$ and $1< i\le k$ define $\e^1=(\epsilon_j^1)_{j=1}^{k}$ where
\[\epsilon_j^1=\left\{
\begin{array}{ll}
-\epsilon_{i-1}^0 & \text{if } j=i\\
-\epsilon_{i}^0 & \text{if } j=i-1\\
-\epsilon_j^0& \text{otherwise.}\
\end{array} 
\right.
\]
Define $\SX,\TX, \SO$ and $\TO$ as before, and for $s_O\in \SO$ define
\[\omega s_O=\left\{
\begin{array}{ll}
i+\frac12 & \text{if } s_O=i-\frac12\\
i-\frac12 & \text{if } s_O=i+\frac12\\
s_O & \text{otherwise.}\
\end{array} 
\right.
\]
For $s_X\in \SX$ define
\[\xi s_X=\left\{
\begin{array}{ll}
i+\frac12  & \text{if } s_X=i-\frac12\\
i-\frac12  & \text{if } s_X=i+\frac12\\
s_X & \text{otherwise.}\
\end{array}
\right.
\]
Consider the shadow ${}_{\e^0}\mathcal{X}_{\e^1}(i)=(k+1,k+1,\xi,\omega)$. See the second picture of Figure \ref{fig:shadow} for $k=4$, $i=2$ and $\e=(+1,-1,+1,-1)$.

\end{exm}
%%%%%%%%%%%%%%%%%%%%%%%%%%%%%%%%%%%%%%%%%%%%%%%%%%%%
\begin{exm}[Cap]\label{exm:cap}
For $\e^0=(\epsilon_i^0)_{i=1}^{k} \in\{\pm1\}^{k}$ and $0\le i\le k$ such that $\epsilon_{i-1}^0\epsilon_{i}^0=-1$ define $\e^1=(\epsilon_i^1)_{i=1}^{k-1} \in\{\pm 1,\{\pm 1\}\}^{k-1}$
by 
\[\epsilon_j^1=\left\{
\begin{array}{ll}
-\epsilon_j^0 & \text{if } j<i\\
-\epsilon_{j-1}^0 & \text{if } j>i\\
\{\pm1\}& \text{if } j=i.\
\end{array} 
\right.
\]
Define $\SX,\TX, \SO$ and $\TO$ as before and for $s_O\in \SO$ define 
\[\omega s_O=\left\{
\begin{array}{ll}
s_O & \text{if } s_O<i\\
s_O-1 & \text{if } s_O>i.
\end{array} 
\right.
\]
For $t_X\in \TX$ define
\[\xi^{-1} t_X=\left\{
\begin{array}{ll}
t_X & \text{if } t_X<i\\
t_X-1 & \text{if } t_X>i.
\end{array} 
\right.
\]
and consider the shadow ${}_{\e^0}\mathcal{D}_{\e^1}(i)=(k+1,k,\xi,\omega)$. See the third picture of Figure \ref{fig:shadow} for $k=4$ $i=3$,  and $\e^0=(+1,-1,+1,-1)$.
\end{exm}
%%%%%%%%%%%%%%%%%%%%%%%%%%%%%%%%%%%%%%%%%%%%%%%%%%%%
\begin{exm}[Cup]\label{exm:cup}
This is the mirror of a cap. For $\e^1=(\epsilon_i^1)_{i=1}^{k} \in\{\pm 1\}^{k}$ and $0\le i\le k$ such that $\epsilon_{i-1}^1\epsilon_{i}^1=-1$ define $\e^0=(\epsilon_i^0)_{i=1}^{k-1} \in\{\pm,\{\pm 1\}\}^{k-1}$
by 
\[\epsilon_j^0=\left\{
\begin{array}{ll}
-\epsilon_j^1 & \text{if } j<i \\
-\epsilon_{j-1}^1 & \text{if } j>i \\
\{\pm1\}& \text{if } j=i.\
\end{array} 
\right.
\]
Define $\SX,\TX, \SO$ and $\TO$ as before and for $t_O\in \TO$ define 
\[\omega^{-1} t_O=\left\{
\begin{array}{ll}
t_O & \text{if } t_O<i\\
t_O-1 & \text{if } t_O>i.
\end{array} 
\right.
\]
For $s_X\in \SX$ define
\[\xi s_X=\left\{
\begin{array}{ll}
s_X & \text{if } s_X<i\\
s_X-1 & \text{if } s_X>i.
\end{array} 
\right.
\]
and consider the shadow ${}_{\e^0}\mathcal{C}_{\e^1}(i)=(k,k+1,\xi,\omega)$. See the fourth picture of Figure \ref{fig:shadow} for $k=4$ $i=3$,  and $\e^1=(-1,+1,-1,+1)$.
\end{exm}
\begin{exm} \label{ex:hole} Given any shadow $\P$, one can introduce a gap at either its left or right  hand side. We discuss the construction for the left hand side. Given $i\in \b$ let $m'=m+1$, $n'=n$, and define $\mathcal{L}_i(\P)=(n',m',\xi',\omega')$ by $(\e^1)'\coloneqq\e^1$ and  
$(\e^0)'=((\epsilon_j^o)')_{j=1}^{m'}$ where
\[(\epsilon_j^0)'=\left\{
\begin{array}{ll}
\epsilon_j^0 & \text{if } j < i\\
\emptyset &\text{if } j=i\\
\epsilon_{j-1}^0& \text{if } j> i.\
\end{array} 
\right.
\]
Define $\SX',\TX', \SO'$ and $\TO'$ as before and for $s_O\in \SO$ define 
\[\omega' s_O=\left\{
\begin{array}{ll}
\omega s_O & \text{if } s_O<i\\
\omega s_O-1 & \text{if } s_O>i
\end{array} 
\right.
\]
For $t_X\in \TX$ define
\[(\xi')^{-1} t_X=\left\{
\begin{array}{ll}
\xi^{-1}t_X & \text{if } t_X<i\\
\xi^{-1}t_X-1 & \text{if } t_X>i
\end{array} 
\right.
\]
Similarly for $i\in \a$ we can introduce a gap on the right hand side to obtain the shadow $\mathcal{R}_i(\P)$.
\end{exm}

%%%%%%%%%%%%%%%%%%%%%%%%%%%%%%%%%%%%%%%%%%%%%%%%%%%%
% TANGLES
%%%%%%%%%%%%%%%%%%%%%%%%%%%%%%%%%%%%%%%%%%%%%%%%%%%%

\subsubsection{Diagrams and tangles associated to shadows.}\label{ssec:graphtangles}
Shadows can be best understood through their diagrams: 
\begin{definition} 
A \emph{diagram} of a shadow $\P$ is a quadruple   $D(\P)=(\{0\}\times\bh,\{1\} \times\ah,x,o)\subset I \times \R$, where $x$ is a set of properly embedded arcs connecting $(1,s_X)$ to $(0,\xi s_X)$ (for $s_X \in \SX$), and $o$ is a set of properly embedded arcs connecting  $(0,s_O)$ to $(1,\omega s_O)$ (for $s_O\in \SO$) such that there are no triple points, and the number of intersection points of all arcs is minimal within the isotopy class fixing the boundaries. 
\end{definition}
Any two diagrams of $\P$ are related by a sequence of Reidemeister III moves (see the first picture of Figure \ref{fig:reidemeister}) and  isotopies relative to the boundaries. We do not distinguish different diagrams of the same shadow and will refer to both the isotopy class (rel. boundary) or a representative of the isotopy class as the diagram of $\P$.

\begin{definition}\label{def:tangle}
To  a shadow $\P$ we can associate a \emph{tangle} $\T(\P)$ as follows. Start from $D(\P)\subset I\times \R$.
If $j+\frac12 \in \SX\cap \TO$ (that is $\epsilon^1_j=\{\pm 1\}$) then there is one arc starting and one arc ending at $(1,j+\frac12)$. Smooth the corner at  $(1,j+\frac12)$ by pushing the union of the two arcs slightly in the interior of $I\times \R$, as shown in Figure \ref{fig:elemtangle}. Do the same at $(0,j+\frac12)$ for $j+\frac12 \in \TX\cap \SO$. This process results in a smooth properly immersed set of arcs. Remove the self-intersection of the union of the above set of arcs by slightly lifting up the interior of arcs with bigger slope.
After this process  we obtain a tangle projection in $I\times\R$ or in $(0,1]\times \R\cong (-\infty,1]\times \R$, $[0,1)\times\R\cong[0,\infty)\times \R$ or $(0,1)\times\R\cong \R^2$, if the resulting projection does not intersect $\{0\}\times \R$ and/or $\{1\}\times \R$. Then the tangle $\T(\P)=\T$ lives in $I\times S^2$, $D^3$ or in $S^3$ with boundaries 
$\partial^0\T= \{0\}\times \{j+\frac12 : \epsilon^0_j=+1\}- \{0\}\times \{j+\frac12 : \epsilon^0_j=-1\}$
and 
$\partial^1\T= \{1\}\times \{j+\frac12 : \epsilon^1_j=+1\}- \{0\}\times \{j+\frac12 : \epsilon^1_j=-1\}.$
\end{definition}

The elementary tangles corresponding to Examples \ref{exm:straight}-\ref{exm:cup} are depicted on Figure \ref{fig:elemtangle}.
\begin{figure}[h]
 \centering
       \includegraphics
       [scale=1.3]
       {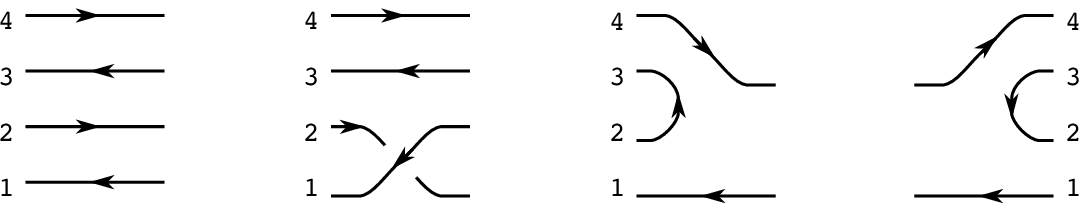} 
       \vskip .2 cm
       \caption[Elementary tangles corresponding to the shadows of Figure \ref{fig:shadow}.]{\textbf{Elementary tangles corresponding to the shadows of Figure \ref{fig:shadow}.}
  }\label{fig:elemtangle}
\end{figure}

%%%%%%%%%%%%%%%%%%%%%%%%%%%%%%%%%%%%%%%%%%%%%%%%%%%%
% GENERATORS
%%%%%%%%%%%%%%%%%%%%%%%%%%%%%%%%%%%%%%%%%%%%%%%%%%%%

\subsubsection{Generators} Now we start describing the type $\mathit{AA}$ structure associated to a shadow $\P$.
The underlying set is generated by the following elements.
\begin{definition}
For a shadow $\P$ let $\gen(\P)$ denote the set of triples $f=\stphi$, where $S\subset \b$, $T\subset \a$, with $|S|=|T|$ and $\phi\colon S\to T$ a bijection. 
\end{definition}
Note that we can also think of generators as partial matchings of the complete bipartite graph on the vertex sets $(\a,\b)$.
For any generator $f=(S,T,\phi)$ we can draw a set of arcs on the diagram of $\P$ by connecting each $(0,s)$ to $(1,\phi s)$ with a monotone properly embed arc. See Figure \ref{fig:generators} for diagrams of the generators. Again, in these diagrams we do not have triple points,  the number of intersection points of all strands is minimal, and we do not distinguish different diagrams of the same generator. Any two diagrams with minimal intersections are related by a sequence of Reidemeister III moves (See the first picture of  Figure \ref{fig:reidemeister}). Note that the generators naturally split into subsets $\gen_i(\P)=\{(S,T,\phi)\colon \vert S\vert =\vert T\vert =i\}$.
Then 
$\gen(\P)=\cup_{i=1}^{\min\{n,m\}} \gen_i(\P)$.

Fix  a variable $U_O$ for each pair $O=(s_O,\omega s_O)\in \SO\times\TO$.  
\begin{definition}
Let $C^-(\P)$ be the module generated by $\gen(\P)$ over $\k=\F_2[U_{O}]_{s_O\in \SO}$.  
\end{definition}
\begin{figure}[h]
 \centering
       \includegraphics
       [scale=1.3]
       {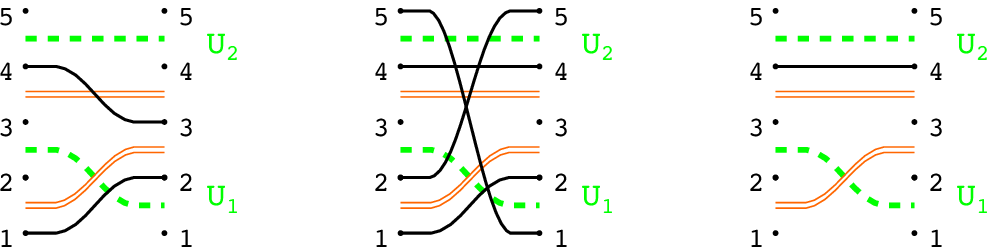} 
       \vskip .2 cm
       \caption[Diagrams of some generators $\stphi\in \gen(\P)$.]{\textbf{Diagrams of some generators $\stphi\in \gen(\P)$.} Solid black lines connect $s$ with $\phi s$.} \label{fig:generators}
\end{figure}

\subsubsection{Inner differential} Note that so far $C^-(\P)$ depends only on $m$ and $n$, but not on the particular structure of $(\SX,\TX,\xi)$ and $(\SO,\TO,\omega)$. The first dependence can be seen in the differential, which is described by resolutions of intersections of the diagram, subject to some relations. (See Figure \ref{fig:reidemeister}.) 
The intersections of the diagram of  a generator $\stphi$ correspond to inversions of the partial permutation $\phi$. 

Let $\phi\colon S\to T$ be a bijection between subsets $S$ and $T$ of two ordered sets $\b$ and $\a$. Define
\[\Inv(\phi)=\{(s_1,s_2)\in  S\times S : s_1<s_2 \text{ and } \phi s_1 >\phi s_2 \}.\]
Given two ordered sets $\b\cup \bh$ and $\a\cup \ah$, and bijections  $\phi\colon S\to T$ and $\omega\colon\SO\to\TO$ for $S\subset \b, T\subset \a, \SO\subset\bh, \TO\subset \ah$, define
\[\Inv(\phi,\omega)=\{(s,s_O)\in S\times \SO : s<s_O \text{ and } \phi s>\omega s_O \text{, or } s>s_O \text{ and } \phi s <\omega s_O\}.\] 
Define the set $\Inv(\phi,\xi^{-1})$ and  for $s_O\in\SO$ the set $\Inv(\phi,\omega\vert_{s_O})$ similarly.
Denote the sizes of these sets by $\inv(\phi), \inv(\phi,\omega), \inv(\phi,\xi^{-1})$ and $\inv(\phi,\omega|_{s_O})$, respectively.

The differential of a generator $\stphi$ can be given by resolving intersections. 
For $\tau=(s_1,s_2) \in \Inv(\phi)$ define the new generator $(S,T,\phi^{\tau})$, where 
$\phi^{\tau}=\phi\circ\tau$ is the \emph{resolution of $\phi$ at $\tau$} (for simplicity here and throughout the paper $\tau$ denotes both the pair $(s_1,s_2)$ and the 2-cycle permutation $(s_1 s_2)$).
A resolution of $\tau=(s_1,s_2)\in \Inv(\phi)$ is \emph{allowed} if $\inv(\phi^{\tau})= \inv(\phi)-1$ (Compare with the second picture of Figure \ref{fig:reidemeister}.) and  $\inv(\phi,\xi^{-1})= \inv(\phi^{\tau},\xi^{-1})$ (Compare with the third picture of Figure \ref{fig:reidemeister}.). The set of inversions with allowed resolutions is denoted by $\Inv_0(\phi)\subset \Inv(\phi)$. 

\begin{figure}[h]
 \centering
       \includegraphics[width=.99\textwidth]{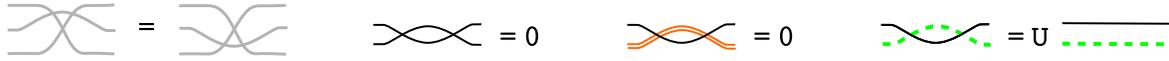} 
       \vskip .2 cm
       \caption[Relations of diagrams.]{\textbf{Relations of diagrams.} In the first relation the strands can correspond to $\phi,\xi$, or $\omega$.} \label{fig:reidemeister}
\end{figure}

Given a pair  $O=(s_O,\omega s_O)$ and a $2$-cycle permutation $\tau$ such that $\phi\circ \tau$ is defined, define
%For an inversion $\tau$ with an allowed resolution and $O=(s_O,\omega s_O)$ let
\[n_O(\tau;\phi)=\frac12\left( \inv(\phi,\omega\vert_{s_O})-\inv(\phi^{\tau},\omega\vert_{s_O})\right).\] 
When $\phi$ is clear from the context we will omit it from the notation and will write $n_O(\tau)$ or $n_O(s_1,s_2)$ for $n_O(\tau;\phi)$.
Note that $n_O(\tau)$ is always an integer. 
The differential is defined on generators by
\[
\partial \stphi=\sum_{\tau \in \Inv_0(\phi)}\left(\prod_{s_O\in \SO }U_O^{n_O(\tau)}\right)(S,T,\phi^{\tau}).
\]
Compare this equation with the last relation of Figure \ref{fig:reidemeister}. Also see Figure \ref{fig:differential} for an example.
\begin{figure}[h]
 \centering
       \includegraphics[scale=1.2]{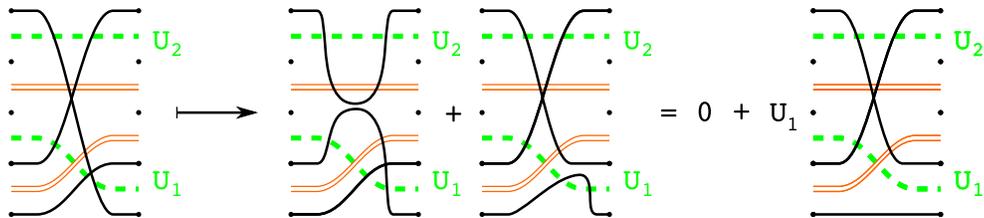} 
       \vskip .2 cm
       \caption[Example of the differential.]{\textbf{Example of the differential.} Note that the second and the third diagrams do not have minimal intersections, thus they do not represent generators. We get the differential by removing the extra intersections using the relations of Figure \ref{fig:reidemeister}.} \label{fig:differential}
\end{figure}
Extend $\partial^-$ linearly to the whole $C^-(\P)$.
\begin{proposition}
$(C^-(\P),\partial)$ is a chain complex.
\end{proposition}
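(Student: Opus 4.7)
The plan is to show $\partial^2(S,T,\phi)=0$ by exhibiting a fixed-point-free involution on the set of two-step sequences $(\tau_1,\tau_2)$ with $\tau_1\in\Inv_0(\phi)$ and $\tau_2\in\Inv_0(\phi^{\tau_1})$, such that matched sequences produce the same generator with the same $U$-power and therefore cancel mod $2$. Expanding the composition gives
\[
\partial^2(S,T,\phi)=\sum_{\tau_1,\tau_2}\Big(\prod_{O}U_O^{n_O(\tau_1;\phi)+n_O(\tau_2;\phi^{\tau_1})}\Big)(S,T,\phi^{\tau_1\tau_2}),
\]
and I split the sum according to whether the supports of the $2$-cycles $\tau_1,\tau_2$ are disjoint or share exactly one element (the only two possibilities).

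\emph{Case 1 (commuting resolutions).} When $\tau_1$ and $\tau_2$ have disjoint supports, $\phi^{\tau_1\tau_2}=\phi^{\tau_2\tau_1}$, and I pair $(\tau_1,\tau_2)$ with $(\tau_2,\tau_1)$. This is well-defined provided $\tau_2\in\Inv_0(\phi)$ and $\tau_1\in\Inv_0(\phi^{\tau_2})$, which follows from an additivity computation: for disjoint transpositions the changes in $\inv(\cdot)$ and $\inv(\cdot,\xi^{-1})$ decouple, so each of the two conditions defining $\Inv_0$ holds independently of the order. The $U$-powers likewise match, because $n_O$ is itself a local crossing count against the $O$-arc that decomposes as a sum of independent contributions from each transposition when the supports are disjoint.

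\emph{Case 2 (overlapping resolutions).} When $\tau_1$ and $\tau_2$ share exactly one element, $\tau_1\tau_2$ is a $3$-cycle on three indices $a<b<c$ in $S$. A $3$-cycle admits exactly three factorizations as a product of two transpositions of the form $(ab)(ac)$, $(ac)(bc)$, $(bc)(ab)$. The combinatorial content of the argument is that among these three, exactly two satisfy the joint allowedness conditions — that $\inv$ drops by exactly one at each step and that $\inv(\cdot,\xi^{-1})$ is preserved throughout. I would establish this by a local analysis near $\{a,b,c\}$: the three conditions on the triple $(\phi a,\phi b,\phi c)$ and on how $\xi$-strands thread among these three heights determine which two of the factorizations are valid, and this dichotomy is exactly the content of the Reidemeister~III move on the first diagram of Figure~\ref{fig:reidemeister}. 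I then pair the two allowed factorizations, and verify that the resulting $U$-coefficient depends only on the final $3$-cycle and the position of $\phi$ relative to the $\omega$-arcs, not on the order of resolutions.

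\emph{Main obstacle.} The genuinely delicate point is the verification in Case~2 that the pairing respects the $\xi^{-1}$-preservation condition: it is not formal that the two "allowed" factorizations of a $3$-cycle are interchanged by the involution, and one must rule out the third (disallowed) factorization by a case analysis of the five possible configurations of $\xi$-strands relative to the triangle $(a,b,c)$. I expect the cleanest presentation to be diagrammatic, reading the differential as resolving an inversion in the strand picture and invoking the three relations of Figure~\ref{fig:reidemeister} — Reidemeister~III, the length-reduction relation, and the $U$-power relation — to cancel the two resolution orders in each $3$-cycle triangle.
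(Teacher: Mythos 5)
Your proposal is correct and is in essence the paper's own proof: the paper argues diagrammatically that one may perform both resolutions first and then apply the relations of Figure \ref{fig:reidemeister} all at once, so that every term of $\partial^2$ appears exactly twice with the same coefficient, and your involution --- swapping the order for disjoint supports, and pairing the two allowed factorizations of a $3$-cycle (Reidemeister~III) for overlapping supports --- is precisely the combinatorial content of that ``appears twice'' claim. Note also that the matching of $U$-powers is immediate in both of your cases, since $n_O(\tau_1;\phi)+n_O(\tau_2;\phi^{\tau_1})=\tfrac12\bigl(\inv(\phi,\omega\vert_{s_O})-\inv(\phi^{\tau_1\tau_2},\omega\vert_{s_O})\bigr)$ telescopes and therefore depends only on the initial and final generators, not on the order of resolution.
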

\begin{proof}
The differential first resolves intersection points, and then applies the  relations of Figure 
\ref{fig:reidemeister} to minimize the number of intersection points. When we apply the differential twice, then we can equivalently first resolve two intersection points and then apply the relations  Figure \ref{fig:reidemeister} all at once. This proves that any term of 
\[
\partial^2 \stphi=\sum_{\tau_1\in \Inv_0(\phi)}\sum_{\tau_2\in \Inv_0(\phi^\tau_1)}\prod_{s_O\in \SO }U_O^{n_O(\tau_1;\phi)+n_O(\tau_2;\phi^{\tau_1})}
(S,T,(\phi^{\tau_1})^{\tau_2})
\]
appears twice with exactly the same coefficient and thus cancels.
\end{proof}

%%%%%%%%%%%%%%%%%%%%%%%%%%%%%%%%%%%%%%%%%%%%%%%%%%%%%%%
%TYPE-A
%%%%%%%%%%%%%%%%%%%%%%%%%%%%%%%%%%%%%%%%%%%%%%%%%%%%%%%

\subsubsection{Composition of shadows -- type $A$  maps.}\label{ssec:typeA-strand}
Let $\P_1=(m_1, n_1,\xi_1,\omega_1)$ and $\P_2=(m_2, n_2, \xi_2,\omega_2)$ be two shadows. If $n_1=m_2$, $S_{\X_1}=T_{\X_2}$, $T_{\O_1}=S_{\O_2}$, then we can define the \emph{concatenation} of the shadows as
$\P_1*\P_2=(m, n, \xi, \omega)$ where $m=m_1$, $n=n_2$, $(\SX,\TX,\xi)=(S_{\X_1},T_{\X_2},\xi_1\circ \xi_2)$ and $(\TO,\SO,\omega)=(S_{\O_2}, T_{\O_1},\omega_2\circ\omega_1)$. 

\begin{definition}
We say that $\P_1$ and $\P_2$ as above are \emph{composable} if  the numbers of intersection points add up i.e.
$\inv(\xi)=\inv(\xi_1)+\inv(\xi_2)$, $\inv(\omega)=\inv(\omega_1)+\inv(\omega_2)$ and $\inv(\omega, \xi^{-1})=\inv(\omega_1,\xi_1^{-1})+\inv(\omega_2, \xi_2^{-1})$. In this case $\P_1$ and $\P_2$ have a well-defined \emph{composition} $\P_1\circ\P_2=\P_1*\P_2$. 
\end{definition}

Note that on the diagram composable means that after the concatenation the resulting shadow still has minimal intersection.

\begin{exm} In Figure \ref{fig:shadow} all shadows that can be concatenated are immediately composable. However, the first two pictures of Figure \ref{fig:composeshadow} can be concatenated, but they are not composable. 
\begin{figure}[h]
 \centering
       \includegraphics[scale=1.3]{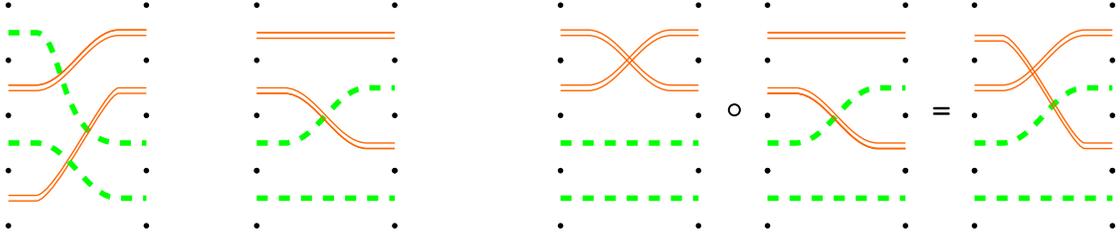} 
       \vskip .2 cm
       \caption[Composition of shadows.]{\textbf{Composition of shadows.} Left: two shadows that are not composable. Right: two composable shadows and their composition.} \label{fig:composeshadow}
\end{figure}
\end{exm}
If $\P_1$ and $\P_2$ are composable, then there is a composition map
\[
\cdot\colon C^-(\P_1)\otimes C^-(\P_2)\to C^-(\P_1\circ\P_2)
\]
defined as follows. Let $f_1=(S_1,T_1,\phi_1)$ and $f_2=(S_2,T_2,\phi_2)$ be generators of $C^-(\P_1)$ and $C^-(\P_2)$ respectively. If $T_1=S_2$, then the concatenation $(S,T,\phi)=(S_1,T_2,\phi_2\circ\phi_1)$ is well-defined. If $\inv(\phi)=\inv(\phi_1)+\inv(\phi_2)$ and $\inv(\phi,\xi^{-1})=\inv(\phi_1,\xi_1^{-1})+\inv(\phi_2,\xi_2^{-1})$, then $f_1\cdot f_2$ is defined by 
\[(S_1,T_1,\phi_1)\cdot(S_2,T_2,\phi_2)=\prod_{s_O\in \TO}U_O^{\frac 1 2 (\inv(\phi_1,\omega_1\vert_{s_O}) +\inv(\phi_2,\omega_2\vert_{\omega_1 s_O}) - \inv(\phi,\omega\vert_{s_O})) }\stphi\]
In all other cases $f_1\cdot f_2$ is defined to be $0$.
See Figure \ref{fig:multiplication} and \ref{fig:gridtypeA} for examples.
\begin{figure}[h]
 \centering
       \includegraphics[scale=1.3]{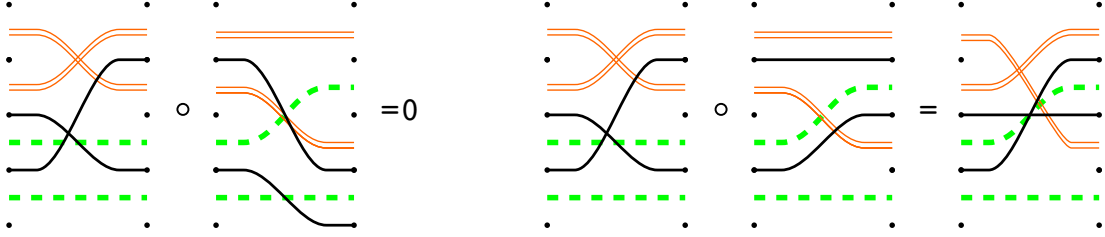} 
       \vskip .2 cm
       \caption[Composition of generators.]{\textbf{Composition of generators.} The first composition is 0 by the third relation of Figure \ref{fig:reidemeister}.}\label{fig:multiplication}
\end{figure}

Note that this composition is consistent with the differential and associative:
\begin{proposition}
\label{prop:dm}
Let $\P_1$ be composable with $\P_2$. Then the following square commutes:
\[\xymatrix{\ar @{} [dr] |{}
C^-(\P_1)\otimes C^-(\P_2) \ar[d]^{\partial\otimes\id+\id\otimes\partial} \ar[r]^\cdot & C^-(\P_1\circ\P_2) \ar[d]^\partial \\
C^-(\P_1)\otimes C^-(\P_2) \ar[r]^\cdot & C^-(\P_1\circ\P_2) }\]
If in addition $\P_2$ is composable with the shadow $\P_3$, then $\P_1\circ\P_2$ is composable with $\P_3$, $\P_1$ is composable with $\P_2\circ\P_3$ and the following square commutes:
\[\xymatrix{\ar @{} [dr] |{}
C^-(\P_1)\otimes C^-(\P_2)\otimes C^-(\P_3) \ar[d]^{\cdot\otimes\id} \ar[r]^{\id \otimes\cdot} & C^-(\P_1)\otimes C^-(P_2\circ\P_3) \ar[d]^{\cdot} \\
C^-(\P_1\circ\P_2)\otimes C^-(\P_3) \ar[r]^\cdot & C^-(\P_1\circ\P_2\circ\P_3) }\]
\end{proposition}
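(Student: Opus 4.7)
The plan is to verify both squares generator by generator, reducing everything to two facts: the inversions of a composable composition split as a disjoint union, and the $U_O$-exponents telescope. Fix composable shadows $\P_1,\P_2$ and generators $f_i=(S_i,T_i,\phi_i)$ with $f_1\cdot f_2\neq 0$. Writing $\phi=\phi_2\circ\phi_1$, there is a natural bijection
\[\Inv(\phi)\longleftrightarrow \Inv(\phi_1)\sqcup \Inv(\phi_2),\]
sending $\tau_1=(s,s')\in\Inv(\phi_1)$ to itself and $\tau_2=(t,t')\in\Inv(\phi_2)$ to $(\phi_1^{-1}t,\phi_1^{-1}t')$; these identifications intertwine the resolutions in the sense that $\phi^{\tau_1}=\phi_2\circ\phi_1^{\tau_1}$ and $\phi^{\tilde\tau_2}=\phi_2^{\tau_2}\circ\phi_1$. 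The first step of the proof is to check that this bijection restricts to one between $\Inv_0(\phi)$ and those allowed resolutions in each factor whose resolved shadow remains composable with the other factor. This follows from additivity of the inversion counts $\inv$ and $\inv(\cdot,\xi^{-1})$ guaranteed by composability, together with the Reidemeister III relations of Figure \ref{fig:reidemeister}.

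For the Leibniz square, the above correspondence sets up a term-by-term matching, so it suffices to show the $U_O$-exponents agree. For $\tau=\tau_1$ coming from the first factor, the exponent of $U_O$ in the corresponding term of $(\partial f_1)\cdot f_2$ is
\[n_O(\tau_1;\phi_1)+\tfrac12\bigl(\inv(\phi_1^{\tau_1},\omega_1|_{s_O})+\inv(\phi_2,\omega_2|_{\omega_1 s_O})-\inv(\phi^{\tau_1},\omega|_{s_O})\bigr),\]
and this simplifies (after cancelling the $\inv(\phi_1^{\tau_1},\omega_1|_{s_O})$ terms) to
\[\tfrac12\bigl(\inv(\phi_1,\omega_1|_{s_O})+\inv(\phi_2,\omega_2|_{\omega_1 s_O})-\inv(\phi^{\tau_1},\omega|_{s_O})\bigr),\]
which is exactly the exponent of $U_O$ in the matching term of $\partial(f_1\cdot f_2)$ (obtained by summing the multiplication exponent with $n_O(\tau_1;\phi)$). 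A mirror computation handles the second-factor case.

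For associativity, if $(\P_1,\P_2)$ and $(\P_2,\P_3)$ are composable, then so are $(\P_1\circ\P_2,\P_3)$ and $(\P_1,\P_2\circ\P_3)$, since each crossing in the triply concatenated diagram is counted exactly once in each pairwise decomposition, forcing the relevant inversion counts to add over any bracketing. On generators, both orderings yield the same underlying partial matching $(S_1,T_3,\phi_3\circ\phi_2\circ\phi_1)$, and the $U_O$-exponents, for each $s_O$ in the leftmost source, telescope in both orderings to the common value
\[\tfrac12\bigl(\inv(\phi_1,\omega_1|_{s_O})+\inv(\phi_2,\omega_2|_{\omega_1 s_O})+\inv(\phi_3,\omega_3|_{\omega_2\omega_1 s_O})-\inv(\phi,\omega|_{s_O})\bigr),\]
where $\phi=\phi_3\circ\phi_2\circ\phi_1$, so the two triple products coincide.

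The main obstacle is the careful combinatorial verification underlying the first step: namely, that the allowed-resolution conditions $\inv(\phi^\tau)=\inv(\phi)-1$ and $\inv(\phi^\tau,\xi^{-1})=\inv(\phi,\xi^{-1})$ transfer bijectively to the analogous conditions on the single factor in which $\tau$ lives, while simultaneously preserving composability of the resolved pair. This is most transparent in the diagrammatic picture, where an allowed resolution undoes a single transverse double point of the composed diagram without creating a new triple point with the $\xi$- or $\omega$-strands; once this is in hand, the rest of the argument is routine bookkeeping in inversion counts.
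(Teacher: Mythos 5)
Your overall strategy is sound and genuinely different from the paper's: the paper disposes of both squares with a diagrammatic meta-argument (perform all resolutions and concatenations in the non-minimal strand calculus, where both identities are tautological, and only at the end reduce to minimal diagrams by the relations of Figure \ref{fig:reidemeister}), whereas you verify everything by explicit inversion bookkeeping. The part you carry out is correct: the bijection $\Inv(\phi_2\circ\phi_1)\cong\Inv(\phi_1)\sqcup\Inv(\phi_2)$, the squeeze identifying $\Inv_0$ of the composite with the allowed resolutions in each factor whose product with the other factor survives, and the telescoping of the $U_O$-exponents (which the paper never writes down) all check out. However, your Leibniz verification has a genuine gap: you only treat pairs with $f_1\cdot f_2\neq 0$. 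When the product vanishes because $T_1=S_2$ but some pair of strands crosses in both factors, the identity still asserts $(\partial f_1)\cdot f_2+f_1\cdot(\partial f_2)=0$, and the left-hand side is not zero term by term. For instance, with $\phi_1=\phi_2$ the transposition of two points and no $\xi,\omega$ data, $f_1\cdot f_2=0$ while $(\partial f_1)\cdot f_2$ and $f_1\cdot(\partial f_2)$ each equal the transposition generator; they cancel only as a pair. In general one must match the two resolutions of each double crossing (one performed in $\P_1$, one in $\P_2$), check that they yield the same generator, and check that their $U_O$-exponents agree. This is precisely the case the paper's ``resolve first, reduce later'' argument is built to absorb, and it is the one case your term-by-term matching misses.

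A second, smaller issue concerns the clause ``$\P_1\circ\P_2$ is composable with $\P_3$.'' Your justification --- that each crossing of the triple concatenation is counted once in each pairwise decomposition, ``forcing the relevant inversion counts to add over any bracketing'' --- restates the conclusion rather than proving it, and the implication is in fact false for general shadows: take $\omega_1$ and $\omega_3$ to be the transposition of two green strands and $\omega_2$ the identity; then both pairwise concatenations are minimal, but $\omega_3\circ\omega_2\circ\omega_1=\mathrm{id}$, so $\inv(\omega_3\circ\omega_2\circ\omega_1)=0\neq\inv(\omega_2\circ\omega_1)+\inv(\omega_3)=2$. To be fair, this clause is asserted without argument in the proposition itself, and in the paper it is only ever invoked with idempotent shadows on the outside (where $\xi=\omega=\mathrm{id}$ and the issue is vacuous); but a self-contained write-up should either add triple minimality of the shadow data as a hypothesis or restrict to that case. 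By contrast, your observation that for the \emph{generators} the non-vanishing conditions for the two bracketings are both equivalent to minimality of the triple concatenation of the $\phi_i$ is correct, and the associativity computation itself is fine.
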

\begin{proof}
This statement again follows from the facts that one can first do all the operations (resolving intersections and concatenating generators) and then reduce the intersection points by the relations of Figure \ref{fig:reidemeister} and that both equations are obvious without the relations.
\end{proof}
\begin{definition} For a shadow $\P$, define the shadows ${\E_R}={\E_R}(\P)$ and ${\E_L}={\E_L}(\P)$ by the quadruples $(m,m,\id_{\TX},\id_{\SO})$ and $(n,n,\id_{\SX},\id_{\TO})$, respectively. 
In general, let $\mathcal{E}$ be the shadow given by the quadruple $(n,n,\id_{\SX}, \id_{\SO})$, where $\SX\subset \bh$, $\SO\subset \ah$ are any subsets.  Then $\E\circ\E=\E$, so we call $\E$ an \emph{idempotent shadow}.
\end{definition}
Note that idempotent shadows are exactly shadows corresponding to straight lines (Example \ref{exm:straight}). 
By Proposition \ref{prop:dm}, the induced multiplication $\cdot\colon C^-(\E)\times C^-(\E)\to C^-(\E)$ upgrades $C^-(\E)$ to a differential algebra: 
\begin{definition} For an idempotent shadow $\E$, let $\Aa(\E)$ be the differential algebra $(C^-(\E),\cdot,\partial)$.
\end{definition}

In Subsection  \ref{ssec:grading} we will define a grading that turns $\Aa(\E)$ into a differential graded algebra. 
Again by Proposition \ref{prop:dm} $(C^-(\P),\partial,\cdot,\cdot)$ is a left-right $\Aa({\E_L})$-$\Aa({\E_R})$ differential module which we can turn into a type $\mathit{AA}$  structure:
\begin{definition} 
With the above notation, let $\CATA^-(\P)$ be the left-right  $\mathit{AA}$  structure $(C^-(\P),\{m_{i,1,j}\})$ over $\Aa({\E_L})$ and $\Aa({\E_R})$, where 
\[m_{i,1,j}\colon \Aa({\E_L})^{\otimes i}\otimes C^-(\P) \otimes \Aa({\E_R})^{\otimes j}\to  C^-(\P)\]
with  $m_{i,1,j}=0$ for $i>1$ or $j>1$, and  nonzero maps given by
\[\begin{array}{ccc}
m_{0,1,0}(f)=\partial f, \qquad & m_{1,1,0}(a_L\otimes f)=a_L\cdot f, \qquad& m_{0,1,1}( f\otimes a_R)= f \cdot a_R
\end{array}\]  
\end{definition}
The gradings of $\CATA^-(\P)$ will only be defined in Subsection \ref{ssec:grading}. Since $\CATA^-(\P)$  comes from a two-sided differential module, we have:
\begin{proposition}
For any shadow $\P$ the structure maps of $\CATA^-(\P)$ satisfy the type $\mathit{AA}$ structure identites. \qed
\end{proposition}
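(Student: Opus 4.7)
The plan is to reduce the general type $\mathit{AA}$ structure identities to the dg-bimodule axioms, which are precisely what Proposition \ref{prop:dm} (together with the fact that $(C^-(\P),\partial)$ is a chain complex) provides.

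Recall from \cite[Equation 2.2.38]{bimod} that a type $\mathit{AA}$ structure must satisfy, for every triple $(i,j)$ with $i+j \geq 0$, a relation of the form
\[
\sum m_{*,1,*}(\dots, m_{*,1,*}(\dots),\dots) + \sum m_{*,1,*}(\dots,d(a_k),\dots) + \sum m_{*,1,*}(\dots,\mu(a_k,a_{k+1}),\dots)=0,
\]
where the sums are over all valid ways to insert inner operations. The first step is to observe that for $\CATA^-(\P)$ we have $m_{i,1,j}=0$ whenever $i>1$ or $j>1$. Strict unitality takes care of all identities in which any input $a_k\in\k$ or $\k$, so only identities with a few ``honest'' algebra inputs remain. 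Concretely, writing out the surviving terms, one sees the nontrivial relations are exactly:
\begin{itemize}
\item[(i)] $m_{0,1,0}\circ m_{0,1,0}=0$;
\item[(ii)] $m_{0,1,0}(m_{1,1,0}(a,f))+m_{1,1,0}(d(a),f)+m_{1,1,0}(a,m_{0,1,0}(f))=0$ and its right-handed analog;
\item[(iii)] $m_{1,1,0}(a_1,m_{1,1,0}(a_2,f))+m_{1,1,0}(\mu(a_1,a_2),f)=0$ and its right-handed analog;
\item[(iv)] $m_{1,1,0}(a_L,m_{0,1,1}(f,a_R))+m_{0,1,1}(m_{1,1,0}(a_L,f),a_R)=0$, expressing commutation of the two actions;
\item[(v)] a mixed identity involving $m_{0,1,0}$ and $m_{0,1,1}$ symmetric to (ii).
\end{itemize}
These are precisely the axioms of a differential graded $\Aa(\E_L)$--$\Aa(\E_R)$-bimodule (working over $\F_2$, so signs are irrelevant).

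Next, I would verify each item in turn. Item (i) is the proposition stating that $(C^-(\P),\partial)$ is a chain complex. Items (ii) and (v) are exactly the first commutative square of Proposition \ref{prop:dm}, applied with $\P_1=\E_L$, $\P_2=\P$ (respectively $\P_1=\P$, $\P_2=\E_R$): the square says $\partial(a\cdot f)=d(a)\cdot f+a\cdot \partial f$ and similarly on the right. Item (iii) is associativity of the left (resp.\ right) action, which is the second commutative square of Proposition \ref{prop:dm} applied with $\P_1=\P_2=\E_L$, $\P_3=\P$ (resp.\ $\P_1=\P$, $\P_2=\P_3=\E_R$). Finally item (iv) is the second commutative square of Proposition \ref{prop:dm} applied with $\P_1=\E_L$, $\P_2=\P$, $\P_3=\E_R$, which indeed gives $(a_L\cdot f)\cdot a_R=a_L\cdot(f\cdot a_R)$ because composition of shadows is associative.

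The main (and essentially only) point to check carefully is that the composability hypotheses in Proposition \ref{prop:dm} are automatic in these applications: an idempotent shadow $\E$ is composable with any shadow $\P$ for which the concatenation $\E*\P$ (or $\P*\E$) makes sense, since $\E$ contributes no inversions and preserves the inversion counts of $\P$. This is just the observation that $\E\circ\P=\P=\P\circ\E$ whenever the idempotents match the boundary of $\P$. Once this is in hand, the $\mathit{AA}$ relations follow immediately, and no further calculation is needed beyond invoking Proposition \ref{prop:dm}. The hardest part of the argument is really bookkeeping: writing out the AA identity for each $(i,j)$, noticing which terms vanish by the vanishing of higher $m_{i,1,j}$ or by strict unitality, and matching the surviving ones against the two squares of Proposition \ref{prop:dm}.
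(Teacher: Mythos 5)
Your proposal is correct and is essentially the paper's argument: the paper simply observes that Proposition \ref{prop:dm} makes $(C^-(\P),\partial,\cdot,\cdot)$ a differential graded $\Aa(\E_L)$--$\Aa(\E_R)$-bimodule, and since all higher operations $m_{i,1,j}$ with $i>1$ or $j>1$ vanish, the type $\mathit{AA}$ identities reduce to the dg-bimodule axioms exactly as you spell out. Your additional check that the idempotent shadows are automatically composable with $\P$ is a worthwhile (and correct) detail that the paper leaves implicit.
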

The idempotents of $\Aa(\E)$ are given by $(S,S,\id_S)$ where $S\subset \b$. Let $\I(\Aa(\E))$ denote the set of idempotent elements of $\Aa(\E)$. For a generator $f=\stphi$ define 
\[\begin{array}{ccc}
\iota_L(f)&=&(S,S,\id_S)\in\I(\Aa(\E_L)),\\ 
\iota_R(f)&=&(T,T,\id_T)\in\I(\Aa(\E_R)).
\end{array}\]
These idempotents are defined so that we have  $\iota_L(f)\cdot f \cdot \iota_R(f)=f$.
%%%%%%%%%%%%%%%%%%%%%%%%%%%%%%%%%%%%%%%
%CO-SHADOWS
%%%%%%%%%%%%%%%%%%%%%%%%%%%%%%%%%%%%%%%%%%%%%%%%%%%%%%%

\subsection{Type $\mathit{DD}$ structures -- Mirror-shadows} To define type $D$ structures we need to work with co-chain complexes associated to ``mirrors'' of shadows. For a shadow $\P = (m,n,\xi,\omega)$, define its \emph{mirror} $\P^*$ to be the same quadruple $(m,n,\xi,\omega)$. In the sequel we will always associate ``dual''-structures to $\P^*$, 
that is why we make the distinction in the notation. To a mirror-shadow $\P^*$ we associate the co-chain complex $(C^-(\P^*),\partial^*)=(C^-(\P),\partial)^*$. Thus the elements of $C^-(\P^*)$ are of the form $(S,T,\phi)^*$ and the co-differential $\partial^*$ introduces intersection points:
\[
\partial^* \stphi^*=\sum_{\tau\in \Inv_0^*(\phi)}\prod_{s_O\in \TO }U_O^{-n_O(\tau; \phi)}(S,T,\phi^{\tau})^*
\]
where the elements of $\Inv^*_0(\phi)$ are elements of $\Inv(\phi)^c$ such that $\inv(\phi^{\tau})= \inv(\phi)+1$ and  $\inv(\phi,\xi^{-1})= \inv(\phi^{\tau},\xi^{-1})$.

Let 
$\Aa(\E^L)$ and $\Aa(\E^R)$ be the algebras corresponding to the idempotent shadows 
$\E^L=\E^L(\P^*)=(n,n,\id_{{\SX}^c},\id_{{\TO}^c})$ and 
$\E^R=\E^R(\P^*)=(m,m,\id_{{\TX}^c},\id_{{\SO}^c})$, where ${.}^c$ denotes the complement of subsets in the appropriate set they are contained in (See Definition \ref{def:shadow}).  Then for $f^*=(S,T,\phi)^*$ let 
\[\begin{array}{ccc}
\iota^L(f^*)&=&(T^c,T^c,\id_{T^c})\in\I(\Aa(\E^L));\\ 
\iota^R(f^*)&=&(S^c,S^c,\id_{S^c})\in\I(\Aa(\E^R)).
\end{array}\]
This definition enables us to define a bimodule structure ${}_{\I(\Aa(\E^L))}C^-(\P^*)_{\I(\Aa(\E^R))}$ by extending the following multiplications to $C^-(\P^*)$. For an idempotent generator $\iota\in \I(\Aa(\E^L))$ let
\[
\iota \cdot (S,T,\phi)^*{}=\left\{
\begin{array}{ll}
(S,T,\phi)^* & \text{if } \iota^L(S,T,\phi)^*=\iota;\\
0 & \text{otherwise}.
\end{array} 
\right.
\]
and for $\iota\in \I(\Aa(\E^R))$ let
\[
(S,T,\phi)^*\cdot \iota=\left\{
\begin{array}{ll}
(S,T,\phi)^* & \text{if } \iota^R(S,T,\phi)^*=\iota;\\
0 & \text{otherwise}.
\end{array} 
\right.
\]
\subsubsection{Diagrams and tangles associated to mirror-shadows}

For a mirror-shadow $\P^*$ we use different conventions to associate diagrams and tangles:
\begin{definition}
Let $D^*(\P^*)$ be the mirror of $D(\P)$ with respect to the vertical axis $\{\frac12\}\times \R$. 
\end{definition}
To indicate that we work with mirrors we put a grey background underneath $D^*(\P^*)$.
\begin{definition}
Let $\T^*(\P^*)$ denote the mirror (with respect to the vertical axis) of $\T(\P)$ with the over-crossings changed to under-crossings.
\end{definition}
See Figure \ref{fig:coshadow}  and \ref{fig:coelemtangle} for the elementary examples.
\begin{figure}[h]
 \centering
       \includegraphics
       [scale=1.3]
       {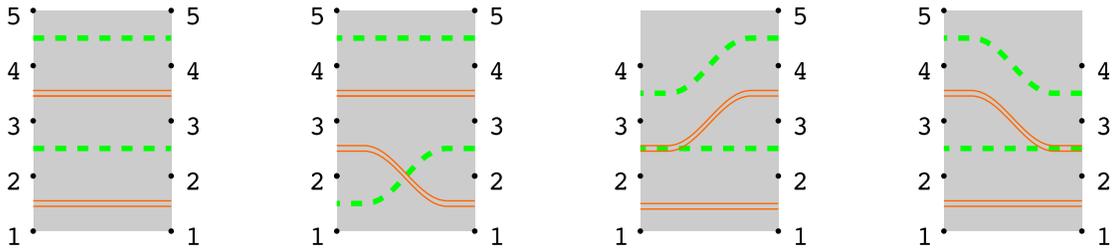} 
       \vskip .2 cm
       \caption[Examples of diagrams of mirror-shadows.]{\textbf{Examples of diagrams of mirror-shadows.} On each figure $\a$ and $\ah$ is on the left, while $\b$ and $\bh$ is on the right hand side. Double (orange) lines connect $\{0\}\times \{s_X\}$ with $\{1\}\times \{\xi s_X\}$ and dashed (green) lines connect $\{1\}\times \{s_O\}$ with $\{0\}\times \{\omega s_O\}$.}\label{fig:coshadow}
\end{figure}

\begin{figure}[h]
 \centering
       \includegraphics
       [scale=1.3]
       {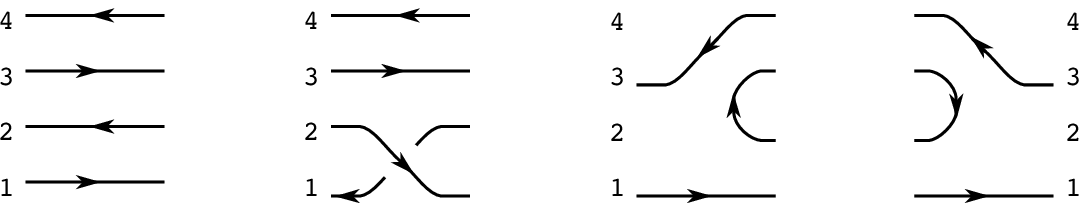} 
       \vskip .2 cm
       \caption[Elemantary tangles corresponding to the mirror-shadows of Figure \ref{fig:coshadow}.]{\textbf{Elemantary tangles corresponding to the mirror-shadows of Figure \ref{fig:coshadow}.}}
       \label{fig:coelemtangle}
\end{figure}

%%%%%%%%%%%%%%%%%%%%%%%%%%%%%%%%%%%%%%%%%%%%%%%%%%%%%%%
%TYPE-D
%%%%%%%%%%%%%%%%%%%%%%%%%%%%%%%%%%%%%%%%%%%%%%%%%%%%%%%

\subsubsection{Wedge product of shadows and mirror-shadows -- type $D$ maps.}\label{subsection:wedge}
The mirror-shadow $\P_1^*$ and shadow $\P_2$ have a \emph{well-defined wedge product} if $m_1=m_2$, $T_{\X_1}=T_{\X_2}^c$, and $S_{\O_1}=S_{\O_2}^c$. This means exactly that $\E^R(\P_1^*)=\E_L(\P_2)$. Denote the ordered pair by $\P_1^*\wedge \P_2$.  Diagrammatically, we indicate a wedge product by placing the corresponding diagrams next to each other. See Figure \ref{fig:concatenation} for an example.
\begin{figure}[h]
 \centering
       \includegraphics[scale=1.3]{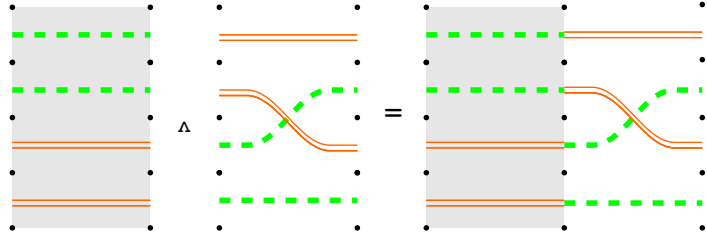} 
       \vskip .2 cm
       \caption[Wedge product of a mirror-shadow and a shadow.]{\textbf{Wedge product of a mirror-shadow and a shadow.}} \label{fig:concatenation}
\end{figure}
Similarly, the shadow $\P_1$ and mirror-shadow $\P_2^*$ have a well-defined wedge product 
if $n_1=n_2$, $S_{\X_1}=S_{\X_2}^c$, and $T_{\O_1}=T_{\O_2}^c$. The pair  is denoted by $\P_1\wedge \P_2^*$.

Let $\I=\I(\Aa(\E^R(\P_1^*)))=\I(\Aa(\E_L(\P_2)))$. 
Define 
\[C^-(\P_1^*\wedge \P_2)=C^-(\P_1^*)\otimes_\I C^-(\P_2)\]
and note that it is a module over $\F_2[U_O]_{s_O\in S_{\O_1}\cup S_{\O_2}}$. For generators $f_1^*=(S_1,T_1,\phi_1)^*\in\gen(\P_1^*)$ and $f_2=(S_2,T_2,\phi_2)\in\gen(\P_2)$ such that $f = f_1^*\otimes f_2$ is non-zero, i.e. such that $S_1 = S_2^c$, define a map
\[\partial_\wedge(f_1^*\otimes f_2)=\partial^*(f_1^*)\otimes f_2+f_1^*\otimes \partial(f_2)+\partial_\textrm{mix}(f_1^*\otimes f_2)\]
where $\partial^*$ and $\partial$ are the differentials on $C^-(\P_1^*)$ and $C^-(\P_2)$, respectively, and
 $\partial_{\textrm{mix}}$ is defined below by looking at pairs of points in $S_1\cup S_2 = \b$. 
%\textcolor{green}{$S_1\cup S_2=\a$, and we don't really look at all points, we don't look at the ones $\partial$ or $\partial^*$ takes care of..}

\begin{itemize}
\item For a pair $(p,q)\in S_1\times S_2$ define $f^{p q}=(f_1^*)^{p q}\otimes f_2^{p q}$, where $(f_1^*)^{p q}=(S_1^{p q},T_1^{p q},\phi_1^{p q})^*$, $f_2^{p q}=(S_2^{p q},T_2^{p q},\phi_2^{p q})$. Here   $S_1^{p q}= S_1\setminus \{p\}\cup\{q\}$, $T_1^{p q}= T_1$ and for $s_1 \in S_1^{p q}$
\[
\phi_1^{p q} s_1 =\left\{
\begin{array}{ll}
\phi_1 p & \text{if } s_1=q ;\\
\phi_1s_1 & \text{otherwise} .
\end{array} 
\right.
\]
Similarly 
$S_2^{p q}= S_2\setminus \{q\}\cup\{p\}$, $T_2^{p q}= T_2$ and for $s_2\in S_2^{p q}$
\[
\phi_2^{p q} s_2 =\left\{
\begin{array}{ll}
\phi_2 q & \text{if } s_2=p ;\\
\phi_2 s_2  & \text{otherwise}.
\end{array} 
\right.
\]
Diagrammatically, $f^{p q}$ is obtained from $f$ by exchanging the $p$ and $q$ endpoints of the two strands ending at $p$ and at $q$.
The pair $(p,q)\in S_1\times S_2$ is \emph{exchangeable} if 
\begin{itemize}
\item[-] $\Inv(\phi_1)\supset \Inv({\phi_1}^{p q})$
\item[-] $\Inv(\phi_2)\subset\Inv(\phi_2^{p q})$
\item[-] $\Inv(\phi_1,\xi_1^{-1})\supset\Inv({\phi_1}^{p q},\xi_1^{-1})$, and
\item[-] $\Inv(\phi_2,\xi_2^{-1})\subset\Inv(\phi_2^{p q},\xi_2^{-1})$. 
\end{itemize}
Diagrammatically this means that while doing the exchange  we cannot pick up crossings with black or orange strands on the $\P_1^*$-side and we cannot lose crossings with black or orange strands on the $\P_2$-side. 
 Given such an exchangeable pair $(p,q)$, for  
$O_1=(s_{O_1},\omega_1 s_{O_1})$ with $s_{O_1}\in S_{\O_1}$ let 
\[n_{O_1}(p q)=\left|\Inv(\phi_1^{p q},\omega_1\vert_{s_{O_1}})\setminus\Inv(\phi_1,\omega_1\vert_{s_{O_1}})\right|,\]
and for $O_2=(s_{O_2},\omega s_{O_2})$ with $s_{O_2}\in S_{\O_2}$ let
\[n_{O_2}(p q)=
\left|\Inv(\phi_2,\omega_2\vert_{s_{O_2}})\setminus\Inv(\phi_2^{p q},\omega_2\vert_{s_{O_2}})\right|.\]

\item For a pair $(p,q)\subset S_1$ with $p<q$ and $(p,q)\in \Inv(\phi_1)$ define $f^{pq} = (f_1^*)^{pq}\otimes f_2$,  where $(f_1^*)^{pq}= (S_1, T_1, \phi_1^{(p,q)})$. The pair $(p,q)\subset S_1$ is \emph{exchangeable} if 
\begin{itemize}
\item[-] each $t\in [p,q]\cap \b$ is in $S_1$ and $\phi_1t\in [\phi_1q, \phi_1p]$, and 
\item[-] each $t\in [p,q]\cap \bh$ is in $T_{\X_1}$ and $\xi_1^{-1}t\in [\phi_1q, \phi_1p]$.
\end{itemize}

Diagrammatically, this means that in $f$ each black or orange strand that ends between $p$ and $q$ is on the $\P_1^*$-side and crosses both black strands ending at $p$ and at $q$. 
Given such an exchangeable pair $(p,q)$,  for  
$O_1=(s_{O_1},\omega_1 s_{O_1})$ with $s_{O_1}\in S_{\O_1}$ let 
\[ n_{O_1}(pq)= \begin{cases} 1 &\mbox{if } s_{O_1}\in[p,q] \mbox{ and } \omega s_{O_1}\notin [\phi_1 q,\phi_1 p] \\
0 & \mbox{otherwise}, \end{cases}
\]
and for $O_2=(s_{O_2},\omega s_{O_2})$ with $s_{O_2}\in S_{\O_2}$ let
\[ n_{O_2}(pq)= \begin{cases} 1 &\mbox{if } s_{O_2}\in[p,q]\\
0 & \mbox{otherwise}. \end{cases}
\]

\item For a pair $(p,q)\subset S_2$ with $p<q$ and $(p,q)\notin \Inv(\phi_2)$  define $f^{pq} = f_1^*\otimes f_2^{pq}$, where $f_2^{pq}\coloneqq (S_2, T_2, \phi_2^{(p,q)})$. The pair $(p,q)\subset S_2$ is \emph{exchangeable} if 
\begin{itemize}
\item[-] each $t\in [p,q]\cap \b$ is in $S_2$ and $\phi_2t\in [\phi_2p, \phi_2p]$, and 
\item[-] each $t\in  [p,q]\cap \bh$ is in $T_{\X_2}$ and  $\xi_2^{-1}t\in [\phi_2p, \phi_2q]$.
\end{itemize}
Diagrammatically this means that in $f$ all black and orange strands that end between $p$ and $q$ are on the $\P_2$-side, and they do not cross either of the two black  strands ending at $p$ and at $q$.  Given such an exchangeable pair $(p,q)$,  for  
$O_1=(s_{O_1},\omega_1 s_{O_1})$ with $s_{O_1}\in S_{\O_1}$ let 
\[ 
n_{O_1}(pq)= \begin{cases} 1 &\mbox{if } s_{O_1}\in[p,q]\\
0 & \mbox{otherwise},\end{cases}
\]
and for $O_2=(s_{O_2},\omega s_{O_2})$ with $s_{O_2}\in S_{\O_2}$ let
\[ 
n_{O_2}(pq)= \begin{cases} 1 &\mbox{if } s_{O_2}\in[p,q] \mbox{ and } \omega s_{O_2}\notin [\phi_2 p,\phi_2 q] \\
0 & \mbox{otherwise}. \end{cases}
\]
\end{itemize}
Denote the set of exchangeable pairs for $f$ by $\mathit{Exch}(f).$

Then 
\[\partial_\textrm{mix}(f)=\sum_{(p,q)\in\mathit{Exch}(f)}\prod_{s_O\in S_{\O_1}\cup S_{\O_2}} U_O^{n_O(p q)} f^{p q}.\] 
See Figure \ref{fig:comultiplication} for an example of the mixed differential.
\begin{figure}[h]
 \centering
       \includegraphics[scale=1]{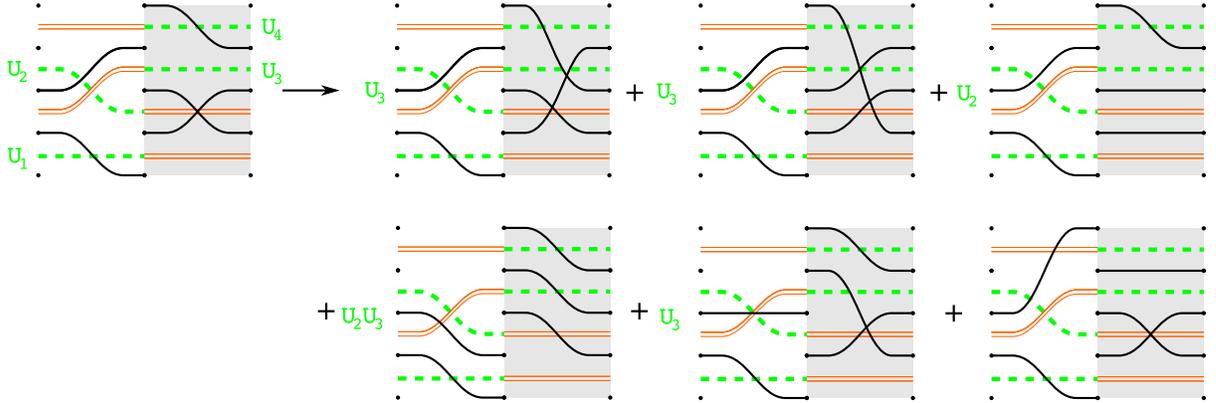} 
       \vskip .2 cm
       \caption[The differential $\partial_\wedge$.]{\textbf{The differential $\partial_\wedge$.} The last four terms on the right hand side correspond to $\partial_{\textrm{mix}}$.} \label{fig:comultiplication}
\end{figure}

Extend $\partial_{\wedge}$ linearly to the whole module $C^-(\P_1^*\wedge \P_2)$. 
\begin{proposition}\label{thm:cocomposition}
$(C^-(\P_1^*\wedge \P_2),\partial_\wedge)$ is a chain complex.
\end{proposition}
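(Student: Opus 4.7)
The plan is to show $\partial_\wedge^2 = 0$ by expanding the square and pairing off the resulting diagrammatic operations in cancelling pairs, in close analogy with the earlier proof that $\partial^2 = 0$ on $C^-(\P)$. Write $\partial_\wedge = \partial_\ell + \partial_r + \partial_{\textrm{mix}}$ with $\partial_\ell = \partial^*\otimes\id$ and $\partial_r = \id\otimes\partial$, so that
\[
\partial_\wedge^2 = \partial_\ell^2 + \partial_r^2 + \partial_{\textrm{mix}}^2 + (\partial_\ell\partial_r + \partial_r\partial_\ell) + (\partial_\ell\partial_{\textrm{mix}} + \partial_{\textrm{mix}}\partial_\ell) + (\partial_r\partial_{\textrm{mix}} + \partial_{\textrm{mix}}\partial_r).
\]
The terms $\partial_\ell^2$ and $\partial_r^2$ vanish because $(C^-(\P_2),\partial)$ is a chain complex by the earlier proposition, and $(C^-(\P_1^*),\partial^*)$ is its dual and hence also a chain complex. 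The cross term $\partial_\ell\partial_r + \partial_r\partial_\ell$ vanishes in characteristic $2$ since $\partial_\ell$ and $\partial_r$ act on different tensor factors and therefore commute.

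The remaining three contributions require the ``do everything at once'' diagrammatic argument already used in the earlier proof. Every term of $\partial_\wedge^2(f)$ arises from performing two successive local modifications to the diagram of $f=f_1^*\otimes f_2$: each modification is either the introduction of a crossing on the $\P_1^*$-side, the resolution of a crossing on the $\P_2$-side, or a strand-exchange of one of the three types entering the definition of $\mathit{Exch}(f)$. After both modifications are performed, the result is reduced to a minimal-intersection diagram via the relations of Figure \ref{fig:reidemeister}. When the two modifications affect disjoint regions of the diagram, they commute, producing the same target generator with the same product of $U_O$-factors in both orderings; such a pair of contributions cancels modulo $2$.

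The essential work is then to check that in the remaining ``interactive'' cases, where the two modifications share a strand, endpoint, or crossing, the contributions still match up in pairs. I would organize this by the types of the two modifications: a typical analysis shows that after passing to minimal form via Figure \ref{fig:reidemeister}, the resulting diagram admits a second description as the composition of a different ordered pair of modifications, furnishing the cancelling partner. The subcase where both modifications are strand-exchanges involving a common index is a small combinatorial ``three-way exchange'' symmetry argument analogous to the one for the standard bordered strand algebra.

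The main obstacle is the bookkeeping of $U_O$-powers under these pairings. The exponents $n_O(\tau)$ appearing in $\partial^*$ and $\partial$ and the exponents $n_O(p\,q)$ appearing in $\partial_{\textrm{mix}}$ are signed counts of how certain $\omega$-arcs cross the modified strands; the crucial point is that for any pair of modifications these counts add to a total that depends only on the underlying ``do-everything-at-once'' modification, not on the order. Once this additivity is verified for the finite list of interactive configurations, the even-multiplicity principle delivers $\partial_\wedge^2 = 0$ and hence the proposition.
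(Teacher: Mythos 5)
Your high-level decomposition of $\partial_\wedge^2$ is fine, and the vanishing of $\partial_\ell^2$, $\partial_r^2$, and the $\partial_\ell\partial_r+\partial_r\partial_\ell$ cross term is correctly disposed of. But the proposition lives entirely in the remaining terms, and there your proposal stops at a plan: you say you ``would organize'' the interactive configurations by type and that the $U_O$-additivity must be ``verified for the finite list of interactive configurations,'' without exhibiting that list or carrying out any of the verifications. This is a genuine gap, and not a small one. The mixed differential has three distinct flavors of exchangeable pairs (one endpoint on each side, both in $S_1$, both in $S_2$), each with its own exchangeability conditions and its own $U_O$-exponents, and the cancelling partner of a composite of two such moves is frequently a composite of two moves of \emph{different} types (e.g.\ two cross-boundary exchanges whose composite also decomposes as an inner resolution on the $\P_2$-side followed by a two-points-in-$S_1$ exchange). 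Identifying these matchings, checking that exchangeability of the first factorization implies exchangeability of the second, and checking the $U_O$-exponents agree is exactly where the work is; asserting an ``even-multiplicity principle'' does not substitute for it.

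It is worth knowing that the paper deliberately avoids this case analysis. Its proof defers to Subsection \ref{ssec:glueing}: Propositions \ref{thm:chain} and \ref{thm:chain2} identify $(C^-(\P_1^*\wedge\P_2),\partial_\wedge)$ with the complex of an annular bordered grid diagram obtained by gluing $G^*(\P_1^*)$ to $G(\P_2)$, under which all three flavors of $\partial_{\textrm{mix}}$, as well as $\partial^*$ and $\partial$, become a single uniform count of empty rectangles (the three flavors are just rectangles crossing the gluing circle in different ways, and the $U_O$-exponents become multiplicities $|R\cap O|$). Then $\partial^2=0$ is the standard grid-diagram fact that every composite of two empty rectangles admits exactly two decompositions. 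If you want to keep a purely strand-diagrammatic proof, you must actually enumerate and pair the interactive configurations; otherwise the cleaner route is to first prove the grid-diagram identification and let it do the combinatorics for you.
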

The proof of Proposition \ref{thm:cocomposition} is straightforward after the reformulation of the algebra to the language of bordered grid diagrams in Subsection \ref{ssec:glueing} and thus it will be given there.

If  $\P_1$ and $\P_2^*$ have a well-defined wedge product then $\partial_\wedge$ can be defined similarly on $C^-(\P_1\wedge \P_2^*)=C^-(\P_1)\otimes_{\I(\Aa(\E_R(\P_1)))} C^-(\P_2^*)$ by 
\[\partial_\wedge(f_1\otimes f_2^*)=\partial_1(f_1)\otimes f_2^*+f_1\otimes \partial_2^*(f_2^*)+\partial_\textrm{mix}(f_1\otimes f_2^*),\]
where the mixed differential $\partial_{\textrm{mix}}$ is defined by following the same shadow and mirror-shadow rules as earlier. Specifically, we look at pairs of black strands, and exchange their endpoints in $T_1\cup T_2$ if the following conditions are met:
\begin{itemize} 
\item If one endpoint is in $T_1$ and the other in $T_2$, then while doing the exchange we cannot pick up crossings with black or orange strands on the $\P_2^*$-side and we cannot lose crossings with black or orange strands on the $\P_1$-side. If we pick up crossings with green strands on the $\P_2^*$-side or lose crossings with green strands on the $\P_1$-side, we record it with $U_O$-variables.
\item If both endpoints are in $T_1$, then each black or orange strand that ends between the two points must be on the $\P_1$-side and cannot cross either of the given two black strands. A green strand that ends between the two points but is either on the $\P_2^*$-side or crosses one of the two black strands is recorded with a $U_O$-variable.  
\item If both endpoints are in $T_2$, then each black or orange strand that ends between the two points must be on the $\P_2^*$-side, and crosses both of the given two black strands. A green strand that ends between the two points but either doesn't cross both black strands or is on the $\P_1$-side is recorded with a $U_O$-variable.
\end{itemize}

Then we have 
\begin{proposition}\label{thm:cocomposition2}
$(C^-(\P_1\wedge \P_2^*),\partial_\wedge)$ is a chain complex. 
\end{proposition}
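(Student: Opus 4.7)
The plan is to reduce Proposition \ref{thm:cocomposition2} to Proposition \ref{thm:cocomposition} via a horizontal-reflection symmetry. Consider the operation that reflects a shadow diagram across the vertical axis $\{\tfrac12\}\times\R$: this exchanges the $0$-side and the $1$-side, and so sends a shadow $\P=(m,n,\xi,\omega)$ to a shadow $\bar\P=(n,m,\xi^{-1},\omega^{-1})$ obtained by swapping the roles of $(S_\X,T_\X)$ with $(T_\X,S_\X)$ and of $(S_\O,T_\O)$ with $(T_\O,S_\O)$. The same reflection turns the mirror-shadow $\P_2^*$ on the right of $\P_1\wedge\P_2^*$ into a mirror-shadow $\bar\P_2^*$ on the left of a wedge $\bar\P_2^*\wedge\bar\P_1$, and the compatibility condition $\E_R(\P_1)=\E^L(\P_2^*)$ corresponds exactly to $\E^R(\bar\P_2^*)=\E_L(\bar\P_1)$.

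First I would define a $\F_2[U_O]$-linear bijection
\[ R\co C^-(\P_1\wedge\P_2^*)\longrightarrow C^-(\bar\P_2^*\wedge\bar\P_1)\]
sending $(S_1,T_1,\phi_1)\otimes(S_2,T_2,\phi_2)^*$ to $(T_2,S_2,\phi_2^{-1})^*\otimes(T_1,S_1,\phi_1^{-1})$, and check that it intertwines the two $\partial_\wedge$ operators. The ``non-mixed'' pieces match immediately: reversing source and target orderings preserves the inversion counts $\inv(\phi)$, $\inv(\phi,\xi^{-1})$, and $\inv(\phi,\omega|_{s_O})$, so $(C^-(\P),\partial)$ is canonically identified with $(C^-(\bar\P),\partial)$, and similarly for the co-differential $\partial^*$ on the mirror side.

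The core step is matching the three cases of $\partial_{\text{mix}}$: a pair $(p,q)\in T_1\times T_2$ exchangeable from the ``right'' in $\P_1\wedge\P_2^*$ reflects to a pair in $S_{\bar\P_2^*}\times S_{\bar\P_1}$ exchangeable from the ``left'' in $\bar\P_2^*\wedge\bar\P_1$, with the same $U_O$-weights, because the four inversion inequalities defining exchangeability are reversed in the same way when one inverts $\phi_1$, $\phi_2$, $\xi_1$, $\xi_2$ simultaneously. The same identification holds for same-side pairs, since the asymmetric conditions governing orange and green strand crossings are themselves invariant under horizontal reflection combined with the swap of $S$-endpoints with $T$-endpoints. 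Granting this matching, $R\circ\partial_\wedge=\partial_\wedge\circ R$, so $\partial_\wedge^2=0$ on $C^-(\P_1\wedge\P_2^*)$ follows from Proposition \ref{thm:cocomposition}.

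The main obstacle is the combinatorial bookkeeping for the three same-side and mixed-side cases of $\partial_{\text{mix}}$, since the conditions are phrased in terms of $\phi$ and $\xi^{-1}$ that formally read differently on the two sides of the wedge. An alternative plan, which avoids the reflection argument, is to prove $\partial_\wedge^2=0$ directly by imitating the bordered grid diagram proof of Proposition \ref{thm:cocomposition} given in Subsection \ref{ssec:glueing}: one expands $\partial_\wedge^2$ into the contributions $\partial_1^2\otimes 1$, $1\otimes(\partial_2^*)^2$, the cross terms $(\partial_1\otimes 1+1\otimes\partial_2^*)\partial_{\text{mix}}+\partial_{\text{mix}}(\partial_1\otimes 1+1\otimes\partial_2^*)$, and $\partial_{\text{mix}}^2$, and observes that the last three types of terms cancel in pairs via an involution on configurations of two resolutions/exchanges on the concatenated diagram. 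Either route reduces the problem to checking that the combinatorics of the two operations ``resolve an inversion on one side'' and ``exchange a pair of endpoints across the wedge'' commute up to a canonical pairing of configurations, exactly as in the proof of Proposition \ref{thm:cocomposition}.
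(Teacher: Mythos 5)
Your proposal is correct, and your primary route is genuinely different from the paper's. The paper does not reduce Proposition \ref{thm:cocomposition2} to Proposition \ref{thm:cocomposition}; it proves the two in parallel in Subsection \ref{ssec:glueing}, by first identifying $(C^-(\P_1\wedge\P_2^*),\partial_\wedge)$ with the rectangle-counting complex of the annular bordered grid diagram $G(\P_1)\cup G^*(\P_2^*)$ (Proposition \ref{thm:chain}; the mirror-then-shadow case is Proposition \ref{thm:chain2}), and then invoking the standard grid-diagram fact that every domain contributing to $\partial^2$ admits exactly two decompositions into empty rectangles. So your ``alternative plan'' is essentially the paper's proof, with the caveat that the cancelling involution is run on rectangles in the glued grid rather than directly on strand-diagram configurations --- the grid picture is what makes the case analysis tractable, and you would do well to pass through it rather than attempt the bare expansion of $\partial_\wedge^2$ on concatenated strand diagrams. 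Your reflection argument is a legitimate and arguably cleaner alternative: the map $(S,T,\phi)\mapsto(T,S,\phi^{-1})$, $\P\mapsto(n,m,\xi^{-1},\omega^{-1})$ preserves all the inversion counts ($\inv(\phi)=\inv(\phi^{-1})$, $\inv(\phi,\omega)=\inv(\phi^{-1},\omega^{-1})$, etc.), carries the wedge-compatibility condition for $\P_1\wedge\P_2^*$ to that for $\bar\P_2^*\wedge\bar\P_1$, and matches the three cases of $\partial_{\textrm{mix}}$ together with their $U_O$-weights, and the reduction is non-circular since Proposition \ref{thm:cocomposition} is established independently via Proposition \ref{thm:chain2}. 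What it buys is the elimination of a second grid-gluing argument; what it costs is that the paper's definition of $\partial_{\textrm{mix}}$ for $\P_1\wedge\P_2^*$ is only given verbally (``following the same shadow and mirror-shadow rules''), so your intertwining check amounts to verifying that this verbal definition is precisely the reflection of the formal one --- which is true, but is exactly the bookkeeping you flag as the main obstacle.
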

The proof of Proposition \ref{thm:cocomposition2} will be given in Subsection \ref{ssec:glueing} as well. 

These propositions allow us to define left and right type $D$ maps on generators $f^*=(S,T,\phi)^*$ by
\[
\begin{array}{ccc}
\delta^R &\colon& C^-(\P^*)\to  C^-(\P^*)\otimes \Aa(\E^R)\\
(S,T,\phi)^*&\mapsto &\partial_\wedge((S,T,\phi)^*\otimes \iota^R(S,T,\phi)^*).
\end{array}
\]
and 
\[
\begin{array}{ccc}
\delta^L &\colon& C^-(\P^*)\to \Aa(\E^L)\otimes C^-(\P^*)\\
(S,T,\phi)^*&\mapsto &\partial_\wedge(\iota^L(S,T,\phi)^*\otimes(S,T,\phi)^*).
\end{array}
\]
The maps $\delta^L$ and $\delta^R$ extend to the whole module $C^-(\P^*)$ and by merging them we can define a type $\mathit{DD}$  structure:
\begin{definition}
With the above notation let $\CDTD^-(\P^*)$ be the left-right  type $\DD$ structure $(C^-(\P^*),\delta^1)$ over $\Aa({\E^L})$ and $\Aa({\E^R})$, where 
\[\delta^1\colon  C^-(\P^*)\to \Aa({\E^L})\otimes C^-(\P^*) \otimes \Aa({\E^R})\]
is defined via
\[\delta^1(f^*)=\iota^L(f^*)\otimes \partial^*(f^*)\otimes \iota^R(f^*)+\iota^L(f^*)\otimes \partial_{\textrm{mix}}(f^*\otimes \iota^R(f^*))+\partial_{\textrm{mix}}(\iota^L(f^*)\otimes f^*)\otimes \iota^R(f^*).\]
\end{definition}
The type $\mathit{DD}$ structure identities hold as a consequence of Propositions \ref{thm:cocomposition} and \ref{thm:cocomposition2}:

\begin{proposition}\label{prop:typeDrelns}
Let $\P^*$ be a mirror shadow. Then
\begin{enumerate}
\item\label{item:left} as defined above $(C^-(\P^*),\delta^L)$ is a left type $D$ structure over $\Aa({\E^L})$;
\item\label{item:right} as defined above $(C^-(\P^*),\delta^R)$ is a right type $D$ structure over $\Aa({\E^R})$;
\item\label{item:both} $\CDTD^-(\P^*)$ is a left-right type $\DD$ structure  over $\Aa({\E^L})$ and $\Aa({\E^R})$.
\end{enumerate}
\end{proposition}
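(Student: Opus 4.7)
The plan is to derive all three assertions from the square-zero identities $\partial_\wedge^2 = 0$ established in Propositions \ref{thm:cocomposition} and \ref{thm:cocomposition2}, via a box-tensor-product identification. I will treat (1) in detail; (2) is the mirror argument and (3) follows by combining them.

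For part (1), I would regard $\E^L = \E^L(\P^*)$ as a (non-mirror) shadow and consider the wedge $\E^L \wedge \P^*$, whose chain complex $C^-(\E^L \wedge \P^*) = \Aa(\E^L) \otimes_\I C^-(\P^*)$ carries the differential $\partial_\wedge$ with $\partial_\wedge^2 = 0$. The core step will be to verify that $\partial_\wedge$ coincides with the standard box-product differential
\[
\partial_\boxtimes(a \otimes f^*) = d(a) \otimes f^* + (\mu_L \otimes \id)\bigl(a \otimes \delta^L(f^*)\bigr)
\]
associated to the candidate type $D$ structure $(C^-(\P^*), \delta^L)$. The two ``internal'' summands of $\partial_\wedge$ match immediately: $\partial a$ is the algebra differential, and $a \otimes \partial^* f^*$ is the contribution of the $\iota^L(f^*) \otimes \partial^* f^*$ summand of $\delta^L$ after left-multiplication by $a$ (using $a \cdot \iota^L(f^*) = a$). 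The remaining content is the diagrammatic identity
\[
\partial_{\textrm{mix}}(a \otimes f^*) = (\mu_L \otimes \id)\bigl(a \otimes \partial_{\textrm{mix}}(\iota^L(f^*) \otimes f^*)\bigr),
\]
which I will verify by inspecting each of the three regimes of exchangeable pairs in Subsection \ref{subsection:wedge} and checking that the $n_O$ counts are additive under composition of strand permutations. Granted this identification, $\partial_\wedge^2 = 0$ becomes $\partial_\boxtimes^2 = 0$; evaluating on $\iota^L(f^*) \otimes f^*$ extracts the left type $D$ structure identity for $\delta^L$. Part (2) is the symmetric statement, proved by the same argument applied to $\P^* \wedge \E^R$ using Proposition \ref{thm:cocomposition}.

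For part (3), the DD structure identity is the single type $D$ identity over $\Aa(\E^L) \otimes_{\F_2} \Aa(\E^R)^{\mathrm{op}}$, and it decomposes into the two type $D$ identities from (1) and (2) together with a cross-term compatibility asserting that left-acting and right-acting mixed exchanges commute in $(\delta^1)^2$. This commutativity is diagrammatically manifest: left mixed exchanges move black endpoints across the left seam and affect only indices in $\b$, while right mixed exchanges move endpoints across the right seam and affect only indices in $\a$, so the two operations act on disjoint data. More uniformly, one can consider the triple wedge $\E^L \wedge \P^* \wedge \E^R$, identify its chain complex with $\Aa(\E^L) \boxtimes \CDTD^-(\P^*) \boxtimes \Aa(\E^R)$ via the same box-tensor recipe, and extract the DD relation from a single application of square-zero. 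The main obstacle throughout will be the bookkeeping of $U_O$-exponents in the mixed identity; this becomes transparent in the bordered grid diagram reformulation of Section \ref{sec:borderedgrid}, where each exchange is a local operation on the grid and $n_O$ is manifestly additive under composition.
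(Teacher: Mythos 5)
Your proposal is correct and is essentially the paper's argument: the paper also proves item (1) by expanding the left type $D$ identity on a generator $f^*$, rewriting everything in terms of $\partial_\wedge(\iota^L(f^*)\otimes f^*)$, and observing that the resulting terms are exactly the nonzero summands of $\partial_\wedge^2(\iota^L(f^*)\otimes f^*)$, which vanish by Propositions \ref{thm:cocomposition} and \ref{thm:cocomposition2} (with (2) and (3) declared analogous). The only presentational difference is that you make explicit, as an intermediate identification with the box-product differential for a general algebra element $a$, the compatibility $\partial_{\textrm{mix}}(a\otimes f^*)=(\mu_L\otimes\id)(a\otimes\partial_{\textrm{mix}}(\iota^L(f^*)\otimes f^*))$ that the paper leaves implicit here and records separately as Theorem \ref{thm:wedgevsbox}.
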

\begin{proof}
As the proofs of all parts of the proposition are similar, we only prove item (\ref{item:left}). Recall that the left type $D$ identity that we need to show is
\[(m_2\otimes \id)\circ(\id_\Aa\otimes \delta^L)\circ \delta^L+(\partial_\Aa\otimes \id)\circ\delta^L=0.\] 
Let $f^*$ be a generator of $C^-(\P^*)$ and let $\iota=\iota^L(f^*)$. Using $\partial\iota=0$, we can rewrite the first term on the left hand side as
\[(\partial_\Aa\otimes \id)\circ\delta^L(f^*)=(\partial_\Aa\otimes \id)\circ \partial_\textrm{mix}(\iota\otimes f^*),\]
and using also that $(\partial^*)^2=0$, we can rewrite the second term  on the left hand side as
\[(m_2\otimes \id)\circ(\id_\Aa\otimes \delta^L)\circ \delta^L(f^*)=\partial_\textrm{mix}(\iota\otimes \partial^* f^*)+(\id_\Aa\otimes \partial^*)\circ \partial_{\textrm{mix}}(\iota\otimes f^*)+\partial_\textrm{mix}^2(\iota\otimes f^*).\]
The resulting four terms are exactly the nonzero summands of  $\partial_\wedge^2(\iota\otimes f^*)$, which, since $\partial_\wedge$ is a chain map, vanishes. This finishes the proof of item (\ref{item:left}). 
\end{proof}

This concept can be extended to multiple wedge products as follows. Let $\Pp=(\P_1^\circ,\dots, \P_p^\circ)$ be an alternating sequence of shadows and mirror-shadows with well-defined consecutive wedge products. (Here and throughout the paper $\P^\circ$ indicates $\P$ or $\P^*$.) Then we can define a differential on 
\[C^-(\Pp)=C^-(\P_1^\circ)^\circ\otimes \cdots \otimes C^-(\P_p^\circ)^\circ\]
by defining it on $\mathbf{f}=f_1^\circ\otimes\cdots\otimes f_p^\circ$ as
\[\partial_\wedge{\mathbf{f}}=\sum_{j=1}^p f_1^\circ\otimes \cdots \otimes \partial^\circ(f_j^\circ)\otimes\cdots \otimes f_p^\circ+\sum_{j=1}^{p-1} f_1^\circ\otimes \cdots \otimes \partial_\textrm{mix}(f_j^\circ\otimes f_{j+1}^\circ)\otimes\cdots \otimes f_p^\circ.\]
Observe that depending on whether $\Pp$ starts (ends) with a shadow or mirror-shadow $C^-(\Pp)$ is equipped with a type $\mathit{AA}$, $\mathit{AA}$, $\mathit{DA}$ or $\mathit{DD}$ structure. Denote these structures by $\mathit{CATA}^-(\Pp)$, $\mathit{CATD}^-(\Pp)$, $\mathit{CDTA}^-(\Pp)$ or $\mathit{CDTD}^-(\Pp)$. Or sometimes -- as the type is anyways specified by the sequence $\Pp$ -- we will refer to any of the above structures as $\mathit{CT}^-(\Pp)$. 

\subsubsection{Tangles associated to wedge products}
Let $\Pp=(\P_1^\circ,\dots, \P_p^\circ)$ be an alternating sequence of shadows and mirror-shadows with well-defined consecutive wedge products. Having a well-defined wedge product exactly means that the associated diagrams $\D(\P^\circ_j)$ and thus the associated tangles $\T(\P^\circ_j)$ match up. Thus let $\D(\Pp)$ and $\T(\Pp)$ be their concatenations.

%%%%%%%%%%%%%%%%%%%%%%%%%%%%%%%%%%%%%%%%%%%%%%%%%%%%%%%
%SELF-GLUEING
%%%%%%%%%%%%%%%%%%%%%%%%%%%%%%%%%%%%%%%%%%%%%%%%%%%%%%%

\subsection{One-sided modules}\label{ssec:selfgluing-strand} 
When a shadow or a mirror-shadow corresponds to a tangle with $\partial^0=\emptyset$ or $\partial^1=\emptyset$, then the left or right map can be contracted to a differential giving a one-sided right or left module. Thus, in this subsection we would like to ``close up'' one side of the bimodule and incorporate one of the type $A$  (or type $D$) maps as a new component of the differential. (Note that this ``closing up'' is easier to follow in the related Subsection \ref{ssec:selfgluing-grid}).  Below we will describe in detail the closing up  of the left type $D$ map on 
a type $\mathit{DD}$  bimodule associated to a mirror-shadow. This way we obtain a right type $D$ structure.

Suppose that for a mirror-shadow $\P^*$ we have $\ah=\SX=\TO$. Then we can define a new component of the differential ${}_D\partial$ that will correspond to resolving some crossings (remember that originally the type $D$ map corresponds to introducing crossings) so that  $\partial^*+{}_D\partial$ is a differential (i.e. has square $0$) when restricted to $\gen_n(\P^*)$ (where $\gen_n(\P^*)$ consists of the generators $(S,T,\phi)^*$ with $|S|=|T|=n$).

Consider a generator  $f^*=(S,T,\phi)^*\in\gen_n(\P^*)$. Suppose that for $s_1<s_2$ the pair $(s_1,s_2)$ is in $\Inv(\phi)$, i.e. $\phi(s_1)>\phi(s_2)$.
We say that the exchange $(s_1,s_2)$ is \emph{allowable} if for any $t\in [\phi(s_2),\phi(s_1)]$ we have $\phi^{-1}(t)\in[s_1,s_2]$ and similarly for any $s_X\in [\phi(s_2),\phi(s_1)]$ we have $\xi(s_X)\in[s_1,s_2]$.
Denote the set of such allowable pairs by ${}_D\mathrm{Exch}(\phi)\subset S\times S$. See Figure \ref{fig:onesided} for an example.
\begin{figure}[h]
 \centering
       \includegraphics[scale=1.2]{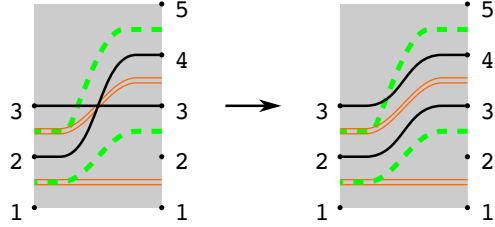} 
       \vskip .2 cm
       \caption[The differential ${}_D\partial$.]{\textbf{The differential ${}_D\partial$.}} \label{fig:onesided}
\end{figure}
For $O=(s_O,\omega s_O)$ define
\[ {}_D n_O(s_1,s_2)= \begin{cases} 1 &\mbox{if } \omega s_O\in [\phi(s_2),\phi(s_1)] \mbox{ and } s_O\notin [s_1,s_2]\\
0 & \mbox{otherwise}. \end{cases}
\]
Then define
\[{}_D\partial f^*=\sum_{(s_1,s_2)\in {}_D\mathrm{Exch}(\phi)} U_O^{{}_Dn_O(s_1,s_2)}(f^{(s_1,s_2)})^*\]
The map $\partial^*+{}_D\partial$ can be extended to the module $C_n^-(\P^*)$ generated by $\gen_n(\P^*)$ over $\k$. Although $({}_D\partial)^2\neq 0$ we have
\begin{lemma}\label{lem:selfglueingchain}
$(C_n^-(\P^*),\partial^*+{}_D\partial)$ is a chain complex.
\end{lemma}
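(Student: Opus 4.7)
The plan is to establish $(\partial^* + {}_D\partial)^2 = 0$ on $C_n^-(\P^*)$. Since $(\partial^*)^2=0$ from the cochain complex structure of $(C^-(\P^*), \partial^*)$, this reduces to verifying
$$\partial^*\,{}_D\partial + {}_D\partial\,\partial^* + ({}_D\partial)^2 = 0.$$

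First I would observe that ${}_D\partial$ is formally parallel to the exchange operation $\partial_{\mathrm{mix}}$ from Subsection \ref{subsection:wedge}: both swap the images of pairs of black strands subject to conditions on the strands that pass ``in between,'' with $U_O$-powers recording interactions with green arcs. This suggests the following uniform strategy. Using the hypothesis $\ah = \SX = \TO$, construct a companion shadow $\P_0$ such that the wedge product $\P^* \wedge \P_0$ is well-defined and $C^-(\P_0)$ has a distinguished top-dimensional generator $g_0$ with the property that the map $f^* \mapsto f^* \otimes g_0$ identifies $(C_n^-(\P^*),\, \partial^* + {}_D\partial)$ with a subcomplex of $(C^-(\P^*\wedge \P_0),\, \partial_\wedge)$. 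Under this identification, the allowability conditions defining ${}_D\partial$ would match the exchangeability conditions defining $\partial_{\mathrm{mix}}$ on $S \times S_0$, while $\partial^*$ on the left factor remains intact. The desired $(\partial^*+{}_D\partial)^2 = 0$ then follows immediately from Proposition \ref{thm:cocomposition2}.

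If the wedge-product identification proves too rigid to realize cleanly, I would fall back on a direct combinatorial verification, pairing up the terms of $(\partial^* + {}_D\partial)^2 f^*$. Each such term corresponds to an ordered pair of moves $(\tau_1,\tau_2)$ applied to $\phi$, yielding $\phi \circ \tau_1 \circ \tau_2$. If $\mathrm{supp}(\tau_1)$ and $\mathrm{supp}(\tau_2)$ are disjoint, the two orders of operations commute and the $U_O$-exponents decompose additively, so both orderings produce the same generator with the same coefficient and cancel mod $2$. If the supports overlap in a single point, so that $\tau_1\tau_2$ is a $3$-cycle, the Reidemeister III-type relation in the first picture of Figure \ref{fig:reidemeister}, together with the interval conditions in $\Inv_0^*(\phi)$ and ${}_D\mathrm{Exch}(\phi)$, should force exactly two admissible factorizations reaching the same $3$-cycle, which then cancel.

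The main obstacle will be the overlapping case: one must verify that the rectangle constraints defining allowable ${}_D\partial$-exchanges, the inversion-increasing condition for $\Inv_0^*$, and the winding computations $n_O(\tau;\phi)$ versus ${}_Dn_O(s_1,s_2)$ all mesh so that the $U_O$-exponents on the two cancelling decompositions agree exactly. A sub-case analysis based on the relative position of the ``middle'' strand at the shared vertex of $\tau_1$ and $\tau_2$ (whether its endpoint lies in $[s_1,s_2]$, whether its image lies in $[\phi s_2, \phi s_1]$, and whether the strand is black, orange, or green) should reduce this to a finite checkable verification. One must also check that no spurious terms arise from pairs where the first move would violate an allowability condition required by the second, which is where the minimality of intersections in the diagrammatic picture should be used.
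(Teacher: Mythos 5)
Your proposal is a plan with two branches, and neither is carried to completion; more importantly, the primary branch rests on an identification that I do not believe exists. You want to realize $(C_n^-(\P^*),\partial^*+{}_D\partial)$ as the subcomplex $\{f^*\otimes g_0\}$ of some wedge product $(C^-(\P^*\wedge\P_0),\partial_\wedge)$ and then quote Proposition \ref{thm:cocomposition2}. But a wedge product glues the grid $G^*(\P^*)$ to a \emph{second} grid $G(\P_0)$ along both pairs of matching edges, whereas ${}_D\partial$ arises from \emph{self}-gluing the two $\alpha$-boundary edges of $G^*(\P^*)$ to each other: its terms are the rectangles $([0,s_1]\cup[s_2,n+1])\times[\phi(s_2),\phi(s_1)]$ with both components in $G^*$ and nothing in between. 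To see such a term inside $\partial_\wedge$ you would need a rectangle that wraps all the way around the two-grid annulus, hence covers an entire horizontal band of $G(\P_0)$; such a rectangle must avoid all $X$s and all points of $g_0$ in that band (extra conditions not present in the definition of ${}_D\mathrm{Exch}$), generally picks up extra $U_O$ powers from $\O_0$, and in any case the remaining $\partial_{\mathrm{mix}}$ terms of type $S\times S_0$ move the endpoint of a strand of $g_0$, so $\{f^*\otimes g_0\}$ is not closed under $\partial_\wedge$. Your fallback (pairing the terms of $(\partial^*+{}_D\partial)^2$ by hand) is the right kind of argument, but you explicitly defer the only hard part — verifying, in the overlapping-support case, that exactly two admissible factorizations occur and that the exponents $n_O(\tau;\phi)$ and ${}_Dn_O(s_1,s_2)$ agree on them — so as written there is no proof.

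The paper's route avoids both difficulties by changing coordinates rather than enlarging the complex: Proposition \ref{prop:selfgrid} (Subsection \ref{ssec:selfgluing-grid}) identifies $\partial^*+{}_D\partial$ with the differential on the annular bordered grid diagram $G^*_\a$ obtained by identifying the edges $\{0\}\times[0,m+1]$ and $\{n+1\}\times[0,m+1]$ of $G^*(\P^*)$; the allowability condition for $(s_1,s_2)$ becomes precisely emptiness of the wrapped rectangle, and ${}_Dn_O$ becomes its multiplicity at $O$. Then $(\partial^*+{}_D\partial)^2=0$ is the standard grid-diagram fact that every index-two domain in the annulus has exactly two decompositions into empty rectangles, with region multiplicities (hence $U_O$-exponents) independent of the decomposition. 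If you want a combinatorial proof in the strand-diagram language, I would recommend first proving the dictionary of Proposition \ref{prop:selfgrid} and then running your cancellation argument on rectangles, where the case analysis you describe becomes a finite and genuinely checkable list.
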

The proof of Lemma \ref{lem:selfglueingchain} will be given using the grid diagram reformulation of $\partial^*+{}_D\partial$ as the differential of an annular bordered grid diagram in Subsection \ref{ssec:selfgluing-grid}. 
\begin{definition}
With the above notation let $\mathit{CTD}^-(\P^*)$ be the right  type $D$ structure $(C^-_n(\P^*),\delta^1)$ over $\Aa({\E^R})$, where 
\[{\delta}^1\colon C_n^-(\P^*)^*\to  C_n^-(\P^*)^*\otimes \Aa(\E^R)\] is given by
\[f^*\mapsto \delta^R(f^*)+{}_D\partial f^*\otimes \iota^R(f^*).\]
\end{definition}
Aside from the gradings that will be defined later, Lemma \ref{lem:selfglueingchain} shows that  $\mathit{CTD}^-(\P^*)$ is indeed a right type $D$ structure. 

The contraction of the right type $D$ map $\partial_D$ can be defined similarly for mirror-shadows 
with $\TX=\SO=\bh$ by exchanging pairs 
$(s_1,s_2)\in\Inv(\phi)$ such that any $s\in[s_1,s_2]$ has $\phi(s)\in[\phi(s_2),\phi(s_1)]$ and any $t_X\in[s_1,s_2]$ has $\xi^{-1}(t_X)\in[\phi(s_2),\phi(s_1)]$. In this way we obtain a left  type $D$ structure over $\Aa({\E^L})$ on $C^-_m(\P^*)$: $\mathit{CDT}^-(\P^*)$.
In this paper we do not need to contract the type $A$  actions, but the definitions go similarly with the only difference that ${}_A\partial$ and $\partial_A$ introduce crossings.

\begin{convention}\label{conv:contract}
Whenever the leftmost and/or rightmost shadow or mirror-shadow in a given well defined wedge product $\Pp$ is contractible, we will assume that the corresponding differential $\partial$ or $\partial^*$ has been replaced with the appropriate map ${}_D\partial$,  $\partial_D$, ${}_A\partial$, or $\partial_A$ in the definition of $\partial_{\wedge}$, to produce a one-sided module $\mathit{CTD}^-(\Pp)$, $\mathit{CDT}^-(\Pp)$ or $\mathit{CAT}^-(\Pp)$, or $\mathit{CTA}^-(\Pp)$, or a chain complex $\mathit{CT}^-(\Pp)$. In these cases again we may use the notation $\mathit{CT}^-(\Pp)$ to refer to any of these structures, as the type is specified by the sequence $\Pp$.
\end{convention}

%%%%%%%%%%%%%%%%%%%%%%%%%%%%%%%%%%%%%%%%%%%%%%%%%%%%%%%
%GRADING
%%%%%%%%%%%%%%%%%%%%%%%%%%%%%%%%%%%%%%%%%%%%%%%%%%%%%%%

\subsection{Gradings}\label{ssec:grading}
Unlike for other bordered theories, one can define surprisingly simple absolute gradings on the structures here.
For a shadow $\P$, we define the \emph{Maslov} and \emph{Alexander} gradings of a generator $f=\stphi$ of the module as
\[\begin{array}{lcl}
M(f) &=& \inv(\phi)-\inv(\phi,\omega) + \inv(\omega)\\
2A(f) &=& \inv(\phi, \xi^{-1}) - \inv(\phi,\omega) + \inv(\omega) - \inv(\xi^{-1})-|\TX|.
\end{array} \]
For $O=(s_O,\omega s_O)$ define
\[\begin{array}{lcl}
M(U_O f)&=&M(f)-2\\
A(U_Of)&=&A(f)-1.
\end{array} \] 
This defines a grading on $C^-(\P)$ and consequently on $\CATA^-(\P)$.

For a mirror-shadow $\P^{\ast}$ the gradings on $f^*=(S{},T{},\phi{})^*$are defined as
\[\begin{array}{lcl}
M(f^*)&=&    - \inv(\phi) + \inv(\phi,\omega) - \inv(\omega) - |\SO| \\
 2A(f^*)&=& - \inv(\phi, \xi^{-1}) + \inv(\phi,\omega) - \inv(\omega) + \inv(\xi^{-1})-|\SO|, 
\end{array} \]
and again
\[\begin{array}{lcl}
M(U_O f^*)&=&M(f^*)-2\\
A(U_Of^*)&=&A(f^*)-1.
\end{array} \]
This defines a grading on $C^-(\P^*)$ and consequently on $\CDTD^-(\P^*)$.
For an alternating sequence of shadows and mirror-shadows $\Pp=(\P_1^\circ, \ldots, \P_p^\circ)$ with  well-defined consecutive wedge product define the gradings on $\mathbf{f}=f_1^\circ\otimes\cdots\otimes f_p^\circ$ as the sums
\[\begin{array}{lcl}
M(\mathbf{f})&=&\sum_{j=1}^p M(f_j^\circ);\\
A(\mathbf{f})&=&\sum_{j=1}^p A(f_j^\circ).
\end{array} \]
All the differentials, multiplications and wedge products  behave well with the gradings. 
\begin{theorem}\label{thm:typeAA}  For a shadow $\P$, horizontal shadow $\E$, and composable shadows $\P_1$ and  $\P_2$:
\begin{enumerate}
\item $(C^-(\P),\partial)$ is a graded chain complex with grading $M$. Moreover $\partial$ preserves $A$;
\item The multiplication $\cdot\colon C^-(\P_1)\otimes C^-(\P_2)\to C^-(\P_1\circ\P_2)$ is a degree $(0,0)$ map;
\item $\Aa(\E)$ is a differential graded algebra with grading $M$. Moreover $A$ is preserved by both the multiplication and the differential;
\item\label{thm:typeAA-item4} $\CATA^-(\P)$ is a left-right  differential graded bimodule over $\Aa(\E_L)$ and ${\Aa(\E_R)}$ (in particular a type $\mathit{AA}$ structure) with grading $M$. Moreover $A$ is preserved both by the multiplication and the differential.
\end{enumerate}
\end{theorem}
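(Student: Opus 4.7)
The plan is to verify the four items by directly computing how the gradings $M$ and $A$ change under the two elementary operations that define all the structure maps: resolving an allowed inversion $\tau \in \Inv_0(\phi)$ (which governs the differential $\partial$) and concatenating composable generators (which governs the multiplication $\cdot$). Once these two computations are in hand, items (3) and (4) follow formally, since the chain complex/algebra/bimodule structure has already been established in Proposition \ref{prop:dm}, and $\inv(\omega)$, $\inv(\xi^{-1})$ and $|\TX|$ are constants attached to the underlying shadow.

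For item (1), I would fix a generator $f = (S,T,\phi)$ and an allowed resolution $\tau \in \Inv_0(\phi)$. Since $\inv(\omega)$ and $\inv(\xi^{-1})$ depend only on $\P$, the $M$-grading difference between $f$ and $(S,T,\phi^\tau)$ reduces to
\[
M(\phi^\tau) - M(\phi) = \bigl[\inv(\phi^\tau) - \inv(\phi)\bigr] - \bigl[\inv(\phi^\tau,\omega) - \inv(\phi,\omega)\bigr].
\]
The allowed resolution condition forces the first bracket to equal $-1$. The $U_O^{n_O(\tau)}$ coefficients appearing in $\partial f$ contribute $-2\sum_{s_O \in \SO} n_O(\tau) = -\bigl[\inv(\phi,\omega) - \inv(\phi^\tau,\omega)\bigr]$ to the total Maslov grading, which precisely cancels the second bracket, leaving a net drop of $1$. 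The analogous computation for $A$ uses the second allowed-resolution condition $\inv(\phi,\xi^{-1}) = \inv(\phi^\tau,\xi^{-1})$ to kill the $\xi^{-1}$-contribution; the remaining terms and the $U_O$ corrections balance in the same way, giving a net change of $0$.

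For item (2), I would use the definition of composability to note the additivity
\[
\inv(\phi) = \inv(\phi_1) + \inv(\phi_2), \qquad \inv(\phi,\xi^{-1}) = \inv(\phi_1,\xi_1^{-1}) + \inv(\phi_2,\xi_2^{-1}),
\]
together with $\inv(\omega) = \inv(\omega_1) + \inv(\omega_2)$ and $\inv(\xi^{-1}) = \inv(\xi_1^{-1}) + \inv(\xi_2^{-1})$, which are shadow-level invariants. The only term that fails additivity is $\inv(\phi,\omega)$, and the exponent of $U_O$ in the multiplication formula is defined exactly so that the total grading contribution $-2 \cdot \tfrac{1}{2}\bigl(\inv(\phi_1,\omega_1|_{s_O}) + \inv(\phi_2,\omega_2|_{\omega_1 s_O}) - \inv(\phi,\omega|_{s_O})\bigr)$, summed over $s_O$, cancels this discrepancy. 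The same pattern works for $A$.

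Items (3) and (4) then follow immediately: specializing item (2) to the idempotent shadow $\E$ and combining with item (1) upgrades $\Aa(\E)$ to a differential graded algebra, while applying items (1) and (2) to the left and right actions in Theorem \ref{thm:typeAA} item (\ref{thm:typeAA-item4}) gives the bimodule statement (the idempotents $\iota_L(f), \iota_R(f)$ are sitting in degree $(0,0)$, so they do not shift gradings). I expect the main obstacle to be the careful bookkeeping of the $U_O$ corrections: both $M$ and $A$ depend on $\inv(\phi,\omega)$, which is itself a sum $\sum_{s_O} \inv(\phi,\omega|_{s_O})$, and the key observation that makes everything work is that the $U_O$ exponents are precisely designed so that their total Maslov (resp. Alexander) contribution is $-\bigl[\inv(\phi_*,\omega) - \inv(\phi,\omega)\bigr]$ (resp. half that), thus absorbing exactly the non-additive piece of $\inv(\phi,\omega)$ in both the differential and the multiplication calculations.
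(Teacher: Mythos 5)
Your overall strategy is the right one, and it is in fact all the paper itself does: the paper's ``proof'' of this theorem is a one-line appeal to Propositions \ref{prop:dm}, \ref{thm:cocomposition}, \ref{thm:cocomposition2} ``and the definition of the grading,'' and the only degree computation it actually writes out is for the mixed differential $\partial_\wedge$ (in the combined proof of Theorems \ref{thm:typeAA}--\ref{thm:typeA-typeD}), not for $\partial$ or for the multiplication. Your verification of item (1) is correct and complete: the two allowed-resolution conditions give $\inv(\phi^\tau)=\inv(\phi)-1$ and $\inv(\phi^\tau,\xi^{-1})=\inv(\phi,\xi^{-1})$, and the exponents $n_O(\tau)$ are exactly $\tfrac12\bigl(\inv(\phi,\omega|_{s_O})-\inv(\phi^\tau,\omega|_{s_O})\bigr)$, so the $U_O$ corrections absorb the change in $\inv(\phi,\omega)$ and $\partial$ drops $M$ by $1$ while preserving $A$.

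There is, however, a concrete gap in item (2), which propagates to (3) and (4): the sentence ``the same pattern works for $A$'' is not true as stated, because of the term $-|\TX|$ in the definition of $2A$. Every other term in $2A$ is additive (or is cancelled by the $U_O$ exponents, as you say), but $|\TX(\P_1\circ\P_2)|=|T_{\X_2}|$ whereas the sum of gradings contributes $|T_{\X_1}|+|T_{\X_2}|$, so carrying out your own computation yields
\[
2A(f_1\cdot f_2)-2A(f_1)-2A(f_2)\;=\;-|T_{\X_2}|+|T_{\X_1}|+|T_{\X_2}|\;=\;|S_{\X_1}|,
\]
a nonzero constant whenever $S_{\X_1}\neq\emptyset$. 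The simplest instance is an idempotent $\iota=(S,S,\id_S)$ in $\Aa(\E)$: here $\iota\cdot\iota=\iota$ but $2A(\iota)=-|\SX|$, so $A(\iota\cdot\iota)\neq 2A(\iota)$, and in particular your parenthetical claim that $\iota_L(f)$, $\iota_R(f)$ sit in degree $(0,0)$ is false for the Alexander grading (it is true only for $M$). So as written your argument establishes that the multiplication has $M$-degree $0$ and that it shifts $A$ by the fixed constant $|S_{\X_1}|/2$ depending only on the shadows; to close the proof of items (2)--(4) you must either record this shift explicitly (it can be absorbed by renormalizing $A$, since it is constant on each $C^-(\P_i)$) or make precise the sense in which ``$A$ is preserved.'' Contrast this with the wedge-product/tensor gradings used for the type $D$ side, where the analogous constants ($-|S_{\O_1}|$, $-|T_{\X_2}|$) genuinely add and no such correction arises --- which is exactly why the paper's written-out computation for $\partial_\wedge$ does not surface this issue.
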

\begin{theorem}\label{thm:typeDD}
For a mirror-shadow $\P^*$: 
\begin{enumerate}
\item\label{thm:typeDD-item1} $(C^-(\P^*),\partial^*)$ is a graded chain complex with grading $M$. Moreover $\partial^*$ preserves $A$;
\item\label{thm:typeDD-item3}  $\CDTD^-(\P^*)$ is a left-right type $\mathit{DD}$  structure over $\Aa(\E^L)$ and $\Aa(\E^R)$ with grading $M$. Moreover $\delta^1$ preserves $A$.
\end{enumerate}
\end{theorem}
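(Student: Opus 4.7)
The type $\DD$ structure identity in item~(\ref{thm:typeDD-item3}) is already established in Proposition~\ref{prop:typeDrelns}(\ref{item:both}), and the identity $(\partial^*)^2=0$ in item~(\ref{thm:typeDD-item1}) follows by duality from the chain complex property of $(C^-(\P),\partial)$ already proved above. The only remaining content is a purely combinatorial check: the co-differential $\partial^*$ has bidegree $(-1,0)$ with respect to $(M,A)$, and $\delta^1$ has the same bidegree as a map of type $\DD$ structures. I would carry this out as two parallel calculations.

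For item~(\ref{thm:typeDD-item1}), fix $f^*=(S,T,\phi)^*$ and $\tau\in\Inv_0^*(\phi)$. By the very definition of $\Inv_0^*$ one has $\inv(\phi^\tau)-\inv(\phi)=+1$ and $\inv(\phi^\tau,\xi^{-1})=\inv(\phi,\xi^{-1})$, while summing the defining identity $n_O(\tau;\phi)=\tfrac12\bigl(\inv(\phi,\omega|_{s_O})-\inv(\phi^\tau,\omega|_{s_O})\bigr)$ over $s_O\in\TO$ gives $\inv(\phi^\tau,\omega)-\inv(\phi,\omega)=-2\sum_{s_O}n_O(\tau)$; the quantities $\inv(\omega)$, $\inv(\xi^{-1})$, $|\SO|$ are unaffected. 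Plugging these differences into the mirror-shadow grading formulas yields
\[
M(f^{\tau\ast})-M(f^\ast)=-1-2\sum_{s_O}n_O(\tau), \qquad A(f^{\tau\ast})-A(f^\ast)=-\sum_{s_O}n_O(\tau),
\]
which are exactly compensated by the $(+2\sum_{s_O}n_O(\tau),\,+\sum_{s_O}n_O(\tau))$ shift coming from the prefactor $\prod_{s_O}U_O^{-n_O(\tau)}$. Hence every summand of $\partial^* f^*$ has bidegree $(-1,0)$ relative to $f^*$.

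For item~(\ref{thm:typeDD-item3}) I would split $\delta^1(f^*)$ into its three summands. The summand $\iota^L(f^*)\otimes\partial^*(f^*)\otimes\iota^R(f^*)$ has the correct bidegree by the previous paragraph together with the observation that the idempotent generators of $\Aa(\E^L),\Aa(\E^R)$ carry bidegree $(0,0)$. For the two $\partial_{\mathrm{mix}}$-summands I would cycle through the cases of exchangeable pairs $(p,q)$ defined before Propositions~\ref{thm:cocomposition} and~\ref{thm:cocomposition2}: a cross-boundary case and the two one-sided cases. In each case the ``exchangeable'' conditions prescribe precisely the changes in $\inv(\phi_j)$ and $\inv(\phi_j,\xi_j^{-1})$, the multiplicities $n_O(pq)$ encode the changes in $\inv(\phi_j,\omega_j)$, and $\inv(\omega_j),\inv(\xi_j^{-1}),|\SO|$ are unchanged; feeding these into the grading formulas (with mirror-shadow signs on the $\P^*$ factor and shadow signs on the idempotent factor) together with the shift of $\prod_{s_O}U_O^{n_O(pq)}$ yields total bidegree $(-1,0)$ in each case.

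The main obstacle is the bookkeeping for the Alexander grading across the pair-exchange cases of $\partial_{\mathrm{mix}}$, since $A$ intertwines $\xi^{-1}$- and $\omega$-contributions whose signs are opposite on shadows versus mirror-shadows; a single sign slip would falsify $A$-preservation. A clean organizing device is to rewrite $M$ and $2A$ as signed counts of pairwise crossings among the black, orange, and green strands of the diagram of $f^*$, since such crossing counts are manifestly additive under wedge products. This additivity reduces the check to computing the local contribution of a single exchanged pair of strands, after which the analysis runs in exact parallel with the grading verifications for $\CATA^-(\P)$ in Theorem~\ref{thm:typeAA}.
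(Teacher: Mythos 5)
Your proposal is correct and follows essentially the same route as the paper: the ungraded statements are taken from Proposition~\ref{prop:typeDrelns} (the paper derives them from Propositions~\ref{thm:cocomposition} and~\ref{thm:cocomposition2}), and the only remaining substance is the check that $\partial^*$ and $\partial_{\textrm{mix}}$ have bidegree $(-1,0)$, which the paper likewise carries out by inversion-count bookkeeping for exchangeable pairs, written out explicitly only for the cross-boundary case and asserted to be similar for the two one-sided cases. One caveat: with the stated grading formulas an idempotent generator $(S,S,\id_S)$ has $M=0$ but $2A=-|\TX|$, so your parenthetical that the idempotents of $\Aa(\E^L)$ and $\Aa(\E^R)$ sit in bidegree $(0,0)$ holds only up to the overall Alexander normalization; this is harmless for your argument, since the same idempotents occur in the source term $\iota^L(f^*)\otimes f^*\otimes\iota^R(f^*)$ and therefore cancel out of the relative degree computation.
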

For tangles in $I\times S^2$ we have:
\begin{theorem}\label{thm:typeAD-DA} Suppose that $\Pp=(\P_1^\circ,\dots,\P_p^\circ)$ is an alternating sequence of shadows and mirror-shadows with well-defined consecutive wedge product. If in addition $\P_1^\circ$ does not have contractible left hand side and $\P_p^\circ$ does not have contractible right hand side. Then
\begin{enumerate} 
\item\label{them:itemAA}  if $\P_1$ and $\P_p$ are both shadows then
$\mathit{CATA}^-(\Pp)$ is a left-right  type $\mathit{AA}$  structure over  $\Aa(\E_L(\P_1))$ and $\Aa(\E_R(\P_p))$ with grading $M$. Moreover $A$ is preserved by all multiplications $m_{0,1,0}, m_{1,1,0}$ and $m_{0,1,1}$;
\item\label{thm:itemAD} if $\P_1$ is a shadow and $\P_p^*$ is a mirror-shadow then
$\mathit{CATD}^-(\Pp)$ is a left-right   type $\mathit{AD}$  structure over $\Aa(\E_L(\P_1))$ and $\Aa(\E^R(\P_p^*))$ with grading $M$. Moreover $A$ is preserved by the maps $\delta_1^1$ and $\delta_2^1$;
\item\label{thm:itemDA} if $\P_1^*$ is a mirror-shadow and $\P_p$ is a shadow then
$\mathit{CDTA}^-(\Pp)$ is a left-right   type $\mathit{DA}$  structure over $\Aa(\E^L(\P_1^*))$ and $\Aa(\E_R(\P_p))$ with grading $M$. Moreover $A$ is preserved by the maps $\delta_1^1$ and $\delta_2^1$;
\item\label{thm:itemDD}  if $\P_1^*$ and $\P_p^*$ are both mirror-shadows then
$\mathit{CDTD}^-(\Pp)$ is a left-right  type $\mathit{DD}$ structure over  $\Aa(\E^L(\P_1^*))$ and $\Aa(\E^R(\P_p^*))$  with grading $M$. Moreover $A$ is preserved by the map $\delta^1$.
\end{enumerate}
\end{theorem}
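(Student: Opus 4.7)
The strategy is to establish $\partial_\wedge^2 = 0$ on $C^-(\Pp)$ for any alternating sequence $\Pp = (\P_1^\circ, \ldots, \P_p^\circ)$ with well-defined consecutive wedge products, and then to harvest the various bimodule structure maps from this master identity. Decompose the differential as
\[
\partial_\wedge \;=\; \sum_{j=1}^{p} \partial^\circ_j \;+\; \sum_{j=1}^{p-1} \partial_{\mathrm{mix},j},
\]
where $\partial^\circ_j$ is the internal differential $\partial$ or $\partial^*$ applied to the $j$-th factor, and $\partial_{\mathrm{mix},j}$ is the mixed differential acting on the $(j,j+1)$ pair of factors. Expanding $\partial_\wedge^2$ yields contributions that fall into three groups: (i) products of two operators acting on disjoint sets of tensor factors, which cancel in pairs over $\F_2$ by commuting their order; (ii) the squares $(\partial^\circ_j)^2$, which vanish by Theorem \ref{thm:typeAA}(1) and Theorem \ref{thm:typeDD}(\ref{thm:typeDD-item1}); and (iii) cross-terms of the form $\partial^\circ_j \partial_{\mathrm{mix},k} + \partial_{\mathrm{mix},k}\partial^\circ_j$ with $j \in \{k,k+1\}$, along with $\partial_{\mathrm{mix},j}^2$ and $\partial_{\mathrm{mix},j}\partial_{\mathrm{mix},j+1} + \partial_{\mathrm{mix},j+1}\partial_{\mathrm{mix},j}$. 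The portions of (iii) confined to two consecutive factors are exactly the content of Propositions \ref{thm:cocomposition} and \ref{thm:cocomposition2}, while the truly new ingredients live on three consecutive factors.

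With $\partial_\wedge^2 = 0$ in hand, the bimodule structure maps are extracted exactly as in Proposition \ref{prop:typeDrelns}. When $\P_1^\circ = \P_1$ is a shadow, the left action of $\Aa(\E_L(\P_1))$ is inherited from left multiplication on the first factor via $m_{1,1,0}(a\otimes \mathbf{f}) = (a\cdot f_1)\otimes f_2^\circ\otimes\cdots\otimes f_p^\circ$, while $m_{i,1,j} = 0$ for $i\ge 2$ on the left, matching the one-piece case of Theorem \ref{thm:typeAA}(\ref{thm:typeAA-item4}). When $\P_1^\circ = \P_1^*$ is a mirror-shadow, the left type $D$ map is defined by $\delta^L(\mathbf{f}) = \partial_\wedge(\iota^L(f_1^*)\otimes\mathbf{f}) - \iota^L(f_1^*)\otimes \partial_\wedge(\mathbf{f})$, isolating the terms that produce a generator of $\Aa(\E^L(\P_1^*))$ on the far left. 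The analogous prescriptions apply on the right, and in the mixed cases one combines them. The type $\AA$, $\AD$, $\DA$, and $\DD$ structure identities then follow by projecting the identity $\partial_\wedge^2 = 0$ onto appropriate idempotent components and using that $\Aa(\E)$ is a differential graded algebra by Theorem \ref{thm:typeAA}(3); the bookkeeping is identical to that of Proposition \ref{prop:typeDrelns}(\ref{item:both}), just with more tensor factors.

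For the grading statements, it suffices to verify that $M$ and $A$ are preserved by each elementary ingredient. Preservation under $\partial$, $\partial^*$ and the algebra multiplications of $\Aa(\E_L)$, $\Aa(\E_R)$, $\Aa(\E^L)$, and $\Aa(\E^R)$ is Theorems \ref{thm:typeAA} and \ref{thm:typeDD}. For the mixed differential $\partial_{\mathrm{mix}}$, I would perform a case-by-case calculation for each of the three exchange patterns described in Subsection \ref{subsection:wedge} (the $(p,q)\in S_1\times S_2$, the $(p,q)\subset S_1$, and the $(p,q)\subset S_2$ cases), together with their $\P_1\wedge \P_2^*$ mirrors. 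In each case one writes down how $\inv(\phi)$, $\inv(\phi,\xi^{-1})$, and each $\inv(\phi,\omega\vert_{s_O})$ change under the exchange and then combines this with the prescribed $U_O$-exponents. The grading rules $M(U_O x) = M(x)-2$ and $A(U_O x) = A(x)-1$ are calibrated exactly so that the exchange and its $U_O$-weighting land on the same bidegree.

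The main obstacle is the three-factor combinatorial identity underlying the $\partial_{\mathrm{mix},j}\partial_{\mathrm{mix},j+1}$ cancellations in step (iii) of the first paragraph. Each such cancellation must pair an exchange across the $(j,j+1)$ boundary with one across the $(j+1,j+2)$ boundary in such a way that the two compositions match, with any discrepancy in orange-strand and green-strand crossings accounted for by matching $U_O$-exponents. Because the middle factor $j+1$ is either a shadow or a mirror-shadow, both the shadow--mirror--shadow and mirror--shadow--mirror patterns must be checked, and within each one must enumerate how the exchangeability conditions defining $\mathit{Exch}(\cdot)$ interact when a single strand in factor $j+1$ is touched by both exchanges. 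Once this local identity is established, the full $\partial_\wedge^2 = 0$ follows by summing local contributions, since all other cross-terms act on disjoint factors. This is the technical heart of the argument, and it is here that the reformulation in terms of bordered grid diagrams in Section \ref{sec:borderedgrid} will likely give the cleanest accounting.
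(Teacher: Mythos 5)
Your proposal is correct in outline, and your grading argument is essentially the paper's: the paper also reduces the graded statement to checking that $\partial_\wedge$ is a degree $(-1,0)$ map, and carries out exactly the inversion-count bookkeeping you describe (in detail for an exchangeable pair $(p,q)\in S_1\times S_2$, with the $(p,q)\subset S_1$ and $(p,q)\subset S_2$ cases left as ``similar counting arguments''). Where you diverge is in how the underlying identity $\partial_\wedge^2=0$ on $C^-(\Pp)$ is established. You propose a direct strand-diagram expansion, isolating the three-consecutive-factor cancellations $\partial_{\mathrm{mix},j}\partial_{\mathrm{mix},j+1}+\partial_{\mathrm{mix},j+1}\partial_{\mathrm{mix},j}$ as the technical heart and promising a case analysis of how the exchangeability conditions interact when both exchanges touch the middle factor. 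The paper avoids this entirely: it proves Propositions \ref{thm:cocomposition} and \ref{thm:cocomposition2} by identifying $(C^-(\P_1^\circ\wedge\P_2^\circ),\partial_\wedge)$ with the rectangle-counting complex of a glued (annular) grid diagram, and Proposition \ref{thm:chain3} extends this to the full sequence $\Pp$, at which point $\partial^2=0$ is the standard ``every composite domain of two empty rectangles has exactly two decompositions'' argument; in particular the three-factor cancellations come for free because no single rectangle can cross two consecutive gluings. Your closing remark that the bordered grid reformulation ``will likely give the cleanest accounting'' is exactly right --- that reformulation is not merely a convenience but is the paper's actual proof mechanism, and if you pursue the direct combinatorial route you will be re-deriving the rectangle-decomposition argument by hand in strand-diagram language. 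Your extraction of the structure maps from $\partial_\wedge^2=0$ by projecting onto idempotent components mirrors Proposition \ref{prop:typeDrelns} and is how the paper deduces the ungraded structure identities, so that part is faithful to the paper.
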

For tangles in $D^3$ and $S^3$:
\begin{theorem}\label{thm:typeA-typeD}
Suppose that $\Pp=(\P_1^\circ,\dots,\P_p^\circ)$ is an alternating sequence of shadows and mirror-shadows with well-defined consecutive wedge product. Then
\begin{enumerate}
\item if $\P_1^\circ$ is left-contractible, and $\P_p$ is a non-right contractible shadow then $\mathit{CTA}^-(\Pp)$ is a right  type $\mathit{A}$ structure over $\Aa(\E_R(\P_p))$ with grading $M$. Moreover $A$ is preserved by all multiplications $m_{0}$ and $m_{1}$;
\item if $\P_1^\circ$ is left-contractible, and $\P_p^*$ is a non-right contractible mirror-shadow then $\mathit{CTD}^-(\Pp)$ is a right type $\mathit{D}$ structure over  $\Aa(\E^R(\P_p^*))$ with grading $M$. Moreover $A$ is preserved by the map $\delta^1$;
\item if $\P_p^\circ$ is right-contractible, and $\P_1$ is a non-left contractible shadow then $\mathit{CAT}^-(\Pp)$ is a left  type $\mathit{A}$ structure over $\Aa(\E_L(\P_1))$ with grading $M$. Moreover $A$ is preserved by all multiplications $m_{0}$ and $m_{1}$;
\item if $\P_p^\circ$  is right-contractible, and $\P_1^*$ is a non-left contractible mirror-shadow then $\mathit{CDT}^-(\Pp)$ is a left  type $\mathit{D}$ structure over $\Aa(\E^L(\P_1^*))$ with grading $M$. Moreover $A$ is preserved by the map $\delta^1$;
\item if $\P_1^\circ$ is left-contractible and $\P_p^\circ$ is right-contractible, then $\mathit{CT}^-(\Pp)$ is a graded chain complex over $\k$ with grading $M$. Moreover $\partial$ preserves $A$.
\end{enumerate}

\end{theorem}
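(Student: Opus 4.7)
The plan is to deduce Theorem~\ref{thm:typeA-typeD} from Theorem~\ref{thm:typeAD-DA} together with Lemma~\ref{lem:selfglueingchain} and its three analogs for $\partial_D$, ${}_A\partial$, and $\partial_A$. Convention~\ref{conv:contract} replaces the boundary differential on each contractible end with its contracted map, and what must be checked is that the resulting total differential still satisfies the relevant $\mathcal A_\infty$, type $D$, or chain complex identity, and that the gradings behave as claimed.

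First, for the single-sided closure cases (items 1--4), I would work through one representative case, say item (2), where $\P_1^\circ$ is left-contractible and $\P_p^*$ is a non-right-contractible mirror-shadow; the other three are parallel. Writing the total differential as $\partial_{\mathrm{tot}}=\partial_\mathrm{contr}+\partial_\wedge^{\mathrm{int}}$, where $\partial_\mathrm{contr}$ is the contracted map on the first factor and $\partial_\wedge^{\mathrm{int}}$ bundles the remaining internal and mixed differentials on factors $2,\dots,p$, the type $D$ output $\delta^1$ is obtained exactly as in Proposition~\ref{prop:typeDrelns}, by extracting the rightmost $\partial_{\mathrm{mix}}$ term into the algebra $\Aa(\mathcal{E}^R(\P_p^*))$. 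To verify the type $D$ structure equation I would follow the same trick as in that proposition: rewrite it as $\partial_\wedge^2=0$ on the extended wedge obtained by appending the idempotent generator $\iota^R$. The resulting terms split into interior terms, which vanish by Proposition~\ref{thm:cocomposition2} applied to $\P_2^\circ\wedge\cdots\wedge\P_p^*\wedge\iota^R$, and cross-terms involving the contracted leftmost factor, which vanish upon combining the $(\partial_\mathrm{contr})^2$-style cancellations from Lemma~\ref{lem:selfglueingchain} with the $\partial_{\mathrm{mix}}^2$ cancellations across the first wedge. Item (5) then follows by applying this reasoning at both ends simultaneously: with no algebra outputs left, the structure identity collapses to $\partial_{\mathrm{tot}}^2=0$, proved by combining the interior vanishing of Propositions~\ref{thm:cocomposition} and~\ref{thm:cocomposition2} with the two self-gluing identities from Lemma~\ref{lem:selfglueingchain} at the two ends. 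The gradings on $C^-(\Pp)$ are unchanged by contraction, so the claim that $M$ drops by one and $A$ is preserved reduces to checking that each contracted exchange behaves this way; this is a direct computation from the definitions, noting that ${}_D\partial$ and $\partial_D$ resolve intersections (opposite sign convention from $\partial^*$) while ${}_A\partial$ and $\partial_A$ introduce them, consistent with their roles in Theorems~\ref{thm:typeAA} and~\ref{thm:typeDD}.

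The main obstacle will be the cross-term bookkeeping in the first step. The exchangeability condition for ${}_D\partial$ forces every intermediate value of $\phi$ and of $\xi$ to lie in a prescribed interval, and this condition must be matched precisely against the exchangeability conditions that appear when squaring $\partial_{\mathrm{mix}}$ at the first wedge. In the pure strand-diagram language this matching is somewhat opaque, since contracted exchanges and mixed exchanges look structurally different. I expect the cleanest route is to postpone the verification until after the reformulation in bordered grid diagrams in Section~\ref{sec:borderedgrid}, where the left-contraction is literally a capping-off of the grid and both contributions to $(\partial_\mathrm{tot})^2$ become instances of standard annular-rectangle cancellations; this is precisely the context in which Lemma~\ref{lem:selfglueingchain} is proved, and the same bijection-of-rectangles argument should extend to the cross-terms with $\partial_{\mathrm{mix}}$.
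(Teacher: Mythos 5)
Your proposal is correct and follows essentially the same route as the paper: the ungraded structure identities are reduced to $\partial_\wedge^2=0$ via Propositions \ref{thm:cocomposition}, \ref{thm:cocomposition2} and Lemma \ref{lem:selfglueingchain}, all of which the paper itself verifies only after passing to (self-glued plumbings of) bordered grid diagrams where everything is a standard rectangle-cancellation argument, exactly as you anticipate. The grading statement is likewise handled in the paper by the same direct inversion-counting check that $\partial_\wedge$ (and, implicitly, the contracted differentials) has degree $(-1,0)$, carried out for one representative case with the others declared analogous.
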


\begin{proof}[Proof of Theorems \ref{thm:typeAA}, \ref{thm:typeDD}, \ref{thm:typeAD-DA} and \ref{thm:typeA-typeD}] 
Theorem \ref{thm:typeAA} and (\ref{thm:typeDD-item1}) of Theorem \ref{thm:typeDD} are 
consequences of Propositions \ref{prop:dm}, \ref{thm:cocomposition} and \ref{thm:cocomposition2} and the definition of the grading.
Item (\ref{thm:typeDD-item3}) of Theorem \ref{thm:typeDD} is a consequence of  Theorem \ref{thm:typeAD-DA}, and the ungraded version of each item  of Theorems \ref{thm:typeAD-DA} and \ref{thm:typeA-typeD} follows from Propositions \ref{thm:cocomposition} and \ref{thm:cocomposition2}. Thus, what is left to check is that $\partial_\wedge$ is a degree $(-1,0)$ map. 
To keep notation simple, we will give a proof in the case of $(C^-(\P_1^*\wedge \P_2),\partial_\wedge)$. Other cases follow the same way. Given a generator $f=f_1^*\otimes f_2=(S_1,T_1,\phi_1)^*\otimes(S_2,T_2,\phi_2)$, then 
\[\partial_\wedge(f_1^*\otimes f_2)=\partial_1^*(f_1^*)\otimes f_2+f_1^*\otimes \partial_2(f_2{})+\partial_\textrm{mix}(f_1^*\otimes f_2).\]
For the first two terms the statement follows from Theorem \ref{thm:typeAA} and (\ref{thm:typeDD-item3}) of Theorem \ref{thm:typeDD}. Next note that
\[\begin{array}{lcl}
M(f)&=&-\inv(\phi_1)+\inv(\phi_2)+\inv(\phi_1,\omega_1)-\inv(\phi_2,\omega_2)-\inv(\omega_1)+\inv(\omega_2)-|S_{\O_1}|\\
2A(f)&=&-\inv(\phi_1,\xi_1^{-1})+\inv(\phi_2,\xi_2)+\inv(\phi_1,\omega_1)-\inv(\phi_2,\omega_2)-\\
&&-\inv(\omega_1)+\inv(\omega_2)+\inv(\xi_1^{-1})-\inv(\xi_2^{-1})-|S_{\O_1}|-|T_{\X_2}|\\
\end{array}
\]
For an exchangeable pair $(p,q)\in S_1\times S_2$ we can write up the same two equations by changing $\phi_1$ and $\phi_2$ to $\phi_1^{p q}$ and $\phi_2^{p  q}$ respectively.

Since $S_{\O_1}{}\sqcup S_{\O_2}=\{1,\dots, m_1{} \}$ and since the intersection points only change for strands that end or start between $p$ and $q$ we have

\[
\begin{array}{lcl}
|p-q|&=&|\Inv(\phi_1^{p  q},\omega_1)\setminus \Inv(\phi_1,\omega_1)|+|\Inv(\phi_1,\omega_1)\setminus\Inv(\phi_1^{p  q},\omega_1)|+\\
&&+|\Inv(\phi_2,\omega_2)\setminus \Inv(\phi_2^{p  q},\omega_2)|+|\Inv(\phi_2^{p  q},\omega_2)\setminus\Inv(\phi_2,\omega_2)|\\
&=&|\Inv(\phi_1^{p  q},\omega_1)|- |\Inv(\phi_1,\omega_1)|-2|\Inv(\phi_1^{p  q},\omega_1)\setminus \Inv(\phi_1,\omega_1{})|+\\
&&+|\Inv(\phi_2,\omega_2)|- |\Inv(\phi_2^{p  q},\omega_2)|-2|\Inv(\phi_2,\omega_2)\setminus \Inv(\phi_2^{p  q},\omega_2)|\\
&=&-2\sum_{s_O} n_O(p  q)+|\Inv(\phi_1{}^{p  q},\omega_1)|- |\Inv(\phi_1{},\omega_1{})|\\
&&+|\Inv(\phi_2,\omega_2)|- |\Inv(\phi_2^{p  q},\omega_2)|
\end{array}
\]

Since the pair $(p,q)$ is exchangeable, we have $\Inv(\phi_1)\subset \Inv(\phi_1^{p q})$, so for the inversions of $\phi_1{}$ and $\phi_2$ the analog of the above formula simplifies to 
\[\inv(\phi_1{}^{p q})-\inv(\phi_1)+\inv(\phi_2)-\inv(\phi_2^{p q})=|p-q|-1\]
Similarly we get 
\[\inv(\phi_1{}^{p q},\xi{}_1^{-1})-\inv(\phi_1{},\xi{}_1^{-1})+\inv(\phi_2,\xi_2^{-1})-\inv(\phi_2^{p q},\xi_2^{-1})=|p-q|\]
which gives 
\[M(f)-M(\prod_{s_O\in S_{\O_1}\cup S_{\O_2}}U^{n_O(p q)}f^{p q})=1\]
and  
\[A(f)-A(\prod_{s_O\in S_{\O_1}\cup S_{\O_2}}U^{n_O(p q)}f^{p q})=0.\]  
Similar counting arguments work for exchangeable pairs $(p,q)$ with $(p,q)\subset S_1$ or $(p,q)\subset S_2$.
\end{proof}

\subsection{Pairing generalized strand modules}
Taking a wedge product of a shadow and a mirror-shadow corresponds to  taking the box tensor product of their algebraic structures:
\begin{theorem}\label{thm:wedgevsbox} Let $\P_1$ and $\P_2$ be shadows. Then 
\begin{enumerate}
\item\label{thm:wedgevsbox-itemDA} if the mirror-shadow $\P_1^*$ and shadow $\P_2$ have well-defined wedge products then the left-right  type $\mathit{DA}$ structures $\mathit{CDTA}^-(\P_1^*\wedge\P_2)$  and $\CDTD^-(\P_1^*)\boxtimes\CATA^-(\P_2)$ over $\Aa(\E^L(\P_1^*))$ and $\Aa(\E_R(\P_2))$ are isomorphic as type $\mathit{DA}$ structures.

\item\label{thm:wedgevsbox-itemAD} if the shadow $\P_1$ and mirror-shadow $\P_2^*$ have well-defined wedge products then the left-right  type $\mathit{AD}$ structures $\mathit{CATD}^-(\P_1\wedge\P_2^*)$ and $\CATA^-(\P_1)\boxtimes\CDTD^-(\P_2^*)$ over $\Aa(\E_L(\P_1))$ and $\Aa(\E^R(\P_2^*))$  are isomorphic as type $\mathit{AD}$ structures.
\end{enumerate}
\end{theorem}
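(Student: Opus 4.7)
Since parts (1) and (2) are symmetric, I will focus on part (1). The first step is to observe that both sides of the asserted isomorphism have the same underlying $(\k,\k)$-bimodule $C^-(\P_1^*) \otimes_{\I} C^-(\P_2)$, with the same idempotent bimodule structure from $\Aa(\E^L(\P_1^*))$ on the left and $\Aa(\E_R(\P_2))$ on the right; the well-defined wedge product condition is exactly the condition $\Aa(\E^R(\P_1^*)) = \Aa(\E_L(\P_2))$ required to form the box tensor. I take the candidate isomorphism $\Phi$ to be the identity on generators, so that the task reduces to verifying that $\Phi$ intertwines the type $\DA$ structure maps $\delta^1_{1+j}$.

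Next I decompose both structure maps. For $\mathit{CDTA}^-(\P_1^* \wedge \P_2)$, the $\delta^1_1$ piece is determined by $\partial_\wedge = \partial^*\otimes \id + \id\otimes \partial + \partial_{\textrm{mix}}$ together with the leftmost $D$-output, and higher $\delta^1_{1+j}$ come from the right $\Aa(\E_R(\P_2))$ action on $C^-(\P_2)$. For $\CDTD^-(\P_1^*) \boxtimes \CATA^-(\P_2)$, the type $\DA$ structure is built by iterating the right $D$-action on $\CDTD^-(\P_1^*)$ to produce a sequence of algebra elements in $\Aa(\E^R(\P_1^*))$, then absorbing them from the left via the $\AA$-action $m_{k,1,j}$ of $\CATA^-(\P_2)$, with the remaining left $D$-output of $\CDTD^-(\P_1^*)$ providing the left $A$-output of the resulting $\DA$. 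Contributions from the internal differentials of each factor and from the left/right boundary actions agree on both sides by direct inspection.

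The substantive step is matching the $\partial_{\textrm{mix}}$-term with the box tensor pairing, handled case-by-case according to the pair type in Section \ref{subsection:wedge}. A cross-pair $(p,q) \in S_1 \times S_2$ corresponds to $\delta^R(f_1^*)$ outputting the transposition on indices $p, q$ in $\Aa(\E^R(\P_1^*))$, whose left action via $m_{1,1,0}$ on $f_2$ produces exactly the cross-pair exchange on the $\CATA^-$-side; the weights $n_{O_1}(pq)$ and $n_{O_2}(pq)$ combine to give the $U_O$-coefficient in $\partial_{\textrm{mix}}$. A self-pair $(p,q) \subset S_1$ corresponds to $\delta^R$ outputting an idempotent (so that nothing happens on the $\CATA^-$-side) while $f_1^*$ itself is modified, which matches the combinatorial requirement that all intermediate strands lie on the $\P_1^*$-side. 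A self-pair $(p,q) \subset S_2$ arises from the dual scenario, where the $\CDTD^-$-side output is the trivial idempotent but a non-trivial permutation acts on the left of $f_2$.

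The main obstacle is the careful accounting of $U_O$-powers across the three cases, ensuring that the exponents $n_O(pq)$ in $\partial_{\textrm{mix}}$ agree with those from the algebra multiplication and from the $\delta^R$-output on the box tensor side. This is handled by the additivity of the relevant inversion counts under composition of shadows, the same mechanism used in the grading computations in the proof of Theorem \ref{thm:typeAD-DA}. Once this matching is established, the verification is a formal combinatorial computation with no further new ideas required; the reformulation via bordered grid diagrams in Section \ref{sec:borderedgrid} should streamline it considerably, as both the wedge product and the box tensor can be read off a single combinatorial picture obtained by stacking the two diagrams along their shared boundary.
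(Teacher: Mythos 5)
Your proposal is correct and follows essentially the same route as the paper, whose entire proof of this theorem is the one-line observation that the claim ``follows directly from the definition of $\delta^L$, $\delta^R$, and $\partial_{\textrm{mix}}$'' --- precisely because $\delta^R(f_1^*)$ is \emph{defined} as $\partial_\wedge(f_1^*\otimes\iota^R(f_1^*))$ and the left multiplication on $\CATA^-(\P_2)$ is concatenation, so the box tensor reassembles $\partial_\wedge$ on $\P_1^*\wedge\P_2$ term by term, exactly as in your case analysis of cross-pairs and the two kinds of self-pairs. The only quibble is your phrasing in the last case (for $(p,q)\subset S_2$ the module factor $f_1^*$ is unchanged while the right $D$-output of $\CDTD^-(\P_1^*)$ is a \emph{non}-idempotent algebra element acting on $f_2$), which does not affect the correctness of the argument.
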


\begin{proof}
This follows directly from the definition of $\delta^L$,  $\delta^R$, and $\partial_{\textrm{mix}}$. 
\end{proof}

Similar theorems hold for multiple wedge products of shadows and mirror-shadows.

\subsection{Relations between the $U$-actions}
Let  $\Pp=(\P_1^\circ,\dots,\P_p^\circ)$ be an alternating sequence of shadows and mirror-shadows with well-defined consecutive wedge products. For $s_O\in S_{\O_i}$ and $s_O'\in S_{\O_{i'}}$ let $O=(s_O,\omega_{i}s_O)$ and $O'=(s_O',\omega_{i'}s_O')$. 
\begin{definition}
The pairs $O$ and $O'$ are \emph{connected by a path of length $k$} if there is a sequence of elements $s_O=s_0,s_1,\dots,s_k=s_O'$ such that $s_l\in S_{\O_{j_l}}$  and $s_{l+1}= \xi_{j_l'}\omega_{j_{l}}s_{l}$. Here, depending on whether $\P^\circ_{j_l}$ is a shadow or a mirror shadow $\omega_{j_l}s_l$ is in $S_{\X_{j_l}}\coprod S_{\X_{j_l-1}}$ or $S_{\X_{j_l}}\coprod S_{\X_{j_l+1}}$, thus $j_l'$ equals $j_l-1$, $j_l$ or $j_l+1$.
\end{definition}
An example of a path is pictured on Figure \ref{fig:path}.
\begin{figure}[h]
 \centering
       \includegraphics[scale=1.2]{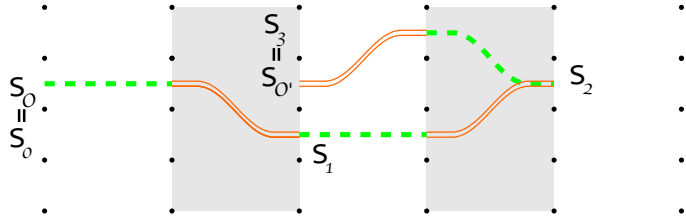} 
       \vskip .2 cm
       \caption[A path of length three.]{\textbf{A path of length three.}}\label{fig:path}
\end{figure}
\begin{lemma}\label{lem:Uaction}
Suppose that $O$ and $O'$ are connected by a path. Then the actions of $U_O$ and $U_{O'}$ on $\mathit{CT}^-(\Pp)$ are equivalent.
\end{lemma}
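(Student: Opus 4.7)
The plan is to prove the lemma by induction on the length $k$ of the path connecting $O$ and $O'$. The case $k = 0$ is trivial, so by induction it suffices to handle paths of length $1$. Suppose therefore that $s_{O'} = \xi_{j'}\omega_j s_O$ for indices $j, j' \in \{1, \ldots, p\}$ with $|j - j'| \leq 1$. Geometrically, the path $s_O \to \omega_j s_O \to \xi_{j'}(\omega_j s_O) = s_{O'}$ traces a short segment of the tangle $\T(\Pp)$ through one $\xi$-arc and one $\omega$-arc; in particular, $O$ and $O'$ lie on the same component of $\T(\Pp)$.

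The strategy is to construct an explicit $\F_2$-linear chain homotopy
$$H \colon \mathit{CT}^-(\Pp) \longrightarrow \mathit{CT}^-(\Pp)[1]$$
satisfying
$$\partial_\wedge \circ H + H \circ \partial_\wedge = U_O + U_{O'},$$
from which the equivalence of the two $U$-actions follows at once. On a generator $\mathbf{f} = f_1^\circ \otimes \cdots \otimes f_p^\circ$, the homotopy $H(\mathbf{f})$ is defined as a sum of local modifications near the short arc joining $s_O$ to $s_{O'}$: for $j = j'$, one sums over inversions of $\phi_j$ (respectively $\phi_j$ for a mirror-shadow) whose resolution straddles the segment between $s_O$ and $s_{O'}$ in $\P_j^\circ$; for $j \neq j'$, one sums over exchanges of pairs of black endpoints across the wedge as in the definition of $\partial_\mathrm{mix}$, with endpoints forced to lie in the relevant interval. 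Each term carries a coefficient given by a monomial in the remaining $U_{O''}$ recording how many other green endpoints are crossed during the modification.

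Verification of the homotopy identity is then a case analysis in the same spirit as the proofs of Propositions \ref{prop:dm}, \ref{thm:cocomposition}, and \ref{thm:cocomposition2}: one expands $\partial_\wedge H(\mathbf{f}) + H \partial_\wedge(\mathbf{f})$ as a signed sum over pairs (composed inversion, composed exchange, etc.), and pairs off terms by a bijection that swaps the order in which the two operations are performed, very much as in the Reidemeister-type relations of Figure \ref{fig:reidemeister}. The only unmatched contributions are the two ``endpoint'' configurations, which correspond exactly to multiplication by $U_O$ and by $U_{O'}$.

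The main obstacle I anticipate is the bookkeeping in the mixed case $j \neq j'$, where $H$ must interact with $\partial_\mathrm{mix}$ on both the $\P_j^\circ$ side and the $\P_{j'}^\circ$ side, and one has to verify that the four types of exchangeable pairs used in the definition of $\partial_\mathrm{mix}$ all conspire to produce exactly the two endpoint $U$-contributions, with all other terms cancelling. Once length $1$ is established, the inductive step follows from the observation that homotopy between $U$-actions is an equivalence relation, so concatenating the homotopies along consecutive edges of the path yields a homotopy between $U_O$ and $U_{O'}$.
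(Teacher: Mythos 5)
Your overall strategy is the same as the paper's: reduce to a path of length one and exhibit a homotopy $H$ with $\partial_\wedge H + H\partial_\wedge = (U_O+U_{O'})\,\mathrm{id}$ by pairing off composite configurations. The paper, however, carries this out after translating to the bordered grid diagram $G$, where $H=\H_X$ simply counts empty rectangles crossing the intermediate marking $X$ (the one in the row of $O$ and the column of $O'$) exactly once, and the identity is the standard argument of Manolescu--Ozsv\'ath--Sarkar. Your strand-diagram version has two problems. First, the condition defining which terms $H$ counts is wrong as stated: a contributing rectangle must cross the specific $X$ at the intersection of the row of $O$ and the column of $O'$, which is a two-dimensional (row \emph{and} column) condition, not the one-dimensional condition of ``straddling the segment between $s_O$ and $s_{O'}$.'' The two unmatched terms in $\partial_\wedge H + H\partial_\wedge$ are precisely the horizontal and vertical thin annuli through that $X$, which contain only $\{O,X\}$ and $\{X,O'\}$ respectively; without identifying $H$ with ``rectangles through $X$'' you have no mechanism forcing the leftover terms to be exactly $U_O$ and $U_{O'}$.

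Second, and more seriously, you only verify that $H$ is a chain homotopy of the underlying complex, whereas the lemma asserts equivalence of the $U$-actions as morphisms of the relevant bordered structure (type $\mathit{AA}$, $\mathit{DA}$, $\mathit{AD}$, or $\mathit{DD}$). For that one must check that $H$, extended by zero in all higher components, satisfies the full morphism relations --- in particular that no configuration counted by $H$ interacts with the algebra actions at the boundary. This is exactly where the hypothesis built into the definition of a path is used: the intermediate $X$ does not lie in $G^\circ(\P_1^\circ)$ or $G^\circ(\P_p^\circ)$, so the row and column containing it are closed up into annuli and no rectangle through $X$ can reach $\partial G$; hence $\H_X$ has no nontrivial components of the form $\Aa^{\otimes l}\otimes \mathit{CT}^-(\Pp)\otimes\Aa^{\otimes r}\to \mathit{CT}^-(\Pp)$ with $l,r>0$. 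You never invoke this hypothesis, and for sequences beginning or ending with a mirror-shadow you also need the paper's device of completing $\Pp$ by the idempotent shadows $\E^R(\P_1^*)$ and $\E^L(\P_p^*)$ so that a chain homotopy of the completed complex yields the required type $\mathit{DD}$ (or $\mathit{DA}$, $\mathit{AD}$) equivalence. Your inductive reduction to length one and the concatenation of homotopies along the path are fine.
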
 
Here and throughout the paper ``equivalent'' means equivalence for the appropriate structures. Thus, it means type $\mathit{AA}$   equivalence for $\mathit{CATA}^-(\Pp)$,  type $\mathit{DA}$  equivalence for $\mathit{CDTA}^-(\Pp)$,  type $\mathit{AD}$  equivalence for 
$\mathit{CATD}^-(\Pp)$, and  type $\mathit{DD}$ equivalence for $\mathit{CDTD}^-(\Pp)$.

The proof of Lemma \ref{lem:Uaction} will be given in the next section after introducing bordered grid diagrams.

%%%%%%%%%%%%%%%%%%%%%%%%%%%%%%%%%%%%%%%%%%%%%%%%%%%%%%%
%BORDERED GRID
%%%%%%%%%%%%%%%%%%%%%%%%%%%%%%%%%%%%%%%%%%%%%%%%%%%%%%%

\section{Bordered grid diagrams}\label{sec:borderedgrid}
In what follows we introduce bordered grid diagrams and structures corresponding to bordered grid diagrams. As it will turn out, all of these notions are reformulations of notions from Section \ref{sec:combinatorial}.

Bordered grid diagrams are a relative version of the grid diagrams used in combinatorial knot Floer homology \cite{mos,most}. Many of the definitions below are parallel to the ones in  \cite{mos,most}. 

\begin{definition} 
A \emph{bordered grid diagram} $G\subset [c_1,c_2]\times[d_1,d_2]$ is given by a quadruple $(\alphas,\betas,\X,\O)$ where  $\alphas=\{\alpha_a\}_{a\in \a}$ is a set of horizontal arcs  indexed by $\a=(d_1,d_2)\cap\Z$ with $\alpha_a=[c_1,c_2]\times\{a\}$, and $\betas=\{\beta_b\}_{b\in\b}$ is a set of vertical  arcs indexed by $\b= (c_1,c_2)\cap\Z$ with $\beta_b=\{b\}\times[d_1,d_2]$.
 The markings $\X$ and $\O$ are subsets of $ [c_1,c_2]\times[d_1,d_2]\cap (\Z+\frac12)\times (\Z+\frac12)$ with the property that for each horizontal and vertical line $| [c_1,c_2]\times \{j+\frac12\}\cap \X|\le 1$, $| [c_1,c_2]\times\{j+\frac12\}\cap \O|\le 1$, $|\{j+\frac12\}\times [d_1,d_2]\cap \X|\le 1$, and $|\{j+\frac12\}\times[d_1,d_2]\cap \O|\le 1$. 

By identifying the edges $[c_1,c_2]\times\{d_1\}$ and $[c_1,c_2]\times\{d_2\}$ we get an \emph{annular bordered grid diagram} $G_{\b}=(\alphas,\widetilde{\betas},\X,\O)$, where $\widetilde{\betas}$ now consists of closed curves $\widetilde{\beta}_b=\{b\}\times[d_1,d_2]/\sim$. 
Similarly, by identifying the edges $\{c_1\}\times[d_1,d_2]$ and $\{c_2\}\times[d_1,d_2]$ we get another annular bordered grid diagram $G_{\a}=(\widetilde{\alphas},\betas,\X,\O)$. 
\end {definition}
A bordered grid diagram is an example of a multi-pointed bordered Heegaard diagram for that tangle; for the general definition of such diagrams, we refer to Section \ref{sec:hdiagrams} below. 
In the sequel we will consider modules associated to bordered grid diagrams, annular bordered grid diagrams, and plumbings of annular bordered grid diagrams. Since all of these diagrams are ``nice'' in the sense of Definition \ref{def:nice}, the structure maps have a combinatorial description. 

\subsection{Generators}\label{subsec:gen}
For each $O\in \O$ fix a variable $U_O$, and let $C^-(G)$ be the free module generated over $\k=\F_2[U_O]_{O\in\O}$ by tuples of intersection points $\x\subset \alphas\cap\betas$ with the property that $|\alpha_a\cap \x|\le 1$ and $|\beta_b\cap\x|\le 1$. The set of generators is denoted by $\gen(G)$.
Note that the generators naturally split into subsets $\gen_i(G)=\{\x \colon \vert\x\vert =i\}$. Then 
$\gen(G)=\cup_{i=1}^{\min\{c_2-c_1,d_2-d_1\}} \gen_i(G)$.

\subsection{Inner differential}\label{subsec:innerdiff}
The differential can be defined by counting rectangles entirely contained in the open rectangle $(c_1,c_2)\times (d_1,d_2)$ and with boundaries on $\alphas\cup\betas$. For $c_1<b_1<b_2<c_2$ and $d_1<a_1<a_2<d_2$,  $R=[b_1,b_2]\times [a_1,a_2]$ \emph{is a rectangle from $\x$ to $\y$} if $\x\cap R=\{(b_1,a_1),(b_2,a_2)\}$,  $\y\cap R=\{(b_1,a_2),(b_2,a_1)\}$ and $\x\setminus R=\y\setminus R$. The rectangle $R$ is empty if $\X\cap R=\emptyset$. The set of empty rectangles from $\x$ to $\y$ is denoted by ${\Re}_0(\x,\y)$. The differential on $\x\in\gen(G)$ is defined by 
\[\partial \x=\sum_{\y\in\gen(G)}\sum_{R\in {\Re}_0(\x,\y)}\prod_{O\in \O} U_O^{|R\cap O|}\y\]
Figure \ref{fig:griddifferential} gives an example of the inner differential. Extend $\partial$ for $C^-(G)$ linearly. 
By the usual arguments for grid diagrams (that every domain representing a term in $\partial^2$ has an alternate decomposition) we have:
\begin{proposition}
$(C^-(G),\partial)$ is a chain complex. \qed
\end{proposition}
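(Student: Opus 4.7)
The plan is to show $\partial^2=0$ by the standard ``double-rectangle cancellation'' argument familiar from combinatorial grid diagram proofs (cf.\ \cite{mos,most}, and the nice-diagram picture of Sarkar-Wang), adapted to the bordered setting. A term of $\partial^2\x$ corresponds to a composable pair $(R_1,R_2)$ with $R_1\in\Re_0(\x,\y)$ and $R_2\in\Re_0(\y,\z)$, contributing $\bigl(\prod_{O\in\O} U_O^{|R_1\cap O|+|R_2\cap O|}\bigr)\z$. Since the total multiplicity of each $O\in\O$ in the union $D:=R_1\cup R_2$ depends only on $D$ (not on the decomposition), the $U$-weight depends only on $D$, so to get cancellation over $\F_2$ it suffices to show that each such $D$ admits exactly one other decomposition $D=R_1'\cup R_2'$ with $R_1'\in\Re_0(\x,\y')$ and $R_2'\in\Re_0(\y',\z)$ for some $\y'\neq \y$.

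First I would do the standard case analysis on the relative position of the two empty rectangles $R_1, R_2$ inside the open region $(c_1,c_2)\times(d_1,d_2)$. The cases are: (i) $R_1$ and $R_2$ are disjoint (interiors); (ii) they share a single corner; (iii) their union is a single larger rectangle (two sub-rectangles glued along an edge); (iv) they overlap in a smaller rectangle giving an ``L''/``hexagonal'' domain. In each case one reads off the alternate decomposition: for disjoint/corner-sharing pairs one simply swaps the order, pairing the same two rectangles via a different intermediate $\y'$; for the ``glued rectangle'' case the alternate decomposition cuts $D$ along the other transverse chord; for the ``L''-shaped domain one again sees exactly two ways to partition $D$ into a pair of rectangles, distinguished by which corner of the overlap is filled. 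In every case the alternate decomposition produces empty rectangles (emptiness of each of $R_1', R_2'$ follows from emptiness of $R_1, R_2$, as no $\X$-marking is introduced), and $|D\cap O|$ is unchanged for each $O\in\O$, so the $U$-weight is the same.

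The only aspect requiring a moment of care in the bordered setting is that all rectangles must lie in the open interior $(c_1,c_2)\times(d_1,d_2)$, i.e.\ they cannot touch the $\alpha$- or $\beta$-boundary of $G$. But this is automatic: if $R_1,R_2$ are both interior, then the alternate rectangles $R_1', R_2'$ occupy the same region $D$ and hence are also interior. No wrapping can occur because, unlike in the annular diagrams $G_\a, G_\b$, the two sets of arcs do not close up; thus the combinatorics is strictly simpler than in the usual toroidal grid case.

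I expect the main obstacle (or rather, the main bookkeeping point) to be verifying that the two decompositions genuinely differ, i.e.\ that $\y\neq \y'$ in every case, so that the terms cancel rather than coincide. This is where one uses that each horizontal and vertical line meets $\x$ in at most one point: the intermediate generator is determined by which two corners of the overlap region are switched, so the two decompositions of a given $D$ produce different $\y$'s unless $D$ is a thin rectangle with $R_1=R_2$, which is excluded since empty rectangles have positive area. Once this is checked, every term of $\partial^2\x$ appears an even number of times and thus vanishes over $\F_2$.
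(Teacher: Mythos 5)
Your proof is correct and is exactly the argument the paper invokes: the paper dispenses with this proposition by citing ``the usual arguments for grid diagrams (that every domain representing a term in $\partial^2$ has an alternate decomposition),'' which is precisely the double-rectangle cancellation you spell out, including the observation that the $U$-weights depend only on the composite domain and that no wrapping occurs since the bordered grid is not annular.
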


\subsection{Type $\mathit{AA}$ structures -- bordered grid diagrams associated to shadows}
All the structures from Section \ref{sec:combinatorial} have equivalent formulations via  bordered grid diagrams, which will be discussed in this and the following sections.
To a shadow $\P$ given by the quadruple $(m, n, \xi,\omega)$ 
we associate the following bordered grid diagram $G(\P)$.
\begin{definition} Let $G=G(\P)=(\alphas,\betas,\X,\O)\subset [-m-1,0]\times[0,n+1]\subset \R^2$ as follows. For $a\in\a$ let $\alpha_a=  [-m-1,0]\times \{a\}$ and for $b\in \b$ let $\beta_b=\{-b\}\times [0,n+1]$ then let $\alphas=\{\alpha_a\}_{a\in\a}$ and $\betas=\{\beta_b\}_{b\in\b}$, also let $\X=\{(-\xi s_X,s_X)\}_{s_X\in \SX}$ and $\O=\{O=(-s_O, \omega s_O)\}_{s_O\in \SO} $. 
\end{definition}
On Figure \ref{fig:borderedgrid} we depict the bordered grid diagrams corresponding to the shadows of Figure \ref{fig:shadow}.
\begin{figure}[h]
 \centering
       \includegraphics[scale=0.9]{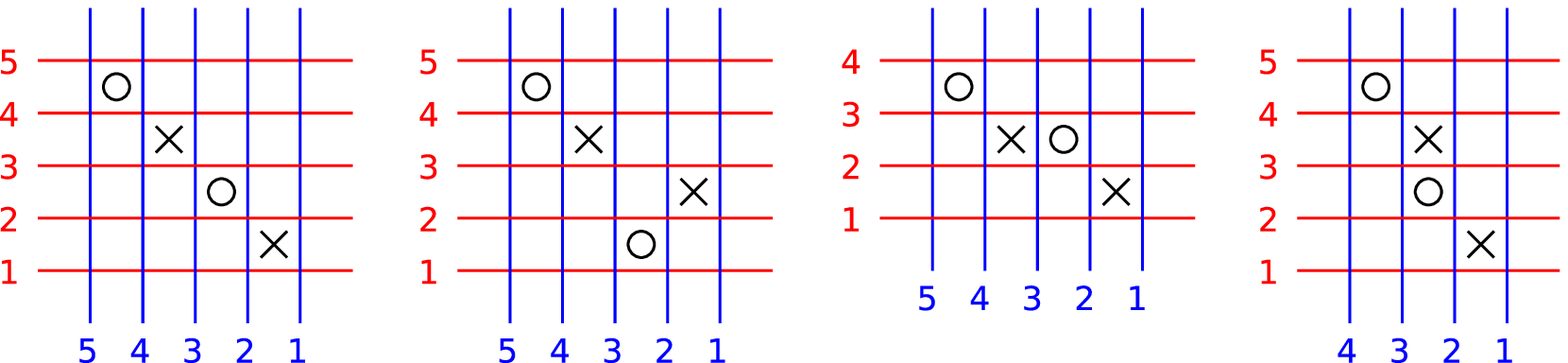} 
       \vskip .2 cm
       \caption[Bordered grid diagrams corresponding to the shadows of Figure \ref{fig:shadow}.]{\textbf{Bordered grid diagrams corresponding to the shadows of Figure \ref{fig:shadow}.}}\label{fig:borderedgrid}
\end{figure}

An equivalent way to associate a bordered grid diagram $G'(\P)$ to the shadow $\P$ is to take the $180^\circ$ rotation of $G(\P)$. Thus $G'(\P)=(\alphas',\betas',\X',\O')$ lies in the opposite quadrant $[0,m+1]\times[-n-1,0]$ with $\alphas'=\{\alpha'_a\}_{a\in\a}$ where  $\alpha_a'= [0,m+1]\times \{a'\}$, $\betas'=\{\beta_b'\}_{b\in\b}$ where $\beta_b'=\{-b\}\times [-n-1,0]$, $\X=\{(\xi s_X,-s_X)\}_{s_X\in \SX}$, and $\O=\{O=(s_O,-\omega s_O)\}_{s_O\in \SO}$. All that follows could be reformulated to $G'(\P)$ by doing a $180^\circ$ rotation to give isomorphic chain-complexes and type $\mathit{AA}$ structures to those for $G(\P)$.
\subsubsection{Tangles associated to $G(\P)$}
Let us complete $G(\P)$ with some extra basepoints 
\[\begin{array}{lcl}
\X_\partial&=&\{(-s,0) : s\in \SO\setminus \TX\}\cup \{(0,s) : s\in \TO\setminus \SX\} \text{   and}\\ 
\O_\partial&=&\{(-s,0) : s\in \TX\setminus \SO\}\cup \{(0,s) : s\in \SX\setminus \TO\}.\\
\end{array}
\]
Then define the associated tangle $\T(G)$ just like one would for a closed grid diagram: connect the points $\X\cup\X_\partial$ to $\O\cup\O_\partial$ horizontally and $\O\cup\O_\partial$ to $\X\cup\X_\partial$ vertically so that vertical strands cross over horizontal strands. Then, after smoothing, $\T(G)$ is a tangle projection in $[-m-1,0]\times[0,n+1]$ with boundary $\partial^0=(\X_\partial-\O_\partial)\cap [-m-1,0]\times\{0\}$  and $\partial^1=(\X_\partial-\O_\partial)\cap \{1\}\times [0,n+1]$. See Figure \ref{fig:knotborderedgrid} for some examples.
\begin{figure}[h]
 \centering
       \includegraphics[scale=0.9]{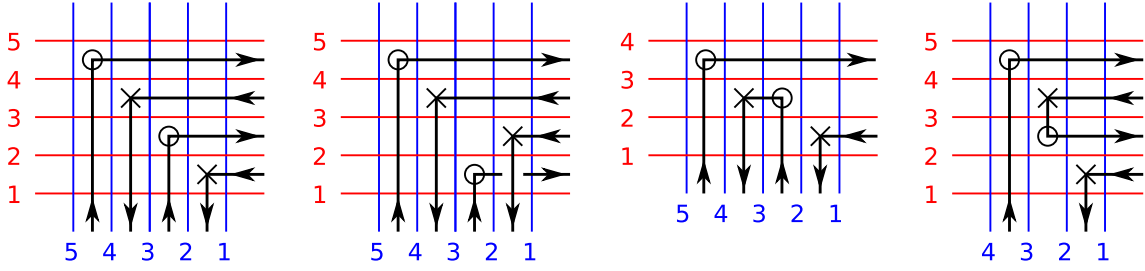} 
       \vskip .2 cm
       \caption[The tangles associated to the bordered grid diagrams of Figure \ref{fig:borderedgrid}.]{\textbf{The tangles associated to the bordered grid diagrams of Figure \ref{fig:borderedgrid}.}}
\label{fig:knotborderedgrid}
\end{figure}
Note that this tangle can be easily identified (by, for example, using polar coordinates and mapping $(r,\vartheta) \in [-m-1,0]\times [0,n+1]$ to $(2\frac{\pi-\vartheta}{\pi},r)\in I\times \R$) with a tangle in $I\times \R$, which we will call  $\T(G)$ as well. 
\begin{proposition}
Let $\P$ be a shadow. Then for $G=G(\P)$ the tangles $\T(\P)$ and $\T(G)$ are isotopic relative to the boundary.
\end{proposition}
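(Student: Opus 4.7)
The plan is to exhibit an explicit planar isotopy carrying the projection of $\T(G)$ onto the shadow diagram $D(\P)$, and then to check that over/under crossings are preserved.

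First, I would match up the arcs combinatorially. Given an orange $X$-arc of the shadow from $(1, s_X)$ to $(0, \xi s_X)$, its analogue in $G(\P)$ is the L-shaped concatenation consisting of the horizontal segment at row $s_X$ terminating at $X = (-\xi s_X, s_X)$ together with the vertical segment at column $\xi s_X$ emanating from $X$; similarly each green $O$-arc from $(0, s_O)$ to $(1, \omega s_O)$ corresponds to an L-shape through $O = (-s_O, \omega s_O)$. The smoothing of corners at points in $\SX \cap \TO$ and $\TX \cap \SO$ that produces $\T(\P)$ from $D(\P)$ corresponds precisely to the joining of consecutive horizontal/vertical strands at interior $X$ and $O$ points of the grid, since the alternating sequence $j+\tfrac{1}{2} \to \xi(j+\tfrac{1}{2}) \to \omega\xi(j+\tfrac{1}{2}) \to \cdots$ is traced in both tangles in the same order.

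Second, I would use the polar-type identification suggested in the paragraph preceding the proposition to straighten each L-shape in the grid into a monotone arc connecting its two endpoints. This is a planar ambient isotopy relative to the boundary, taking the union of L-shapes to a collection of arcs combinatorially identical to $D(\P)$. After applying this isotopy, the two projections differ only by a planar isotopy rel.\ boundary of the individual arcs, which does not change the resulting tangle in three dimensions.

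Finally, I would check the crossing convention. In $\T(G)$, vertical strands lie over horizontal strands at each intersection. Under the straightening isotopy, the strand of larger slope is always the one that was vertical in the grid, which matches the convention in the construction of $\T(\P)$ that the arc of larger slope is lifted upward. The main obstacle is the bookkeeping at the boundary: one must verify that the points in $\X_\partial$ and $\O_\partial$ produce boundary endpoints of $\T(G)$ at precisely those $s \in \bh, \ah$ with $\epsilon^0_j$ or $\epsilon^1_j$ a singleton, and that every smoothed corner of $D(\P)$ corresponds to a grid position where $\omega$ or $\xi^{-1}$ is applied rather than to a boundary arc. This amounts to a case analysis over the four combinations $\SX \cap \TO$, $\SX \setminus \TO$, $\TO \setminus \SX$, and $(\SX \cup \TO)^c \cap \ah$, and their analogues on the opposite side, each of which matches the defining rule for $\X_\partial, \O_\partial$ on the nose.
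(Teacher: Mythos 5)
Your proof is correct and follows essentially the same route as the paper's: both exhibit an explicit planar isotopy rel boundary between the two projections and note that the over/under conventions (vertical over horizontal in the grid, larger slope over smaller in the shadow) agree. The paper organizes the isotopy around the vertical tangencies of $\T(G)$ --- each comes from an $X$ and an $O$ sharing a row or column that contains no other markings, so the tangency can be pushed to the boundary unobstructed --- whereas you straighten the L-shapes arc by arc; these describe the same isotopy.
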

\begin{proof}
Let $\T(G)\subset I\times \R$ be the tangle (projection) associated to $G=G(\P)$. If $p\in\T(G)$ has a vertical tangency, then depending on whether $\T(G)$ near $p$ is to the right (or left) from this tangency,
it is coming from an $X=(-\xi s_X,s_X)$ and an $O=(-s_O,\omega s_O)$ in the same horizontal (or vertical) line of the grid, thus $s_X=\omega s_O$ (or $\xi s_X=s_O$). If for example $s_X=\omega s_O$, then there is no more $X$'s or $O$'s in the same horizontal line of the grid, thus the point with the vertical tangency can be isotoped to $(0,s_O)\in I\times \R$ without altering or crossing other parts of the tangle. Do this with every point with vertical tangency and notice that the resulting tangle is $\T(\P)$.
\end{proof}

\subsubsection{Generators}
Recall that  $C^-(G)$ is the free module generated over $\k$ by the tuples of intersection points $\x=(\alpha_{\phi s}\cap\beta_{s})_{s\in S}$, where $S\subset \b$, and $\phi\colon S\to\a$ is an injection with image $T=\phi(S)$. There is a one to one correspondence between $\gen(\P)$ and $\gen(G)$ given by  associating $\x=(\alpha_{\phi s}\cap\beta_{s})_{s\in S}\in \gen(G)$ to $\stphi\in\gen(\P)$.

\subsubsection{Inner differential}
The differential of Subsection \ref{subsec:innerdiff} translates to the following. For $s_1<s_2$ and $t_1<t_2$, and  $\x=(\alpha_{\phi s}\cap\beta_{s})_{s\in S}$ and $\y=(\alpha_{\phi^{(s_1,s_2)} s}\cap\beta_{s})_{s\in S}$, where 
$s_1,s_2\in S$ and $\phi\colon S\to T$ satisfies $\phi s_1=t_2$, $\phi s_2=t_1$, $R=[-s_2,-s_1]\times [t_1,t_2]$ is a rectangle from $\x$ to $\y$. Note that then automatically $(s_1,s_2)\in \Inv(\phi)$.
\begin{figure}[h]
 \centering
       \includegraphics[scale=0.9]{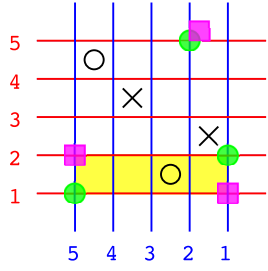} 
       \vskip .2 cm
       \caption[The inner differential for bordered grid diagrams.]{\textbf{The inner differential for bordered grid diagrams.} The generator $\x$ denoted by (light green) dots corresponds to the first strand diagram of Figure \ref{fig:differential}. The only empty rectangle (denoted by yellow) starting from $\x$ connects it to the generator $\y$ denoted by a (pink) square. The latter generator corresponds to the last strand diagram on Figure \ref{fig:differential}. The rectangle passes through the $\O$ marking $O_1$. Thus $\partial \x=U_1\y$. }\label{fig:griddifferential}
\end{figure}

Thus with the above definition of the inner differential:
\begin{proposition}\label{prop:isom} The chain complexes $(C^-(G),\partial)$ and $(C^-(\P),\partial)$ are isomorphic.

Moreover, if  $R$ is a rectangle from  $\x=(\alpha_{\phi s}\cap\beta_s)_{s\in S}$ to $\y=(\alpha_{\phi^{(s_1,s_2)} s}\cap\beta_s)_{s\in S}$, then
\begin{enumerate}
\item  $A(S,T,\phi)-A(S,T,\phi^{(s_1,s_2)})=\vert R\cap \X\vert-\vert R\cap \O\vert$;
\item if $R\in {\Re}_0(\x,\y)$ then  $M(S,T,\phi)-M(S,T,\phi^{(s_1,s_2)})=1-2\vert R\cap \O\vert$. 
\end{enumerate}
\end{proposition}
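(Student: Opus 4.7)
The bijection on generators stated just before the proposition extends $\k$-linearly to an isomorphism of $\k$-modules $\Phi\colon C^-(G)\to C^-(\P)$. The task is to check that $\Phi$ intertwines the two differentials and that the asserted grading identities hold, so all three claims follow together.

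The plan rests on a single crossing-count dictionary. Given a rectangle $R=[-s_2,-s_1]\times[t_1,t_2]$ from $\x$ to $\y$ (so $\phi s_1=t_2$, $\phi s_2=t_1$, and $\tau=(s_1,s_2)\in\Inv(\phi)$), consider any ``strand'' $\sigma$ in the shadow with endpoints $(0,u)$ and $(1,v)$, where $u,v\in\tfrac12\Z$. A direct endpoint-ordering check shows that the total number of crossings of $\sigma$ with the pre-swap pair $\{(0,s_j),(1,\phi s_j)\}_{j=1,2}$ exceeds that with the post-swap pair $\{(0,s_j),(1,\phi^\tau s_j)\}_{j=1,2}$ by $2$ if $(-u,v)\in R^\circ$ and by $0$ otherwise. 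Applying this first to the strands of $\phi|_{S\setminus\{s_1,s_2\}}$ shows that the rectangle condition $\x\cap R=\{\text{two corners}\}$ is equivalent to $\inv(\phi^\tau)=\inv(\phi)-1$; applying it to the $\X$-strand from $(1,s_X)$ to $(0,\xi s_X)$ gives
\[\inv(\phi,\xi^{-1})-\inv(\phi^\tau,\xi^{-1})=2|R\cap\X|,\]
so the condition $R\cap\X=\emptyset$ defining $\Re_0$ matches the condition $\inv(\phi,\xi^{-1})=\inv(\phi^\tau,\xi^{-1})$ used in the definition of $\Inv_0$; and applying it to each $\O$-strand from $(0,s_O)$ to $(1,\omega s_O)$ gives $\inv(\phi,\omega|_{s_O})-\inv(\phi^\tau,\omega|_{s_O})=2|R\cap O|$, hence $\sum_{s_O}n_O(\tau)=|R\cap\O|$ and the $U$-weights agree term by term. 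Together these yield a bijection $\Re_0(\x,\y)\leftrightarrow\Inv_0(\phi)$ identifying the two differentials, so $\Phi$ is a chain map.

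The two grading identities then fall out of the definitions. For the Alexander grading,
\[2A(\phi)-2A(\phi^\tau)=\bigl(\inv(\phi,\xi^{-1})-\inv(\phi^\tau,\xi^{-1})\bigr)-\bigl(\inv(\phi,\omega)-\inv(\phi^\tau,\omega)\bigr)=2|R\cap\X|-2|R\cap\O|,\]
and for the Maslov grading (when $R\in\Re_0$, so $\inv(\phi)-\inv(\phi^\tau)=1$),
\[M(\phi)-M(\phi^\tau)=\bigl(\inv(\phi)-\inv(\phi^\tau)\bigr)-\bigl(\inv(\phi,\omega)-\inv(\phi^\tau,\omega)\bigr)=1-2|R\cap\O|.\]
The main bookkeeping step is the crossing-count dictionary in the second paragraph; every other assertion of the proposition reduces to it by direct substitution.
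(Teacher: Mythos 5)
Your proposal is correct and follows essentially the same route as the paper: both rest on the dictionary identifying $|R\cap\x|$, $|R\cap\X|$, and $|R\cap\O|$ with the changes in $\inv(\phi)$, $\inv(\phi,\xi^{-1})$, and $\inv(\phi,\omega|_{s_O})$ under the swap $\tau=(s_1,s_2)$, from which the matching of differentials and the two grading identities follow by substitution. If anything, your version is slightly more careful about the factor of $2$ (each marking or interior intersection point in $R$ accounts for two crossings with the swapped pair of strands), which the paper's displayed equations gloss over.
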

\begin{proof} \belowdisplayskip=-12pt
If $(s_1,s_2)\in \Inv(\phi)$ then $R=[-s_2,-s_1]\times [\phi s_2, \phi s_1]$ defines a rectangle in $[-m+1,0]\times [0,n+1]$. The statement follows from the following three equations:
\[\begin{array}{lcl}
|R\cap \x |&=& \left\vert \{(-s,\phi s): s\in S,  -s_2<-s<-s_1\text{ and }\phi s_2< \phi s < \phi s_1\}\right\vert \\
&=&\left\vert\Inv \phi\setminus \Inv \phi^{(s_1,s_2)}\cup\{(s_1,s_2)\}\right\vert\\
|R\cap \X|&=&\left\vert\{(-\xi s_X,s_X):s_X\in \SX, -s_2<-\xi s_X<-s_1\text{ and }\phi s_2 < s_X < \phi s_1\}\right\vert\\
&=&\left\vert\Inv (\phi,\xi^{-1})\setminus \Inv (\phi^{(s_1,s_2)},\xi^{-1})\right\vert\\
|R\cap \O|&=&\left\vert\{(-s_O,\omega s_O) : s_O\in \SO, -s_2< -s_O< -s_1\text{ and }\phi s_2 < \omega s_O < \phi s_1\}\right\vert\\
&=&\left\vert\Inv (\phi,\omega\vert_{s_O})\setminus \Inv (\phi^{(s_1,s_2)}\omega\vert_{s_O})\right\vert
\end{array}\] 
\end{proof}

\subsubsection{Type A structures} The left and right algebra actions by $\Aa({\E_L})$ and $\Aa({\E_R})$ are defined by counting sets of partial rectangles as follows. First, we will describe the right action. The left action, as it will be spelled out later, is similar. For the action of $\Aa({\E_R})$ we consider sets of partial rectangles that intersect the left and right boundaries $\{-m-1,0\}\times(0,n+1)$. 
We consider the following two types of  partial rectangles depending on whether the rectangle intersects the left or the right boundary edge:

\begin{itemize}
\item $H= [-s_1,0]\times [t_1,t_2]$, where $t_1<t_2$, or

\item $H=[-m-1,-s_2]\times [t_1,t_2]$, where $t_1<t_2$,
\end{itemize}
where $s_i\in \b$ and $t_i\in \a$.

Now fix $S\subset \b$ and generators $\x=(\alpha_{\phi s}\cap\beta_{s})_{s\in S}$ and $\y=(\alpha_{\phi' s}\cap\beta_{s})_{s\in S}$. Let $r=(\phi(S),\phi'(S),\phi'\circ \phi^{-1})\in\Aa(\E_R)$. Suppose that ${\bf H} = \{H_1, \ldots, H_l\}$
 is a set of partial rectangles of the above two types. We say that $\bf H$  \emph{connects $\x$ and $r$ to  $\y$} if for the rectangles in $\bf H$,  all bottom-left and top-right corners that are in the interior of $G$ are distinct points and form the set $\x\setminus (\x\cap \y)$, and  all bottom-right and top-left corners that are in the interior of $G$ are distinct points and form the set $\y\setminus (\x\cap \y)$. We say that $\bf H$ is \emph{allowed} if for each $H_i\in \bf H$ we have  $H_i\cap\X = \emptyset$ and $H_i\cap (\x\cap \y) = \emptyset$, no partial rectangle in $\bf H$ is completely contained in another rectangle in $\bf H$, and no two partial rectangles touching opposite boundary edges have overlapping interiors. See Figure \ref{fig:forbidden_rect}.
  Note that when $\bf H$ consists of only one partial rectangle $H$, this is equivalent to the condition $\Int H\cap\X=\Int H \cap\x=\emptyset$.
 
 \begin{figure}[h]
 \centering
       \includegraphics[scale=0.8]{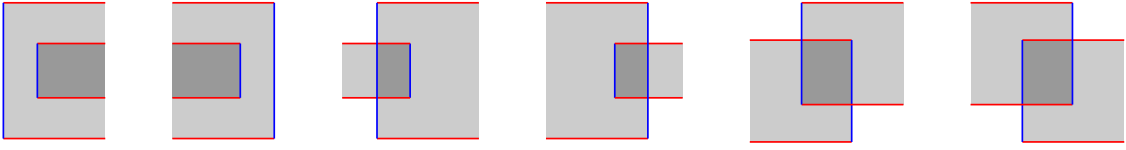} 
       \vskip .2 cm
       \caption[Forbidden pairs of partial rectangles.]{\textbf{Forbidden pairs of partial rectangles.} A set of rectangles $\bf H$ is allowed if no partial rectangle in it contains no points in $\X$ or $\x\cap\y$, and no two partial rectangles in it are in relative configuration as depicted here.}\label{fig:forbidden_rect}
\end{figure}

Note that for a fixed generator $\x$ and algebra generator $r$, there is at most one $\y$ and at most one $\bf H$ as above. Thus, we can define the action of $r$ on $\x$ as follows. If there is no set of empty partial rectangles from $\x$ and $r$ to any $\y$, then $\x\cdot r=0$. Otherwise, let $\bf H$ and $\y$ be the unique objects such that  $\bf H$ is an allowed set of partial rectangles connecting $\x$ and $r$ to $\y$. Then
\[\x\cdot r=\prod_{s_O\in \TO} U_O^{|O\cap {\bf H}|}\y.\]
where  $O\cap {\bf H}=\cup (O\cap H_i)$.

See Figure \ref{fig:gridtypeA} for examples of the type $A$ multiplication. 

\begin{figure}[h]
 \centering
       \includegraphics[scale=0.9]{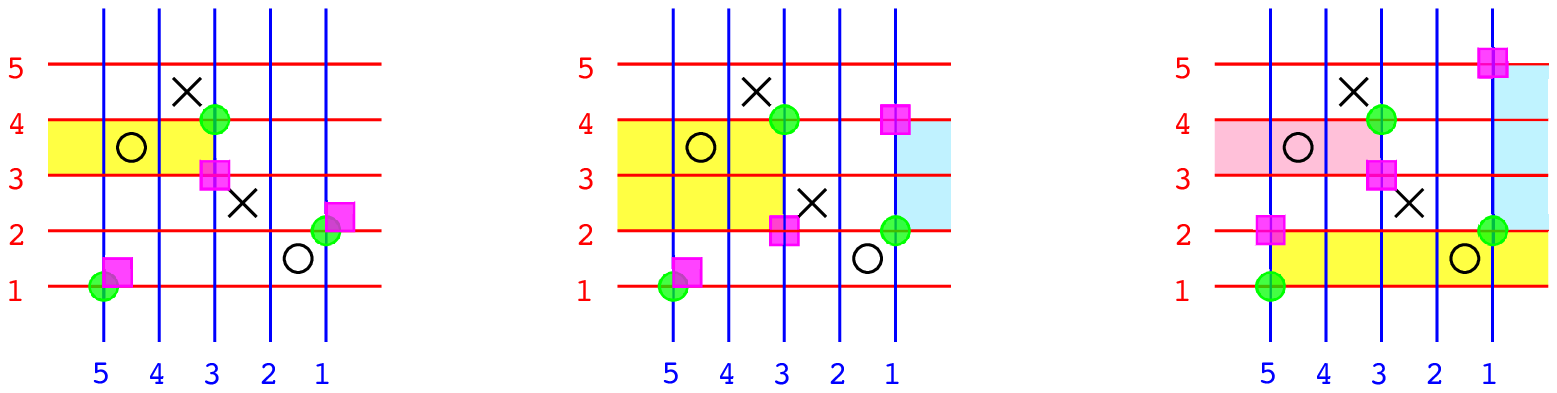} 
        \vskip .5 cm
       \includegraphics[scale=0.8]{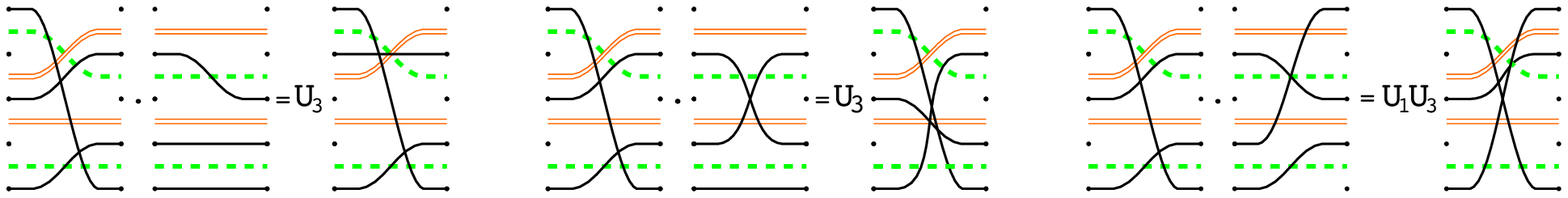} 
       \vskip .2 cm
       \caption[Examples of the right type $A$  action.]{\textbf{Examples of the right type $A$  action.} Top: examples of allowed sets of partial rectangles for the right action, starting at the generator formed by the green dots. Bottom: the corresponding right multiplications, viewed as concatenations of strand diagrams.}\label{fig:gridtypeA}
\end{figure}

The left action can be similarly defined using partial rectangles touching the top or bottom parts of the boundary $(-m-1,0)\times\{0,n+1\}$ or by rotating the rectangles by $90^\circ$. See Figure 
\ref{fig:leftgridtypeA}. 
\begin{figure}[h ]\centering
       \includegraphics[scale=0.9]{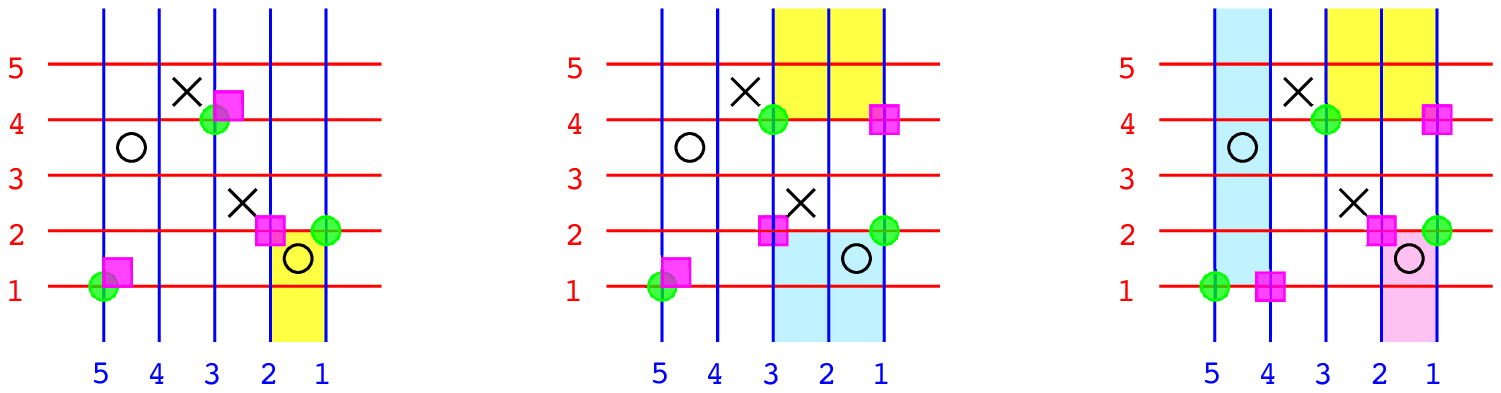} 
        \vskip .5 cm
       \includegraphics[scale=0.8]{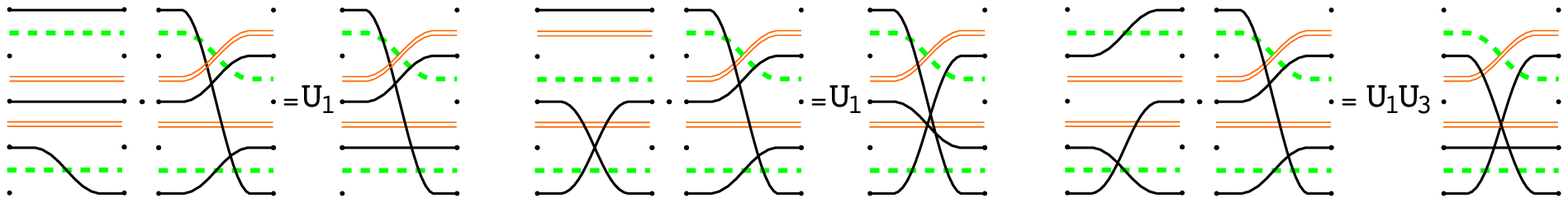} 
       \vskip .2 cm
       \caption[Examples of the left type $A$ action.]{\textbf{Examples of the left type $A$ action.} Top: examples of allowed sets of partial rectangles for the left action, starting at the generator formed by the green dots. Bottom: the corresponding left multiplications, viewed as concatenations of strand diagrams.}\label{fig:leftgridtypeA}
\end{figure}
\begin{definition}
With the above notation, let $\CATA^-(G)$ be the left-right  type $\mathit{AA}$ bimodule $(C^-(\P),\{m_{i,1,j}\})$ over $\Aa({\E_L})$ and $\Aa({\E_R})$, where 
\[m_{i,1,j}\colon \Aa({\E_L})^{\otimes i}\otimes C^-(\P) \otimes \Aa({\E_R})^{\otimes j}\to  C^-(\P)\]
with  $m_{i,1,j}=0$ for $i>1$ or $j>1$, and the nonzero maps are given by
\[\begin{array}{ccc}
m_{0,1,0}(f)=\partial f, \qquad & m_{1,1,0}(a_L\otimes f)=a_L\cdot f, \qquad& m_{0,1,1}( f\otimes a_R)= f \cdot a_R
\end{array}\]  
\end{definition}
It is not immediate to see that the above definition indeed gives a type $\mathit{AA}$ bimodule, but the next proposition says that it is isomorphic to $\CATA^-(\P)$ which by Theorem \ref{thm:typeAA}  is a type $\mathit{AA}$ structure. 
\begin{proposition}\label{prop:gridtypea1}
Let $\P$ be a shadow and let $G=G(\P)$. Then
the one to one correspondence between the generators gives rise to an isomorphism of the  structures $\CATA^-(\P)$ and $\CATA^-(G(\P))$. 
\end{proposition}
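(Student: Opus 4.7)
The bijection between the generators is already in place: $\x=(\alpha_{\phi s}\cap\beta_s)_{s\in S}$ corresponds to $(S,T,\phi)$ with $T=\phi(S)$, and this induces a $\k$-linear isomorphism $C^-(G)\cong C^-(\P)$. The plan is to show that under this isomorphism the four types of structure maps $m_{i,1,j}$ agree. By construction, $m_{i,1,j}=0$ for $i>1$ or $j>1$ in both pictures (on the grid side this is how $\CATA^-(G)$ is defined; on the strand side it is Theorem~\ref{thm:typeAA}~(\ref{thm:typeAA-item4})). Thus only the three maps $m_{0,1,0}$, $m_{1,1,0}$, and $m_{0,1,1}$ need to be compared, and $m_{0,1,0}$ is handled by Proposition~\ref{prop:isom}: every empty interior rectangle from $\x$ to $\y$ corresponds to a single exchange $\tau=(s_1,s_2)\in\Inv_0(\phi)$, with the count of $O$-markings inside the rectangle agreeing with the exponent $n_O(\tau)$ by the computation in the proof of Proposition~\ref{prop:isom}.

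The main content is therefore the identification of the two one-sided multiplications. I will treat the right action; the left action is the same after a $90^{\circ}$ rotation. Fix a generator $\x\leftrightarrow(S,T,\phi)$ and an algebra element $r=(T,T',\psi)\in\Aa(\E_R)$; in the strand picture the product is either zero or a single term, and the same is true on the grid side because the set $S$ of $\beta$-coordinates is preserved by the right action. The key is to construct a bijection between allowed sets $\mathbf{H}=\{H_1,\dots,H_\ell\}$ of partial rectangles touching the vertical boundary and producing $\y\leftrightarrow(S,T',\psi\circ\phi)$ on the one hand, and concatenations of strand diagrams realising $(S,T,\phi)\cdot(T,T',\psi)$ on the other. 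Each partial rectangle $H_i$ of the form $[-s,0]\times[t_1,t_2]$ (resp.\ $[-m-1,-s]\times[t_1,t_2]$) records a strand of $\psi$ going from $\alpha_{t_1}$ to $\alpha_{t_2}$ (resp.\ $\alpha_{t_2}$ to $\alpha_{t_1}$), and the interior corners match the two sets $\x\setminus(\x\cap\y)$ and $\y\setminus(\x\cap\y)$ precisely because $S$ is fixed; this gives the bijection on the nose.

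The heart of the matter — and the step where the combinatorics must be checked carefully — is verifying that the grid-side ``allowed'' condition is equivalent to the strand-side composability conditions $\inv(\psi\circ\phi)=\inv(\phi)+\inv(\psi)$ and $\inv(\psi\circ\phi,\xi^{-1})=\inv(\phi,\xi^{-1})+\inv(\psi,\xi^{-1})$, and that the $U$-exponents agree. For the first: the prohibition that no partial rectangle in $\mathbf{H}$ contain another, and that no two rectangles touching opposite vertical edges have overlapping interior, is precisely the condition that the concatenated strand diagram has minimal self-intersection among the strands in the image of $\psi$ once composed with those of $\phi$; each forbidden configuration would either cancel or create a new intersection upon concatenation, violating $\inv(\psi\circ\phi)=\inv(\phi)+\inv(\psi)$. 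The condition $H_i\cap\X=\emptyset$ for each $H_i$ is the analogous statement with $\X$-markings, corresponding to minimality of intersections between the composed black strands and the orange $\xi$-strands. Finally, for the $U$-powers: counting $|O\cap H_i|$ over all partial rectangles in $\mathbf{H}$ gives the number of $O\in\O$ whose position $(-s_O,\omega s_O)$ lies in some $H_i$, and a direct count identical to the one in the proof of Proposition~\ref{prop:isom} (applied in the region $[-s,0]\times[t_1,t_2]$ rather than in a full interior rectangle) shows this equals $\tfrac12(\inv(\phi,\omega|_{s_O})+\inv(\psi,\omega_2|_{\omega s_O})-\inv(\psi\circ\phi,\omega|_{s_O}))$, matching the exponent in the strand-algebra composition formula.

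The main obstacle is organising this bookkeeping cleanly, in particular recognising that the grid-side definition of $\x\cdot r$ as a sum over allowed sets of partial rectangles always reduces to at most one term, and that the forbidden configurations of Figure~\ref{fig:forbidden_rect} are precisely those that would violate the additivity of inversions. Once this equivalence of conditions and the $U$-count are in place, the bijection assembles $\x\cdot r=\prod_{s_O\in\TO}U_O^{|O\cap\mathbf{H}|}\y$ on the grid side into the strand formula $(S,T,\phi)\cdot(T,T',\psi)=\prod U_O^{\cdots}(S,T',\psi\circ\phi)$, giving the required isomorphism of type $\mathit{AA}$ structures.
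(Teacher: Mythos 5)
Your proposal is correct and follows the same route as the paper's own (much terser) proof: both rest on the observation that a set $\mathbf{H}$ connecting $\x$ and $r$ to $\y$ corresponds exactly to a concatenation of the strand diagrams for $\x$ and $r$, that the forbidden configurations of Figure~\ref{fig:forbidden_rect} and the condition $H_i\cap\X=\emptyset$ are precisely the Reidemeister~II obstructions (equivalently, failure of additivity of inversions with black and orange strands), and that $|O\cap\mathbf{H}|$ matches the exponent $n_O$ in the strand-algebra product. You simply spell out in detail what the paper records as a three-sentence observation.
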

\begin{proof} Observe that $\bf H$ connects $\x$ and $r$ to $\y$ exactly when  the strand diagrams corresponding to $\x$ and $r$ can be concatenated. The result of the concatenation is the strand diagram corresponding to $\y$ when $\bf H$ is allowed, and zero otherwise. Indeed, the obstructions to $\bf H$ being allowed correspond to the Reidemeister II relations involving black and orange strands. Similarly, the count  $O\cap \bf H$ corresponds to the count $n_O$.
\end{proof}
\subsection{Type $\mathit{DD}$ structures -- bordered grid diagrams associated to mirror-shadows}
The bordered grid diagram $G^*(\P^*)$ associated to the mirror-shadow $\P^*$ is the mirror of $G(\P)$ with respect to a vertical axis. 
\begin{definition}
$G^*=G^*(\P^*)=(\alphas,\betas,\X,\O)\subset [0,m+1]\times[0,n+1]\subset \R^2$ as follows. For $a\in\a$ let $\alpha_a=  [0,m+1]\times \{a\}$ and for $b\in \b$ let $\beta_b=\{b\}\times [0,n+1]$ then let $\alphas=\{\alpha_a\}_{a\in\a}$ and $\betas=\{\beta_b\}_{b\in\b}$. Also let $\X=\{(\xi s_X,s_X)\}_{s_X\in \SX}$ and $\O=\{O=(s_O, \omega s_O)\}_{s_O\in \SO}$. 
\end{definition}
Figure \ref{fig:borderedgrid2} shows the bordered grid diagrams corresponding to the mirror-shadows of Figure \ref{fig:coshadow}.

\begin{figure}[h]
 \centering
       \includegraphics[scale=0.9]{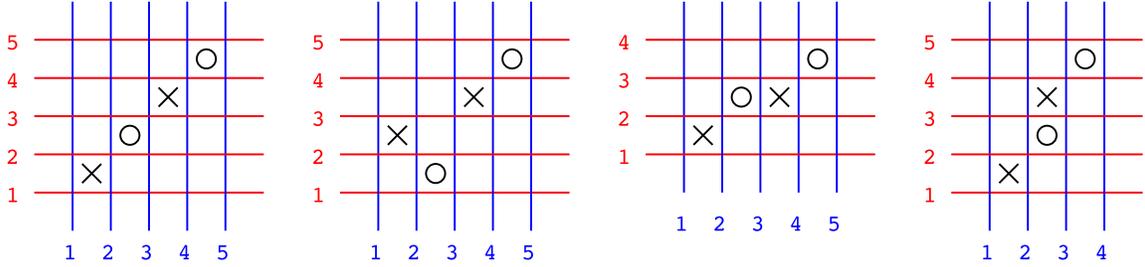} 
       \vskip .2 cm
       \caption[Bordered grid diagrams corresponding to the shadows of Figure \ref{fig:coshadow}.]{\textbf{Bordered grid diagrams corresponding to the mirror-shadows of Figure \ref{fig:coshadow}.}}\label{fig:borderedgrid2}
\end{figure}

By mirroring $G(\P)$ with respect to the horizontal axis instead, we get a bordered grid diagram $(G^*)'(\P^*)$
equivalent to $G^*(\P^*)$.

As in the case for $G(\P)$, the generators $\gen(G^*)$ are tuples of intersection points, and similarly there is a one-to-one correspondence between $\gen(G^*)$ and $\gen(\P^*)$ identifying $(S,T,\phi)^*$ with the set of intersection points $\x=(\alpha_{\phi s}\cap \beta_s)_{s\in S}$.   The differential $\partial$ is again given by counting empty rectangles.
\begin{proposition}
The chain complexes $(C^-(\P^*),\partial^*)$ and $(C^-(G^*),\partial)$ are isomorphic.
Moreover if  $R$ is a rectangle from   
$\x=(\alpha_{\phi^{(s_1,s_2)} s}\cap\beta_{s})_{s\in S}$ to $\y=(\alpha_{\phi s}\cap\beta_s)_{s\in S}$ then
\begin{enumerate}\label{prop:isomD}
\item  $A(S,T,\phi)-A(S,T,\phi^{(s_1,s_2)})=\vert R\cap \X\vert-\vert R\cap \O\vert$;
\item If $R\in {\Re}_0(\x,\y)$ then  $M(S,T,\phi)-M(S,T,(\phi)^{(s_1,s_2)})=1-2\vert R\cap \O\vert$. 
\end{enumerate}
\end{proposition}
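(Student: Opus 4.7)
The plan is to mirror the proof of Proposition \ref{prop:isom}, since $G^*(\P^*)$ is obtained from $G(\P)$ by a reflection across a vertical axis. First I would set up the bijection $\gen(G^*) \leftrightarrow \gen(\P^*)$ sending a tuple of intersection points $\x = (\alpha_{\phi s} \cap \beta_s)_{s\in S}$ to $(S, \phi(S), \phi)^*$; this is a bijection by the same combinatorial argument as in the shadow case and extends $\k$-linearly to an isomorphism $C^-(G^*) \cong C^-(\P^*)$ of free modules.

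Next I would identify empty rectangles in $G^*$ with the terms appearing in $\partial^*$. The key observation is that in $G^*$ the vertical $\beta$-arcs extend to the right of the $x$-axis, so a rectangle from $\x = (\alpha_{\phi^{(s_1,s_2)} s}\cap\beta_s)_{s\in S}$ to $\y = (\alpha_{\phi s}\cap\beta_s)_{s\in S}$ with $s_1 < s_2$ in $S$ and $\phi s_2 < \phi s_1$ is precisely $R = [s_1,s_2]\times[\phi s_2, \phi s_1]$. Thus rectangles in $G^*$ run from lower-inversion to higher-inversion generators, which is the right behavior for $\partial^*$. A direct count of intersections gives
\[
|R\cap\x| = |\Inv(\phi)\setminus\Inv(\phi^{(s_1,s_2)})|,\quad |R\cap\X| = |\Inv(\phi,\xi^{-1})\setminus\Inv(\phi^{(s_1,s_2)},\xi^{-1})|,
\]
\[
|R\cap\O| = \sum_{s_O\in\SO}|\Inv(\phi,\omega|_{s_O})\setminus\Inv(\phi^{(s_1,s_2)},\omega|_{s_O})|.
\]
In particular, $R$ is empty exactly when $(s_1,s_2)\in\Inv_0^*(\phi^{(s_1,s_2)})$, and the weight $\prod_O U_O^{|R\cap\O|}$ equals $\prod_O U_O^{-n_O((s_1,s_2);\phi^{(s_1,s_2)})}$, matching the combinatorial definition of $\partial^*$ term for term. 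This establishes the isomorphism of chain complexes.

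For the grading statements I would substitute the three equalities above into the Maslov and Alexander grading formulas for mirror-shadows from Subsection~\ref{ssec:grading}. Concretely, subtracting the values at $(S,T,\phi^{(s_1,s_2)})^*$ from those at $(S,T,\phi)^*$ causes the $\inv(\omega)$, $\inv(\xi^{-1})$, and $|\SO|$ terms to cancel, leaving only the inversion-difference terms, which by the counts above equal $|R\cap\X| - |R\cap\O|$ for the Alexander difference, and $1 - 2|R\cap\O|$ for the Maslov difference in the empty case (the extra $1$ coming from the lone inversion $(s_1,s_2)$ itself).

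The main obstacle is simply sign bookkeeping: in the mirror setting, the roles of source and target of rectangles are swapped relative to Proposition~\ref{prop:isom}, and the grading conventions for $C^-(\P^*)$ carry opposite signs from those for $C^-(\P)$. These sign flips cancel exactly, which is why the stated formulas take the same form as in the shadow case. Since everything reduces to the reflection symmetry between $G$ and $G^*$, there is no genuinely new difficulty beyond what Proposition~\ref{prop:isom} already addresses.
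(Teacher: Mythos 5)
Your overall strategy is exactly the paper's: the printed proof of this proposition is the single sentence ``essentially the same as the proof of Proposition~\ref{prop:isom},'' and your proposal is precisely that argument carried out for the mirrored grid, with the correct identification of generators, the correct observation that in $G^*$ the rectangle $[s_1,s_2]\times[\phi s_2,\phi s_1]$ runs from the lower-inversion generator to the higher-inversion one (matching the fact that $\partial^*$ \emph{introduces} inversions), and the right intersection counts feeding into the grading formulas. So there is no difference in method.

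The one place you should not wave your hands is the final sign bookkeeping, because that is exactly where the mirror conventions bite. Writing $\psi=\phi^{(s_1,s_2)}$ for the source and using the mirror-shadow gradings
$M(f^*)=-\inv(\phi)+\inv(\phi,\omega)-\inv(\omega)-|\SO|$, the counts you list give, for an empty rectangle,
$\inv(\phi)-\inv(\psi)=1$ and $\inv(\phi,\omega)-\inv(\psi,\omega)=2|R\cap\O|$, hence
\[
M(S,T,\phi)-M(S,T,\psi)=-1+2|R\cap\O|,\qquad A(S,T,\phi)-A(S,T,\psi)=|R\cap\O|-|R\cap\X|.
\]
That is the \emph{negative} of what you assert. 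The reason is that two independent sign reversals occur relative to Proposition~\ref{prop:isom} --- the gradings on $C^-(\P^*)$ carry opposite signs, \emph{and} the roles of source and target of the rectangle are exchanged --- so the identity that survives in the same form as the shadow case is the source-minus-target one, $M(\x)-M(\y)=1-2|R\cap\O|$ and $A(\x)-A(\y)=|R\cap\X|-|R\cap\O|$, not the literal expression $M(S,T,\phi)-M(S,T,\phi^{(s_1,s_2)})$ appearing in the statement (which, as printed, seems to retain the subtraction order of Proposition~\ref{prop:isom} without accounting for the swap of $\x$ and $\y$). Your claim that ``the sign flips cancel exactly'' is true only if you also flip the order of subtraction; as written, your last step asserts an equality that your own intermediate counts contradict. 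This is easily repaired --- and the sanity check is that the differential must \emph{lower} $M$ by one in the tilde version, which forces the source to sit in the higher Maslov grading --- but it should be stated correctly rather than deferred to ``bookkeeping.''
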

\begin{proof}
Essentially the same as the proof of Proposition \ref{prop:isom}.
\end{proof}

Associate the tangle $\T^*(G^*)$ that is the mirror of $\T(G)$, again with respect to the vertical axis.

\subsubsection{Type $D$ maps}\label{ssec:Dgrid}
Define a bimodule structure ${}_{\I(\Aa(\E^L))}C^-(G^*)_{\I(\Aa(\E^R))}$ using the one-to-one correspondence between $\gen(G^*)$ and $\gen(\P^*)$. In other words, if the correspondence maps  $\x\in \gen(G^*)$ to  $f^*\in \gen(\P^*)$, then define $\iota\cdot \x\cdot \iota'= \iota\cdot f^*\cdot\iota'$. For such a pair $\x$ and $f^*$, define  $\iota^L(\x)= \iota^L(f^*)$ and $\iota^R(\x)= \iota^R(f^*)$.
Similar to the type $A$ maps, we define left and right type $D$ maps 
\[\delta^L \colon C^-(G^*)\to\Aa(\E^L)\otimes C^-(G^*),\qquad\delta^R\colon C^-(G^*)\to C^-(G^*)\otimes\Aa(\E^R)\] also by counting partial rectangles. In the following we describe the left type $D$ map $\delta^L$ in detail.

Let $\x=(\alpha_{\phi s}\cap\beta_{s})_{s\in S}$ be a generator.  We define a map $\partial^L$ by counting  partial rectangles that intersect the left and/or right boundaries $\{0,m+1\}\times [0,n+1]$. We distinguish four types of partial rectangles as follows:
\begin{itemize}
\item 
 $H=[0,s_1]\times [t_1,t_2]$, where $s_1\in S$, $t_1<t_2$ and $t_2 = \phi s_1, t_1\notin\phi(S)$.  
 Let $T_1 = \phi(S)^c$, $T_2 = \phi(S)^c\setminus\{t_1\}\cup \{t_2\}$, and define $\rho:T_1\to T_2$ by $\rho t_1=t_2$ and $\rho |_{T_1\setminus\{t_1\}} = \id_{T_1\setminus\{t_1\}}$. Let  $r = (T_1, T_2, \rho)\in \Aa(\E^L)$. Let  $\y$ be the set of intersection points $\x\setminus\{(s, t_2)\}\cup \{(s, t_1)\}$.

\item 
$H=[s_2, m+1]\times [t_1,t_2]$, where $s_2\in S$, $t_1<t_2$ and $t_1 = \phi s_2, t_2\notin\phi(S)$. Let $T_2 = \phi(S)^c$, $T_1 = \phi(S)^c\setminus\{t_2\}\cup \{t_1\}$, and define $\rho:T_2\to T_1$ by $\rho t_2=t_1$ and $\rho |_{T_2\setminus\{t_2\}} = \id_{T_2\setminus\{t_2\}}$. Let  $r = (T_2, T_1, \rho)\in \Aa(\E^L)$. Let $\y = \x\setminus\{(s, t_1)\}\cup \{(s, t_2)\}$.

\item $H=[0,m+1]\times [t_1,t_2]$, where $t_1, t_2\notin \phi(S)$ and $t_1<t_2$. Define $\rho:\phi(S)^c\to \phi(S)^c$ by $(t_1t_2)\circ\id_{\phi(S)^c}$ and let $r=(\phi(S)^c, \phi(S)^c, \rho)\in \Aa(\E^L)$. Let $\y = \x$.
\end{itemize}
In any of the above three cases we say that  the partial rectangle  $H$ \emph{connects $\x$ and $r$ to $\y$}. 
$H$ is \emph{empty} if $H\cap\X=H\cap\x=\emptyset$. For $O = (s_O, t_O)\in\O$ set $n_{t_O}(H)=|O\cap H|$.
\begin{itemize}
\item $H= ([0,s_1]\cup [s_2,m+1])\times [t_1,t_2]$, where $s_1<s_2$, $t_1<t_2$ and $t_1=\phi s_2, t_2 = \phi s_1$.  Then $H$ \emph{connects $\x$ and $r=(S^c,S^c,\id_{S^c})$ to $\y=(\alpha_{((t_1 t_2)\circ\phi) s}\cap\beta_{s})_{s\in S}$}. 
\end{itemize}
In this last case there is an extra condition on $H$ being empty: we require that for the projection $\pi_2\colon (s,t)\mapsto t$ the images $\pi_2(\X\cap [s_1,s_2]\times[t_1,t_2])$ and $\pi_2(\x\cap [s_1,s_2]\times[t_1,t_2])$ are precisely $[t_1,t_2]\cap\ah$ and $[t_1,t_2]\cap\a$. For $O\in\O$, let $n_{t_O}(H)=|O\cap H|$, and for $t_O^c\in ([t_1,t_2]\cap \ah)\setminus \TO$, let $n_{t^c_O}(H)=1$.

Given $\x$, $\y$, and $r$, let $\H_0(\x,\y, r)$ denote the set of empty partial rectangles connecting $\x$ and $r$ to $\y$ (note that that set is either empty, or consists of one partial rectangle). Define  
\[\partial^L \x=\sum_{\substack{\y\in \gen(G^*)\\ r\in \gen(\E^L)}}\sum_{H\in \H_0(\x,\y, r)}r\otimes\prod_{t_O\in{ \ah{}}} U_O^{n_{t_O}(H)}\y.\]
See Figure \ref{fig:leftgridtypeD} for an example of $\partial^L$.

\begin{figure}[h]
 \centering
     \hspace{2.2cm}  \includegraphics[scale=0.72]{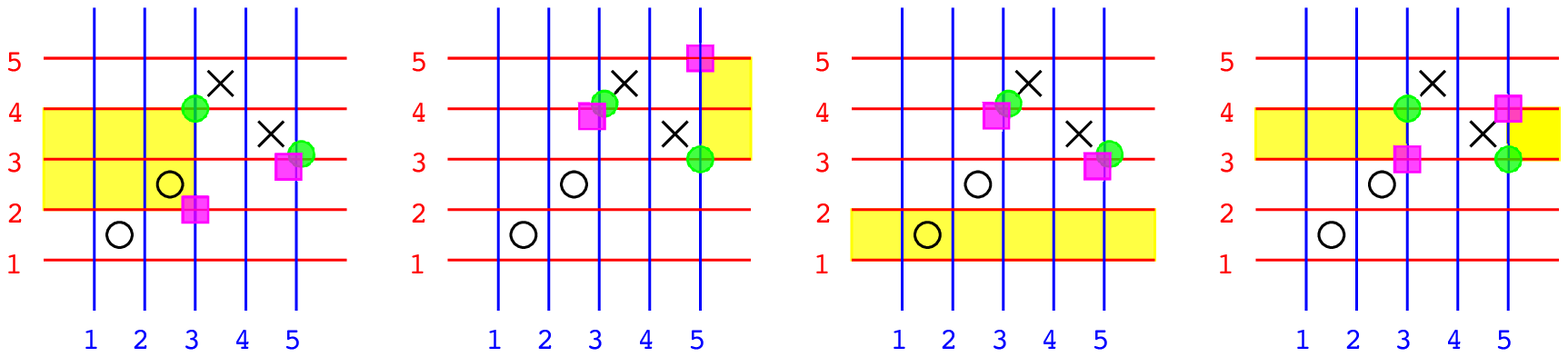} 
        \vskip .5 cm
       \includegraphics[scale=0.81]{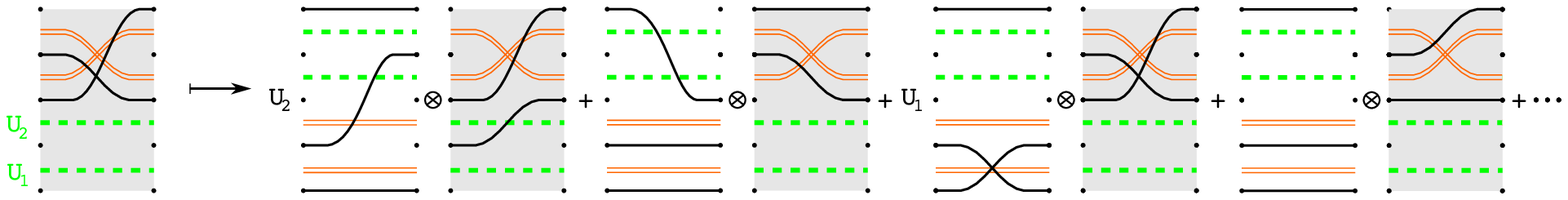} 
       \vskip .2 cm
       \caption[The four types of rectangles corresponding to  the map $\partial^L$.]{\textbf{The four types of rectangles corresponding to the map $\partial^L$.} Top: examples of the four types of rectangles for $\partial^L$ applied to the generator formed by the green dots. Bottom: the respective terms of $\delta^L$ applied to the strand diagram corresponding to the green dots.}\label{fig:leftgridtypeD}
\end{figure}

Then the left type $D$ map is defined on generators by 
\[\delta^L\x =\iota^L(\x)\otimes \partial \x + \partial^L \x.\]
In other words, $\delta^L$ is defined by counting empty rectangles in the interior of the grid, as well as empty rectangles that touch the left and/or right boundary of the grid.

The right type $D$ map $\delta^R$ can be defined in a similar way as the sum $\delta^R = \partial\otimes \iota^R + \partial^R$ using  a map $\partial^R$ that counts partial rectangles that intersect the top and bottom boundary of $[0,n+1]\times[0,m+1]$. 

The left and the right type $D$ maps can be merged together to define a type $\DD$ map by counting all empty rectangles. 
\begin{definition}
For $G^*=G^*(\P^*)$ define $\CDTD^-(G^*)$ be the left-right type  $\mathit{DD}$ structure $(C^-(G^*),\delta^1\})$ over $\Aa({\E^L})$ and $\Aa({\E^R})$, where 
\[\delta^1\colon  C^-(G^*)\to \Aa({\E^L})\otimes C^-(G^*) \otimes \Aa({\E^R})\]
is defined via
\[\delta^1(\x)=\iota^L(\x)\otimes\partial^R(\x)+\iota^L(\x)\otimes \partial (\x)\otimes \iota^R(\x)+ \partial^L(\x)\otimes\iota^R(\x).\]
\end{definition}
\begin{proposition}\label{prop:gridtyped1}
For $G^*=G^*(\P^*)$ the one to one correspondence between generators gives rise to an isomorphism between $\CDTD^-(G^*)$ and $\CDTD^-(\P^*)$.
\end{proposition}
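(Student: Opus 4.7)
The plan is to upgrade the bijection of generators provided in Subsection \ref{ssec:Dgrid} to an isomorphism of type $\DD$ structures by matching each component of $\delta^1$ termwise. On the $\CDTD^-(\P^*)$-side, $\delta^1(f^*)$ decomposes as $\iota^L(f^*)\otimes\partial^*(f^*)\otimes\iota^R(f^*) + \iota^L(f^*)\otimes\partial_{\mathrm{mix}}(f^*\otimes\iota^R(f^*)) + \partial_{\mathrm{mix}}(\iota^L(f^*)\otimes f^*)\otimes\iota^R(f^*)$, while on the $\CDTD^-(G^*)$-side it decomposes exactly analogously as $\iota^L(\x)\otimes\partial\x\otimes\iota^R(\x) + \iota^L(\x)\otimes\partial^R\x + \partial^L\x\otimes\iota^R(\x)$. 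The inner-differential terms already agree by Proposition \ref{prop:isomD} (reinterpreted in the mirror-grid by rotating each rectangle's source/target), so the substance lies in identifying $\partial^L\x$ with $\partial_{\mathrm{mix}}(\iota^L(f^*)\otimes f^*)\otimes\iota^R(f^*)$ and the symmetric statement for $\partial^R$.

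For the left type $D$ identification, I would wedge the idempotent shadow $\iota^L(f^*)=(T^c,T^c,\id_{T^c})$ to the left of $f^*$ and use the three cases described in Subsection \ref{subsection:wedge} for $\partial_{\mathrm{mix}}$ on $\P_1\wedge\P_2^*$. Case (a), where an exchangeable pair has one endpoint in $T_1=T^c$ and the other in $T_2=T$, would be shown to correspond exactly to Types 1 and 2 of the partial rectangles (those touching only the left or only the right boundary of $G^*$): in each such rectangle one corner lies on a generator point $(s_1,t_2)\in\x$ with $t_2\in T$ and the opposite vertical side meets $\alpha_{t_1}$ with $t_1\in T^c$, so the rectangle encodes precisely the exchange $(t_1,t_2)$ across the two sides, with the algebra generator $r\in\Aa(\E^L)$ recording the motion on the idempotent side. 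Case (b), where both endpoints of the exchange lie in $T_1=T^c$, corresponds to Type 3 rectangles $[0,m+1]\times[t_1,t_2]$ with $t_1,t_2\notin\phi(S)$, which touch no interior intersection points of $\x$; and case (c), where both endpoints lie in $T_2=T$, corresponds to the Type 4 ``split'' rectangles $([0,s_1]\cup[s_2,m+1])\times[t_1,t_2]$ with $s_1,s_2\in S$, $t_1=\phi s_2$, $t_2=\phi s_1$.

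In each of the three matchings, emptiness of the rectangle (no $\X$-markings, no points of $\x$) translates directly into the combinatorial admissibility conditions for $\partial_{\mathrm{mix}}$. In particular, the extra condition on Type 4 rectangles that $\pi_2(\X\cap[s_1,s_2]\times[t_1,t_2])=[t_1,t_2]\cap\ah$ and $\pi_2(\x\cap[s_1,s_2]\times[t_1,t_2])=[t_1,t_2]\cap\a$ corresponds precisely to the requirement in case (c) that every black or orange strand ending between $t_1$ and $t_2$ must cross both strands at $(s_1,t_2)$ and $(s_2,t_1)$ on the $\P_2^*$-side. The $U$-exponents will then match: multiplicities $n_{t_O}(H)$ picked up by rectangles correspond to the $O$-pair counts $n_O(pq)$ in the definition of $\partial_{\mathrm{mix}}$, following the same case analysis used in the proof of Proposition \ref{prop:gridtypea1}.

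The main obstacle is the case-(c)/Type-4 identification, since the wrap-around rectangles and their $\pi_2$-conditions look rather different from the ``crosses both strands'' condition in the strand language; the rest of the verification is parallel bookkeeping to the type $A$ case, and once the $\partial^L$-identification is in hand, $\partial^R$ follows by a $90^\circ$ rotation argument, yielding the full type $\DD$ isomorphism.
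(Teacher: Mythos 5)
Your proposal is correct, but it takes the ``direct'' route that the paper explicitly mentions and then declines to follow (see the remark right after the statement: the authors ``chose a longer way''). The paper's actual proof observes that $\delta^L(f^*)$ is \emph{by definition} $\partial_\wedge(\iota^L(f^*)\otimes f^*)$ on the wedge $\E^L(\P^*)\wedge\P^*$, then checks the near-tautological fact that each of the four types of partial rectangles in $G^*$ completes to a unique empty rectangle crossing the gluing region of $G(\E^L(\P^*))\cup G^*$ (and likewise for $\partial^R$ and $\delta^1$), and finally invokes the already-established gluing Propositions \ref{thm:chain}, \ref{thm:chain2}, \ref{thm:chain3} to identify $\partial_\wedge$ with the rectangle-counting differential $\partial$ on the glued annular diagram; the type $\DD$ identities for $\CDTD^-(G^*)$ then come for free from Proposition \ref{prop:typeDrelns}. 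Your termwise matching --- Types 1 and 2 with mixed exchanges $T^c$--$T$, Type 3 with exchanges inside $T^c$, Type 4 with exchanges inside $T$, and the $\pi_2$-fullness condition on the complementary rectangle with the ``crosses both black strands'' requirement --- is exactly the composite of the paper's two steps, unrolled; in effect you re-prove the special case of Proposition \ref{thm:chain2} where one factor is an idempotent shadow. What the paper's factorization buys is economy (the gluing propositions are needed anyway, e.g.\ to prove Propositions \ref{thm:cocomposition} and \ref{thm:cocomposition2}); what yours buys is an explicit, self-contained combinatorial dictionary. Just make sure, when you write it up, to carry through the $U_O$-exponent comparison for Types 3 and 4 (including the extra $n_{t_O^c}(H)=1$ contributions on split rectangles), since the statement concerns the minus version.
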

While Proposition \ref{prop:gridtyped1} and the fact that $\CDTD^-(G^*)$ satisfies the type $\mathit{DD}$ identities could be proven directly, we will chose a longer way. First we understand how to glue bordered grid diagrams. Then, as it is explained later,  both statements are consequences of Proposition \ref{thm:chain} and \ref{thm:chain2}.

\subsection{Gluing bordered grid diagrams}\label{ssec:glueing}
Suppose that $G_1=G(\P_1)= (\alphas^1,\betas^1,\X_1,\O_1)$ and $G_2^*=G^*(\P_2^*)= (\alphas^2,\betas^2,\X_2,\O_2)$, where $\P_1$ and $\P_2^*$ have well-defined wedge product. This means that $n_1=n_2$, so $G=G_1\cup G_2^*/\sim \subset [-m_1-1,m_2+1]\times [0,n_1+1]/\sim$ is a bordered grid diagram where the edges $\{-m_1-1\}\times[0,n_1+1]$ and $\{m_2+1\}\times[0,n_2+1]$ are identified. Here $\betas=\betas^1\cup \betas^2$, and the $\alphas$-arcs are glued to form the new circles $\widetilde\alpha_a=[-m_1-1,m_2+1]\times \{a\} / \sim$. Similarly, $\X=\X_1\cup \X_2$ and $\O=\O_1\cup \O_2$. Note that since $\P_1$ and $\P_2^*$ have a well-defined wedge product every annulus between the alpha circles $\widetilde\alpha_a$ and $\widetilde\alpha_{a+1}$ contains exactly one element of $\X$ and one element of $\O$.

Informally, we glued $G_2^*$ to the right of $G_1$ and identifed the left and right edges of the resulting rectangle to obtain an annulus. Alternatively, one can shift coordinates in $\R^2$ and view the annulus by placing $G_2^*$ to the left of $G_1$ and then identifying the left and right edges of the resulting rectangle to obtain an annulus. Abstractly, the annulus is simply the result of identifying each ``$\alphas$-boundary edge" of one grid with an $\alphas$-boundary edge of the other grid, so that the labels on the $\alphas$-curves match up, and the gluing respects the orientation on the two surfaces of the grids.

We define $C^-(G)$ to be the free module generated over $\F_2[U_O]_{O\in \O}$ by tuples of intersection points $\x\subset \widetilde\alphas\cap \betas$ such that there is one point on each $\widetilde\alphas$-circle, and at most one point on each $\betas$-arc. Observe that the generating set is precisely 
\[\gen(G) = \{\x = (\x_1,\x_2)\in \gen(G_1)\times \gen(G_2^*)| |\x_1\cap \alpha^1_a|=1 \textrm{ if and only if } |\x_2\cap \alpha^2_a|=0\}.\]
Define a map $\partial$ on $\gen(G)$ by counting empty rectangles in the interior of $G$ (note that rectangles may cross the newly identified edges), and extend linearly to all of $C^-(G)$. By standard grid diagram arguments, $\partial$ is a differential. See Figure \ref{fig:glueinggrid} for an example of the identification where $G_1$ is drawn to the right.
\begin{figure}[h]
 \centering
       \includegraphics[scale=0.9]{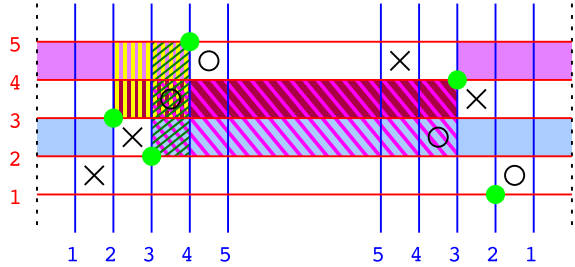} 
       \vskip .2 cm
       \caption[The differential on the annular grid diagram associated to the example of Figure \ref{fig:comultiplication}.]{\textbf{The differential on the annular grid diagram associated to the example of Figure \ref{fig:comultiplication}.} The dashed lines on the right and left hand side are glued together.
       The green dots corresponds to the strand diagram on the left hand side of Figure \ref{fig:comultiplication}, and the six rectangles to the nonzero terms in the differential of that diagram.}
       \label{fig:glueinggrid}
\end{figure}

Now there is a one to one correspondence between generators of $\P_1\wedge \P_2^*$ and $\gen(G)$ given by mapping $(S_1,T_1,\phi_1)\otimes (S_2,T_2,\phi_2)^*$ to $(\x_1,\x_2)$, where $\x_1=(\alpha^1_{\phi_1 s}\cap\beta^1_s)_{s\in S_1}$ and $\x_2=(\alpha^2_{\phi_2 s}\cap\beta^2_{s})_{s\in S_2}$. 
We show below that under this correspondence the differential $\partial$ on $C^-(G)$ agrees with $\partial_\wedge$ on $C^-(\P_1\wedge \P_2^*)$. 
In particular, it follows that $(C^-(\P_1\wedge \P_2^*),\partial_\wedge)$ is a chain complex as it is stated in Theorem \ref{thm:cocomposition2}. 
\begin{proposition}\label{thm:chain}
The structures $(C^-(\P{}_1\wedge \P_2^*),\partial_\wedge)$ and $(C^-(G),\partial)$ are isomorphic. 
\end{proposition}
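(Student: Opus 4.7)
The plan is to combine the bijection of generators set up just before the proposition with a case analysis of empty rectangles in the annular grid $G$, matched term-by-term against the three summands $\partial_1+\partial_2^*+\partial_{\textrm{mix}}$ of $\partial_\wedge$. I would classify each empty rectangle in $G$ by how its closed horizontal extent interacts with the two alpha-seams of the annulus --- the ``interior'' seam at $x=0$ where $G_1$ and $G_2^*$ were joined, and the ``identification'' seam $\{-m_1-1\}\sim\{m_2+1\}$. Rectangles entirely contained in $G_1$ (respectively $G_2^*$) count $\partial_1$ (respectively $\partial_2^*$) under the bijection, by Propositions \ref{prop:isom} and \ref{prop:isomD}; the $\x$, $\X$ and $\O$ data on the other side of the seam is irrelevant to whether such a rectangle is empty, so these contributions split off cleanly.

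For seam-crossing rectangles I would subdivide according to the positions of the four corners, matching the three cases of $\partial_{\textrm{mix}}$ in order:
\begin{itemize}
\item Two corners in $G_1$ and two in $G_2^*$: the ``bridging'' rectangle corresponds to an exchangeable pair $(p,q)\in S_1\times S_2$.
\item All four corners in $G_1$, forcing the rectangle to wrap through $G_2^*$: this corresponds to $(p,q)\subset S_1$ with $(p,q)\in\Inv(\phi_1)$.
\item All four corners in $G_2^*$, forcing the rectangle to wrap through $G_1$: this corresponds to $(p,q)\subset S_2$ with $(p,q)\notin\Inv(\phi_2)$.
\end{itemize}
In each sub-case I would check that the $U_O$-exponent $n_O(pq)$ from Section \ref{subsection:wedge} equals $|O\cap R|$, which is a direct translation of the green-strand bookkeeping in the strand-diagram language into the count of $\O$-markings covered by the rectangle $R$.

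The main obstacle is verifying that the algebraic ``exchangeability'' conditions match the geometric ``emptiness'' conditions for $R$, since the two are phrased dually. For instance, in case (b) the requirements that every integer $t\in(p,q)$ lies in $S_1$ with $\phi_1 t\in(\phi_1 q,\phi_1 p)$, and every half-integer $t\in(p,q)$ lies in $T_{\X_1}$ with $\xi_1^{-1}t\in(\phi_1 q,\phi_1 p)$, say precisely that the \emph{short} rectangle $[-q,-p]\times[\phi_1 q,\phi_1 p]$ inside $G_1$ is completely filled with $\x_1$- and $\X_1$-markings. This is exactly what forces the complementary wrap-around rectangle through $G_2^*$ to be free of $\x_1$- and $\X_1$-markings outside $[-q,-p]$; meanwhile the wedge-product identifications $S_{\X_1}=S_{\X_2}^c$ and $T_{\O_1}=T_{\O_2}^c$ translate the emptiness of the wrap-around with respect to $\x_2$ and $\X_2$ into the same combinatorial package. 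Case (c) is entirely parallel, and case (a) is the most direct geometric bijection. Once these matchings are in place, the equality $\partial_\wedge=\partial$ holds term-by-term on every generator, yielding the desired isomorphism of chain complexes.
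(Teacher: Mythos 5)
Your proof is correct and follows essentially the same route as the paper's: classify empty rectangles in the annular grid by how they meet the two seams, match the interior ones to $\partial_1$ and $\partial_2^*$ and the seam-crossing ones to the three kinds of exchangeable pairs, and verify that the $U_O$-exponents agree with the $\O$-multiplicities of the rectangles. The only cosmetic issue is that for $\P_1\wedge\P_2^*$ the exchanged endpoints live in $T_1\cup T_2$ (the $\alpha$-circle direction), so the exchangeability conditions you quote are the transposed ones from the $\P_1^*\wedge\P_2$ case --- a notational slip the paper's own writeup also makes.
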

\begin{proof}
Let $(\x_1,\x_2)$ be the generator of $(C^-(G),\partial)$ corresponding to the element $f=f_1\otimes f_2^*=(S_1,T_1,\phi_1)\otimes (S_2,T_2,\phi_2)^*$ in $(C^-(\P_1\wedge \P^*_2),\partial_\wedge)$. Recall that the differential $\partial_\wedge$ of $f_1\otimes f_2^*$ is given by the formula
\[\partial_\wedge(f_1\otimes f_2^*)=\partial(f_1)\otimes f_2^*+f_1\otimes \partial^*(f_2^*)+\partial_\textrm{mix}(f_1\otimes f_2^*),\]
while the differential of $(\x_1,\x_2)$ in $(C^-(G),\partial)$ is given by counting rectangles. Suppose that the rectangle $R$ contributes to the differential $\partial$. Then depending on the position of $R$ the result corresponds to different components of the differential $\partial_\wedge$ as follows:
\begin{itemize}
\item If $R$ is entirely contained in $G_1$, then $R$ corresponds to a term of $\partial(f_1)\otimes f_2^*$;
\item If $R$ is entirely contained in $G_2^*$, then $R$ corresponds to a term of $f_1\otimes \partial^*(f_2^*)$;
\item If $R$  intersects both $G_1$ and $G_2^*$, each in a  connected component, then $R$ intersects exactly one of the vertical lines $\{0\}\times [0,n_1+1]$ or $\{-m_1-1\}\times [0,n_1+1]\sim \{m_2+1\}\times [0,n_1+1]$.  In the first case  $R\cap\{0\}\times [0,n_1+1] = \{0\}\times [p,q]$ for some $p<q$, and in the second case  $R\cap\{m_1-1\}\times [0,n_1+1] = \{m_1-1\}\times [q,p]$ for some $q<p$.  Then $(p,q)\in S_1\times S_2$ is an exchangeable pair, and $R$ corresponds to a term of $\partial_\textrm{mix}$;
\item If $R$ intersects both $G_1$ and $G_2^*$ and $R\cap G_1$ has one component while $R\cap G_2^*$ has two components, then let $R\cap\{0\}\times [0,n_1+1] = \{0\}\times [p,q]$ for some $p<q$. The pair $(p,q)\subset S_2$ is exchangeable  and $R$ corresponds to a term of $\partial_\textrm{mix}$;
\item Similarly if $R$ intersects both $G_1$ and $G_2^*$ and $R\cap G_1$ has two components while $R\cap G_2^*$ has one component, then $R\cap\{0\}\times [0,n_1+1] = \{0\}\times [p,q]$ for some $p<q$. The pair $(p,q)\subset S_1$ is exchangeable  and $R$ corresponds to a term of $\partial_\textrm{mix}$.
\end{itemize}
Conversely, any term of $\partial_\wedge (f_1\otimes f_2^*)$ appears in the above list, thus the statement is proved. 
\end{proof}

Note that the writeup of the above proof uses coordinates for the case  when $G_1$ is viewed sitting to the left of $G_2^*$.

Similarly, if $G_1^*=G^*(\P_1^*)$ and $G_2'=G_2'(\P_2)$, then we can glue $(G_1^*)'$ to $G_2$ along the $x$-axis, i.e. place $G_2$ above $G_1^*$, and identify the resulting horizontal boundaries. Alternatively, we can view the annulus by placing $G_2$ below $G_1^*$ and then identifying the horizontal edges of the resulting rectangle. Abstractly, the annulus is the result of identifying $\betas$-boundary edges.  For the resulting annular grid diagram, we define a chain complex $(C^-(G), \partial)$, where again generators over $\F_2[U_O]_{O\in \O_1\cup\O_2}$ are tuples of intersection points with exactly one point on each $\widetilde\betas$-circle and at most one point  on each $\alphas$-arc, and the differential counts empty rectangles. Once again we have:
\begin{proposition}\label{thm:chain2}
The structures $(C^-(\P^*_1\wedge \P_2),\partial_\wedge)$ and $(C^-(G),\partial)$ are isomorphic.
\end{proposition}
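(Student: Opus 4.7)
The plan is to mirror the proof of Proposition~\ref{thm:chain}, adapting coordinates to the new gluing along $\betas$-boundaries rather than $\alphas$-boundaries. First, I would set up the bijection on generators: a generator of $C^-(\P_1^*\wedge \P_2)$ is of the form $f_1^*\otimes f_2 = (S_1,T_1,\phi_1)^*\otimes (S_2,T_2,\phi_2)$ with $T_1 = T_2^c$ (this is the non-vanishing condition on the wedge product coming from the idempotent matching $\E^R(\P_1^*)=\E_L(\P_2)$). I would send such an element to $(\x_1,\x_2)\in \gen((G_1^*)')\times \gen(G_2)$ where $\x_i = (\alpha^i_{\phi_i s}\cap \beta^i_s)_{s\in S_i}$. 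The condition $T_1 = T_2^c$ is exactly what guarantees that in the annular diagram $G$ each $\widetilde\alphas$-circle carries exactly one intersection point, so this gives a bijection with $\gen(G)$.

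Next, I would classify the empty rectangles $R$ in $G$ contributing to $\partial(\x_1,\x_2)$ according to their position relative to the glued horizontal edge. The cases are parallel to those in the proof of Proposition~\ref{thm:chain}, but rotated $90^\circ$: (i) if $R\subset (G_1^*)'$, it contributes to $\partial^*(f_1^*)\otimes f_2$; (ii) if $R\subset G_2$, it contributes to $f_1^*\otimes \partial(f_2)$; (iii) if $R$ crosses the horizontal identification and meets each of $(G_1^*)'$ and $G_2$ in a single connected component, it is recorded by an exchangeable pair $(p,q)\in T_1\times T_2$ as in the first bulleted case defining $\partial_{\mathrm{mix}}$ for $\P_1\wedge \P_2^*$ (applied now to the dual configuration $\P_1^*\wedge \P_2$); (iv) if $R$ meets $(G_1^*)'$ in two components and $G_2$ in one, it corresponds to a pair $(p,q)\subset T_1$; and (v) the symmetric case with $R\cap G_2$ having two components corresponds to a pair $(p,q)\subset T_2$.

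For each case, I would check that the combinatorial conditions for $R$ to be an allowed empty rectangle match exactly the exchangeability conditions listed in Subsection~\ref{subsection:wedge} (the bulleted list following the definition of $\partial_{\mathrm{mix}}$ for $C^-(\P_1\wedge \P_2^*)$, transferred by the shadow/mirror-shadow dictionary). Emptiness with respect to $\X$ and avoidance of generator points translate into the constraints about not gaining/losing crossings with black or orange strands on the appropriate side; passages through $\O$-markings translate into the $U_O^{n_O(pq)}$ exponents via the formulas for $n_O$. The key point is that the three sub-cases of $\partial_{\mathrm{mix}}$ are determined by whether the rectangle sits across the boundary once or wraps into one side twice, and this is exactly the trichotomy of positions of $(p,q)$.

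The main obstacle will be the careful bookkeeping in cases (iv) and (v), where the rectangle wraps and one must verify that the requirement ``every intermediate $\alpha$-label is occupied by a point on the correct side and every intermediate $\X$-marking lies on the correct side'' reproduces the exchangeability conditions exactly, and that the $U$-exponents agree with $n_{O_1}(pq)$ and $n_{O_2}(pq)$ as defined for the $\P_1^*\wedge \P_2$ case (noting the swap of roles compared to the $\P_1\wedge \P_2^*$ case handled explicitly in Subsection~\ref{subsection:wedge}). Once this dictionary is verified, the proof is essentially a re-labeling of that of Proposition~\ref{thm:chain}, and isomorphism follows; as an immediate consequence, $\partial_\wedge$ squares to zero, giving Proposition~\ref{thm:cocomposition2} as promised.
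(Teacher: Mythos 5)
Your overall strategy is exactly the paper's: the proof of Proposition~\ref{thm:chain2} is declared to be analogous to that of Proposition~\ref{thm:chain}, via the same bijection on generators and the same five-way classification of empty rectangles by their position relative to the gluing locus. However, as written your bookkeeping has the roles of the $S$-sets and $T$-sets (equivalently, of the $\betas$- and $\alphas$-directions) systematically swapped, and this is not cosmetic. For $\P_1^*\wedge\P_2$ the idempotent matching $\iota^R(f_1^*)=(S_1^c,S_1^c,\id_{S_1^c})=\iota_L(f_2)=(S_2,S_2,\id_{S_2})$ gives the non-vanishing condition $S_1=S_2^c$, not $T_1=T_2^c$; indeed the latter cannot even be the right condition in general, since the wedge product only requires $m_1=m_2$, so $T_1\subset\a_1$ and $T_2\subset\a_2$ live in sets of possibly different sizes, whereas $S_1,S_2\subset\b$. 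Correspondingly, gluing along the $\betas$-boundary edges closes the $\betas$-arcs into circles, so the one-point-per-circle condition concerns $\widetilde\betas$-circles and the occupancy sets $S_i$, not $\widetilde\alphas$-circles and the $T_i$; and a rectangle crossing the horizontal identification meets it in an interval $[p,q]$ of \emph{horizontal} coordinates, i.e.\ $p,q\in\b=S_1\cup S_2$.

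Relatedly, you propose to take the exchangeability conditions from the bulleted list for $C^-(\P_1\wedge\P_2^*)$ and ``transfer'' them, but you have the two lists in Subsection~\ref{subsection:wedge} backwards: the fully explicit definition of $\partial_{\mathrm{mix}}$ (with pairs in $S_1\times S_2$, $S_1$, $S_2$ and the explicit $n_{O}$ formulas) is the one given for $\P_1^*\wedge\P_2$, which is precisely the case of Proposition~\ref{thm:chain2}, so no dictionary is needed — the verification is a direct comparison of those conditions with emptiness of the corresponding rectangles, exactly as in the proof of Proposition~\ref{thm:chain}. (A symptom of the same swap: Proposition~\ref{thm:chain2} yields Proposition~\ref{thm:cocomposition}, about $C^-(\P_1^*\wedge\P_2)$, not Proposition~\ref{thm:cocomposition2}.) With the index sets corrected throughout, your argument becomes the paper's.
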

\begin{proof}
The proof is analogous to that of Proposition \ref{thm:chain}.
\end{proof}
As an immediate consequence we have:
\begin{proof}[Proof of Proposition \ref{thm:cocomposition} and  \ref{thm:cocomposition2}]
Both statements follow from Propositions \ref{thm:chain} and \ref{thm:chain2} for $C^-(\E^L(\P^*)\wedge \P^*)$ and  $C^-(\P^*\wedge \E^R(\P^*))$, along with the fact that $\partial$ is a differential for the corresponding grid diagrams.
\end{proof}

In general, suppose we have an alternating sequence of shadows and mirror-shadows $\Pp=(\P_1^\circ,\dots,\P_p^\circ)$ with well-defined consecutive wedge products. We can glue the grid diagrams $G^\circ (\P_1^\circ),\ldots, G^\circ (\P_p^\circ)$ by alternating the gluing along  horizontal or vertical edges to obtain the nice bordered Heegaard diagram $G$ on plumbings of annuli. 
We can associate a tangle to $G$, which is simply the concatenation of $\T^\circ(G^\circ(\P_1^\circ)),$ $\dots,$ $\T^\circ(G^\circ(\P_p^\circ))$. See, for example,  Figure \ref{fig:reid2}.

Let $C^-(G)$ be the free module over $\F_2[U_O]_{O\in \O_1\cup \cdots\cup \O_p}$ generated by tuples of intersection points, one point on each $\widetilde\alphas$-circle, at most one  on each $\alphas$-arc, one on each $\widetilde\betas$-circle, and at most one on each $\betas$-arc, and let $\partial$ be the differential on $C^-(G)$ defined by counting empty rectangles. Then
\begin{proposition}\label{thm:chain3}
The structures $(C^-(\Pp),\partial_\wedge)$ and $(C^-(G),\partial)$ are isomorphic. 
\end{proposition}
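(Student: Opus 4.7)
The plan is to reduce to the two-factor case (Propositions \ref{thm:chain} and \ref{thm:chain2}) via a locality property of empty rectangles in the plumbed diagram. First I would establish the bijection on generators. A generator of $C^-(G)$ is a tuple of intersection points with exactly one point on each $\widetilde\alphas$-circle and each $\widetilde\betas$-circle, and at most one point on each arc. Intersecting such a tuple with each piece $G^\circ(\P_j^\circ)$ yields $(\x_1,\dots,\x_p)$ with $\x_j\in\gen(G^\circ(\P_j^\circ))$; the circle-occupancy condition at each gluing seam is precisely the complementary-idempotent constraint defining the tensor product $C^-(\P_1^\circ)^\circ\otimes\cdots\otimes C^-(\P_p^\circ)^\circ$. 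Composing with the bijections of Propositions \ref{prop:isom} and \ref{prop:isomD} identifies $\gen(G)$ with the generating set of $C^-(\Pp)$.

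Next I would verify that the differentials agree. The key step is the following locality property: any empty rectangle $R$ in $G$ either lies entirely within one piece $G^\circ(\P_j^\circ)$, or crosses exactly one gluing seam between two consecutive pieces $G^\circ(\P_j^\circ)$ and $G^\circ(\P_{j+1}^\circ)$. This follows from the alternating horizontal/vertical structure of the plumbing together with the shape of rectangles: any $R$ is bounded by at most two $\alphas$-curves and two $\betas$-curves, so it cannot cross two non-adjacent seams (which are separated by a full intermediate piece whose interior generator points would have to appear as corners of $R$), nor can it cross two adjacent seams, since consecutive seams run in orthogonal directions. Consequently the differential decomposes as
\[
\partial \;=\; \sum_{j=1}^{p}\partial^{\mathrm{inner}}_j \;+\; \sum_{j=1}^{p-1}\partial^{\mathrm{seam}}_j,
\]
where $\partial^{\mathrm{inner}}_j$ counts empty rectangles contained in $G^\circ(\P_j^\circ)$ and $\partial^{\mathrm{seam}}_j$ counts those crossing the $j$-th seam.

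By Propositions \ref{prop:isom} and \ref{prop:isomD}, each $\partial^{\mathrm{inner}}_j$ corresponds under the bijection to the inner differential $\partial^\circ$ acting in the $j$-th tensor factor. The analysis used in Propositions \ref{thm:chain} and \ref{thm:chain2} is purely local to the strip of the diagram near a single gluing edge, and therefore applies verbatim at each seam of the plumbing: it identifies $\partial^{\mathrm{seam}}_j$ with $\partial_\textrm{mix}$ applied to the $(j,j+1)$-th pair of factors. Summing all these contributions reproduces the formula for $\partial_\wedge$ from Subsection \ref{subsection:wedge}, giving the desired isomorphism of chain complexes.

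The main obstacle is a careful verification of the locality property. One must rule out rectangles that wrap around the annular directions of the plumbing so as to encircle a seam curve, or rectangles that span across an entire intermediate piece. Once these configurations are excluded, the remainder of the proof is a bookkeeping extension of the two-factor argument, with each seam contributing independently because rectangles are confined to a neighborhood of at most one seam at a time.
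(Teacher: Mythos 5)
Your proposal is correct and follows essentially the same route as the paper: the paper's proof of Proposition \ref{thm:chain3} simply observes that any empty rectangle is either fully contained in one grid or intersects two consecutive grids, and then invokes the argument of Propositions \ref{thm:chain} and \ref{thm:chain2} locally at each seam. Your write-up spells out the generator bijection and the inner/seam decomposition of $\partial$ in more detail, but the key locality observation and the reduction to the two-factor case are exactly the paper's argument.
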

\begin{proof}
The proof is analogous to that of Proposition \ref{thm:chain} (here, any empty rectangle is either fully contained in one grid, or intersects two consecutive grids).
\end{proof}
 When the gluing maps between adjacent grids are clear from the context, we will use the otherwise ambiguous notation $G^\circ (\P_1^\circ)\cup \cdots \cup G^\circ (\P_p^\circ)$ for $G$. We will also sometimes write $\x_1\cup \ldots\cup \x_p$ for $(\x_1, \ldots, \x_p)$.

We are now ready to prove Proposition \ref{prop:gridtyped1}.
\begin{proof}[Proof of Proposition \ref{prop:gridtyped1}]
By definition, the maps $\delta^L$,  $\delta^R$, and $\delta^1$ on a generator $f^*$ of $\CDTD^-(\P^*)$ correspond to the map $\partial_{\wedge}$ on the generators $\iota^L(f^*)\otimes f^*$, $f^*\otimes\iota^R(f^*)$, and  $\iota^L(f^*)\otimes f^*\otimes\iota^R(f^*)$of $\E^L(\P^*)\wedge \P^*$, $\P^*\wedge \E^R(\P^*)$, and $\E^L(\P^*)\wedge \P^*\wedge \E^R(\P^*)$, respectively.

It is also not hard to see that the  maps $\delta^L$,  $\delta^R$, and $\delta^1$ on a generator $\x$ of $\CDTD^-(G^*)$ correspond to the map $\partial$ on the generators $\iota^L(\x)\cup \x$, $\x\cup \iota^R(\x)$, and 
 $\iota^L(\x)\cup \x\cup \iota^R(\x)$ of the grid diagrams $G(\E^L(\P^*))\cup G^*$, $G^*\cup G(\E^R(\P^*))$, and  $G(\E^L(\P^*))\cup G^*\cup G(\E^R(\P^*))$, respectively. We outline the correspondence for $\delta^L$ here. The other cases are analogous. An empty rectangle starting at $\x$ that stays in $G^*$ contributes to $\partial(\x)$, hence to $\iota^L(\x)\otimes \partial (\x)$,  as well as to $\partial (\iota^L(\x)\cup \partial (\x))$.
 An empty partial rectangle starting at $\x$ in $G^*$ of the form $[0,t_1]\times [s_1,s_2]$, $[t_2, m+1]\times [s_1,s_2]$, $[0,m+1]\times [s_1,s_2]$, or $([0,t_1]\cup [t_2,m+1])\times [s_1,s_2]$ contributes to $\partial^L(\x)$ and corresponds to the empty rectangle  $[-s_1,t_1]\times [s_1,s_2]$,  $([-n-1,s_2]\cup [t_2, m+1])\times [s_1,s_2]$, $([-n-1, -s_2]\cup [-s_1,m+1])\times [s_1,s_2]$, or $([-n-1, t_1]\cup[t_2, m+1])\times [s_1, s_2]$, respectively,  in $G(\E^L(\P^*))\cup G^*$ which contributes to $\partial(\iota^L(\x)\cup \x)$.

By Propositions \ref{thm:chain},  \ref{thm:chain2}, and \ref{thm:chain3},  the correspondence between generators of $\E^L(\P^*)\wedge \P^*$ and $G(\E^L(\P^*))\cup G^*$, $\P^*\wedge \E^R(\P^*)$ and $G^*\cup G(\E^R(\P^*))$, and $\E^L(\P^*)\wedge \P^*\wedge \E^R(\P^*)$ and $G(\E^L(\P^*))\cup G^*\cup G(\E^R(\P^*))$, respectively,  carries the map $\partial_{\wedge}$ to the map $\partial$. Therefore, the structures $(C^-(G^*), \delta^L)$ and $(C^-(\P^*), \delta^L)$,  $(C^-(G^*), \delta^R)$ and $(C^-(\P^*), \delta^R)$, and $(C^-(G^*),  \delta^1)$ and $(C^-(\P^*), \delta^1)$ are pairwise isomorphic. In particular, $\CDTD^-(G^*)$ and $\CDTD^-(\P^*)$ are isomorphic. Further, by Proposition \ref{prop:typeDrelns}, $(C^-(G^*), \delta^L)$ is a left type $D$ structure, $(C^-(G^*), \delta^R)$ is a right type $D$ structure, and $\CDTD^-(G^*)$ is a left-right type $\DD$ structure.
\end{proof}

The above proof sums up to the following observation. For a mirror-shadow $\P^*$, the maps $\delta^L$,  $\delta^R$, and $\delta^1$ on a generator $f^*$ correspond to gluing $G^*(\P^*)$ to $G(\E^R(\P^*))$ along the $\betas$-curves and/or to $G(\E^L(\P^*))$ along the $\alphas$-curves, and then taking the inner differential of the generator of the resulting diagram corresponding to $\iota^L(f^*)\otimes f^*$, $f^*\otimes\iota^R(f^*)$, or  $\iota^L(f^*)\otimes f^*\otimes\iota^R(f^*)$, respectively.

If $G$ is the bordered Heegaard diagram corresponding to an alternating sequence of shadows and mirror-shadows $\Pp=(\P_1^\circ,\dots,\P_p^\circ)$ with well-defined consecutive wedge products, then  $C^-(G)$ has a left type $A$ or $D$ map depending on whether $\P_1^\circ$ is shadow or a mirror-shadow, defined by counting partial rectangles in $G^\circ (\P_1^\circ)$ as usual, and similarly it has a right type $A$ or $D$ map depending on whether $\P_p^\circ$ is a shadow or a mirror shadow. Denote the resulting structures by $\mathit{CATA}^-(G)$, $\mathit{CDTA}^-(G)$, $\mathit{CATD}^-(G)$ or $\mathit{CDTD}^-(G)$, or simply by $\mathit{CT}^-(G)$.

\subsection{Self-gluing of bordered grid diagrams}\label{ssec:selfgluing-grid}
In this subsection we discuss annular bordered grid diagrams corresponding to one-sided modules. Let $G^*=G^*(\P^*) = (\alphas,\betas,\X,\O)$ correspond to a mirror-shadow $\P^*$ with $\ah=\SX=\TO$. This means that each row of $G^*$ contains both an $X$ and an $O$, thus the 
annular bordered grid diagram $G^*_\a = (\widetilde{\alphas},\betas,\X,\O)$ will have an $X$ and an $O$ in each of its annuli.
See Figure \ref{fig:selfgluing}.
\begin{figure}[h]
 \centering
       \includegraphics[scale=0.9]{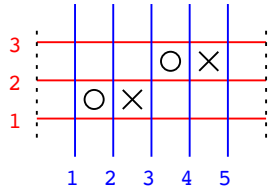} 
       \vskip .2 cm
       \caption[Self-gluing of a bordered grid diagram.]{\textbf{Self-gluing of a bordered grid diagram.} The dashed lines are identified.}
       \label{fig:selfgluing}
\end{figure}
Take the subset $\gen_n(G^*_\a)$ of generators that occupy each $\widetilde{\alpha}$-circle. Then the map $\partial$ that also counts the rectangles which cross the line $\{0\}\times [0,m+1]\sim \{n+1\}\times[0,m+1]$ endows $C_n^-(G^*_\a)$  with a chain complex structure, and under the usual identification of $\gen_n(\P^*)$ with $\gen_n(G^*_\a)$ we have
\begin{proposition}\label{prop:selfgrid}
$(C_n^-(G^*_\a),\partial)$ is a  chain complex isomorphic to $(C_n^-(\P^*),\partial^*+{}_D\partial)$.
\end{proposition}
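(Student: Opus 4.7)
The plan is to exhibit a bijection between $\gen_n(G^*_\a)$ and $\gen_n(\P^*)$ and to show that under this bijection the annular grid differential equals $\partial^*+{}_D\partial$. The hypothesis $\ah=\SX=\TO$ guarantees that each horizontal strip of $G^*$ carries exactly one $X$ and one $O$, so identifying the vertical boundary edges produces a well-defined annular bordered grid diagram. A generator of $C_n^-(G^*_\a)$ occupies every $\widetilde\alpha$-circle exactly once, so it is a tuple $(\alpha_{\phi s}\cap\beta_s)_{s\in S}$ with $|S|=n$ and $\phi(S)=\a$. Via the same identification used in Proposition \ref{prop:isomD}, these match $\gen_n(\P^*)$ on the nose.

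Next I would split empty rectangles of $G^*_\a$ into those contained in the fundamental domain $G^*$ and those wrapping across the identified edge. Rectangles of the first type, restricted to generators with $T=\a$, reproduce $\partial^*$ on $C_n^-(\P^*)$ directly by Proposition \ref{prop:isomD}. The main work is to verify that wrap-around rectangles correspond exactly to the terms of ${}_D\partial$.

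To carry out this identification, pass to the universal cover: a wrap-around empty rectangle from $\x$ lifts to a standard rectangle whose lower-left and upper-right corners are points of $\x$. In the fundamental domain it has the form $([0,s_1]\cup[s_2,m+1])\times[\phi(s_2),\phi(s_1)]$ for some $s_1<s_2$ in $S$ with $(s_1,s_2)\in\Inv(\phi)$, and its target is the generator for $\phi^{(s_1,s_2)}$. The absence of $\x$-points and $X$-markings in the interior translates to $\phi^{-1}(t)\in[s_1,s_2]$ for every $t\in[\phi(s_2),\phi(s_1)]\cap\a$ and $\xi(s_X)\in[s_1,s_2]$ for every $s_X\in[\phi(s_2),\phi(s_1)]\cap\ah$, which are precisely the two conditions defining ${}_D\mathrm{Exch}(\phi)$. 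An $O$-marking $(s_O,\omega s_O)$ sits inside the rectangle exactly when $s_O\notin[s_1,s_2]$ and $\omega s_O\in[\phi(s_2),\phi(s_1)]$, so the total $U_O$-exponent matches ${}_D n_O(s_1,s_2)$.

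Once this dictionary is in place, $\partial^2=0$ on $C_n^-(G^*_\a)$ follows from the standard grid-diagram cancellation argument---every term of $\partial^2$ comes from a domain admitting a unique alternate decomposition into two empty rectangles, now allowing wrap-around pieces---and this simultaneously proves Lemma \ref{lem:selfglueingchain}. The main obstacle is the bookkeeping in the third paragraph: correctly interpreting wrap-around rectangles in the annulus, verifying that their emptiness conditions coincide with membership in ${}_D\mathrm{Exch}(\phi)$, and confirming that the $U_O$-powers recorded by the rectangle agree with the combinatorial exponents ${}_D n_O(s_1,s_2)$.
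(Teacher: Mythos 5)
Your proof is correct and follows essentially the same route as the paper: identify generators, match rectangles contained in the fundamental domain with $\partial^*$, and match wrap-around rectangles of the form $([0,s_1]\cup[s_2,m+1])\times[\phi(s_2),\phi(s_1)]$ with the allowable exchanges of ${}_D\partial$, checking that emptiness corresponds to membership in ${}_D\mathrm{Exch}(\phi)$ and that the $O$-multiplicities give ${}_Dn_O(s_1,s_2)$. The paper's proof is just a terser version of exactly this dictionary.
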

\begin{proof}
The proof is similar to the proof of Proposition \ref{prop:isom}. The terms in ${}_D\partial$ correspond to those empty rectangles that cross the gluing, as follows. For the generator $f=(S,T,\phi)$ corresponding to the intersection point $\x=(\alpha_s,\beta_{\phi s})_{s\in S}$, the pair $(s_1,s_2)$ is allowable exactly when the glued up rectangle $R=([0,s_1]\cup [s_2,n+1])\times [\phi(s_2),\phi(s_1)]$ is empty (i.e. $\x\cap R=\X\cap R=\emptyset$). Then $R$ connects $\x$ to $\y=(\alpha_s,\beta_{\phi^{(s_1,s_2)} s})_{s\in S}$ and $n_O$ measures the multiplicity of $O$ in $R$.\end{proof}

Lemma \ref{lem:selfglueingchain} now  follows from Proposition \ref{prop:selfgrid}.

As in Section \ref{ssec:Dgrid}, we can define a right type $D$ map on $C_n^-(G^*_\a)$ by $\delta^1 \x =\partial \x \otimes \iota^R(\x) + \partial^R \x.$, to obtain a right type $D$ structure  $\mathit{CTD}^-(G^*_\a)$ which,  by arguments analogous to those for Proposition \ref{thm:chain},  is isomorphic to $\mathit{CTD}^-(\P^*)$.  We can similarly define structures $\mathit{CDT}^-(G^*_\b)$, $\mathit{CAT}^-(G_\b)$ and $\mathit{CTA}^-(G_\a)$ isomorphic to $\mathit{CDT}^-(\P)$, $\mathit{CAT}^-(\P)$ and $\mathit{CTA}^-(\P)$. 

\begin{convention}\label{conv:contractgrid}
Similar to Convention \ref{conv:contract}, if $G_1^\circ\cup \cdots \cup G_p^\circ$ corresponds to an alternating sequence of shadows and mirror-shadows $\Pp=(\P_1^\circ,\dots,\P_p^\circ)$, and
 $G_1^\circ$ and/or $G_p^\circ$ can be self-glued, we will always self-glue it, to produce a nice diagram $G$ whose invariant is a  one-sided module or a chain complex that agrees with $\mathit{CT}^-(\Pp)$.
\end{convention}

\subsection{Pairing for plumbings of bordered grid diagrams.}
Gluing bordered grid diagrams corresponds to taking a box tensor product of their algebraic invariants:

\begin{theorem}
Given an alternating sequence of shadows and mirror-shadows $\Pp=(\P_1^\circ,\dots,\P_p^\circ)$ with well-defined consecutive wedge products, denote $G^\circ(\P_1^\circ)\cup \cdots \cup G^\circ(\P_i^\circ)$ and $G^\circ(\P_{i+1}^\circ)\cup \cdots \cup G^\circ(\P_p^\circ)$ by $G_i$ and $G_i'$, respectively. 
The obvious identification of generators gives an  isomorphism
\[\mathit{CT}^-(G_i\cup G_i') \cong \mathit{CT}^-(G_i)\boxtimes \mathit{CT}^-(G_i').\]
\end{theorem}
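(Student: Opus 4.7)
The plan is to set up a bijection between generators of the two sides and then match the structure maps by analyzing empty rectangles in the glued diagram according to whether they cross the seam of the gluing. More precisely, a generator of $\mathit{CT}^-(G_i\cup G_i')$ is a tuple of intersection points occupying every $\widetilde\alphas$-circle and $\widetilde\betas$-circle and at most one of each $\alphas$-arc and $\betas$-arc of $G_i\cup G_i'$; by inspection such a tuple is precisely a pair $(\x,\x')\in\gen(G_i)\times\gen(G_i')$ whose boundary occupancies on the shared seam are complementary, which is exactly the idempotent matching condition defining generators of $\mathit{CT}^-(G_i)\boxtimes \mathit{CT}^-(G_i')$. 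This identification is manifestly $\F_2[U_O]$-linear, since the $U_O$ variables are indexed by all $O$-markings of the combined diagram.

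Next, I would classify empty rectangles in $G_i\cup G_i'$ into three types according to how they meet the seam shared by $G_i$ and $G_i'$. Rectangles entirely contained in one half contribute the internal differential of the corresponding factor. Rectangles crossing the seam decompose uniquely as a concatenation of a partial rectangle in $G_i$ touching its boundary and a partial rectangle in $G_i'$ touching the matching boundary; in Proposition \ref{thm:chain} (and in the proofs of Propositions \ref{thm:chain2} and \ref{thm:chain3}) it was already observed that, under the correspondence with strand pictures, such a pair of partial rectangles is exactly the data counted by an elementary type $A$ action on the one side paired with an elementary type $D$ term on the other side. The $U_O$-weight of a crossing rectangle factors as the product of the $U_O$-weights of its two halves, and the emptiness and non-forbidden-configuration conditions on the glued rectangle correspond exactly to the allowedness of the partial-rectangle set on the type $A$ side.

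Assembling these three sources of terms, the differential $\partial$ on $\mathit{CT}^-(G_i\cup G_i')$ becomes, under the bijection of generators,
\[
\partial(\x\otimes\x') = (\partial_i\x)\otimes\x' + \x\otimes(\partial_i'\x') + \sum_{k\ge 1}(m_k\otimes \id)\circ(\id\otimes\delta_{k-1})(\x\otimes\x'),
\]
which is precisely the box tensor product differential. The same verification works for higher actions: if $G_i$ has an unglued boundary on the far side, the corresponding $m_{1,1,j}$ or $\delta^1_{1+j}$ maps of the box tensor product arise from partial rectangles touching that far boundary, which are disjoint from the seam and so descend unchanged from $\mathit{CT}^-(G_i)$. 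Conventions \ref{conv:contract} and \ref{conv:contractgrid} ensure that the boundary-contracted cases are treated uniformly.

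The main obstacle is bookkeeping rather than conceptual: one must verify case by case, for each of the four pairings (AA/DA/AD/DD of the outer boundaries of $G_i$ and $G_i'$), that the four kinds of partial rectangles in the type $D$ grid description from Subsection \ref{ssec:Dgrid} — including the wrap-around rectangles of the form $([0,s_1]\cup[s_2,m+1])\times[t_1,t_2]$ — pair correctly with the sets of partial rectangles defining the type $A$ action, reproducing precisely the box tensor product formula without double-counting and without missing terms. Once the crossing-rectangle dictionary is established, associativity of the construction under iterated gluing follows immediately by induction on $p$ applied to the sequence $\Pp$, so it suffices to treat the single cut case in detail.
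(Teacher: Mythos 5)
Your proposal is correct and follows the second of the two routes the paper itself gives for this theorem: identifying generators, splitting empty rectangles in $G_i\cup G_i'$ into those contained in one half (internal structure maps) and those crossing the seam (which decompose as a type $D$ partial rectangle paired with an allowed set of type $A$ partial rectangles), exactly reproducing the $\boxtimes$ differential. The paper's primary stated route instead deduces the result from the grid--shadow isomorphisms together with Theorem \ref{thm:wedgevsbox}, but your direct rectangle-matching argument is precisely the alternative the paper sketches, so no further comparison is needed.
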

\begin{proof}This follows from the equivalences proven earlier in this section, along with Theorem \ref{thm:wedgevsbox}. Alternatively, one can notice that by definition of the type $D$ and type $A$ actions for bordered grid diagrams, pairing them via $\boxtimes$ corresponds to matching partial rectangles for the type $D$ maps with  sets of partial rectangles for the type $A$ maps along the boundary. The possible pairings correspond to empty rectangles in the union of the two diagrams that cross the gluing. 
\end{proof}

\subsection{Relations between the $U$-actions}
Let  $\Pp=(\P_1^\circ,\dots,\P_p^\circ)$ be an alternating sequence of shadows and mirror-shadows with well-defined consecutive wedge products. Let  $G$ be the nice bordered Heegaard diagram obtained by gluing $G^\circ (\P_1^\circ), \dots ,G^\circ (\P_p^\circ)$ as before. 

The pairs $O=(s_O,\omega_ is_O)$ and $O'=(s_O',\omega_{i'} s_O')$ are connected by a path  exactly when $O$ and $O'$ lie on the same component of the tangle $\T(G)$ associated to $\Pp$, or in other words if there is a sequence of $O=O_1,X_1,O_2,X_2, \dots,X_{k-1},O_k=O'$ such that $O_j$ and $X_j$ are in the same row, and $X_{j-1}$ and $O_j$ are in the same column (note that we also require that none of the $X$s are in the first or last parts $G^\circ(\P_1^\circ)$ or $G^\circ(\P_p^\circ)$). Now we are ready to prove Lemma \ref{lem:Uaction}:
\begin{proof}[Proof of Lemma \ref{lem:Uaction}]
First let us assume that $\mathit{CT}^-(\Pp)$ is a type $\mathit{AA}$ structure. Then we need to prove that there is a type $\mathit{AA}$ map $\H$ such that $(U+U')\id_{\mathit{CT}^-(\Pp)}=\partial \H$. It is enough to prove this statement in the case when $O$ and $O'$ are of distance 1 (the general case then can be obtained by adding up the homotopies for all $j$). This means that there is a point $X$ which is in the  row of $O$ and in the column of $O'$. By definition $X$ is not in $G^\circ(\P_1^\circ)$ or $G^\circ(\P_p^\circ)$, thus the horizontal and vertical rows containing it are both closed up to annuli. This means that the map $\H_X$ that counts rectangles that cross $X$ once consists of the single map $\mathit{CT}^-(\Pp)\to \mathit{CT}^-(\Pp)$ with no nontrivial components of the type $\Aa(\E_L(\P_1))^{\otimes l}\otimes \mathit{CT}^-(\Pp)\otimes\Aa(\E_R(\P_p))^{\otimes r}\to \mathit{CT}^-(\Pp)$ for $l,r>0$. And as in \cite{mos} the map $\H_X$ satisfies  $(U+U')\id_{\mathit{CT}^-(\Pp)}=\partial \H_X$.

The argument goes exactly the same way for the other types of structures, with the observation that if $\Pp$ starts or ends with a mirror-shadow, then we can complete it by adding $\E^R(\P_1^*)$ and/or $\E^L(\P_p^*)$ and denote the obtained sequence of shadows and mirror-shadows by $\Pp'$. Then chain homotopy in $(C^-(\Pp'),\partial)$ gives type $\mathit{DD}$ (or $\mathit{DA}$, or $\mathit{AD}$) equivalence of $\mathit{CT}^-(\Pp)$.
\end{proof}

%%%%%%%%%%%%%%%%%%%%%%%%%%%%%%%%%%%%%%%%%%%%%%%%%%%%%%%
%%%%%%%%%%%%%%%%%%%%%%%%%%%%%%%%%%%%%%%%%%%%%%%%%%%%%
% BIMODULES FOR TANGLES
%%%%%%%%%%%%%%%%%%%%%%%%%%%%%%%%%%%%%%%%%%%%%%%%%%%%%%%
%%%%%%%%%%%%%%%%%%%%%%%%%%%%%%%%%%%%%%%%%%%%%%%%%%%%%%%

\section{Modules associated to tangles}\label{sec:tangleinv}

In this section we will associate a left type $D$ structure or a right type $A$  structure to a tangle in $D^3$, a type $\mathit{DA}$ structure  to a tangle in $I\times S^2$, and a bigraded  chain complex to a knot (or link) in $S^3$. The main idea is to cut $\T$ into elementary pieces $\T=\T_1\circ \dots \circ \T_p$, associate a type $A$  structure to $\T_1$ if it is in $D^3$, a type $D$ structure to $\T_p$ if it is in $D^3$, and type 
$\mathit{DA}$ structures to all the other $\T_j$'s, and then take their box-tensor product. 
The structures associated to elementary pieces are the structures defined earlier for wedge products of appropriate shadows and mirror-shadows. The hard part -- of course -- is to prove independence of the cut. Although we believe that there is a completely combinatorial proof of the independence, in this paper we will only provide a proof that uses holomorphic curve techniques, see Section \ref{sec:hd_modules}. As a consequence of that, we can only prove independence for the ``tilde''-version of the theory. 
%%%%%%%%%%%%%%%%%%%%%%%%%%%%%%%%%%%%%%%%%%%%%%%%%%%%%%%
% ALGEBRAS TO ENDS
%%%%%%%%%%%%%%%%%%%%%%%%%%%%%%%%%%%%%%%%%%%%%%%%%%%%%%%

\subsection{Algebras associated to $\partial \T$}
For a sequence of oriented points with signs $\e=(\epsilon_1,\dots,\epsilon_k)$, let $n=k+1$, and remember that the sequence $\e=\e^1$
 corresponds to two complementary subsets $\SX=\{j+\frac12 : \epsilon_j=-1\}$ and $\TO=\{j+\frac12 : \epsilon_j=+1\}$ of the set $\{1\frac12,\dots,n-\frac12\}$. Set $\e^0=-\e^1$. This determines $\TX(=\SX)$ and $\TO(=\SO)$ in a similar vein. Take the idempotent shadow ${}_{\e^0}\mathcal{E}_{\e^1}=(n,n,\id_{\SX},\id_{\SO})$ of Example \ref{exm:straight}. This defines the algebra $\Aa_{\e}=\Aa({}_{\e^0}\mathcal{E}_{\e^1})$.

Given a tangle $\T$ with left boundary $\partial^0\T$ and right boundary $\partial^1\T$ (any of these sets can be empty if the tangle is closed from that side), let $\e^0=\e(\partial^0\T)$ and $\e^1=\e(\partial^1\T)$ be the sequences of signs of $\partial^0 \T$ and $\partial^1\T$, respectively. Let $\Aa(\partial^0 \T)=\Aa_{-\e^0}$ and $\Aa(\partial^1 \T)=\Aa_{\e^1}$. The minus sign in the second definition is there so that if we cut $\T=\T_1\circ\T_2$, then $\e^1(\partial^1\T_1)=-\e^0(\partial^0\T_2)$ thus $\Aa(\partial^1 \T_1)=\Aa(-\partial^0 \T_2)$.

%%%%%%%%%%%%%%%%%%%%%%%%%%%%%%%%%%%%%%%%%%%%%%%%%%%%%%%
% INVARIANTS OF TANGLES
%%%%%%%%%%%%%%%%%%%%%%%%%%%%%%%%%%%%%%%%%%%%%%%%%%%%%%%
\subsection{Invariants associated to a tangle}
Given a sequence of shadows and mirror-shadows $\Pp=(\P_1^\circ,\dots,\P_p^\circ)$ with well-defined consecutive wedge products, each $\P_j^\circ$ has a tangle $\T_j=\T^\circ(\P_j^\circ)$ associated to it. Note that if $\P_j$ is a shadow then at all crossings the strand with the bigger slope goes over the strand with the smaller slope, while if $\P_j^*$ is a mirror-shadow then at all crossings the strand with the smaller slope goes over the strand with the bigger slope. Since $\P_j^\circ$ and $\P_{j+1}^\circ$ have well-defined wedge product, thus $\P_j^\circ$ is not left-contractible, and $\P_{j+1}^\circ$ is not right-contractible, so $\partial^1\T_j\neq \emptyset$ and $\partial^0\T_{j+1}\neq \emptyset$ for $1\le j\le p-1$. If $\P_1^\circ$ is left-contractible then $\partial^0\T_1=\emptyset$ and if  $\P_p^\circ$ is left-contractible then $\partial^1\T_p=\emptyset$. This means that the composition-tangle $\T(\Pp)=\T_1\circ\cdots\circ\T_p$ can be  in $S^3$, $D^3$ or in $S^2\times I$. Moreover any tangle $\T$ can be constructed in the above way. 

\begin{lemma}\label{lem:decomp}
Let $\T$ be a tangle in $S^3$, $D^3$ or in $S^2\times I$. Then there is a sequence of shadows $\Pp=(\P_1^\circ,\dots,\P_p^\circ)$ such that $\T$ is isotopic to $\T(\Pp)$ (relative to the boundary), and
\begin{itemize}
\item[-] if $\partial^0\T=\emptyset$ then $\P_1^*$ is a mirror-shadow;
\item[-] if $\partial^1\T=\emptyset$ then $\P_p^*$ is a mirror-shadow;
\item[-] if $\partial^0\T\neq\emptyset$ then $\P_1^*$ is a mirror-shadow and 
$\e^0(\P_1^*)=\e^0(\T)$;
\item[-]  if $\partial^1\T\neq\emptyset$ then $\P_p$ is a shadow and  
 $\e^1(\P_p)=\e^1(\T)$.
\end{itemize}
\end{lemma}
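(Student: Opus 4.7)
The plan is to take a generic projection of $\T$ and decompose it combinatorially into elementary pieces, then match each piece with a shadow or a mirror-shadow from Examples \ref{exm:straight}--\ref{exm:cup}, finally interleaving idempotent pieces if needed to enforce the alternation requirement and produce well-defined consecutive wedge products.

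First I would put $\T$ into generic position: project so that only transverse double points and vertical tangencies occur, and perturb if necessary so that no two critical events lie at the same $x$-coordinate. Cutting along vertical lines between consecutive critical events produces a finite concatenation $\T=\T_1\circ\cdots\circ\T_q$ of elementary tangles. Each $\T_j$ is either a product of straight strands together with at most one crossing, cap, or cup. I would then assign to each elementary tangle a shadow or a mirror-shadow as follows: straight strands give the idempotent shadow ${}_{\e^0}\mathcal{E}_{\e^1}$ of Example \ref{exm:straight} (which can equally be viewed as a mirror-shadow); a crossing where the strand with the larger slope goes over is recorded by the shadow $\mathcal{X}_{\e^1}(i)$ of Example \ref{exm:crossing}, while a crossing where the smaller-slope strand goes over is recorded by its mirror, i.e.\ the corresponding mirror-shadow $\mathcal{X}_{\e^1}^{\ast}(i)$; a cap gives the shadow $\mathcal{D}_{\e^1}(i)$ of Example \ref{exm:cap} (or its mirror); and a cup gives $\mathcal{C}_{\e^1}(i)$ of Example \ref{exm:cup} (or its mirror). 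By construction the sign sequences along the vertical cut-lines match up between consecutive pieces, and the associated tangles $\T(\P_j^{\circ})$ glue to reproduce $\T_j$; concatenating gives $\T(\Pp)=\T$ up to isotopy rel.\ boundary.

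Next I would enforce the alternation and matching conditions required by the lemma. Whenever two consecutive elementary pieces are both shadows or both mirror-shadows, I insert between them an extra piece of straight strands, viewed as the opposite type of shadow (idempotent shadows can be viewed either way). This is a topologically trivial insertion and does not change the tangle, but it turns the sequence into a proper alternating one. The well-definedness of each wedge product $\P_j^{\circ}\wedge\P_{j+1}^{\circ}$ is automatic: at each interface the sign sequence coming from the right of $\P_j^{\circ}$ and the left of $\P_{j+1}^{\circ}$ agree, which by the dictionary between $\e$ and the subsets $(\SX,\TX,\SO,\TO)$ is exactly the complementary condition $T_{\X_j}=T_{\X_{j+1}}^c$, $S_{\O_j}=S_{\O_{j+1}}^c$ appearing in Subsection \ref{subsection:wedge}. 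To secure the last four bulleted conditions, I would additionally prepend or append a straight-strand piece on the appropriate side: if $\partial^0\T=\emptyset$, the leftmost elementary tangle is a cap, whose natural encoding is a mirror-shadow with left-contractible (empty) boundary, so $\P_1^\ast$ is a mirror-shadow; similarly for $\partial^1\T=\emptyset$ on the right. If $\partial^0\T\neq\emptyset$, then inserting a leftmost idempotent piece viewed as a mirror-shadow gives $\P_1^{\ast}$ with $\e^0(\P_1^{\ast})=\e^0(\T)$ by construction, and dually on the right.

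The main technical obstacle, and the only real point of care, is the bookkeeping at the two endpoints: the alternation rule together with the prescribed type (shadow or mirror-shadow) of $\P_1^{\circ}$ and $\P_p^{\circ}$ can fail to be compatible with the parity of the number of elementary pieces chosen. This is harmless, however, because one can always pad with a further idempotent piece at either end --- such a piece can be viewed as a shadow or as a mirror-shadow at will, and the corresponding tangle is a collection of straight strands, so neither the ambient isotopy class of $\T$ nor the boundary sign-sequences are altered. Once the parity is right, the conditions in the lemma follow directly from the construction.
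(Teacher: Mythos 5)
Your overall strategy (cut into elementary tangles, encode each piece, and use straight-strand pieces to repair alternation and the endpoint conditions) is the same as the paper's, but there is a genuine gap at caps and cups: the claim that inserting an \emph{idempotent} shadow between two same-type pieces makes the wedge products ``automatic'' is false there. The well-definedness condition for a wedge product is \emph{complementarity} of the marking subsets (Subsection \ref{subsection:wedge}), under which a position carrying both an $X$ and an $O$ (the turnaround position of a cap, where $\epsilon_i=\{\pm1\}$, cf.\ Example \ref{exm:cap}) must face a position carrying \emph{neither} marking, i.e.\ a gap in the sense of Example \ref{ex:hole}. An idempotent shadow $\E=(n,n,\id_{\SX},\id_{\SO})$ has identical marking data on its two sides, so if you insert $\E^\circ$ to the right of a cap $\mathcal{D}(i)$, the first complementation forces a gap at position $i$ on \emph{both} sides of $\E$, and the second complementation then forces the next elementary piece to carry a turnaround at position $i$ on its left --- i.e.\ a spurious cup, not the straight strands or crossing you actually have there. (For straight strands and crossings your computation is correct: single signs complement to single signs and the condition reduces to matching of boundary sign sequences. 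The failure is confined to, but unavoidable at, every cap and cup.)

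The fix is exactly what the paper's proof builds in: the interpolating straight-strand piece adjacent to a cap or cup must carry a gap on \emph{one} side only, so it is of the form $\mathcal{L}_i(\E)$ or $\mathcal{R}_i(\E)$ with $m\neq n$ and is not an idempotent shadow. The paper achieves this by bisecting \emph{every} elementary tangle into a mirror-shadow followed by a shadow, one of which is ``straight strands (possibly with a gap),'' which makes the alternation automatic and localizes the gap bookkeeping; it is also what makes the conditions $\e^0(\P_1^*)=\e^0(\T)$ and $\e^1(\P_p)=\e^1(\T)$ in the statement nontrivial --- they are equivalent to the outermost pieces having no gap on their outer sides. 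Your writeup never mentions gaps, identifies only endpoint parity as ``the only real point of care,'' and does not check that the concatenated tangles $\T(\Pp)$ reproduce $\T$ near a cap or cup, so as written the construction breaks at every local maximum and minimum of the tangle projection.
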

The first two assumptions are in the statement for cosmetic reasons (to match with the assumptions of Sections \ref{sec:circles}-\ref{sec:hd_pairing}), while,
as we will see later, the last two assumptions ensure that the associated invariant has the correct type and is defined over the correct algebras.
\begin{proof}
The statement is clearly true for elementary tangles $\T$. Indeed, depending on the type of crossing in $\T$, or whether $\T$ is a cap or a cup we can always bisect $\T$ into two pieces $\T_-\circ \T_+$ such that one of $\T_-$ or $\T_+$ consists of straight strands (possibly with a gap) and the other one is isotopic to $\T$, and 
at the (possible) crossing of $\T_-$ (or $\T_+$) the strand with the smaller slope goes over (under) the strand with bigger slope, or $\T_-$ (or $\T_+$) is a cup (or a cap). Let $\P^*_-$ and $\P_+$ be the mirror shadow and shadow corresponding to  $\T_-$ and $\T_+$ (i.e. $\T_-=\T^*(\P^*_-)$ and $\T_+=\T(\P_+)$). Note that in this case the condition $\epsilon^0(\P^*_-)=\epsilon^0(\T)$ is equivalent to $\P_-^*$ not having a gap on its left side. Similarly the condition $\epsilon^1(\P_+)=\epsilon^1(\T)$ to $\P_+$ not having a gap on its right side.

In the general case, put $\T$ in a not obviously split position. This means that when cutting it up into elementary tangles $\T=\T_1\circ\cdots\circ\T_p$, every cut intersects the tangle. Then, by the previous paragraph, each $\T_i$ is isotopic to
$\T^*((\P_i)_-^*)\circ\T((\P_i)_+)$. Thus if $\partial^1\T\neq \emptyset$ then the decomposition
$\T= \T^*((\P_1)_-^*)\circ\T((\P_1)_+)\circ\cdots\circ\T^*((\P_p)_-^*)\circ\T((\P_p)_+)$ works. Otherwise $\T_p$ is a single cap, thus it can be written as $\T_p=\T^*(\P_p^*)$, where $\P_p^*$ does not have a gap on its right. This means that the decompositon $\T= \T^*((\P_1)_-^*)\circ\T((\P_1)_+)\circ\cdots\circ\T^*((\P_{p-1})_-^*)\circ\T((\P_{p-1})_+)\circ\T^*(\P_{p}^*)$ satisfies all criterions of the lemma.
\end{proof}

Note that by construction, if $\partial^0\T=\emptyset$, then $\T_1^-$ is left-contractible, and if $\partial^1\T=\emptyset$, then $\T_p^-$ is right-contractible. 

\begin{definition} Let $\T$ be a tangle given by a sequence of shadows $\Pp=(\P_1^\circ,\dots,\P_p^\circ)$  as in Lemma \ref{lem:decomp}. 

If $\partial^0\T=\emptyset$ and $\partial^1\T=\emptyset$, then define the chain complex by
\[\mathit{CT}^-(\Pp)=\mathit{CTD}^-(\P_1^*)\boxtimes\cdots\boxtimes \mathit{CDT}^-(\P_p^*).\]

If $\partial^0\T=\emptyset$ and $\partial^1\T\neq\emptyset$, then define the right  type $\mathit{A}$ structure over $\Aa(\partial^1\T)$ by
\[\mathit{CTA}^-(\Pp)=\mathit{CTD}^-(\P_1^*)\boxtimes\cdots\boxtimes \mathit{CATA}^-(\P_p).\]

If $\partial^0\T\neq\emptyset$ and $\partial^1\T=\emptyset$, then define  the left  type $\mathit{D}$ structure over $\Aa(\partial^0\T)$ by
\[\mathit{CTD}^-(\Pp)=\mathit{CDTD}^-(\P_1^*)\boxtimes\cdots\boxtimes \mathit{CDT}^-(\P_p^*).\]

If $\partial^0\T\neq\emptyset$ and $\partial^1\T\neq\emptyset$, then define the left-right  type $\mathit{DA}$ structure over $\Aa(\partial^0\T)$ and $\Aa(\partial^1\T)$  by
\[\mathit{CDTA}^-(\Pp)=\mathit{CDTD}^-(\P_1^*)\boxtimes\cdots\boxtimes \mathit{CATA}^-(\P_p).\]
\end{definition}
Whenever the sequence $\Pp$ is clear from the context, we simplify the notation of the above bimodules to $CT^-(\T)$.
In this paper we will not prove that $\mathit{CT}^-(\T)$ as defined above is an invariant of $\T$. We will only prove it for the weaker version $\widetilde{\mathit{CT}}(\T)$.  From now on, we restrict ourselves to the  ``tilde''-theory by 
setting all $U_O$ to 0 . A consequence of Theorems \ref{thm:cft_pairing} and \ref{thm:invariance} is:
\begin{theorem}\label{thm:welldef} Suppose that $\Pp=(\P_1^\circ,\dots,\P_p^\circ)$ and $\boldsymbol{\mathcal{Q}}=(\mathcal{Q}_1^\circ,\dots,\mathcal{Q}_q^\circ)$ give  tangles (in the sense of Lemma \ref{lem:decomp}) isotopic to $\T$. Then
for some integers $k(\P)$ and $k(\mathcal{Q})$, the (bi)modules $\widetilde{\mathit{CT}}(\P_1^\circ)\boxtimes\cdots\boxtimes \widetilde{\mathit{CT}}(\P_p^\circ)\boxtimes V^{\otimes k(\mathcal{Q})}$
and $\widetilde{\mathit{CT}}(\mathcal{Q}_1^\circ)\boxtimes\cdots\boxtimes \widetilde{\mathit{CT}}(\mathcal{Q}_{q}^\circ)\otimes V^{\otimes k(\mathcal{P})}$
are equivalent. Here $V=\F_2\oplus \F_2$, where one of the $\F_2$ components has bigrading $(M,A)=(-1,-1)$ and the other one has bigrading $(M,A)=(0,0)$.  
\end{theorem}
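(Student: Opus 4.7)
The plan is to reduce this statement to a combination of the pairing theorem (Theorem \ref{thm:cft_pairing}) and the invariance theorem (Theorem \ref{thm:invariance}), exploiting the translation between the combinatorial shadow picture of Section \ref{sec:combinatorial} and the bordered grid / Heegaard diagram picture developed in Section \ref{sec:borderedgrid} (and Sections \ref{sec:circles}--\ref{sec:hd_pairing}). The key point is that each decomposition $\Pp = (\P_1^\circ, \dots, \P_p^\circ)$ satisfying the hypotheses of Lemma \ref{lem:decomp} determines a specific bordered (Heegaard) diagram $H_\Pp$ for $\T$, obtained by plumbing together the elementary pieces $G^\circ(\P_j^\circ)$ in the alternating pattern described in Subsection \ref{ssec:glueing} (with the outermost pieces self-glued via Convention \ref{conv:contractgrid} when $\partial^0\T = \emptyset$ or $\partial^1\T = \emptyset$).

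First I would apply the pairing theorem iteratively. By Theorem \ref{thm:cft_pairing} (iterated $p-1$ times), the box tensor product $\widetilde{\mathit{CT}}(\P_1^\circ) \boxtimes \cdots \boxtimes \widetilde{\mathit{CT}}(\P_p^\circ)$ is equivalent, up to tensoring with a fixed power $V^{\otimes a(\Pp)}$, to the tangle Floer invariant $\widetilde{\mathit{CT}}(H_\Pp)$ of the glued Heegaard diagram. (The factor $V^{\otimes a(\Pp)}$ arises because each pairing across a boundary can lose some components, exactly as indicated in the footnote following Theorem \ref{thm:simplepairing}, via the term $V^{\otimes(|\T_1|+|\T_2| - |\T_1\circ\T_2|)}$.) The same statement holds for $\boldsymbol{\mathcal{Q}}$, producing $\widetilde{\mathit{CT}}(H_{\boldsymbol{\mathcal{Q}}}) \otimes V^{\otimes a(\boldsymbol{\mathcal{Q}})}$.

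Second, since $\Pp$ and $\boldsymbol{\mathcal{Q}}$ both give rise to the same tangle $\T$ (up to boundary-preserving isotopy) by Lemma \ref{lem:decomp}, the two bordered Heegaard diagrams $H_\Pp$ and $H_{\boldsymbol{\mathcal{Q}}}$ are two Heegaard diagrams for $\T$. By Theorem \ref{thm:invariance} (invariance under Heegaard moves, proved analytically in Sections \ref{sec:hd_modules}--\ref{sec:hd_pairing}), the invariants $\widetilde{\mathit{CT}}(H_\Pp)$ and $\widetilde{\mathit{CT}}(H_{\boldsymbol{\mathcal{Q}}})$ are equivalent up to stabilization, which contributes additional factors of the form $V^{\otimes b(\Pp)}$ and $V^{\otimes b(\boldsymbol{\mathcal{Q}})}$ (one $V$ for each stabilization needed to make the two diagrams comparable, since stabilization of a nice Heegaard diagram adds a free pair of generators forming a copy of $V$). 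Setting $k(\Pp) = a(\boldsymbol{\mathcal{Q}}) + b(\boldsymbol{\mathcal{Q}})$ and $k(\boldsymbol{\mathcal{Q}}) = a(\Pp) + b(\Pp)$ then yields the desired equivalence.

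The main obstacle will be the careful bookkeeping of the two sources of $V$ factors and verifying that they are consistent on both sides. Specifically, one must show that for each decomposition $\Pp$, the total $V$-contribution from pairing (measured by drops in the component count across each gluing) plus the contribution from stabilization (measured by the genus excess of $H_\Pp$ over a minimal Heegaard diagram for $\T$) is an invariant of $\Pp$ once one side is fixed. Since both contributions are manifestly local -- each elementary piece contributes a fixed amount depending only on its combinatorial type (straight strand, crossing, cap, cup, gap) -- this ultimately boils down to an Euler-characteristic-type count, comparing the number of elementary pieces in $\Pp$ and in $\boldsymbol{\mathcal{Q}}$ against the topological invariants $|\T|$ and the genus of the diagrams; this verification, while routine, must be done case by case for the moves in Figure \ref{fig:relations}.
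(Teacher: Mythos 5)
Your proposal is correct and follows essentially the same route as the paper, which derives Theorem \ref{thm:welldef} directly as a consequence of the pairing result (Theorem \ref{thm:cft_pairing}, applied iteratively to the plumbed grid/Heegaard diagrams $H_{\Pp}$ and $H_{\boldsymbol{\mathcal{Q}}}$) together with the graded invariance under Heegaard moves (Theorem \ref{thm:invariance}), with the powers of $V$ accounted for by the difference in the number of interior basepoints. The only inessential discrepancy is your attribution of some $V$ factors to the pairing step and your suggested case-check over the moves of Figure \ref{fig:relations}: for the specific diagrams $G^\circ(\P_j^\circ)$ the box tensor product is an isomorphism onto the invariant of the glued diagram with no extra $V$, and Theorem \ref{thm:invariance} already subsumes all Heegaard moves between the two plumbings, so all $V$ factors arise from the stabilization count alone.
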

The integers $k(\P)$ and $k(\mathcal{Q})$  in the above theorem can be computed explicitly. 
For a shadow $\P$ (or mirror-shadow $\P^*$), define $k(\P)=|S_\X|$ (or $k(\P^*)=|S_\X|$). For a sequence of shadows and mirror-shadows $\Pp=(\P_1^\circ,\cdots,\P_p^\circ)$ with a well-defined wedge product, define
$k(\Pp)=\sum_{j=1}^p k(\P_j)$. 

The $\DA$ bimodule for the trivial tangle is equivalent to the identity bimodule, or more precisely:
\begin{theorem}
If $\Pp=(\E_1^*,\E_2)$ is a sequence of an idempotent mirror-shadow and shadow for a tangle $\T$ consisting of $m$ straight strands, then
$$\widetilde{\mathit{CATA}}(\E_2)\boxtimes \widetilde{\mathit{CDTA}}(\Pp)  \simeq \widetilde{\mathit{CATA}}(\E_2) \otimes V^{\otimes m}.$$
\end{theorem}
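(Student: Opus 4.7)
We compute the left hand side by unpacking the box tensor product geometrically. Iterating Theorem~\ref{thm:wedgevsbox} and applying Proposition~\ref{thm:chain3}, we have
\[\widetilde{\mathit{CATA}}(\E_2)\boxtimes \widetilde{\mathit{CDTA}}(\Pp) \simeq \widetilde{\mathit{CATA}}(\E_2, \E_1^*, \E_2),\]
where the right hand side is the $\AA$-bimodule of the plumbed bordered grid diagram $G = G(\E_2)\cup G^*(\E_1^*)\cup G(\E_2)$ representing the concatenation of three trivial $m$-strand tangles. Since $\E_1^*$ and $\E_2$ are idempotent shadows (straight strands), the intermediate piece $G^*(\E_1^*)\cup G(\E_2)$ has the property that every row and every column contains exactly one $X$-marking and one $O$-marking in an aligned configuration; in effect this piece consists of $m$ parallel ``identity annuli,'' one per strand of the trivial tangle.

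The central step is to show that attaching these $m$ identity annuli to the basic diagram $G(\E_2)$ amounts to an $m$-fold $V$-stabilization. Concretely, the set of generators of $\widetilde{\mathit{CATA}}(G)$ decomposes canonically as a product of generators of $\widetilde{\mathit{CATA}}(\E_2)$ (recorded by the intersection points in the outermost copy of $G(\E_2)$) and, independently for each of the $m$ strands, a choice of one of two intersection points near the $X$-marking in the corresponding annulus. Every empty rectangle supported in a single annulus necessarily crosses the associated $O$-marking (by the aligned placement of the $X$-$O$ pair), so its contribution to the tilde-version differential vanishes; the two annular choices therefore survive in homology, contributing a factor $V = \F_2\oplus\F_2$ with bigradings $(0,0)$ and $(-1,-1)$ computed via the area formulas of Proposition~\ref{prop:isom}. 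The remaining empty rectangles contribute to the inner differential of $\widetilde{\mathit{CATA}}(\E_2)$, and the partial rectangles supplying the left/right $\AA$-actions are confined to the outer copy of $G(\E_2)$, hence act only on that factor. This yields
\[\widetilde{\mathit{CATA}}(G) \simeq \widetilde{\mathit{CATA}}(\E_2)\otimes V^{\otimes m}\]
as $\AA$-bimodules, which is what we want.

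The hard part will be ruling out higher $\AA$-operations that mix the $\widetilde{\mathit{CATA}}(\E_2)$ factor with the $V^{\otimes m}$ factor. Any such mixing would have to come from an allowed set of partial rectangles reaching from the left or right boundary of $G$ across one of the intermediate annuli. The alignment of the $X$- and $O$-markings together with the ``no-inclusion'' condition on allowed sets of partial rectangles (see Figure~\ref{fig:forbidden_rect}) combine to rule out all such configurations in the tilde setting: any partial rectangle long enough to reach the annular region must pass through a forbidden $X$-marking, or else through an $O$-marking that would contribute a $U_O = 0$ weight. Verifying this combinatorial obstruction separately for each of the operations $m_{0,1,0}$, $m_{1,1,0}$, $m_{0,1,1}$ completes the identification and hence the proof.
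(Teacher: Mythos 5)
Your opening step --- identifying $\widetilde{\mathit{CATA}}(\E_2)\boxtimes \widetilde{\mathit{CDTA}}(\Pp)$ with the invariant of the plumbed grid diagram $G = G(\E_2)\cup G^*(\E_1^*)\cup G(\E_2)$ --- is correct and is also where the paper's argument starts. The gap is in your central claim that the generators of $G$ decompose as (generators of an outer copy of $G(\E_2)$) $\times$ ($m$ independent binary choices). This is false. In the plumbing, the $\alpha$-arcs of the first two grids are joined into $m+1$ $\alpha$-circles and the $\beta$-arcs of the last two grids into $m+1$ $\beta$-circles; a generator must occupy every such circle, so if it places $k$ points in the first grid it must place $m+1-k$ points in the middle grid on the complementary $\alpha$-circles, and distribute points between the middle and last grids accordingly. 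Each circle carries $2(m+1)$ intersection points, not two, so the middle piece is nothing like a union of small ``identity annuli.'' Already for $m=1$ the plumbing has $22$ generators while $\widetilde{\mathit{CATA}}(\E_2)\otimes V$ has at most $14$, so the asserted equivalence cannot be an isomorphism of underlying modules: substantial cancellation is required, and your argument provides no mechanism for it --- on the contrary, you argue that the differential essentially vanishes on the ``extra'' factor, which would make cancellation impossible.

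What is actually needed is exactly what the paper's proof supplies: one performs Heegaard moves on $G$ --- handleslides that isolate, for each strand, a small $\alpha$-circle/$\beta$-circle pair meeting in two points, followed by index one/two and index zero/three destabilizations --- and invokes the invariance results (Theorem~\ref{thm:invariance}) to conclude that each move is a homotopy equivalence; each of the $m$ index zero/three destabilizations contributes one factor of $V$, and the terminal diagram is $G(\E_2)$, whose invariant is the tilde version of $\Aa(\E_2)$. Only \emph{after} these moves does the ``two intersection points near a basepoint, connected by bigons each covering an $X$ or an $O$'' picture you describe become accurate. If you want to avoid the analytic invariance machinery, you would instead have to carry out the cancellation algebraically (e.g.\ by repeatedly applying the cancellation lemma to the many rectangles crossing the gluing circles), which is a genuinely different and longer argument than the one you have written.
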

\begin{proof}
The proof follows from the results in Sections  \ref{sec:circles}-\ref{sec:hd_pairing}, but we outline it here nevertheless. 
One can represent the sequence $(\E_2, \E_1^*, \E_1)$ by a plumbing of bordered grid diagrams. One can perform Heegaard moves to this plumbing to obtain the bordered grid diagram for $\E_2$. Every index zero/three destabilization results in an extra $V$ factor. Observe that $\widetilde{\mathit{CATA}}(\E_2)$ is just the tilde version of the algebra $\Aa(\E_2)$.
\end{proof}

% !TEX root = ../tanglefloer.tex
%%%%%%%%%%%%%%%%%%%%%%%%%%%%%%%%%%%%%%%%%%%%%%%%%%%%%%%

%%%%%%%%%%%%%%%%%%%%%%%%%%%%%%%%%%%%%%%%%%%%%%%%%%%%%%%
%NOTATIONS BEGINNING
%%%%%%%%%%%%%%%%%%%%%%%%%%%%%%%%%%%%%%%%%%%%%%%%%%%%%%%
\subsection{Sample invariance proofs}\label{subsec:sampleproof}
Although the proof of Theorem \ref{thm:welldef} is proved entirely in Section \ref{sec:hd_modules},  to give evidence that the theory can be  defined combinatorially we give sample proofs for statements from Theorem \ref{thm:welldef}. Most of the arguments rely on the generalisation of the commutation move for grid diagrams. 

\subsubsection{Generalized commutation}

In all the (bordered) Heegaard diagrams we have been working with all regions (connected components of $\Sigma\setminus(\alphas\cup\betas)$) are rectangles, and each annulus between two neighbouring $\alphas$ circles  or $\betas$ circles contains exactly one $X$ and one $O$. In the following this will be our assumption on the Heegaard diagrams, and we will call these diagrams \emph{rectangular}. Note that for rectangular diagrams the connected components of $\Sigma\setminus\alphas$ (or $\Sigma\setminus \betas$) are annuli or punctured spheres with at most two boundary components intersecting $\alphas$ (or $\betas$) and the rest of the boundary components are subsets of $\partial\Sigma$. Thus rectangular diagrams are always constructed as a plumbing of annuli.

\begin{figure}[h]
 \centering
       \includegraphics
       [scale=0.8]
       {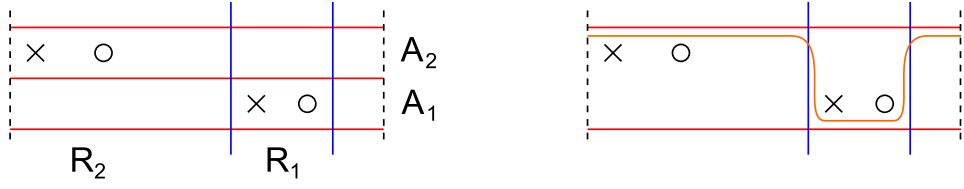} 
       \vskip .2 cm
       \caption[Generalized commutation.]{\textbf{Generalized commutation.} The left and right hand side of each diagram are identified.  
  }\label{fig:gencomm}
\end{figure}

So let $\H=(\Sigma,\alphas=\alphas^c\cup\alphas^a,\betas=\betas^c\cup\betas^a,\X,\O)$ be a rectangular Heegaard diagram such that every annulus contains an $X$. Then in the usual way we can define a chain complex with underlying module $C^-(\H)$ generated over $\k=\F[U_O]_{O\in\O}$ by intersection points $\x\in\gen(\H)$ with one intersection point on each circle $\alphas^c$ and each circle $\betas^c$ and at most one intersection point on each arc $\alphas^a$ and each arc $\betas^a$. The differential is defined by counting empty rectangles: a rectangle from a generator $\x$ to a generator $\y$ is an embedded rectangle $R\subset \Sigma$ with boundary $\partial R \subset \alphas\cup\betas$ such that $\x\cap R$ is the two corners of $R$ where $(T\alphas,T\betas)$ form a positive basis of $T\Sigma$ and  $\y\cap R$ is the two corners of $R$ where $(T\alphas,T\betas)$ form a negative basis of $T\Sigma$ (here the orientation on the tangent vectors comes from the orientation on $\bdy R$). A rectangle $R$ is called empty if $\mathit{int}(R)\cap(\x\cup\y)=\emptyset$ and $R\cap\X=\emptyset$. Denote the set of empty rectangles from $\x$ to $\y$ by $\mathcal{R}_0(\x,\y)$. Then define
\[\partial \x=\sum_{y\in\gen(\H)}\sum_{R\in\mathcal{R}_0(\x,\y)}\prod_{O\in\O}U^{|R\cap O|}\y.\]
This can be extended to whole $C^-(\H)$ and using the usual arguments we conclude:
\begin{lemma}
$(C^-(\H),\partial)$ is a chain complex. \qed
\end{lemma}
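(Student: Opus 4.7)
The plan is to show $\partial^2 = 0$ by the standard domain-decomposition argument familiar from grid diagram homology. I would expand
\[
\partial^2 \x \;=\; \sum_{\z,\,\y}\ \sum_{R_1 \in \mathcal{R}_0(\x,\y)}\ \sum_{R_2 \in \mathcal{R}_0(\y,\z)}\ \prod_{O \in \O} U_O^{|R_1 \cap O| + |R_2 \cap O|}\,\z,
\]
group terms by the composite domain $D = R_1 + R_2$ (taken with multiplicities), and show that for each fixed pair $(\x, \z)$ and each such $D$ the set of factorizations of $D$ as an ordered pair of empty rectangles admits a fixed-point-free involution. Since the total $U$-weight depends only on $D$ and not on the chosen factorization, paired contributions will cancel over $\F_2$.

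Next I would carry out the usual case analysis, classifying composite domains $D = R_1 + R_2$ by how the interiors of $R_1$ and $R_2$ meet: (i) disjoint interiors with no shared corners; (ii) sharing exactly one corner (L-shape); (iii) overlapping to form a hexagon; (iv) one rectangle contained in, or directly stacked on, the other, producing a ``thin'' domain. In each local case, an alternate factorization $D = R_1' + R_2'$ is produced either by swapping the order of $R_1$ and $R_2$ (case (i)) or by the horizontal-versus-vertical rearrangement standard in \cite{mos, most}. Rectangularity of $\H$ guarantees that every region of $\Sigma \setminus (\alphas \cup \betas)$ is a rectangle, so these local models genuinely govern the combinatorics, and emptiness of one factorization is equivalent to emptiness of its partner because both factorizations occupy the same portion of $\Sigma$.

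The main obstacle will be ruling out annular composite domains. Since $\H$ is a plumbing of annuli, two embedded rectangles can in principle compose to a domain $D$ that wraps fully around one of the plumbed annuli, and such a $D$ typically admits only one factorization, which would leave an unmatched term in $\partial^2$. Here the hypothesis that every annulus of $\H$ contains a basepoint in $\X$ is exactly what is needed: any annular composite domain contains a full annulus of $\H$, hence contains some $X \in \X$ in its interior, which forces at least one of $R_1$ or $R_2$ to meet $\X$ and contradicts emptiness. This eliminates the annular case entirely, reducing the argument to the local models above and completing the proof.
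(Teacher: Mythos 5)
Your proof is correct and is exactly the argument the paper has in mind: the lemma is stated with only an appeal to ``the usual arguments'' (alternate decompositions of composite domains, as in \cite{mos,most}), and you have filled in those details, including the one genuinely diagram-specific point --- that the hypothesis that every annulus of the rectangular diagram contains an $X$ is what kills the annular composite domains with a unique factorization. No gaps; this matches the paper's approach.
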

Take three consecutive alpha circles $\alpha_1$, $\alpha_2$ and $\alpha_3$, i.e. so that $\alpha_1$ and $\alpha_2$ bound the annulus $A_1$ and $\alpha_2$ and $\alpha_3$ bound the annulus $A_2$. All connected components of $\betas\cap (A_1\cup A_2)$ are intervals. Suppose that two of these intervals corresponding to different $\betas$-curves subdivide $A_1\cup A_2$ into two rectangles $R_1$ and $R_2$ such that $(\X\cup\O)\cap A_1\subset R_1$ and $(\X\cup\O)\cap A_2\subset R_2$. Then we can define a new Heegaard diagram $\mathcal{H}'$  by changing $\alpha_2$ to $\alpha_2^\prime$, where $\alpha_2^\prime$ is the smoothing  of $(\alpha_3\setminus \partial R_1) \cup (\partial R_1\setminus \alpha_3)$ isotoped in the complement of $\X\cup\O$ so that it is disjoint from $\alphas\setminus \{\alpha_2\}$, transverse to all $\beta$-curves and intersects them only once.  See Figure \ref{fig:gencomm}. 
Then 
\begin{lemma}[Generalized commutation]
The chain complexes $(C^-(\H),\partial)$ and $(C^-(\H'),\partial')$ are chain homotopy equivalent.
\end{lemma}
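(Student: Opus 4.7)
The plan is to adapt the pentagon-hexagon counting argument of Manolescu--Ozsv\'ath--Sarkar \cite{mos} for commutation invariance of grid diagrams to our rectangular bordered Heegaard diagram setting. Under the hypotheses, $\alpha_2$ and $\alpha_2'$ intersect transversely in exactly two points $a, b$ interior to $A_1 \cup A_2$. Away from a small neighborhood of $A_1 \cup A_2$, the curves $\alpha_2$ and $\alpha_2'$ are isotopic by a small isotopy supported in the complement of $\X \cup \O$, so there is a canonical nearest-point bijection between generators of $\H$ and of $\H'$ that agree on all circles other than $\alpha_2$ (respectively $\alpha_2'$).

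Using this, I would define chain maps $\Phi \co C^-(\H) \to C^-(\H')$ and $\Psi \co C^-(\H') \to C^-(\H)$ on a generator $\x$ by
\[ \Phi(\x) = \sum_{\y \in \gen(\H')} \sum_{P \in \mathcal{P}_0(\x,\y;a)} \prod_{O \in \O} U_O^{|P \cap O|}\, \y, \]
where $\mathcal{P}_0(\x,\y;a)$ denotes the set of empty embedded pentagons $P \subset \Sigma$ with $\partial P \subset \alphas \cup \alpha_2' \cup \betas$, four corners alternating between $\x$ and $\y$ in the same sense as for rectangles, fifth corner at $a$, and such that $P \cap \X = \emptyset$ and $\mathrm{int}(P) \cap (\x \cup \y) = \emptyset$. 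Define $\Psi$ analogously, by counting empty pentagons with fifth corner at $b$ and boundary on $\alphas \cup \alpha_2 \cup \betas$.

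To show $\Phi$ is a chain map, I would establish $\partial' \Phi + \Phi \partial = 0$ by classifying all juxtaposed domains obtained as $R \ast P$ or $P \ast R$ with $R$ an empty rectangle and $P$ an empty pentagon connecting the same pair of generators. Exactly as in \cite{mos}, such a domain admits either a pair of decompositions whose contributions cancel, or a unique decomposition cancelled against a term of the form ``trivial pentagon composed with rectangle''; the sign-bookkeeping is trivial over $\F_2$. The same argument, with the roles of $\alpha_2$ and $\alpha_2'$ interchanged, gives $\partial \Psi + \Psi \partial' = 0$. Then to prove $\Psi \circ \Phi \simeq \id_{C^-(\H)}$, I would define a homotopy $H$ by counting empty hexagons with two distinguished corners at $a$ and $b$ (equivalently, gluings of two pentagons along an edge joining $a$ to $b$), and verify $\partial H + H \partial = \Psi \circ \Phi + \id$ by classifying the decompositions of hexagon-rectangle and rectangle-hexagon juxtapositions. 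The relation $\Phi \circ \Psi \simeq \id_{C^-(\H')}$ follows by the symmetric argument.

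The main obstacle is the bordered bookkeeping: pentagons, rectangles, and hexagons can touch $\partial \Sigma$ as partial polygons, and the classification of decompositions of juxtaposed domains must be checked in the presence of such boundary-touching pieces. Because the diagram is rectangular in the sense assumed and every annulus between consecutive $\alphas$- or $\betas$-curves contains an $X$, no domain can wind around an annulus, and the partial polygons near the boundary are controlled by exactly the same combinatorics used to define the type $A$ and type $D$ actions in Section \ref{sec:borderedgrid}. In particular the forbidden configurations of Figure \ref{fig:forbidden_rect} are the only obstruction to carrying out the decomposition analysis, and I expect no genuinely new phenomenon to arise beyond more tedious case analysis.
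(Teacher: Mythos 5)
Your proposal is correct and follows essentially the same route as the paper, which simply invokes the pentagon-counting chain maps and hexagon-counting homotopies from the commutation argument of \cite{most}. The only remark worth making is that the lemma concerns only the inner differential (empty rectangles in the interior of $\Sigma$), so the boundary-touching partial polygons you worry about in your last paragraph do not actually enter at this stage; they only matter for the module-level extension in Lemma \ref{lem:DDequiv}.
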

\begin{proof} The proof is literally  the same as  in the closed case  (see Section 3.1.\ of  \cite{most}): the chain maps count pentagons, while the homotopy counts hexagons of the triple Heegaard diagram.
\end{proof}
For sequences of shadows and mirror-shadows, the proof goes the same way: 
\begin{lemma}\label{lem:DDequiv} Let  $\Pp=(\P_1^\circ,\P_2^\circ,\dots \P_p^\circ)$ and $\boldsymbol{\mathcal{Q}}'=(\mathcal{Q}_1^\circ,\mathcal{Q}_2^\circ,\dots \mathcal{Q}_p^\circ)$ be sequences of shadows and mirror-shadows with well-defined wedge products. Assume that the corresponding grid diagrams $G(\boldsymbol{\P})$ and $G(\boldsymbol{\mathcal{Q}})$
are related to each other by  generalized commutation. Then the associated structures 
 $\mathit{CT}^-(\boldsymbol{\P})$ and $\mathit{CT}^-(\boldsymbol{\mathcal{Q}})$ are equivalent. \qed
\end{lemma}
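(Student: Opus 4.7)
The plan is to upgrade the generalized commutation argument from the chain-complex setting to the (bi)module setting. Since generalized commutation is a purely local move, confined to two adjacent annular regions bounded by $\alpha_1, \alpha_2, \alpha_3$, and since the grid diagrams associated to sequences of shadows and mirror-shadows are rectangular in the sense above, the hypothesis $\Pp \sim \boldsymbol{\mathcal{Q}}$ differs only by replacing one $\alpha$-curve $\alpha_2$ by an isotopic curve $\alpha_2'$ inside some $G^\circ(\P_j^\circ)$ (or straddling two consecutive pieces). The underlying chain complexes $(C^-(\Pp),\partial_\wedge)$ and $(C^-(\boldsymbol{\mathcal{Q}}),\partial_\wedge)$ are already known to be chain homotopy equivalent by the preceding ``Generalized commutation'' lemma via pentagon-counting chain maps $\Phi, \Psi$ and hexagon-counting homotopies $H_\Phi, H_\Psi$.

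First I would set up a triple bordered Heegaard diagram obtained from $G(\Pp)$ by adding $\alpha_2'$ alongside $\alpha_2$, and introduce the analogues of the Sarkar--Wang pentagon and hexagon counts exactly as in \cite{most}. The key observation is that $\alpha_2'$ is supported away from the boundary $\partial \Sigma$, so all pentagons and hexagons under consideration are properly embedded in the interior of the diagram; in particular they do not touch any $\alpha$-arc or $\beta$-arc. This means the maps $\Phi, \Psi, H_\Phi, H_\Psi$ act trivially on the boundary idempotents, and for each generator they preserve the idempotents $\iota_L, \iota_R$ (respectively $\iota^L, \iota^R$) on both sides.

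The main step is then to check that $\Phi$ extends to a morphism of the appropriate type. By Propositions \ref{thm:chain}, \ref{thm:chain2}, \ref{thm:chain3} and the definitions in Section \ref{sec:borderedgrid}, the structure maps on both sides (type $A$ actions, type $D$ actions, mixed differentials across wedges, or self-gluing contractions) all count partial rectangles that touch, or cross, certain boundary or gluing edges. One extends $\Phi$ to a $\DA$/$\AA$/$\AD$/$\DD$-morphism by letting its higher components count \emph{partial pentagons} — pentagons obtained from rectangles of the same type (boundary-touching, gluing-crossing, or allowable self-gluing) by inserting the $\alpha_2$ vs.\ $\alpha_2'$ juxtaposition. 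The proofs that these higher components combine with the interior pentagon count to satisfy the morphism relations are the direct analogues of the closed-case proof of $\partial \Phi + \Phi \partial = 0$: each term in the structure identity is identified with the boundary of a moduli space of pentagons, and the boundary splits into pairs as in \cite{most}. Similarly one defines higher components of $\Psi$, $H_\Phi$, $H_\Psi$ by allowing hexagons to be of the partial/boundary-crossing types listed in Sections \ref{ssec:typeA-strand}--\ref{ssec:selfgluing-grid}, and verifies the morphism and homotopy identities.

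The hard part will be the bookkeeping of partial pentagons across multiple boundary configurations (left and right type $A$ or $D$ actions, wedge differentials $\partial_{\text{mix}}$ between adjacent $\P_j^\circ, \P_{j+1}^\circ$, and the self-gluing contributions ${}_D\partial$, $\partial_D$, etc.); in particular one must verify that the forbidden configurations of Figure \ref{fig:forbidden_rect} and their pentagon/hexagon analogues contribute in cancelling pairs so that the structure-map identities reduce to the already-known closed-case identities plus identities that are local near $\alpha_2 \cup \alpha_2'$. Once this combinatorial matching is in place, $\Phi$ and $\Psi$ become mutually inverse equivalences of the appropriate type, yielding $\mathit{CT}^-(\Pp) \simeq \mathit{CT}^-(\boldsymbol{\mathcal{Q}})$.
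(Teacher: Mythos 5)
Your plan is the same one the paper intends: the pentagon/hexagon counts of the generalized commutation lemma (following \cite{most}), upgraded to the bordered setting. The paper itself gives no details here --- the lemma is stated with a ``\qed'' after the remark that ``the proof goes the same way'' --- so the real question is whether your way of filling the gap is sound, and it essentially is, with two caveats worth recording.

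First, there is an internal tension in your write-up: you first assert that all pentagons and hexagons are properly embedded in the interior and hence act trivially with respect to the boundary structure, and then you spend the rest of the argument constructing higher morphism components out of \emph{partial} pentagons and hexagons that touch the boundary. You should commit to one of these. The cleaner option, and the one the paper implicitly relies on (see the proof of Lemma~\ref{lem:Uaction}), is to complete the sequence on any type-$D$ end by wedging on the idempotent (mirror-)shadows $\E^R(\P_1^*)$ and/or $\E^L(\P_p^*)$: after this completion the type-$D$ structure maps become honest interior rectangles of an enlarged plumbing, the commutation region is still sandwiched between $\alpha$-\emph{circles} away from $\partial\Sigma$, and an idempotent-preserving chain homotopy equivalence of the enlarged complex is exactly a type $\DD$/$\DA$/$\AD$ equivalence. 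This reduces most of the bookkeeping you flag as ``the hard part'' to the closed-case identity $\partial\Phi+\Phi\partial=0$ of \cite{most}.

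Second, on a type-$A$ end the completion trick is not available, and one does need your observation in sharpened form: an $\cala_\infty$-morphism a priori has components $f_{1,1,j}$ fed by algebra inputs, and these would be counts of pentagons juxtaposed with the sets of partial rectangles of Figure~\ref{fig:forbidden_rect}'s allowed configurations. What makes the argument close is that the pentagon corner at $\alpha_2\cap\alpha_2'$ lies in the interior annuli $A_1\cup A_2$, so the degenerations of one-parameter families of partial pentagons pair off exactly as in the closed case plus the strict intertwining $\Phi(\xx\cdot r)=\Phi(\xx)\cdot r$; you should state this as the identity being verified rather than leaving it as ``bookkeeping.'' With those adjustments your proposal is a correct and more explicit version of the paper's argument.
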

Using Lemma \ref{lem:DDequiv}, we can prove the following:
\begin{proposition}
Let $\Pp=\{\P_1^\circ,\dots,\P_p^\circ\}$ and $\boldsymbol{\mathcal{Q}}=\{\mathcal{Q}_1^\circ,\dots,\mathcal{Q}_p^\circ\}$ be sequences with corresponding tangles (in the sense of Lemma \ref{lem:decomp})  $\T(\Pp)$ and $\T(\boldsymbol{\mathcal{Q}})$, respectively. Suppose that $\T(\Pp)$ and $\T(\boldsymbol{\mathcal{Q}})$ are related to each other by Reidemeister II and Reidemeister III moves. Then the (bi)modules $\mathit{CT}^-(\T(\Pp))$
and $\mathit{CT}^-(\T(\boldsymbol{\mathcal{Q}}))$
are equivalent.
\end{proposition}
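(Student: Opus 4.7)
The plan is, for each of the Reidemeister II and III moves separately, to exhibit explicit bordered grid diagrams representing $\T(\Pp)$ and $\T(\boldsymbol{\mathcal{Q}})$ on the two sides of the move, and to connect them by a finite sequence of generalized commutation moves in the sense of Lemma \ref{lem:DDequiv}. Since each such move preserves the homotopy type of the associated structure, this will give the desired equivalence $\mathit{CT}^-(\T(\Pp))\simeq\mathit{CT}^-(\T(\boldsymbol{\mathcal{Q}}))$.

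I would handle Reidemeister III first, as it is the more natural match for generalized commutation. Choose decompositions of $\T(\Pp)$ and $\T(\boldsymbol{\mathcal{Q}})$ that agree outside the local R3-region and have the three crossings appear in the same three positions of the sequence on each side, differing only in the over/under pattern prescribed by R3. The associated bordered grid diagrams then share the same $\alphas$-- and $\betas$--skeleton away from the R3 region, and differ inside that region only by a local rearrangement of one middle $\alpha$-arc together with a small repositioning of the $\X$ and $\O$ markings corresponding to the three crossings. I would verify directly that this local rearrangement fits the hypothesis of generalized commutation: the two bounding rectangles $R_1$, $R_2$ each contain exactly one pair $(X,O)$ in precisely the configuration required by the definition. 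One application of Lemma \ref{lem:DDequiv} then yields R3-invariance.

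For Reidemeister II, the two tangles have different crossing numbers, so first I would align the two decompositions to have the same length $p$ by inserting, on the ``straight-strands'' side of the move, a pair of consecutive idempotent shadows $\E^*$ and $\E$ in the positions occupied by the canceling crossings on the other side. With this convention both sequences yield bordered grid diagrams of the same size, agreeing outside a small window. Inside that window, the two canceling crossings can be slid past each other in a bounded number of steps, each of which moves one $\alpha$-arc across a pair of rectangles containing precisely one $X$ and one $O$ in the required configuration, i.e.\ each step is a generalized commutation. After finitely many such moves the canceling crossings collapse to two straight strands, giving the grid diagram for the decomposition with $\E^*$ and $\E$. Repeatedly applying Lemma \ref{lem:DDequiv} then yields R2-invariance.

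The main obstacle in both cases is bookkeeping: one must pick the local decompositions and the aligning idempotent pieces so that every intermediate bordered grid diagram remains rectangular, every relevant annulus contains exactly the markings prescribed by generalized commutation, and no intermediate commutation move sweeps an $\alpha$-arc across an $X$. Once the correct sequence of commutations has been identified, the content of the proof is exhausted by appeals to Lemma \ref{lem:DDequiv}.
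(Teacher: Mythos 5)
Your overall strategy is exactly the paper's: realize each of R2 and R3 as a sequence of generalized commutations on the associated plumbed bordered grid diagrams and invoke Lemma \ref{lem:DDequiv}. However, you have the difficulty of the two cases inverted, and in the R3 case this creates a genuine gap. A generalized commutation replaces a single $\alpha$-circle $\alpha_2$ lying between two adjacent annuli $A_1$ and $A_2$, and it only rearranges the markings contained in $A_1\cup A_2$; every other annulus of the rectangular diagram is untouched. A Reidemeister III move, by contrast, permutes the relative positions of three crossings among three strands, so the grid diagrams before and after the move differ in (at least) three consecutive annuli of the plumbing. No single generalized commutation can connect them, so your claim that ``one application of Lemma \ref{lem:DDequiv} then yields R3-invariance'' is false as stated. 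The actual content of the proof for R3 is precisely the explicit chain of intermediate rectangular diagrams --- each step a valid commutation with the markings in the required configuration --- and that chain (which the paper displays in Figure \ref{fig:reidem3} and which takes several steps) is what your proposal omits.

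For R2 the situation is the opposite of what you describe: after aligning the two decompositions by inserting an idempotent pair $\E^*\wedge\E$ on the straight-strands side (which you do correctly, and which the paper does implicitly), the two-cancelling-crossings diagram and the straight-strands diagram already differ only in two adjacent annuli, and a \emph{single} generalized commutation carries one to the other (Figure \ref{fig:reid2}). Your multi-step ``slide the crossings past each other'' scheme is not wrong in principle, but it is unnecessary, and since you do not exhibit the intermediate diagrams you have not verified that each intermediate step satisfies the hypotheses of the commutation lemma. In both cases the argument is complete only once the explicit sequence of intermediate rectangular diagrams is produced and checked; deferring this to ``bookkeeping'' leaves out the part of the proof that actually carries the content.
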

\begin{proof}
As it is shown on Figure \ref{fig:reid2}, a Reidemeister II move is simply a general commutation on the associated grid diagram.
\begin{figure}[h]
 \centering
       \includegraphics[scale=0.8]{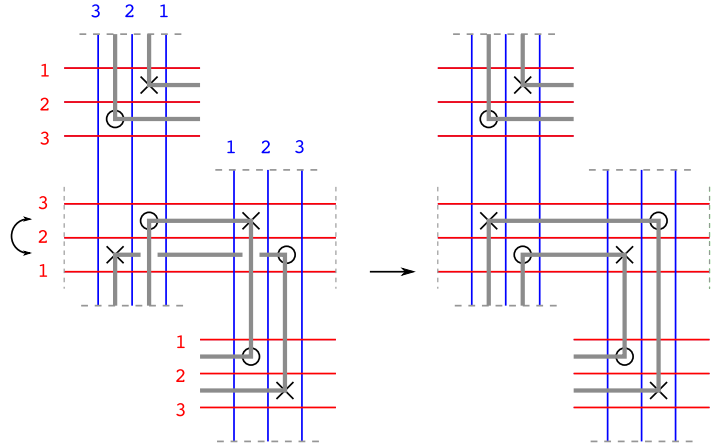} 
       \vskip .2 cm
       \caption[Diagram for simplifying a Reidemeister II move.]{\textbf{Diagram for simplifying a Reidemeister II move.} The first picture corresponds to two canceling crossings, the arrow corresponds to a generalized commutation, and the second picture corresponds to straight strands. This image can have more straight strands that are not affected by the moves.}
       \label{fig:reid2}
\end{figure}
A Reidemeister III move can be achieved with a sequence of commutation moves, see figure \ref{fig:reidem3}. 
\begin{figure}[p]
 \centering
       \includegraphics[scale=0.5]{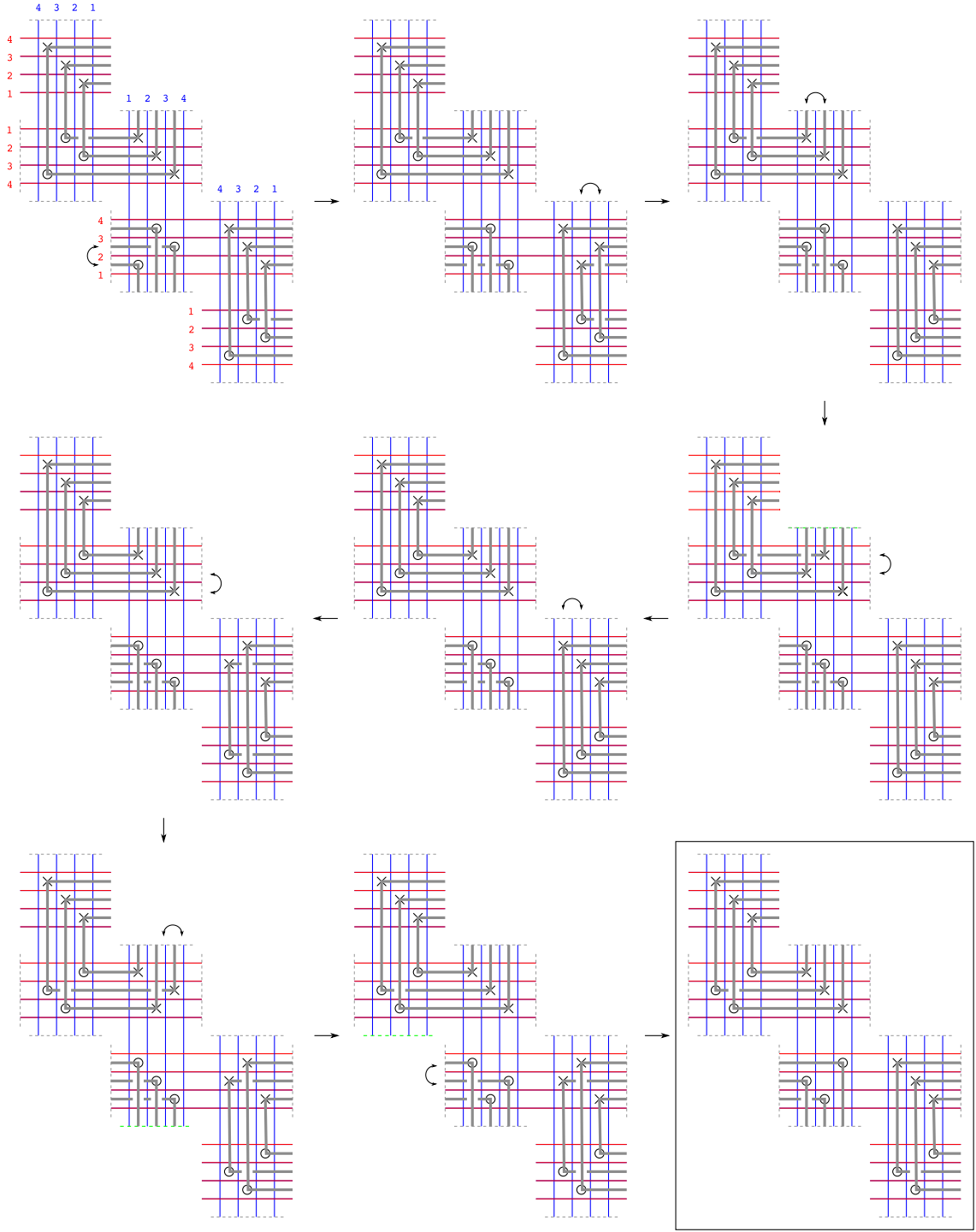} 
       \vskip .2 cm
       \caption[Commutation moves corresponding to a Reidemeister III move.]{\textbf{Commutation moves corresponding to a Reidemeister III move.} Again, this image can have more straight strands that are not affected by the moves.}
       \label{fig:reidem3}
\end{figure}
\end{proof}

%%%%%%%%%%%%%%%%%%%%%%%%%%%%%%%%%%%%%%%%%%%%%%%%%%%%%%%
% section combinatorial (end)
%%%%%%%%%%%%%%%%%%%%%%%%%%%%%%%%%%%%%%%%%%%%%%%%%%%%%%%

%%%%%%%%%%%%%%%%%%%%%%%%%%%%%%%%%%%%%%%%%%%%%%%%%%%%%%%

%%%%%%%%%%%%%%%%%%%%%%%%%%%%%%%%%%%%%%%%%%%%%%%%%%%%%%%
% !TEX root = ../tanglefloer.tex
%%%%%%%%%%%%%%%%%%%%%%%%%%%%%%%%%%%%%%%%%%%%%%%%%%%%%%%

\section{Relation to knot Floer homology} % (fold)
\label{sec:apps}
%%%%%%%%%%%%%%%%%%%%%%%%%%%%%%%%%%%%%%%%%%%%%%%%%%%%%%%

This  section provides the connection between $\mathit{CT}^-$ and $\cfkm$. 

Let $\P_1^\circ, \ldots, \P_n^\circ$ be a sequence of shadows and mirror-shadows as in Lemma \ref{lem:decomp} such that the associated tangle $L= \T^\circ(\P_1^\circ) \circ \ldots \circ\T^\circ(\P_n^\circ)$ is a closed link. After self-gluing the first and last grid in  $G^\circ (\P_1^\circ)\cup \cdots \cup G^\circ (\P_n^\circ)$, we obtain a diagram that is a plumbing of annuli and has one boundary component. Close off the boundary by gluing on a disk with one $X$ and one $O$ in it. The resulting closed Heegaard diagram $\HH$ represents  the link $L\cup U$, where  $U$ is an unknot unlinked from $L$. 

\begin{theorem}\label{recover_hfk}
We have a graded homotopy equivalence
\[\mathit{CT}^-(\P_1^{\circ})\boxtimes\cdots \boxtimes  \mathit{CT}^-(\P_n^{\circ})\simeq g\cfkm(\HH)\]
that maps a homogeneous generator in Maslov grading $m$ and Alexander grading $a$ to a homogeneous generator in Maslov grading $m+\frac{|L|}{2}$ and Alexander grading $a+\frac{|L|}{2}$.
\end{theorem}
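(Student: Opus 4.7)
The plan is to identify both sides with the chain complex of a single closed multi-pointed Heegaard diagram, namely $\HH$, and then check the grading shift.

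First I would iteratively apply the pairing results of Section \ref{sec:combinatorial} (Theorem \ref{thm:wedgevsbox} and its extensions to longer wedge products) together with the bordered grid reformulations of Section \ref{sec:borderedgrid} (Propositions \ref{prop:gridtypea1}, \ref{prop:gridtyped1}, and especially Propositions \ref{thm:chain}, \ref{thm:chain2}, \ref{thm:chain3}). These combine to show that
\[
\mathit{CT}^-(\P_1^{\circ})\boxtimes\cdots \boxtimes \mathit{CT}^-(\P_n^{\circ}) \;\cong\; C^-(G),
\]
where $G$ is the plumbed bordered grid diagram $G^\circ(\P_1^\circ)\cup \cdots \cup G^\circ(\P_n^\circ)$, with the first and last grids self-glued as in Convention \ref{conv:contractgrid}. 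By the assumption that $\P_1^\circ, \ldots, \P_n^\circ$ arises from Lemma \ref{lem:decomp} for a closed link, $G$ is closed except for a single boundary circle.

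Next I would observe that the disk capping that remaining boundary circle, equipped with one $X$ and one $O$, is by construction precisely the recipe producing the closed diagram $\HH$ for $L\cup U$ from the statement. Since every region of $G$ is a rectangle and the capping disk is a bigon containing one $X$ and one $O$, the Heegaard diagram $\HH$ is nice in the sense of Sarkar-Wang \cite{sarkarwang}. Consequently $\cfkm(\HH)$ is computed combinatorially by counting empty embedded rectangles on $\HH$. No such rectangle can cross the capping disk, for it would have to contain the $X$ or the $O$ in that disk and hence would not be empty. Therefore the rectangles counted in the differential on $\cfkm(\HH)$ are exactly those counted in the differential on $C^-(G)$, yielding the desired chain isomorphism.

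Finally, for the bigrading shift, I would carry out the bookkeeping for the Maslov and Alexander gradings. The elementary gradings defined in Section \ref{ssec:grading} add under box tensor product (Theorems \ref{thm:typeAA}--\ref{thm:typeA-typeD}); comparing this sum with the standard grid-diagram formulas for $\cfkm$ (as in \cite{mos,most}), and accounting for the one stabilization performed by the capping disk together with the stabilization shifts that are built into the definitions of the endpieces $\CTD^-$, $\CDT^-$, one finds a uniform shift of $\tfrac{|L|}{2}$ in both $M$ and $A$, independent of the chosen decomposition. I expect the main obstacle to be precisely this last bookkeeping: verifying that the absolute gradings on the generalized strand modules, defined by the combinatorial formulas involving $\inv(\phi)$, $\inv(\phi,\omega)$, $\inv(\phi,\xi^{-1})$, and $\inv(\omega)$, reassemble correctly after all the box tensor products and self-gluings into the grid-diagram gradings for $L\cup U$, with exactly the shift $\tfrac{|L|}{2}$. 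Once this is established on elementary pieces, additivity under $\boxtimes$ propagates it to the entire composition and completes the proof.
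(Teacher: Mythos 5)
Your identification of the two differentials is correct and is exactly the paper's route: translate each $\mathit{CT}^-(\P_i^\circ)$ into its bordered grid model, observe that the box tensor differential counts empty rectangles avoiding $\XX$ on the plumbed diagram, and note that no rectangle can enter the capping region because of the new $X$ and $O$ placed there. That half of the argument is fine.

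The gap is in the grading statement, which is the actual content of the theorem. You correctly flag the absolute normalization as ``the main obstacle,'' but you do not supply the idea that resolves it, and the route you sketch does not work as stated. The formulas of \cite{mos,most} compute absolute gradings only for toroidal grid diagrams; the diagram $\HH$ here is a capped plumbing of annuli, not a grid, so there is no off-the-shelf combinatorial formula for $M'$ or $A'$ to compare against. Moreover, the absolute Maslov grading on $\cfkm(\HH)$ is pinned down only by a homological normalization ($H_*(\cfkm/(U_i=0))\cong H_{*+k-1-\frac{l-1}{2}}(T^{k-1})$), so one must actually compute the absolute $M'$ of at least one explicit generator. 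The paper does this by singling out the generator $\xxx_{\OO}$ built from corners of the $O$'s, computing its combinatorial grading $M(\xxx_{\OO})=-k$ via Lemma \ref{abs_m_lemma}, and then computing $M'(\xxx_{\OO})=-k+\frac{l}{2}$ by a holomorphic triangle argument: one introduces auxiliary curves $\bgamma$ so that $(\Sigma,\bbeta,\bgamma,\OO)$ is a diagram for $(S^1\times S^2)^{\# k}$ and $(\Sigma,\balpha,\bgamma,\OO)$ is a diagram for $S^3$ with vanishing differential, and a Maslov index zero triangle carries $\xxx_{\OO}\otimes\Theta$ to the bottom-most generator $\yyy$. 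The Alexander shift then requires a second distinguished generator $\xxx_{\XX}$, the $\XX$-normalized grading $N'$, and the identity $N'=M'-2A'-(k-l)$, together with the count of basepoints on the closed diagram (which has $k+1$ of each type because of the capping disk). None of this is ``additivity bookkeeping''; additivity only controls the relative grading, which already follows from Propositions \ref{prop:isom} and \ref{prop:isomD}. Without an explicit generator whose absolute $M'$ and $A'$ you can compute, the shift $\frac{|L|}{2}$ cannot be verified, so your proof as written is incomplete at exactly the step you identified as the hard one.
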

Before we prove Theorem \ref{recover_hfk}, we review the basic construction for knot Floer homology, see also  \cite{hfk, jrth, most, oszlink}.

Let $\HH_L = (\Sigma, \balpha, \bbeta, \OO, \XX)$ be a Heegaard diagram for a knot or a link $L$ with $l$ components, where $\OO$ and $\XX$ are sets of $k\geq l$ basepoints. Let $\mathfrak S$ be the set of generators of $\HH_L$. 
The \emph{knot Floer complex} $\cfkm(\HH_L)$ is generated over $\F_2[U_1, \ldots, U_k]$ by $\mathfrak S$, with differential

$$\bdy^-(\xx) = \sum_{\yy\in \mathfrak S}\sum_{\substack{B\in \tilde\pi_2(\xxx, \yyy)\\  \ind B = 1}}\#\mathcal M^B(\xxx,\yyy) \prod_{O_i\in \OO}(U_i^{n_{O_i}(B)})\cdot \yy,$$
where $\tilde\pi_2(\xxx, \yyy)$ is the set of homology classes from $\xx$ to $\yy$ which may cross both $\OO$ and $\XX$. The complex has a differential grading called the \emph{Maslov} grading. As a relative grading, it is defined by
\begin{align*}
M'(\xx) - M'(\yy) &= \ind B - 2n_{\OO}(B)\\
M'(U_i\xx) &= M'(\xx) - 2
\end{align*}
for any $\xxx, \yyy\in \mathfrak S$, and $B\in  \tilde\pi_2(\xxx, \yyy)$.
The complex also comes endowed with an  \emph{Alexander} filtration, defined by
\begin{align*}
A'(\xx) - A'(\yy) &= n_{\XX}(B) - n_{\OO}(B)\\
A'(U_i\xx) &= A'(\xx) - 1, 
\end{align*}
and normalized so that 
\begin{equation}\label{normalized_a}
\#\{\xx\in \mathfrak S| A'(\xx) = a\} = \#\{\xx \in \mathfrak S| A'(\xx) = -a\}\textrm{ mod } 2
\end{equation}

The \emph{associated graded object} $g\cfkm(\HH_L)$ is also generated over $\F_2[U_1, \ldots, U_k]$ by $\mathfrak S$, and its differential is given by 
$$\bdy^-(\xx) = \sum_{\yy\in \mathfrak S}\sum_{\substack{B\in \tilde\pi_2(\xxx, \yyy)\\  \ind B = 1\\n_{\XX}(B) = 0}}\#\mathcal M^B(\xxx,\yyy) \prod_{O_i\in \OO}(U_i^{n_{O_i}(B)})\cdot \yy.$$
The Alexander filtration descends to a grading on $g\cfkm(\HH_L)$. 
The bigraded  homology 
$$\hfkm(L)\coloneqq H_{\ast}(g\cfkm(\HH_L))$$
is an invariant of $L$.

The Maslov grading is normalized so that after setting each $U_i$ to zero we get
$$H_{\ast}(\cfkm(\HH_L)/(U_i=0))\cong H_{\ast+k-1-\frac{l-1}{2}}(T^{k-1}),$$ 
where $\ast$ denotes the grading $M'$, and  we ignore the Alexander filtration on $\cfkm(\HH_L)$.

One can also set each $U_i=0$ to obtain the filtered chain complex over $\F_2$
$$\cfkhat(\HH_L)\coloneqq \cfkm(\HH_L)/(U_i=0).$$
The associated graded object to $\cfkhat(\HH_L)$ is $g\cfkhat(\HH_L)$, with differential 
$$\hat\bdy(\xx) = \sum_{\yy\in \mathfrak S}\sum_{\substack{B\in \tilde\pi_2(\xxx, \yyy)\\  \ind B = 1\\n_{\XX}(B) = 0 = n_{\OO}(B)}}\#\mathcal M^B(\xxx,\yyy)\cdot \yy.$$ 
We denote its homology, which is an invariant of $L$, by $\hfkhat(L)\coloneqq H_{\ast}(g\cfkhat(\HH_L))$.

There is another grading, which we refer to as the \emph{$\XX$-normalized grading}, defined by 
\begin{align*}
N'(\xx) - N'(\yy) &= \ind B - 2n_{\XX}(B)\\
N'(U_i\xx) &= N'(\xx), 
\end{align*}
and normalized so that 
$$H_{\ast}(g\cfkm(L)/(U_i=1))\cong H_{\ast+k-1-\frac{l-1}{2}}(T^{k-1}),$$ 
where $\ast$ denotes the grading $N'$.

It turns out that
\begin{equation}\label{mna}
N' = M' - 2A' - (k-l),
\end{equation}
so instead of using Equation \ref{normalized_a} to normalize the Alexander grading, we can use Equation \ref{mna}.

Next, we put the grading from Section \ref{ssec:grading} in the context of grid diagrams.

Let $\P$ be a shadow,  let $G = G(\P)$ be the corresponding grid, and $G^{\ast}$ be the grid corresponding to $\P^{\ast}$. We define a few special generators below. 

Let $f_{\OO}$ be  the generator of $G$ formed by picking the top-right corner of each $O$, see Figure \ref{fig:gr_module}, and let $f_{\OO}'$ be  the generator formed by picking the bottom-left corner of each $O$. 
Similarly, let $f^{\ast}_{\OO}$ be  the generator of $G^{\ast}$ formed by picking the bottom-left corner of each  $O$, together with the top-right corner of the grid $G^{\ast}$, see Figure \ref{fig:gr_comodule}, and let $f'^{\ast}_{\OO}$ be  the generator formed by picking the top-right corner of each  $O$, together with the bottom-left corner of the grid $G^{\ast}$. 

 \begin{figure}[h]
 \centering
       \includegraphics[scale=.9]{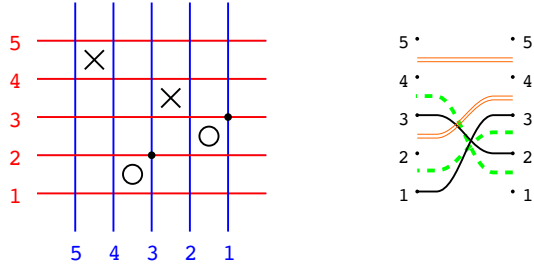} 
       \vskip .2 cm
       \caption[The generator $f_{\OO}$.]{\textbf{The generator $f_{\OO}$.} Left:  the generator $f_{\OO}$ on a grid diagram $G$. Right: the corresponding generator on the shadow for $G$.}\label{fig:gr_module}
\end{figure}
 
\begin{figure}[h]
 \centering
       \includegraphics[scale=.9]{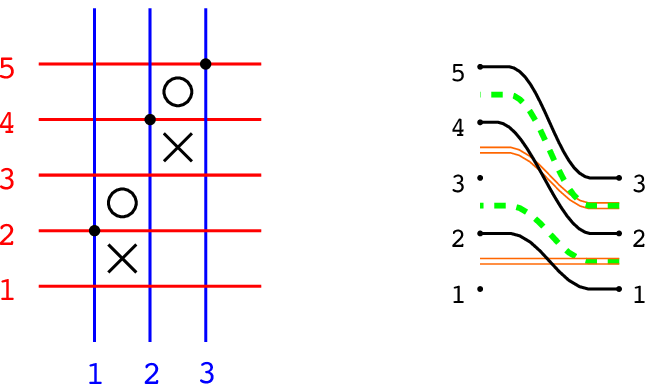} 
       \vskip .2 cm
       \caption[The generator $f_{\OO}^{\ast}$.]{\textbf{The generator $f_{\OO}^{\ast}$.} Left:  the generator $f_{\OO}^{\ast}$ on a grid diagram $G^{\ast}$. Right: the corresponding generator on the mirror-shadow for $G^{\ast}$.}\label{fig:gr_comodule}
\end{figure}

 Let $f_{\XX}$ and $f_{\XX}'$  be  the generators of $G$ formed by picking the top-right (respectively bottom-left) corner of each $X$.  Similarly, let $f^{\ast}_{\XX}$ and   $f'^{\ast}_{\XX}$ be  the generators of  $G^{\ast}$ formed by picking the bottom-left (respectively top-right) corner of each $X$, and the top-right (respectively bottom-left) corner of the grid.

\begin{lemma}\label{abs_m_lemma}
For the generators defined above, we have
\begin{align*}
 M(f_{\OO})  &= M(f_{\OO}') = M(f^{\ast}_{\OO})  = M(f'^{\ast}_{\OO})= -|\OO|\\
  M(f_{\XX})  &= M(f'_{\XX}) = \inv(\xi^{-1}) - \inv(\xi^{-1}, \omega) + \inv(\omega) \\
  M(f^{\ast}_{\XX}) &= M(f^{\ast}_{\XX})=  -\inv(\xi^{-1}) + \inv(\xi^{-1}, \omega) - \inv(\omega) - |\OO| \\
       A(f_{\XX}) &= \frac{M(f_{\XX})}{2} = A(f'_{\XX}) \\
   A(f^{\ast}_{\XX})  &= \frac{M(f^{\ast}_{\XX})}{2} = A(f'^{\ast}_{\XX}) . 
 \end{align*}
\end{lemma}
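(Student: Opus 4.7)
The plan is to verify each identity by a direct computation from the grading definitions in Subsection~\ref{ssec:grading}, using the fact that the black strand configuration of each distinguished generator is a parallel copy of either the green $\omega$-strand pattern (for the $\OO$-generators) or the orange $\xi$-strand pattern (for the $\XX$-generators), shifted by $\pm\tfrac12$ at each endpoint.

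I would first treat the $\OO$-generators. For $f_\OO$ on $G$, reading off Figure~\ref{fig:gr_module}, the top-right corner of $O=(-s_O,\omega s_O)$ is the intersection $\alpha_{\omega s_O+\frac12}\cap\beta_{s_O-\frac12}$, so $\phi$ sends $s_O-\tfrac12$ to $\omega s_O+\tfrac12$. A short case analysis in half-integer arithmetic gives $\inv(\phi)=\inv(\omega)$ (parallel-shifted strands invert iff their $\omega$-partners do) and $\inv(\phi,\omega)=|\OO|+2\inv(\omega)$ (each pair of inverted $\omega$-strands contributes two black-green crossings, and each black strand crosses its own $\omega$-partner exactly once). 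Substituting into $M(f_\OO)=\inv(\phi)-\inv(\phi,\omega)+\inv(\omega)$ yields $-|\OO|$. The calculation for $f'_\OO$ is the mirror image and gives the same answer. For $f^*_\OO$ and $f'^*_\OO$ on $G^*$, the shift now lands the black strand on the \emph{non-crossing} side of its partner $\omega$-strand, so the self-pair contribution disappears: $\inv(\phi^*,\omega)=2\inv(\omega)$, and the extra corner strand at $(m,n)$ is disjoint from everything. Plugging into the mirror-shadow formula $M(f^*)=-\inv(\phi^*)+\inv(\phi^*,\omega)-\inv(\omega)-|\OO|$ again gives $-|\OO|$.

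Next I would treat the $\XX$-generators by the parallel argument with orange strands. For $f_\XX$, the top-right corner of $X=(-\xi s_X,s_X)$ identifies $\phi$ with $\xi s_X-\tfrac12\mapsto s_X+\tfrac12$, so each black strand parallels the orange strand for $s_X$. The same parallelism gives $\inv(\phi)=\inv(\xi^{-1})$ and $\inv(\phi,\xi^{-1})=|\SX|+2\inv(\xi^{-1})$, and a separate case check yields $\inv(\phi,\omega)=\inv(\xi^{-1},\omega)$. Substituting into the shadow formula produces $M(f_\XX)=\inv(\xi^{-1})-\inv(\xi^{-1},\omega)+\inv(\omega)$. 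The identity $A(f_\XX)=M(f_\XX)/2$ then follows from
\[ 2A(f_\XX)=\inv(\phi,\xi^{-1})-\inv(\phi,\omega)+\inv(\omega)-\inv(\xi^{-1})-|\TX|, \]
after using $|\SX|=|\TX|$ so that $|\SX|+2\inv(\xi^{-1})-\inv(\xi^{-1})-|\TX|=\inv(\xi^{-1})$. The cases $f'_\XX$, $f^*_\XX$, $f'^*_\XX$ follow the same template; in the mirror-shadow cases the extra corner strand at $(m,n)$ is again disjoint from all the $X$-strands and green strands, while the $-|\OO|$ constant term in the mirror-shadow formula produces the extra $-|\OO|$ shift.

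The main obstacle is the bookkeeping for the equality $\inv(\phi,\omega)=\inv(\xi^{-1},\omega)$ in the $\XX$-computation. Naively, the half-integer shifts make Case~1 of the inversion definition---``$s<s_O$ and $\phi s>\omega s_O$''---pick up boundary pairs where either $\xi s_X=s_O$ or $s_X=\omega s_O$ (i.e.\ the orange and green strands share a boundary endpoint, as happens for cup- or cap-type tangles), which never appear in the strict orange-green count used to define $\inv(\xi^{-1},\omega)$. The key point to verify is that in precisely these configurations the second strict inequality of Case~1 forces the pair to fail, or else the extra Case~1 pair is compensated by a matching Case~2 pair it cancels with. A careful enumeration of the four sub-possibilities in each Case~1 (strict vs.\ equality in each coordinate) should show that the half-integer shift was chosen exactly so that only genuine orange-green crossings survive; once this verification is in place the remainder of the lemma is routine algebra.
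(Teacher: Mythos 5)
Your proposal is correct and follows essentially the same route as the paper: identify each distinguished generator's black strands as parallel perturbations of the green (resp.\ orange) strands, compute $\inv(\phi)$, $\inv(\phi,\omega)$, $\inv(\phi,\xi^{-1})$ in terms of $\inv(\omega)$, $\inv(\xi^{-1})$, $\inv(\xi^{-1},\omega)$, and substitute into the grading formulas. The boundary-pair bookkeeping you flag as the main obstacle is handled only implicitly in the paper as well (the strands are genuine perturbations with minimal intersection, so no spurious crossings arise), and your algebra for $A(f_{\XX})=M(f_{\XX})/2$ matches the paper's verbatim.
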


\begin{proof}
Write out $f_{\OO} = \stphi$. Let $t = |\OO|$, let $g_1, \ldots, g_t$ be the dashed (green) strands in the graphical representation for the shadow $\P$, and let $f_1, \ldots, f_t$ be the strands for $f$, where $f_i$ is the strand that starts immediately below, and ends immediately above $g_i$.

 Recall that $\inv(\phi)$ counts intersections between pairs  in $\{f_1, \ldots,  f_t\}$, $\inv(\omega)$ counts intersections between pairs  in $\{g_1, \ldots,  g_t\}$, and $\inv(\phi,\omega)$ counts the total number of intersections between  a strand in $\{f_1, \ldots,  f_t\}$  and a strand in $\{g_1, \ldots,  g_t\}$.
 
Observe that $\inv(\phi) = \inv(\omega)$, since each $f_i$ is just a perturbation of $g_i$.
Also,   $f_i$ intersects $g_j$ exactly when $i\neq j$ and  $g_i$ intersects $g_j$, or $i=j$ , so $\inv(\phi,\omega) = 2 \inv(\omega) + |\SO|$.
Thus, 
$$M(f_{\OO}) =   \inv(\phi) - \inv(\phi,\omega) + \inv(\omega) = \inv(\omega) -2 \inv(\omega) - |\SO|+ \inv(\omega) = -|\SO|.$$

Similarly, write out $f^{\ast}_{\OO} = (S{},T{},\phi{})$. Again let $t = |\OO|$, let $g_1, \ldots, g_t$ be the dashed (green) strands in the graphical representation for the shadow $\P^{\ast}$, and let $f_1, \ldots, f_{t+1}$ be the strands for $f$, where $f_i$ is the strand that starts and ends immediately below $g_i$, for $1\leq i\leq t$, and $f_{t+1}$ connects the highest point to the left to the highest point to the right. Clearly $\inv(\phi) = \inv(\omega)$ and $ \inv(\phi,\omega) = 2\inv(\omega)$, since this time for a fixed $i$, $f_i$ and $g_i$ do not intersect, so
$$M(f^*_{\OO}) = - \inv(\phi)+\inv(\phi,\omega) - \inv(\omega) -|\SO| = -|\SO|.$$ 

The proof for $f'_{\OO}$ and $f'^{\ast}_{\OO}$ is analogous.

Now write $f_{\XX} = \stphi$. With notation as above, it is clear that each $f_i$  is a perturbation of the corresponding double (orange) strand for $\XX$. Reasoning as above, we see that 
$$M(f_{\XX}) =  \inv(\phi)-\inv(\phi,\omega) + \inv(\omega) = \inv(\xi^{-1}) - \inv(\xi^{-1}, \omega)+ \inv(\omega).$$
Next, 
\begin{align*}
A(f_{\XX}) &= \frac12\left(\inv(\phi, \xi^{-1}) - \inv(\phi,\omega) + \inv(\omega) - \inv(\xi^{-1})-|\TX|\right)\\
&=\frac12\left(2\inv(\xi^{-1}) + |\TX| - \inv(\xi^{-1},\omega) + \inv(\omega) - \inv(\xi^{-1})-|\TX|\right)\\
&= \frac12\left(\inv(\xi^{-1})  -  \inv(\xi^{-1},\omega)+ \inv(\omega) \right) \\
&= \frac{M(f_{\XX})}{2}
\end{align*}

The proof for $f^{\ast}_{\XX}$, $f'_{\XX}$, and $f'^{\ast}_{\XX}$  is analogous. 
\end{proof}

We are now ready to prove Theorem  \ref{recover_hfk}. 

 \begin{proof}[Proof of Theorem \ref{recover_hfk}]
 Each shadow $\P_i^{\circ}$ has a corresponding grid diagram $G_i^{\circ}$. Both for grids and for shadows, we abbreviate the notation for the (bi)modules $\mathit{CTA}^-, \mathit{CDTD}^-$, etc. by $\ctm$. For shadows and the corresponding grids we consider the type $A$ or type $\AA$ structures, and for mirror-shadows and the corresponding grids we consider the type $D$ or type $\DD$ structures.  By Propositions \ref{prop:gridtypea1} and \ref{prop:gridtyped1}, the modules $\mathit{CT}^-(\P_i^{\circ})$ and $\ctm(G_i^{\circ})$ are isomorphic. The type $A$ or $\AA$ structures $\ctm(G_i^{\circ})$ are defined by counting empty rectangles and certain sets of half-rectangles that do not intersect $\XX$, whereas the type $D$ or $\DD$ structures are defined by counting empty rectangles and (individual) half-rectangles that do not intersect $\XX$.
 So the differential on $\ctm(G_1^{\circ})\boxtimes\cdots \boxtimes  \ctm(G_n^{\circ})$ counts empty rectangles in the diagram $G_1^{\circ}\cup\ldots\cup G_n^{\circ}$ that do not intersect $\XX$, hence  $\ctm(G_1^{\circ})\boxtimes\cdots \boxtimes  \ctm(G_n^{\circ})$ is isomorphic to the complex $g\cfkm$ associated to the closure of the nice diagram $G_1^{\circ}\cup\ldots\cup G_n^{\circ}$, with and $X$ and an $O$ added in the new region, which represents $L\cup U$.  It remains to check that this last isomorphism preserves the Maslov and Alexander gradings.

Let $\HH$ be the Heegaard diagram obtained by closing up the plumbing of annuli $G_1^{\circ}\cup \ldots\cup G_n^{\circ}$. We argue that the absolute Maslov grading on $\HH$ (obtained by adding the gradings on each $G_i^{\circ}$) is correct. Let $k_i$ be the number of $O$s in each grid $G_i^{\circ}$, and let $k = \sum_{i=1}^n k_i$.    Let $\xxx_{\OO} = f_{\OO_1}^{\ast}\boxtimes f_{\OO_2}\boxtimes f'^{\ast}_{\OO_3}\boxtimes f'_{\OO_4} \boxtimes f_{\OO_5}^{\ast}\boxtimes \cdots\boxtimes f^{\circ}_{\OO_n}$ (the decoration $\circ$ depends on $n \textrm{ mod } 4$, as specified according to the first four factors).  By Lemma \ref{abs_m_lemma}, 
$M(\xxx_{\OO}) = M(f_{\OO_1}^{\ast})+M(f_{\OO_2})+\cdots + M(f^{\circ}_{\OO_n}) = -|\OO_1|-\cdots - |\OO_n| = -k$.

Form a set of $\gamma$ circles $\bgamma$ by performing handleslides (which are allowed to cross $\XX$ but not $\OO$) of $k_i$ of the $\beta$ circles and a perturbation of one $\beta$ circle for each $G_i$, as in Figure  \ref{fig:gr_triangle}.
We look at the holomorphic triangle map (see \cite{osz5, osz14}) associated to $(\Sigma, \balpha, \bbeta, \bgamma, \OO)$. Observe that $(\Sigma, \bbeta, \bgamma, \OO)$ is a diagram for $(S^1\times S^2)^{\#k}$, and let $\Theta$ be the top-dimensional generator (on the diagram this is the set of intersection points at which the small bigons start).
Let $\yyy$ be the generator of $(\Sigma, \balpha, \bgamma, \OO)$ nearest to $\xxx_{\OO}$. There is a holomorphic triangle that maps $\xxx_{\OO}\otimes \Theta$ to $\yyy$, see  Figure \ref{fig:gr_triangle}.

\begin{figure}[h]
 \centering
       \includegraphics[scale=1.25]{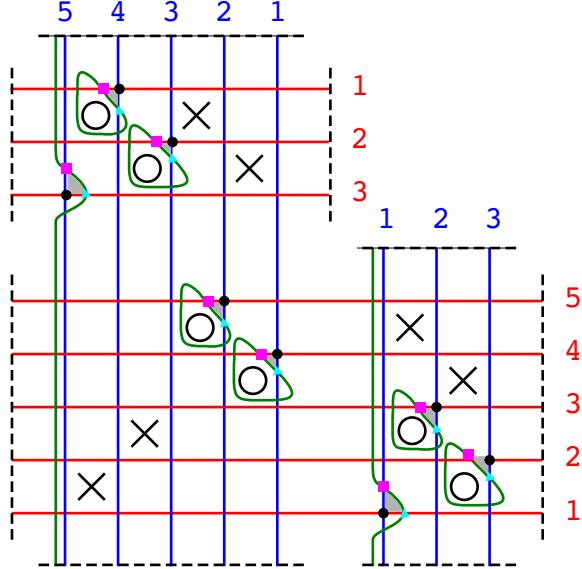} 
       \vskip .2 cm
       \caption[The union $\HH$ of three grid diagrams.]{\textbf{The union $\HH$ of three grid diagrams:} $G_1^*$ (top), $G_2$ (bottom left), and $G_3^*$ (bottom right). The black dots form the generator $\xxx_{\OO}$, the purple squares form $\yyy$, and the cyan triangles form $\Theta$.}\label{fig:gr_triangle}
\end{figure}

Observe that $(\Sigma, \balpha, \bgamma, \OO)$
is a diagram for $S^3$ with $2^k$ generators, for which the differential vanishes (each small bigon ending at an intersection point in $\yyy$ is cancelled by the  corresponding horizontal annulus with the small region region containing an $O$ removed). By looking at the small bigons, one sees that $\yyy$ is the bottom-most generator of $(\Sigma, \balpha, \bgamma, \OO)$, so its Maslov grading is $-k+\frac{l}{2}$, where $l=|L| = |L\cup U|-1$.  Since $\xxx_{\OO}$, $\Theta$, and $\yyy$ are connected by a Maslov index zero triangle, the Maslov grading of $\xxx_{\OO}$ should be $M'(\xxx_{\OO}) =-k+\frac{l}{2}$ too.

Next, we argue that the Alexander grading on $\HH$ is correct. For that purpose, let $\xxx_{\XX} = f_{\XX_1}^{\ast}\boxtimes f_{\XX_2}\boxtimes f'^{\ast}_{\XX_3}\boxtimes f'_{\XX_4}\boxtimes\cdots\boxtimes f^{\circ}_{\XX}$. A priori, 
$A'(\xxx_{\XX}) = A(\xxx_{\XX})+s= A(f^{\ast}_{\XX_1})+ \cdots+ A(f^{\circ}_{\XX_n})+s$, where  $s$ is a constant. We show the shift $s$ is zero.
By Lemma \ref{abs_m_lemma}, 
$$A(\xxx_{\XX})  = A(f^{\ast}_{\XX_1})+ A(f_{\XX_2})+\cdots+A(f^{\circ}_{\XX_n}) = \frac{M(f^{\ast}_{\XX_1})}{2}+ \frac{M(f_{\XX_2})}{2}+\cdots+\frac{M(f^{\circ}_{\XX_n})}{2} = \frac{M(\xxx_{\XX})}{2},$$
and we just showed that $M \equiv M'- \frac{l}{2}$, so $A(\xxx_{\XX}) = (M'(\xxx_{\XX})- \frac{l}{2})/2$.
On the other hand, using the holomorphic triangles argument above, we see that the $\XX$-normalized  grading of $\xxx_{\XX}$ is $N'(\xxx_{\XX}) = -k+ \frac{l}{2}$. The closed diagram has one additional $X$ and one $O$ in the outside region that we closed off, for a total of $k+1$ basepoints of each type,   so by Equation \ref{mna}, 
$$A'(\xxx_{\XX}) = \frac 1 2 \left(M'(\xxx_{\XX}) - N'(\xxx_{\XX})- ((k+1)-(l+1))\right) =\frac{M'(\xxx_{\XX})+ \frac{l}{2}}{2}, $$
so
\begin{equation*}A(\xxx_{\XX}) = A'(\xxx_{\XX}) - \frac{l}{2}.\qedhere
\end{equation*}

\end{proof}

%%%%%%%%%%%%%%%%%%%%%%%%%%%%%%%%%%%%%%%%%%%%%%%%%%%%%%%
% section apps (end)
%%%%%%%%%%%%%%%%%%%%%%%%%%%%%%%%%%%%%%%%%%%%%%%%%%%%%%%

%%%%%%%%%%%%%%%%%%%%%%%%%%%%%%%%%%%%%%%%%%%%%%%%%%%%%%%

%%%%%%%%%%%%%%%%%%%%%%%%%%%%%%%%%%%%%%%%%%%%%%%%%%%%%%%
% !TEX root = ../tanglefloer.tex
%%%%%%%%%%%%%%%%%%%%%%%%%%%%%%%%%%%%%%%%%%%%%%%%%%%%%%%
\section{Matched circles and their algebras} % (fold)
\label{sec:circles}

Just as closed $3$-manifolds and knots or links in closed $3$-manifolds can be represented by Heegaard diagrams, and bordered $3$-manifolds can be represented by bordered Heegaard diagrams, tangles in $3$-manifolds with boundary can be represented by suitable Heegaard diagrams, which we will call bordered Heegaard diagrams for tangles. 

We define two types of (multipointed) bordered Heegaard diagrams for tangles in $3$-manifolds with  one boundary component. The reason we need two slightly different diagrams is so the result after gluing is a valid closed Heegaard diagram for a link, with the same number of $\alpha$-curves as $\beta$-curves, and with the correct number of basepoints (this should become apparent once the reader goes through the relevant definitions and examples). We also define Heegaard  diagrams for tangles in $3$-manifolds with two  boundary components. We restrict our work to the case where all boundary components are spheres.

\subsection{Matched circles}

An $n$-marked sphere  $\mathcal S = (S^2, t_1, \ldots, t_n)$  has a compatible handle decomposition as follows:
\begin{itemize}
\item[-] Start with $n+2$ two-dimensional $0$-handles $h_0^0, \ldots, h_{n+1}^0$, where the core of $h_i^0$ is $t_i$ for $1\leq i\leq n$.
\item[-] Attach $1$-handles $h_1^1, \ldots, h_{n+1}^1$, so that $h_i^1$ is attached to $h_{i-1}^0$ and $h_i^0$.
\item[-] Attach a $2$-handle to the resulting boundary to obtain $S^2$. 
\end{itemize}

As a first step towards building  Heegaard diagrams for tangles, we represent marked  spheres by matched circles.  First we define matched circles even more generally.
\begin{definition}
A \emph{marked matched circle $\zz$} is a sextuple $(Z, {\bf a}, \mu, \XX, \OO, \zzz)$ of: 
\begin{itemize}
\item[-] an oriented circle $Z$
\item[-] $2n+2$ points ${\bf a} = \{a_1, \ldots, a_{2n+2}\}$ on $Z$ labeled with order induced by the orientation on $Z$
\item[-] a matching $\mu:{\bf a}\to [n+1]$  (where $[n+1]\coloneqq \{1, \ldots, n+1\}$) so that surgery on $Z$ along the matched pairs in ${\bf a}$ yields $n+2$ circles.
\item[-]  Two sets of points, $\XX = \{X_1, \ldots, X_k\}$ and $\OO = \{O_1, \ldots, O_l\}$, and a pair of points $\zzz=\{z^-, z^+\}$ in $Z\setminus {\bf a}$, so that there is exactly one point in each circle obtained after surgery on the matched pairs in ${\bf a}$, and so that one of the points in $\zzz$ is on the interval $(a_{2n+2}, a_1)$.
\end{itemize}
\end{definition}
See, for example, Figure \ref{fig:mmc}.

\begin{figure}[h]
 \centering
  \labellist
       \pinlabel $\textcolor{red}{a_1}$ at 90 650
       \pinlabel $\textcolor{red}{a_{10}}$ at 127 596
       \pinlabel $\textcolor{red}{a_2}$ at 88 705
       \pinlabel $\rotatebox{55}{\textcolor{red}{\ldots}}$ at 262 630
       \pinlabel $X_1$ at 220 765
       \pinlabel $X_2$ at 264 725
       \pinlabel $z^-$ at 100 620       
       \pinlabel $O_1$ at 280 675
       \pinlabel $O_2$ at 85 675   
       \pinlabel $z^+$ at 220 590 
        \pinlabel $\rotatebox{54}{$\to$}$ at 114 721
             \endlabellist
       \includegraphics[scale=.55]{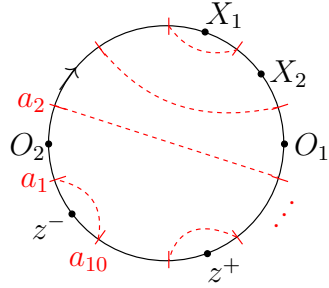} 
       \vskip .2 cm
       \caption[A marked matched circle.]{\textbf{A marked matched circle.} Here $n = 4$. The matching on ${\bf a}$ is illustrated schematically with dotted lines.}\label{fig:mmc}
\end{figure}

Given a marked matched circle $\zz = (Z, {\bf a}, \mu, \XX, \OO, \zzz)$, the marked matched circle $\zz^{\ast}$ is given by $(Z', {\bf a}', \mu', \XX', \OO', \zzz')$, where there is an orientation-reversing homeomorphism $f:Z\to Z'$ such that 
\begin{itemize}
\item[-] $f({\bf a}) =  {\bf a}'$ and $\mu = \mu'\circ f$
\item[-] $f(z^+) = (z')^-$ and $f(z^-) = (z')^+$
\item[-] $f(\XX) = \OO'$ and $f(\OO) = \XX'$
\end{itemize}
 In other words, $\zz^{\ast}$ is obtained from $\zz$ by taking the mirror, swapping $\XX$ and $\OO$ and swapping $z^+$ and $z^-$.  We will soon study Heegaard diagrams whose boundaries are marked matched circles, and gluing two diagrams along boundary components $\zz_1$ and $\zz_2$ will be allowed exactly when $\zz_1 = \zz_2^{\ast}$.

A marked sphere $\mathcal S = (S^2, t_1, \ldots, t_n)$ is represented by the following marked matched circle. 

\begin{definition}\label{s_def}
The \emph{marked matched circle $\zz(\mathcal S)$ associated to $\mathcal S$} is given by the sextuple $(Z, {\bf a}, \mu, \XX, \OO, \zzz)$ with  ${\bf a} =\{a_1, \ldots, a_{2n+2}\}$ and matching $\mu(a_i) = i =  \mu(a_{2n+3 - i})$ for $1\leq i\leq 2n+1$. The set $\XX$ consists of one point in each interval $(a_i, a_{i+1})$ on the circle $Z$, whenever $t_i$ has positive orientation, and the set $\OO$ consists of one point in each interval $(a_i, a_{i+1})$ on the circle $Z$, whenever $t_i$ has negative orientation, for $1\leq i \leq n$. The point $z^-$ is in the interval between $a_{2n+2}$ and $a_1$, and $z^+$ is in the interval $(a_{n+1}, a_{n+2})$. 
\end{definition}

See, for example, Figure \ref{fig:cmmc}.

\begin{figure}[h]
 \centering
  \labellist
       \pinlabel $\textcolor{red}{a_1}$ at 120 -10
       \pinlabel $\textcolor{red}{a_2}$ at 165 30
       \pinlabel $\textcolor{red}{a_3}$ at 185 85
       \pinlabel $\textcolor{red}{a_4}$ at 170 135
       \pinlabel $\textcolor{red}{a_5}$ at 120 172
       \pinlabel $\textcolor{red}{a_{10}}$ at 50 -10
       \pinlabel $\rotatebox{55}{\textcolor{red}{\ldots}}$ at 10 140
       \pinlabel $O_1$ at 150 10
       \pinlabel $O_2$ at 177 55
       \pinlabel $z^-$ at 89 -10     
       \pinlabel $X_1$ at 180 110
       \pinlabel $X_2$ at 150 155   
       \pinlabel $z^+$ at 89 180 
        \pinlabel $\rotatebox{246}{$\to$}$ at 9 115
        \pinlabel $\textcolor{red}{a_1}$ at 445 -10
        \pinlabel $\textcolor{red}{a_2}$ at 490 30
       \pinlabel $\textcolor{red}{a_3}$ at 510 85
       \pinlabel $\textcolor{red}{a_4}$ at 495 135
       \pinlabel $\textcolor{red}{a_5}$ at 445 172
       \pinlabel $\textcolor{red}{a_{10}}$ at 375 -10
       \pinlabel $\rotatebox{55}{\textcolor{red}{\ldots}}$ at 335 140
       \pinlabel $X_1$ at 475 10
       \pinlabel $X_2$ at 502 55
       \pinlabel $z^+$ at 414 -10     
       \pinlabel $O_1$ at 505 110
       \pinlabel $O_2$ at 475 155   
       \pinlabel $z^-$ at 414 180 
        \pinlabel $\rotatebox{68}{$\to$}$ at 335 113
             \endlabellist
       \includegraphics[scale=.55]{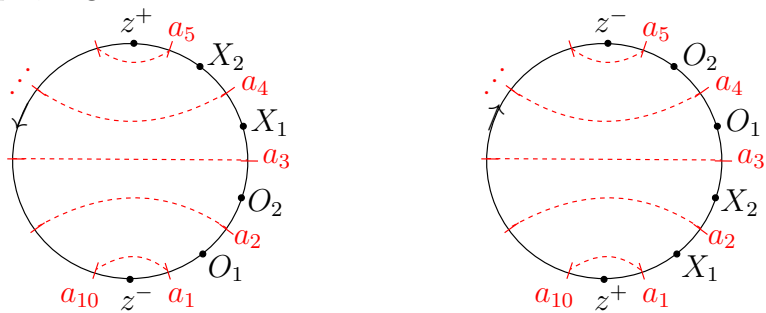} 
       \vskip .2 cm
       \caption[Examples of marked matched circles.]{\textbf{Examples of marked matched circles.} Left: the marked matched circle $\zz(\mathcal S)$ associated to $\mathcal S = (S^2, -, -, +, +)$. Right: the marked matched circle $\zz(\mathcal S)^{\ast}$.}\label{fig:cmmc}
\end{figure}

We can recover the sphere $\mathcal S$ from $\zz(\mathcal S)$  in the following way. We take a disk with boundary $Z$, attach $2$-dimensional $1$-handles along the matched pairs in ${\bf a}$, and fill the resulting $2n+2$ boundary components with $2$-handles. We take $\{t_1, \ldots, t_{n}\}$ to be the cores of the $2$-handles that do not intersect $(a_{2n+2},a_1)$  and $(a_{n+1}, a_{n+2})$, and we  orient $t_i$ positively if the attaching circle for the corresponding $2$-handle contains an $\XX$ marking, and negatively if the attaching circle contains an $\OO$ marking. This is the dual handle decomposition to the one described at the beginning of this section.

\subsection{The algebra associated to a marked matched circle}\label{sec:reeb_alg}

Given a marked matched circle, we define an algebra similar to the algebras from  \cite{bfh2} and \cite{bs}.  For marked matched circles associated to marked spheres, these algebras are precisely the ones from Section \ref{ssec:typeA-strand}. The reason we give another description is that the  interpretation in this section fits better with the geometric setup in the forthcoming sections.
Below, we use the same  notation as \cite[Chapter 3]{bfh2} for our analogous structures, and caution the reader to remember that our matched circles are different from the ones in \cite{bfh2}.

\begin{definition}\label{alg}
The \emph{strands algebra} $\cala(n,k, t)$ is a free $\F_2$-module generated by partial permutations $a = (S, T, \phi)$, where $S$ and $T$ are $k$-element subsets of the set $[2n+2]\coloneqq \{1, \ldots, 2n+2\}$ and $\phi:S\to T$ is a non-decreasing bijection such that  $\phi(i)\leq t$ if and only if $i\leq t$.
Let $\Inv(\phi)$ be the set of inversions of $\phi$,  i.e. the number of pairs $i,j\in S$ with $i<j$ and $\phi(j)<\phi(i)$, and $\inv(\phi) =\# \Inv(\phi)$. 
Multiplication on $\cala(n,k, t)$ is given by

\begin{displaymath}
(S, T, \phi)\cdot(U, V, \psi) = \left\{ \begin{array}{ll}
(S, V, \psi\circ\phi) & \textrm{if $T=U$, $\inv(\phi)+ \inv(\psi) = \inv(\psi\circ\phi)$}\\
0 & \textrm{otherwise.}
\end{array} \right.
\end{displaymath}

For an inversion $c = (i,j)$ of $\phi$, define $\phi_c$ by $\phi_c(i) = \phi(j)$, $\phi_c(j) = \phi(i)$, and $\phi_c(l) = \phi(l)$ for $l\neq i,j$. The differential on $\cala(n,k, t)$ is given by 
$$\bdy(S, T, \phi) = \sum_{\substack{c\in \Inv(\phi)\\ \inv(\phi_{c}) = \inv\phi - 1}} (S, T, \phi_c).$$
\end{definition}

Compare with  \cite[Section 3.1.1]{bfh2}.
We can represent a generator $(S, T, \phi)$ by a strands diagram of horizontal and upward-veering strands. Compare with  \cite[Section 3.1.2]{bfh2}.
 In this notation, the product becomes concatenation, where double crossings are set to zero. The differential corresponds to resolving crossings, subject to the same double crossing rule.

The ring of idempotents $\mathcal I(n,k, t)\subset \cala(n,k, t)$ is generated by all elements of the form $I(S) \coloneqq (S, S, \textrm{id}_S)$ where $S$ is a $k$-element subset of $[2n+2]$.

Fix a marked matched circle $\zz=(Z, {\bf a}, \mu, \XX, \OO, \zzz)$  with $|{\bf a}| = 2n+2$. Recall that one of the points in $\zzz$ is on the interval $(a_{2n+2}, a_1)$, and let $t$ be the number for which the other point in $\zzz$ is on the interval $(a_t, a_{t+1})$.

If we forget the matching on the circle for a moment, we can view $\cala(n, t) = \bigoplus_{i}\cala(n, i, t)$ as the algebra generated by certain sets of Reeb chords in $(Z\setminus \zzz, {\bf a})$: We can view a set $\brho$ of Reeb chords, no two of which share initial or final endpoints, as a strands diagram of upward-veering strands. For such a set $\brho$, we define the \emph{strands algebra element associated to $\brho$} to be the sum of all ways of consistently adding horizontal strands to the diagram for $\brho$, and we denote this element by $a_0(\brho)\in \cala(n,t)$. The basis over $\F_2$ from Definition \ref{alg} is in this terminology the non-zero elements of the form $I(S)a_0(\brho)$, where $S\subset \bf a$.

For a subset ${\bf s}$ of $[n+1]$, a \emph{section} of ${\bf s}$ is a set  $S\subset \mu^{-1}({\bf s})$, such that $\mu$ maps $S$ bijectively to ${\bf s}$. To each ${\bf s}\subset [n+1]$ we associate an idempotent in $\cala (n,t)$ given by
$$I({\bf s}) = \sum_{S \textrm{ is a section of } {\bf s}} I(S).$$
Let $\mathcal I(\zz)$ be the subalgebra generated by all $I({\bf s})$, and let ${\bf I} = \sum_{\bf s}I({\bf s})$.
\begin{definition}
The \emph{algebra $\cala(\zz)$} is the subalgebra of $\cala(n,t)$ generated (as an algebra) by $\mathcal I(\zz)$ and by all $a({\brho}) \coloneqq{\bf I}a_0(\brho)\bf{ I}$. We refer to $a(\brho)$ as the \emph{algebra element associated to $\brho$}.
\end{definition}

Note that this definition, which is what we use for the ``tilde" version of our invariants, does not take into account the $\XX$ and $\OO$ labels on $\zz$. 

The non-zero elements $I({\bf s})a(\brho)$ form a basis for $\Aa(\zz)$ over $\F_2$. Note that for a non-zero generator $I(\sss)a(\brho)$, there is a unique primitive idempotent $I(\ttt)$ such that $I(\sss)a(\brho) = I(\sss)a(\brho)I(\ttt)$. We can represent a generator $I({\bf s})a(\brho)$ by a strands diagram by adding dashed horizontal strands to the strands diagram for $\brho$, one for each horizontal strand that appears in the expansion of $I({\bf s})a(\brho)$ as a sum of elements of $\Aa(n,t)$.

As a special case,  let $\zz(\mathcal S)=(Z, {\bf a}, \mu, \XX, \OO, \zzz)$ be a marked matched circle for a marked sphere $\mathcal S$, with $|{\bf a}| = 2n+2$. Recall the definition of a shadow (Definition \ref{def:shadow}), and let $\E$ be the idempotent shadow corresponding to the interval of $\zz(\mathcal S)$ containing $a_1, \ldots, a_{n+1}$, i.e.  $(n+1, n+1, \id_{\SX},\id_{\SO})$ where 
$\SO = \{s+\frac{1}{2}| \textrm{ there is an $X$ between $a_s$ and $a_{s+1}$}\}$
and 
  $\SX = \{1\frac 1 2, \ldots, n\frac 1 2\}\setminus \SO$.
   Recall the definition of the algebra $\Aa(\E)$ from Section  \ref{ssec:typeA-strand}. Let $\widehat{\Aa}(\E)\coloneqq \Aa(\E)/(U_i=0)$ be the algebra obtained from $\Aa(\E)$  after setting all $U_i$ to zero. 

\begin{proposition}
For $\E$ and $\zz(\mathcal S)$ as above, the algebras $\widehat\Aa(\E)$ and $\Aa(\zz(\mathcal S))$ are isomorphic.
\end{proposition}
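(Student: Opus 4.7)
The plan is to exhibit an explicit bijection $\Phi$ between the $\F_2$-bases of $\widehat{\Aa}(\E)$ and $\Aa(\zz(\mathcal S))$, then to check that $\Phi$ respects multiplication and the differential. A basis element of $\widehat{\Aa}(\E)$ is a triple $(S,T,\phi)$ with $S,T\subset[n+1]$ and $\phi:S\to T$ a bijection; a basis element of $\Aa(\zz(\mathcal S))$ is a nonzero product $I(\sss)\,a(\brho)\,I(\ttt)$ with $\sss,\ttt\subset[n+1]$ and $\brho$ a consistent set of Reeb chords in $(Z\setminus\zzz,\mathbf a)$. The key geometric observation is that the concentric matching $\mu(a_i)=\mu(a_{2n+3-i})=i$ of $\zz(\mathcal S)$ identifies the lower arc $[a_1,a_{n+1}]$ with $[n+1]$ order-preservingly, and the upper arc $[a_{n+2},a_{2n+2}]$ with $[n+1]$ order-reversingly.

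First I would construct $\Phi$ on generators. Given $(S,T,\phi)\in\gen(\E)$, set $\sss=S$ and $\ttt=T$, and assemble $\brho_\phi$ as follows: for each $s\in S$ with $s<\phi(s)$ take the lower-arc Reeb chord from $a_s$ to $a_{\phi(s)}$; for each $s\in S$ with $s>\phi(s)$ take the upper-arc Reeb chord from $a_{2n+3-s}$ to $a_{2n+3-\phi(s)}$; fixed points $\phi(s)=s$ contribute horizontal strands coming from the idempotent expansion $I(\sss)=\sum_{S'}I(S')$. Set $\Phi(S,T,\phi):=I(\sss)\cdot a(\brho_\phi)\cdot I(\ttt)$. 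Since the section of $\sss$ compatible with $a(\brho_\phi)$ is uniquely forced at each chord endpoint by the arc carrying the chord (with fixed points giving horizontal strands on either arc), $\Phi(S,T,\phi)$ is a single nonzero basis element of $\Aa(\zz(\mathcal S))$. Running the construction in reverse reads any basis element $I(\sss)a(\brho)I(\ttt)$ as a bijection $\phi:\sss\to\ttt$ via the same dictionary, so $\Phi$ is an $\F_2$-linear bijection of the underlying vector spaces.

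Next I would match the algebraic structures. Under $\Phi$ the combinatorics of strand diagrams underlying $\widehat{\Aa}(\E)$ translates, via the matching, into the combinatorics of Reeb chord diagrams underlying $\Aa(\zz(\mathcal S))$, with crossings on either side corresponding to inversions of $\phi$. The multiplication $(S,T,\phi)\cdot(T,V,\psi)=(S,V,\psi\circ\phi)$ in $\Aa(\E)$ is nonzero precisely when $\inv(\phi)+\inv(\psi)=\inv(\psi\circ\phi)$, which is the no-double-crossings rule for concatenating the two strand diagrams; under $\Phi$ this matches the no-double-crossings rule for concatenating the corresponding Reeb chord diagrams, and any double crossing occurring at an $X$- or $O$-marking of $\zz(\mathcal S)$ would contribute a $U_O$-factor that is killed by the tilde quotient on the $\E$-side. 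Similarly, the differential on $\Aa(\E)$ smooths one crossing at a time subject to not changing the intersection count with the $\xi$-strands, which matches the smoothing rule for Reeb chord diagrams inherited from $\Aa(n,n+1)$. Hence $\Phi$ would be an isomorphism of differential algebras.

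The hard part will be the careful geometric bookkeeping of crossings between upper-arc and lower-arc chords: one must verify that a pair $s<s'$ in $S$ with one ascending (lower-arc) and one descending (upper-arc) strand realizes as exactly one crossing of the corresponding Reeb diagram under the orientation-reversing identification of the upper arc, and similarly for counting crossings within each arc and between moving and horizontal strands. Once this geometric dictionary is confirmed, the verifications for multiplication and for the differential are essentially tautological, since both sides are then defined by the same resolution combinatorics on strand diagrams.
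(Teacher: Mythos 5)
Your proposal is correct and follows essentially the same route as the paper: the same generator dictionary (lower-arc chords for ascending strands, upper-arc chords via $i\mapsto 2n+3-i$ for descending ones, horizontal strands from the idempotent expansion for fixed points), followed by matching the concatenation and resolution combinatorics. One small imprecision: the vanishing of an ``up-then-down'' concatenation on the $\Aa(\zz(\mathcal S))$ side comes from a section/idempotent mismatch at the matched pair rather than from a double crossing at a marking, while on the $\widehat\Aa(\E)$ side it comes from the orange line at every half-integer height (with green lines behaving identically to orange ones once $U_O=0$) --- but this does not affect the validity of the argument.
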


\begin{proof}
As long as we do not need to keep track of the bigrading, we can think of  $\widehat\Aa(\E)$ simply  as the algebra $\Aa(\widehat \E)$ for the shadow $\widehat\E=(n+1, n+1, \id_{\SX},\id_{\SO})$  where $\SX = \{1\frac 1 2, \ldots, n\frac 1 2\}$ and $\SO = \emptyset$.

We first outline the correspondence of generators. Suppose $\stphi$ is a generator for $\widehat\Aa(\E)$. The corresponding element $I({\bf s})a(\brho)\in \Aa(\zz(\mathcal S))$ has starting idempotent ${\bf s} = S$ and the following set of Reeb chords $\brho$: the Reeb chord from $i$ to $\phi(i)$ if $\phi(i)>i$, and the Reeb chord from $2n+3-i$ to $2n+3 - \phi(i)$  if $\phi(i)<i$. 

\begin{figure}[h]
 \centering
       \includegraphics[scale=.9]{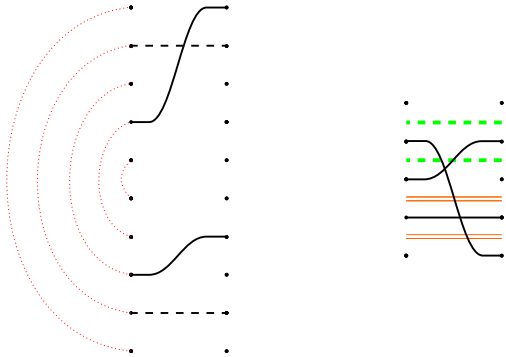} 
       \vskip .2 cm
       \caption[Example of corresponding generators of $\az$ and $\Aa(\E)$.]{\textbf{Example of corresponding generators of $\az$ and $\Aa(\E)$.} Left: a generator of $\az$, where $\zz$ is the circle in Figure \ref{fig:cmmc}. Right: the corresponding generator of $\Aa(\E)$, for the idempotent shadow $\E$ associated to $\zz$.}\label{fig:alg_reeb}
\end{figure}

Note that since there is a double (orange) line at every half-integer height in the diagram of $\widehat\E$,  the concatenation of  two strand diagrams is automatically zero whenever an upward-veering and a downward-veering strand are concatenated. 
Thus,  the concatenation of two strand diagrams in $\widehat\Aa(\E)$ is nonzero exactly when it is nonzero for the corresponding generators in $\Aa(\zz(\mathcal S))$.

The differential of $\widehat\Aa(\E)$ is obtained by summing over all the ways of resolving a crossing, where resulting double crossings are set to zero. Again having a double line at every half-integer height means that resolving crossings between an upward-veering strand and a downward-veering strand is no longer allowed. The allowed resolutions are only those of crossings between two upward-veering strands, two downward-veering strands, an upward-veering and a horizontal strand, or a downward-veering and a horizontal strand.  The first two kinds correspond to resolving a crossing between two Reeb chords in the lower half or upper half of a strand diagram, respectively, and the other two kinds correspond to resolving a crossing between a Reeb chord in the lower half, respectively upper half, of a strand diagram and  a horizontal strand in a section of $\sss$.
\end{proof}

\section{Heegaard diagrams} 
\label{sec:hdiagrams}

We represent tangles by a type of Heegaard diagrams, which we call \emph{multipointed bordered Heegaard diagrams for tangles}, or just tangle Heegaard diagrams. In a sense, our work in this section is a variation of the bordered Heegaard diagrams from \cite{bfh2} and \cite{bimod}, and many of the statements we make and their proofs are analogous to the ones in \cite{bfh2} and \cite{bimod}. We have tried to provide detailed references, and we also encourage the reader to compare our subsections with the corresponding ones in \cite[Chapter 4]{bfh2} and \cite[Chapter 5]{bimod}.

\subsection{$3$-manifolds with one boundary component} \label{ssec:hd1}

\begin{definition}
A \emph{type $1$ multipointed bordered Heegaard diagram for a tangle}, or simply a type $1$ tangle Heegaard diagram, is a sextuple $\HH = (\Sigma, \balpha, \bbeta, \XX, \OO, \zzz)$ where
\begin{itemize}
\item[-] $\Sigma$ is a compact surface of genus $g$ with one boundary component
\item[-] $\balpha = \{\alpha_1^a,\ldots, \alpha_{2n+1}^a, \alpha_1^c, \ldots, \alpha_{t}^c\}$ is a set of pairwise disjoint, embedded curves: $2n+1$  arcs, each  with boundary on $\bdy \Sigma$, and $t$ closed curves in the interior of $\Sigma$
\item[-] $\bbeta$ is a set of $t+n$ pairwise disjoint curves embedded in the interior of $\Sigma$
\item[-] $\XX$ and $\OO$ are two $(t+2n-g)$-tuples of points in  $\Sigma\setminus(\balpha\cup\bbeta)$
\item[-] $\zzz = \{z^-, z^+\}$ is a set of two oppositely oriented points on $\bdy\Sigma \setminus \balpha$ 

\end{itemize}
subject to the conditions
\begin{itemize}
\item[-] $\bbeta$ span a $g$-dimensional subspace of $H_1(\Sigma; \Z)$
\item[-] $\{\alpha_1^c, \ldots, \alpha_t^c\}$ span a $g$-dimensional subspace of  $H_1(\Sigma; \Z)$, and along with the arcs, $\balpha$ span a $g+1$-dimensional subspace of $H_1(\Sigma, \bdy \Sigma; \Z)$
\item[-] $\{\alpha_1^a,\ldots, \alpha_{2n+1}^a\}$ induce a concentric matching on $\bdy\Sigma$. Specifically, they are labeled so that we can order the points on $\bdy\balpha$ according to the orientation of $\bdy \Sigma$ as  $a_1, \ldots, a_{4n+2}$ so that $\bdy\alpha^a_i = \{a_i, a_{4n+3-i}\}$.
\item[-] $z^-$ lies in the interior of the segment with boundary $a_{4n+2}$ and $a_1$ of $\bdy\Sigma\setminus \balpha$, and $z^+$ lies on the segment with boundary  $a_{2n+1}$ and $a_{2n+2}$.
\item[-] Each of the $t-g$ components of $\Sigma\setminus \balpha$ that do not meet $\bdy\Sigma$
 contains one $X\in\XX$ and one $O\in\OO$, and each of the $2n$ components of $\Sigma\setminus \balpha$ that contain two segments of $\bdy\Sigma\setminus \balpha$ contains either an $X$ in the interior and an $O$ on the segment of $\bdy\Sigma\setminus \balpha$ with the lower indexed endpoints, or an $O$ in the interior and an $X$ on the segment of $\bdy\Sigma\setminus \balpha$ with the lower indexed endpoints. 
\item[-] Each of the $t+n-g$ components of $\Sigma\setminus \bbeta$ that do not meet $\bdy\Sigma$ contains exactly one $X$ and one $O$. The unique component of $\Sigma\setminus\bbeta$ that meets  $\bdy\Sigma$ contains $n$ $X$s and $n$ $O$s on $\bdy \Sigma$.
\end{itemize}
\end{definition}

Figure \ref{fig:hd1} is an example of a type $1$  Heegaard diagram for a tangle.

\begin{figure}[h]
 \centering
  \labellist
       \pinlabel $\textcolor{red}{a_1}$ at 266 48
       \pinlabel $\textcolor{red}{a_2}$ at 262 74
       \pinlabel $\textcolor{red}{a_3}$ at 260 100
       \pinlabel $\textcolor{red}{a_4}$ at 330 130
       \pinlabel $\textcolor{red}{a_5}$ at 336 106
       \pinlabel $\textcolor{red}{a_6}$ at 338 80  
       \pinlabel ${z^-}$ at 310 6
       \pinlabel ${z^+}$ at 310 184
     \endlabellist
       \includegraphics[scale=.6]{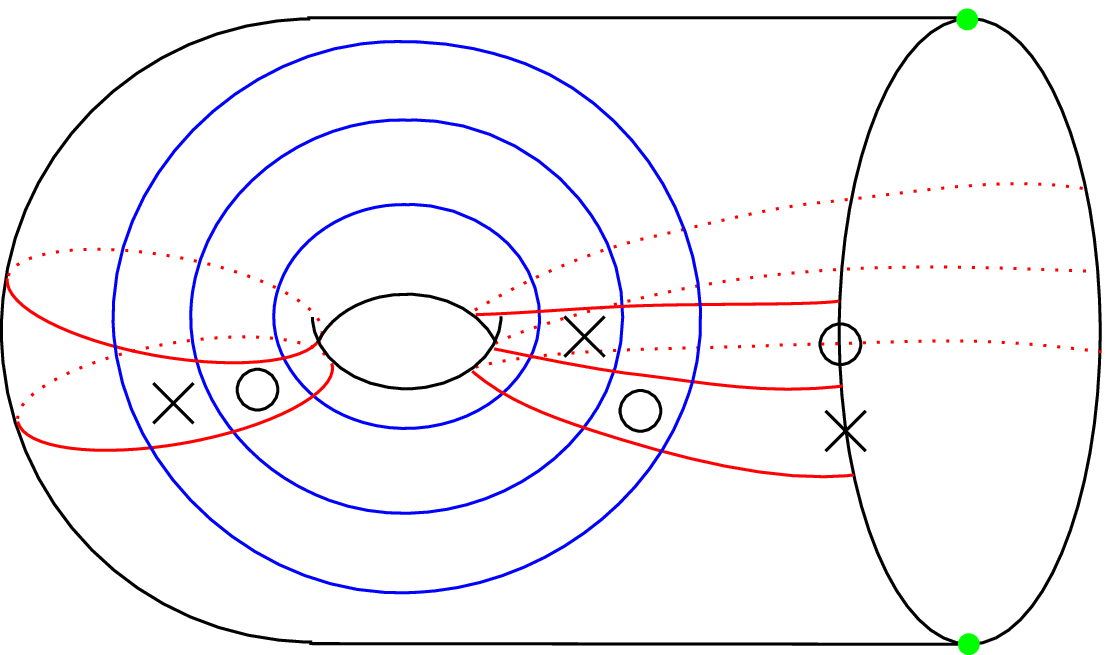} 
       \vskip .2 cm
       \caption[A type $1$ tangle Heegaard diagram.]{\textbf{A type $1$ tangle Heegaard diagram.}}\label{fig:hd1}
\end{figure}

A type $1$ tangle Heegaard diagram gives rise to a pair $(Y, \T)$, where $Y$ is a $3$-manifold with $\bdy Y\cong S^2$, and $\T$ is  marked $2n$-tangle in $Y$. We outline the topological construction below. 

Let $\mathcal S$ be the marked sphere associated to $(Y, \T)$. Note that  $\bdy \HH\cong \zz(\mathcal S)$, so we begin by building $\mathcal S$ from $\zz(\mathcal S)$. Next, let $[-\epsilon, 0]\times Z$ be a collar neighborhood of $\bdy \Sigma$, so that $\{0\}\times Z$ is identified with   $\bdy \Sigma$. Choose a neighborhood $Z\times [1,2]$ of $Z$ in $\mathcal S$, so that $Z\times \{2\}$ is in the interior of the $0$-handle from the decomposition described right after Definition \ref{s_def}. Glue $\Sigma\times [1,2]$ to $[-\epsilon, 0]\times \mathcal S$ so that the respective submanifolds $([-\epsilon, 0]\times Z)\times [1,2]$ and  $[-\epsilon, 0]\times (Z\times [1,2])$ are identified. Call the resulting 3-manifold $Y_0$.

Now attach a $3$-dimensional $2$-handle to each $\beta_i\times \{2\} \subset \bdy Y_0$ and to each $\alpha_i^c\times \{1\}\subset \bdy Y_0$ to obtain a manifold $Y_1$.
Next, join each $\alpha_i^a\times \{1\}$ to the core of the corresponding handle in $\{-\epsilon\} \times \mathcal S$ along their boundary to form a circle, and attach a $2$-handle to each such circle.   The resulting manifold, call it $Y_2$,
has the following boundary components:
\begin{itemize}
\item[-] $t+n-g$ spheres which meet $\Sigma\times \{2\}$ but do not meet $\{-\epsilon\} \times  \mathcal S$
\item[-] a sphere which meets both $\Sigma\times \{2\}$ and $\{-\epsilon\} \times  \mathcal S$
\item[-]  $t-g$ spheres which meet $\Sigma\times \{1\}$ but do not meet $\{-\epsilon\} \times  \mathcal S$
\item[-]  $2n$ spheres which meet both $\Sigma\times \{1\}$ and $\{-\epsilon\} \times  \mathcal S$ but do not meet  $\{-\epsilon\}\times \zzz \subset \{-\epsilon\} \times  \mathcal S$
\item[-] a sphere which meets both $\Sigma\times \{1\}$ and $(-\epsilon,z^-)\in \{-\epsilon\} \times  \mathcal S$, and a sphere which meets both $\Sigma\times \{1\}$ and $(-\epsilon,z^+)\in \{-\epsilon\} \times  \mathcal S$
\item[-]  The sphere $\{0\}\times  \mathcal S\subset  [-\epsilon, 0]\times  \mathcal S$
\end{itemize}
Glue $3$-balls to all but the last sphere. Call the result $Y$. 

Last, we construct a tangle $\T\subset Y$. Draw arcs from the $X$s to the $O$s in $(\Sigma\setminus \bbeta)\times \{3/2\}$, and push the interiors of the arcs into $(\Sigma\setminus \bbeta)\times (3/2,2]$.
Draw arcs from $O$s to $X$s in $(\Sigma\setminus \balpha)\times \{3/2\}$.
 The union of all arcs is an oriented, marked $2n$-tangle, where the marking, i.e. the ordering on $\bdy \T\subset \bdy Y$ comes from the order in which those $X$s and $O$s  that are on $\bdy \Sigma$ appear along $(a_1, a_{2n})\subset Z\times \{3/2\}\subset \mathcal S$. Observe that drawing an arc from $z^-$ to $z^+$ in  $(\Sigma\setminus \bbeta)\times \{3/2\}$  produces a $1$-component tangle which is unlinked from $\T$, and, together with an arc in the $3$-handle that was glued to  the sphere which meets both $\Sigma\times \{2\}$ and $\{-\epsilon\} \times  \mathcal S$, it bounds a disk away from $\T$ that lies entirely in that $3$-handle.
See, for example, Figure \ref{fig:hd1_top}. 
\begin{figure}[h]
 \centering
  \labellist
       \pinlabel $Y$ at 180 360
       \pinlabel $\bdy Y\cong S^2$ at 280 360
        \pinlabel $\T$ at 160 143
     \endlabellist
       \includegraphics[scale=.6]{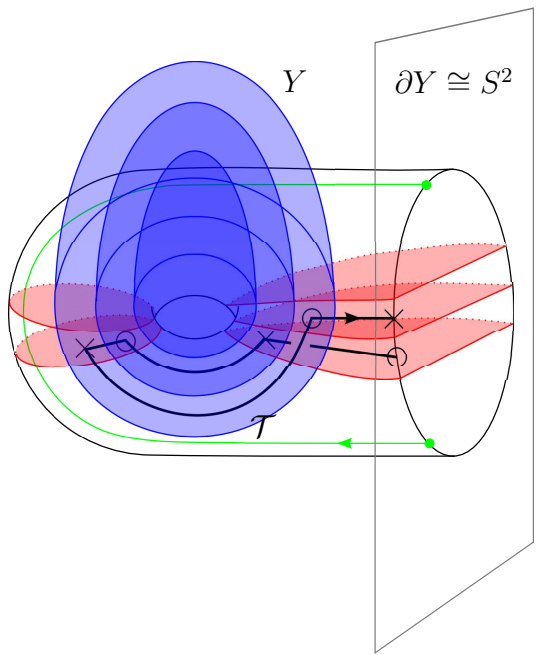} 
       \vskip .2 cm
       \caption[Building a tangle $(Y, \T)$ from a Heegaard diagram.]{\textbf{Building a tangle $(Y, \T)$ from a Heegaard diagram.}}\label{fig:hd1_top}
\end{figure}

\begin{definition}\label{morse_compat_s}
Given a marked sphere $\mathcal S = (S^2, t_1, \ldots, t_{n})$, we say that a Morse function $f$ on $S^2$ (with an implicit choice of a Riemannian metric $g$) is \emph{compatible with $\mathcal S$} if 
\begin{enumerate}
\item $t_1, \ldots, t_{n}$ are index $0$ critical points of $f$
\item $f$ has $n+2$ index $0$ critical points in total, $t_0, t_1, \ldots, t_{n}, t_{n+1}$ 
\item $f$ has $n+1$ index $1$ critical points $p_1, \ldots, p_{n+1}$, with $p_i$ flowing down to $t_{i-1}$ and $t_i$
\item $f$ has a unique index $2$ critical point
\end{enumerate}
\end{definition}

\begin{definition}\label{morse_compat}
Given a tangle $(Y, \T)$, we say that a self-indexing Morse function $f$ on $Y$ (with an implicit choice of a Riemannian metric $g$) is \emph{compatible with $(Y, \T)$} if
\begin{enumerate}
\item $\bdy Y$ is totally geodesic, $\nabla f$ is parallel to $\bdy Y$, $f|_{\bdy Y}$ is a Morse function compatible with $\mathcal S$, and $f|_T$ is a Morse function, where $T\subset Y$ is the underlying $1$ manifold for the marked tangle $\T$.
\item The index $1$ critical points for $\bdy Y$ are also index $1$ critical points for $Y$.
\item The  index $0$ critical points for $T$, along with the two additional index $0$ critical points for $\bdy Y$, are precisely the index $0$ critical points for $Y$.
\item The  index $1$ critical points for $T$, along with the index $2$ critical point for $\bdy Y$, are precisely the index $3$ critical points for $Y$.
\end{enumerate}
\end{definition}

\begin{proposition}
Every pair $(Y, \T)$ has a type $1$ Heegaard diagram.
\end{proposition}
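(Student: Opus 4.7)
The plan is to reverse the topological construction that precedes the proposition. Namely, I will first produce a self-indexing Morse function on $Y$ that is compatible with $(Y,\T)$ in the sense of Definition \ref{morse_compat}, and then read off a type $1$ tangle Heegaard diagram from its critical points and gradient flow lines.

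First I would construct the compatible Morse function in stages. Start on $\bdy Y \cong S^2$: choose a Morse function $f_\bdy$ compatible with $\mathcal S$ as in Definition \ref{morse_compat_s}, which exists because one may place the $n$ index $0$ critical points at $t_1,\dots,t_n$, place two auxiliary index $0$ critical points $t_0,t_{n+1}$ away from $\T$, arrange $n+1$ saddles $p_i$ flowing down to consecutive $t_{i-1},t_i$, and cap off with a single index $2$ critical point. Extend $f_\bdy$ to a collar $[-\epsilon,0]\times\bdy Y$ by $(s,x)\mapsto f_\bdy(x)+s$, so $\nabla f$ is parallel to $\bdy Y$ and $\bdy Y$ is totally geodesic. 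On the rest of $Y$, begin with an arbitrary self-indexing Morse function that extends the collar construction, and then modify it so that $f|_T$ is Morse, the index $0$ and index $1$ critical points of $T$ are respectively index $0$ and index $3$ critical points of $Y$ (by tubing and handle cancellations along $T$, and by adjusting the auxiliary index $0$/index $3$ pairs of $Y$ to match the required count $t-g$ interior handles plus the two boundary ones), and the index $1$ critical points of $\bdy Y$ remain index $1$ critical points of $Y$. The four conditions of Definition \ref{morse_compat} can all be achieved by the usual handle-cancellation and rearrangement techniques for Morse functions on manifolds with boundary, applied separately to the tangle $T$, to the boundary $\bdy Y$, and to the complement.

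Next, given such an $f$, I would define the Heegaard data. Let $\Sigma = f^{-1}(3/2)$; this is a compact surface with one boundary component, namely $f_\bdy^{-1}(3/2) \subset \bdy Y$, a circle $Z$. The boundary points $\bdy\balpha$ are the points where the descending gradient flow lines from the saddles $p_1,\dots,p_{n+1}$ of $\bdy Y$ meet $Z$. The arcs $\alpha_1^a,\dots,\alpha_{2n+1}^a$ are the intersections with $\Sigma$ of the descending manifolds of the index $1$ critical points of $Y$ that lie on $\bdy Y$, producing an arc between each such pair of boundary points and inducing precisely the concentric matching required. The closed $\alpha$ circles $\alpha_1^c,\dots,\alpha_t^c$ are the intersections with $\Sigma$ of the descending manifolds of the interior index $1$ critical points. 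Similarly, the $\beta$ circles are the intersections with $\Sigma$ of the ascending manifolds of the index $2$ critical points. The basepoints $\XX$ and $\OO$ on $\Sigma$ are obtained as follows: put $\zzz = \{z^-, z^+\}$ at the intersections of $Z$ with the descending flows from the index $2$ critical point of $\bdy Y$; put $\XX\cup\OO$ (interior points) at the intersections of $\Sigma$ with the flow lines of $T$, with labels $X$ versus $O$ dictated by the orientation of $T$; and finally place the boundary $X$s and $O$s on $Z$ at the intersections of $Z$ with descending flows from the index $0$ critical points of $\bdy Y$ (i.e.\ the $t_i$), matching signs to orientations.

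Finally I would verify all the bulleted conditions in the definition of a type $1$ tangle Heegaard diagram. The count of $\alpha$-arcs, $\alpha$-circles, and $\beta$-circles follows from the count of critical points of the various indices, controlled by Definition \ref{morse_compat}. The spanning conditions in $H_1$ are standard consequences of handle decompositions from a self-indexing Morse function. The placement of $\XX$ and $\OO$ in the complement of $\balpha$ and $\bbeta$ follows because each component of $\Sigma\setminus\bbeta$ corresponds to a connected component of the descending manifold of an index $3$ critical point (plus the boundary component), and each component of $\Sigma\setminus\balpha$ corresponds to an ascending manifold of an index $0$ critical point, so the $X$s and $O$s of $T$ land one per region, while the boundary $X$s and $O$s distribute according to the combinatorics set up by $f_\bdy$. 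The reverse of the collar/handle construction described before Definition \ref{morse_compat_s} then recovers $(Y,\T)$ from $\HH$, confirming compatibility.

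The genuinely delicate step is the first: building a single Morse function that is simultaneously Morse on $\bdy Y$, on $T$, and on $Y$, and for which the correspondence of index-$0$ critical points of $T$ with index-$0$ critical points of $Y$ (and index-$1$ of $T$ with index-$3$ of $Y$) holds exactly. The freedom in choosing a Morse function, together with stabilization (introducing cancelling pairs of interior index $0$/$1$ or $2$/$3$ critical points to accommodate any arrangement) and standard handle-cancellation arguments, makes this achievable, but it is the step where the bookkeeping demanded by Definition \ref{morse_compat} genuinely needs to be checked.
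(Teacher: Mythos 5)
Your proposal is correct and follows essentially the same route as the paper: build a self-indexing Morse function compatible with $(Y,\T)$ in the sense of Definition \ref{morse_compat} and read the diagram off the level set $f^{-1}(3/2)$, with the descending/ascending manifolds giving $\balpha$ and $\bbeta$ and the intersections of $T$ with $\Sigma$ giving $\XX$ and $\OO$. The only (cosmetic) difference is the order of construction --- the paper starts from a Morse function on $T$, extends to $T\cup\bdy Y$ and a neighborhood so that the compatibility conditions hold by construction, and then cancels the superfluous interior index $0$/$3$ critical points using connectivity of $Y\setminus\nu(T\cup\bdy Y)$, which is exactly the ``delicate step'' you flag and handle with the same cancellation argument.
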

\begin{proof}
We describe a compatible Morse function. Choose a Morse function $f'$ and metric $g'$ on $T$ which takes value $0$ on $\bdy T$ and is self-indexing except that it takes value $3$ on the index $1$ critical points. Extend to a pair $(f'', g'')$ on $T\cup \bdy Y$, so that $f''$ is also self-indexing on $\bdy Y$, except that it takes value $3$ on index $2$ critical points of $\bdy Y$, and is compatible with $\mathcal S$. Extend $f''$ and $g''$ to $f$ and $g$ on a neighborhood of $T\cup \bdy Y$ satisfying the conditions of Definition \ref{morse_compat}, and extend $f$ and $g$ arbitrarily to a Morse function and  metric on the rest of $Y$.  

Since $Y$ is connected, the graph formed by flows between the index $0$ and index $1$ critical points is connected. In fact, since the flows from the index $1$ critical points on $\bdy Y$  remain on $\bdy Y$, it follows that every index $0$ critical point of  $Y' \coloneqq Y \setminus \nu(T\cup \bdy Y)$ is connected by an edge in this graph to an index $1$ critical point of $Y'$, so we modify $f$ in the interior of $Y'$ to cancel every index $0$ critical point of $Y'$ with an index $1$ critical point of $Y'$. Similarly, we eliminate all index $3$ critical points of $Y'$.  

Finally, given these $f$ and $g$,  we construct  a type $1$ tangle Heegaard diagram. Start with Heegaard surface $\Sigma  = f^{-1}(3/2)$, oriented as the boundary of $f^{-1}([0,3/2])$. Let $\balpha$ be the set of points on $\Sigma$ that flow down to the index $1$ critical points, label the arcs $\balpha^a$ and their endpoints compatibly with $\mathcal S$, and let $\bbeta$ be the set of points on $\Sigma$ which flow up to the index $2$ critical points. Mark the positive intersections of $T\cap \Sigma$ with $O$s, and the negative intersections with $X$s. Also place an $X$ in each region $(a_i, a_{i+1})$ of $\bdy\Sigma\setminus\balpha\cup (a_1, a_{2n+1})$ if the points in that region flow down to a positive endpoint $t_i$ of the tangle $T$, and an $O$ if those points flow to a negative endpoint $t_i$ of $T$. Finally, place a point labeled $z^-$ in $(a_{4n+2}, a_1)$, and a point $z^+$ in $(a_{2n+1}, a_{2n+2})$.
\end{proof}

The Morse theory construction implies the following proposition.

\begin{proposition}
Any two type $1$ tangle Heegaard diagrams for a given tangle $(Y, \T)$ are related by a sequence of \emph{Heegaard moves}:
\begin{itemize}
\item[-] isotopies of the $\alpha$-curves and $\beta$-curves, not crossing $\bdy \Sigma\cup \XX\cup \OO$
\item[-] handle slides of $\alpha$-curves over $\alpha$-circles and $\beta$-circles over $\beta$-circles 
\item[-] index one/two stabilizations (and their inverses, destabilizations) in the interior of $\Sigma$: forming the connected sum with a torus with one $\alpha$-circle and one $\beta$-circle meeting transversely in a single point
\item[-] index zero/three stabilizations (and their inverses, destabilizations) in the interior of $\Sigma$: replacing a neighborhood of an $X$ with  one $\alpha$-circle and one $\beta$-circle, isotopic to each other and intersecting in two points, and adding an $O$ in the middle of the three new regions, and an $X$ in each of the new side regions, or replacing a neighborhood of an $O$ with such $\alpha$ and $\beta$ curves, along with an $X$ in the middle new region, and an $O$ in each side region (see Figure \ref{fig:stab}). 
\end{itemize}
\end{proposition}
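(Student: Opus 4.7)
The plan is to mimic the standard Morse-theoretic proof that any two Heegaard diagrams for a closed $3$-manifold are related by isotopies, handle slides and stabilizations, while carefully tracking the extra data: the boundary matched circle, the basepoints $\XX\cup\OO$, and the points $\zzz=\{z^-,z^+\}$. First, I would show that any two type $1$ tangle Heegaard diagrams $\HH_0$ and $\HH_1$ for $(Y,\T)$ arise from compatible Morse data $(f_0,g_0)$ and $(f_1,g_1)$ in the sense of Definitions~\ref{morse_compat_s}--\ref{morse_compat}. Using the construction in the existence proof above, together with the fact that the extra structure (the $\alpha$-arcs, the assignment of $\XX$/$\OO$ to tangle intersections, and the positions of $z^\pm$) can be recovered rigidly from $f|_T$, $f|_{\bdy Y}$ and the choice of Heegaard surface level $3/2$, this reduces the problem to connecting $(f_0,g_0)$ and $(f_1,g_1)$ through compatible data.

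The next step is to connect these two pieces of Morse data through a generic smooth one-parameter family $(f_t,g_t)_{t\in[0,1]}$ that remains compatible with the tangle structure everywhere except at finitely many parameter values. The existence of such a family is a standard transversality argument in the spirit of Cerf theory, applied relative to $T\cup\bdy Y$: one first connects $(f_0|_{T\cup\bdy Y},g_0|_{T\cup\bdy Y})$ to $(f_1|_{T\cup\bdy Y},g_1|_{T\cup\bdy Y})$ in the space of Morse-function/metric pairs on $T\cup\bdy Y$ that are self-indexing with the prescribed index conventions (keeping $\mathcal{S}$-compatibility at the boundary), and then extends the family to the interior of $Y\setminus\nu(T\cup\bdy Y)$, again cancelling spurious index $0$ and index $3$ critical points in the complement. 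Along such a generic family, the Heegaard diagram changes by: (a) isotopies of $\alpha$- and $\beta$-curves, when the level set $f_t^{-1}(3/2)$ moves transversely and flows remain generic; (b) handle slides of $\alpha$-curves over $\alpha$-circles and of $\beta$-circles over $\beta$-circles, at parameter values where two flows coincide and neither flow goes to an arc endpoint on $\bdy\Sigma$ or to a tangle endpoint; (c) creation or cancellation of an index $1$/$2$ critical point pair, which corresponds to an interior index one/two (de)stabilization.

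For the $\XX$, $\OO$ and $\zzz$ markings, I would argue that because the tangle $T$ and the points $t_i\in\bdy Y$ are fixed throughout the family, the positions of the basepoints are always determined by the intersections of $T$ and of the $\bdy Y$-arcs with the Heegaard surface, so they transport continuously along the family without any ambiguity; the $z^\pm$ lie between the two $\bdy Y$ index $1$ critical points and move by boundary isotopy. The one missing type of move is an index $0$/$3$ stabilization, which cannot appear generically in the Morse data on the closed $3$-manifold $Y$, but \emph{does} occur as the move needed to reconcile different normalizations of the number of interior basepoints: one must also connect two choices of how many $X$/$O$ pairs appear in the complement of $\bbeta$ and in the complement of $\balpha^c$. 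I would handle this by first performing enough index $0$/$3$ stabilizations on $\HH_0$ and $\HH_1$ to match their $t$ values and their numbers of $\XX\cup\OO$ markings, and only then running the Cerf-theoretic argument on the stabilized diagrams.

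The main obstacle will be the boundary analysis in the second step: ensuring that along the one-parameter family the Morse data on $\bdy Y$ remains compatible with the fixed marked sphere $\mathcal{S}$ (in particular, that the two boundary index $1$ critical points, which give the $\alpha$-arc matching, are never destroyed or permuted in a way that would change the concentric matching or the cyclic position of $z^\pm$) and that the tangle $T$ continues to be a union of ascending/descending flow lines of $f_t$ with the correct index data. Controlling the critical points of $f_t|_{\bdy Y}$ and $f_t|_T$ throughout the homotopy --- essentially a relative version of the Cerf theorem --- is where the real content lies; once this is in hand, the moves on the Heegaard diagram on the interior follow from the classical argument applied in $Y\setminus\nu(T\cup\bdy Y)$.
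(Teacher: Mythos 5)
Your proposal is correct and follows essentially the same route as the paper: the paper's proof is a one-line citation to the Morse calculus used for Proposition 3.3 of Ozsv\'ath--Szab\'o's link Floer homology paper (handling the $\XX$, $\OO$ basepoints and the index zero/three stabilizations) and Proposition 4.10 of Lipshitz--Ozsv\'ath--Thurston (handling the boundary-relative Cerf theory), which is precisely the combined argument you spell out. The only point worth emphasizing is the one you already flag --- that the index zero/three stabilizations must be treated separately from the generic one-parameter family, since they reconcile different normalizations of the basepoints rather than arising from births/deaths of critical points of $f_t$ --- and your handling of this matches the standard treatment.
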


\begin{proof}
The proof follows from the Morse calculus used in the proofs of \cite[Proposition 3.3]{oszlink} and \cite[Proposition 4.10]{bfh2}.
\end{proof}

\begin{figure}[h]
 \centering
  \labellist
       \pinlabel $\to$ at 60 50
       \pinlabel $\to$ at 400 50
     \endlabellist
       \includegraphics[scale=.6]{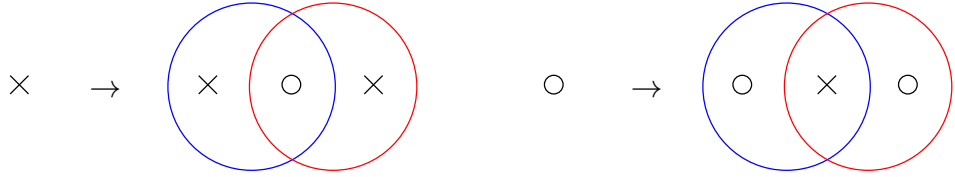} 
       \vskip .2 cm
       \caption[Index zero/three stabilization.]{\textbf{Index zero/three stabilization.}}\label{fig:stab}
\end{figure}

We also define type $2$ tangle Heegaard diagrams. The definition is slightly different from that of type $1$ diagrams, so that when one glues a type $1$ and a type $2$ diagram that agree along the boundary, the resulting closed diagram is a valid Heegaard diagram for a link. 
\begin{definition}
A \emph{type $2$ multipointed bordered Heegaard diagram for a tangle} is a sextuple $\HH = (\Sigma, \balpha, \bbeta, \XX, \OO, \zzz)$ where
\begin{itemize}
\item[-] $\Sigma$ is a compact surface of genus $g$ with one boundary component
\item[-] $\balpha = \{\alpha_1^a,\ldots, \alpha_{2n+1}^a, \alpha_1^c, \ldots, \alpha_{t}^c\}$ is a set of pairwise disjoint, embedded curves: $2n+1$  arcs, each  with boundary on $\bdy \Sigma$, and $t$ closed curves in the interior of $\Sigma$
\item[-] $\bbeta$ is a set of $t+n+1$ pairwise disjoint curves embedded in the interior of $\Sigma$
\item[-] $\XX$ and $\OO$ are two $(t+2n-g+1)$-tuples of points in  $\Sigma\setminus(\balpha\cup\bbeta)$
\item[-] $\zzz$ is an oriented arc in $\Sigma\setminus (\balpha\cup \bbeta)$ with boundary on  $\bdy\Sigma \setminus \balpha$ 

\end{itemize}
subject to the conditions
\begin{itemize}
\item[-] $\bbeta$ span a $g$-dimensional subspace of $H_1(\Sigma; \Z)$
\item[-] $\{\alpha_1^c, \ldots, \alpha_t^c\}$ span a $(g-1)$-dimensional subspace of  $H_1(\Sigma; \Z)$, and along with the arcs, $\balpha$ span a $g$-dimensional subspace of $H_1(\Sigma, \bdy \Sigma; \Z)$
\item[-] $\{\alpha_1^a,\ldots, \alpha_{2n+1}^a\}$ induce a concentric matching on $\bdy\Sigma$, and they are labeled so that we can order the points on $\bdy\balpha$ according to the orientation of $-\bdy \Sigma$ as  $a_1, \ldots, a_{4n+2}$ so that $\bdy\alpha^a_i = \{a_i, a_{4n+3-i}\}$.
\item[-] $\zzz^+\coloneqq\bdy^+(\zzz)$ lies in the interior of the segment with boundary $a_{4n+2}$ and $a_1$ of $\bdy\Sigma\setminus \balpha$, and $\zzz^- \coloneqq\bdy^-(\zzz)$ lies on the segment with boundary  $a_{2n+1}$ and $a_{2n+2}$.
\item[-] Each of the $t-g+1$ components of $\Sigma\setminus \balpha$ that do not meet $\bdy\Sigma$ contains one $X\in \XX$ and one $O\in\OO$, and each of the $2n$ components of $\Sigma\setminus \balpha$ that meet $\bdy\Sigma$ but do not meet $\zzz$ contains either an $X$ in the interior and an $O$ on the segment of $\bdy\Sigma\setminus \balpha$ with the lower indexed endpoints, or an $O$ in the interior and an $X$ on the segment of $\bdy\Sigma\setminus \balpha$ with the lower indexed endpoints. 
\item[-] Each of the $t+n-g+1$ components of $\Sigma\setminus \bbeta$ that do not meet $\bdy\Sigma$ contains exactly one $X$ and one $O$. The unique component of $\Sigma\setminus\bbeta$ that meets  $\bdy\Sigma$ contains $n$ $X$s and $n$ $O$s on $\bdy \Sigma$.
\end{itemize}
\end{definition} 

Figure \ref{fig:hd2} is an example of a type $2$ tangle Heegaard diagram.

\begin{figure}[h]
 \centering
  \labellist
       \pinlabel $\textcolor{red}{a_1}$ at 58 48
       \pinlabel $\textcolor{red}{a_2}$ at 60 74
       \pinlabel $\textcolor{red}{a_3}$ at 60 100
       \pinlabel $\textcolor{red}{a_4}$ at -10 130
       \pinlabel $\textcolor{red}{a_5}$ at -12 106
       \pinlabel $\textcolor{red}{a_6}$ at -13 80  
       \pinlabel $\textcolor{green}{\zzz^+}$ at 42 14
       \pinlabel $\textcolor{green}{\zzz^-}$ at 38 170
     \endlabellist
       \includegraphics[scale=.6]{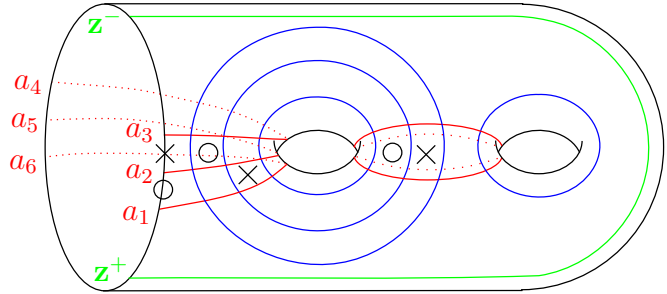} 
       \vskip .2 cm
       \caption[A type $2$ tangle Heegaard diagram.]{\textbf{A type $2$ tangle Heegaard diagram.}}\label{fig:hd2}
\end{figure}

A type $2$ tangle Heegaard diagram gives rise to a pair $(Y, \T)$ of a $3$-manifold $Y$ with $\bdy Y \cong S^2$ and a marked tangle $\T$. 
The topological construction is similar to the one for a type $1$ diagram.

We build the manifold $Y_2$ by following the type $1$ construction, except this time $\bdy \HH\cong \zz(-\mathcal S)^*$, where $\mathcal S$ is the marked sphere associated to $(Y,\T)$. The difference in the types of boundary components of $Y_2$ is that there are now $t-g+1$ spheres which meet $\Sigma\times \{1\}$ but do not meet $\{-\epsilon\}\times \mathcal S$, and there is one single sphere which meets both $\Sigma\times \{1\}$ and $\{-\epsilon\}\times \{\zzz^+, \zzz^-\}\subset \{-\epsilon\} \times \mathcal S$.  We again glue $3$-balls to all spheres except $\{0\}\times \mathcal S$ to obtain $Y$.

The tangle $\T\subset Y$ is again constructed by connecting the $X$s and $O$s. This time its marking comes from the order in which the $X$s and $O$s on $\bdy \Sigma$ appear along $-\bdy \Sigma$. The oriented arc $\zzz\times \{3/2\}$ is a $1$-component boundary-parallel tangle which is unlinked from $\T$.  
 
 We cannot use Morse theory directly to prove the statements that follow. One way to explain where the problem lies is that if we start with a Morse function for $\bdy Y$, then two index $0$ critical points on $\bdy Y$ that would correspond to $\zzz^+$ and $\zzz^-$ belong to the same $0$-handle in the handle decomposition for $Y$ specified by $\HH$. 

\begin{proposition}\label{prop:type1to2}
Every $(Y, \T)$ has a type $2$ tangle Heegaard diagram. 
\end{proposition}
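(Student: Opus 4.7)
The plan is to construct a type $2$ Heegaard diagram for $(Y,\T)$ by modifying a type $1$ diagram. By the previous proposition I can choose a type $1$ diagram $\HH_1 = (\Sigma, \balpha, \bbeta, \XX, \OO, \{z^-,z^+\})$ for $(Y,\T)$. Observe that in $\Sigma$ the innermost alpha arc $\alpha^a_1$ together with the boundary segment $(a_{4n+2},a_1)$ bounds a small bigon region $R^-\ni z^-$ of $\Sigma\setminus\balpha$, and similarly $\alpha^a_{2n+1}$ together with $(a_{2n+1},a_{2n+2})$ bounds a bigon $R^+\ni z^+$. Both $R^-$ and $R^+$ lie in the unique component of $\Sigma\setminus\bbeta$ meeting $\bdy\Sigma$.

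The main modification is a \emph{boundary tubing}: I would attach a $2$-dimensional $1$-handle $h$ to $\Sigma$ with feet in small disks $D^-\subset R^-$ and $D^+\subset R^+$, producing a new surface $\Sigma'$ of genus $g+1$, and then add one new $\beta$-circle $\beta'$ that bounds a compressing disk in the $\beta$-handlebody which cancels the $3$-dimensional neighborhood of $h$. Topologically this ensures that the underlying $3$-manifold and the tangle $\T$ are unchanged. I would define the oriented arc $\zzz$ on $\Sigma'$ by starting at $\zzz^+\in(a_{4n+2},a_1)\subset\bdy\Sigma'$, running through $R^-$, traversing the tube $h$, exiting into $R^+$, and ending at $\zzz^-\in(a_{2n+1},a_{2n+2})\subset\bdy\Sigma'$. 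Finally I would add one new $X$ basepoint and one new $O$ basepoint in the regions of $\Sigma'\setminus(\balpha\cup(\bbeta\cup\{\beta'\}))$ adjacent to the tube, placed so that the type $2$ conditions on the distribution of $\XX$ and $\OO$ among the complementary regions of $\balpha$ and $\bbeta\cup\{\beta'\}$ are satisfied.

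The main obstacle is arranging $\zzz$ to be disjoint from $\beta'$: the naive choice of $\beta'$ as the belt circle of the tube intersects any longitudinal arc through the tube algebraically once, violating the requirement that $\zzz\subset \Sigma'\setminus(\balpha\cup\bbeta\cup\{\beta'\})$. To resolve this, I would modify the construction by performing an auxiliary index $0/3$ stabilization on the tube (creating an additional canceling $\alpha$-circle/$\beta$-circle pair together with an $X, O$ pair), and then handleslide $\beta'$ over the new $\beta$-circle. After this handleslide, $\beta'$ can be chosen in its homotopy class as a curve that follows a parallel of $\zzz$ longitudinally through the tube and then closes up through the interior of $\Sigma'$ along an arc entirely contained in the unique region of $\Sigma'\setminus\bbeta$ meeting $\bdy\Sigma'$, avoiding $\zzz$. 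The fact that $\beta'$ is still a compressing disk for the $\beta$-handlebody (and thus the resulting $3$-manifold is unchanged) follows from the fact that handleslides preserve the homeomorphism type of the handlebody.

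To conclude, I would verify the type $2$ axioms for $\HH_2 = (\Sigma', \balpha, \bbeta', \XX', \OO', \zzz)$: the numbers of $\alpha$-arcs ($2n+1$, unchanged), $\alpha$-circles ($t$, unchanged), and $\beta$-curves ($t+n+1$, one more) are correct; the dimension spans of $\balpha^c$ and $\bbeta'$ in the homology of $\Sigma'$ and $H_1(\Sigma',\bdy\Sigma';\Z)$ agree with the type $2$ requirements (using $g' = g+1$, the tube-induced increase to $H_1$, and the fact that $\beta'$ adds a new generator to $H_1(\Sigma';\Z)$); the basepoint placements are correct by construction; and $\bdy\HH_2\cong\zz(-\mathcal{S})^*$ follows from the relabeling $a_i'=a_{4n+3-i}$, which fixes the matchings but reverses the boundary orientation. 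The fact that the topological construction applied to $\HH_2$ recovers $(Y,\T)$ follows from the cancellation property of $\beta'$ with $h$ together with the boundary-parallel nature of $\zzz\times\{3/2\}$.
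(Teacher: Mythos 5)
Your overall strategy---add a handle near the boundary of a type $1$ diagram, add one new $\beta$-circle, and route $\zzz$ over the handle---is the same in spirit as the paper's (which performs an index one/two stabilization near $z^+$, slides all $\alpha$-arcs over the new $\alpha$-circle so that the new $\beta$-circle crosses each arc once, runs $\zzz$ over the handle parallel to that $\beta$-circle, and then deletes the new $\alpha$-circle). But your execution has a genuine gap at exactly the point you flag. If $\beta'$ is the belt circle of the tube $h$, then it \emph{must} meet $\zzz$: close $\zzz$ to a circle $\bar\zzz$ along $\bdy\Sigma'$; since $\bar\zzz$ traverses $h$ once, $[\beta']\cdot[\bar\zzz]=\pm1$ in $H_1(\Sigma';\Z)$, and as $\beta'$ is disjoint from $\bdy\Sigma'$ every intersection point lies on $\zzz$ itself. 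Your proposed repair---an index $0/3$ stabilization followed by a handleslide of $\beta'$ over the resulting (null-homotopic) $\beta$-circle---cannot remove these intersections, because such a handleslide preserves $[\beta']$ and hence the intersection number $\pm1$. Indeed, the curve you describe at the end (``a parallel of $\zzz$ longitudinally through the tube, closed up through the interior'') lies in the homology class of $\bar\zzz$, not of the belt circle, so it is not reachable from your $\beta'$ by the moves you allow. That longitudinal curve is the correct choice (it is exactly what the paper's construction produces: it necessarily crosses each $\alpha$-arc once near $a_1,\dots,a_{2n+1}$, which is permitted), but then your justification that the $3$-manifold is unchanged---``the cancellation property of $\beta'$ with $h$''---no longer applies, since the longitude does not cancel the tube in the naive sense. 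The paper sidesteps this by exhibiting the whole construction as a sequence of legitimate Heegaard moves (stabilization, handleslides of $\alpha$-arcs, then removal of the auxiliary $\alpha$-circle, the last step being justified by comparing the type $1$ and type $2$ topological constructions); you would need some replacement for that argument.

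Two smaller problems: the basepoint count is off---for a genus $g+1$ type $2$ surface with $t$ $\alpha$-circles one needs exactly $t+2n-g$ points of each type, which is what the type $1$ diagram already has, so adding a new $X$ and $O$ near the tube (let alone a second pair from the $0/3$ stabilization without the accompanying curves being accounted for) violates the one-$X$-and-one-$O$-per-region conditions for $\Sigma'\setminus\balpha$ and $\Sigma'\setminus\bbeta$. And the orientation conventions are not handled: a type $2$ diagram is a \emph{mirror} of a type $1$ diagram (its boundary is $\zz(-\mathcal S)^*$, with $\XX$ and $\OO$ swapped on the boundary and the labeling taken with respect to $-\bdy\Sigma$), which is why the paper starts from a type $1$ diagram for $(-Y,-\T)$ and reverses orientation at the end; merely relabeling $a_i'=a_{4n+3-i}$ does not accomplish this.
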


\begin{proof}
Let $\HH$ be a type $1$ diagram for $(-Y, -\T)$. We perform the following series of moves near the boundary of the diagram, as in Figure \ref{fig:1to2}. Perform an index one/two stabilization near $z^+$ (Figure \ref{fig:1to2}(b)). Denote the new $\alpha$-circle by $\alpha'$, and the new $\beta$-circle by $\beta'$.  Slide all $\alpha$-arcs over $\alpha'$ so that now $\beta'$ crosses them once each, near $a_1, \ldots, a_{2n+1}$ (Figure \ref{fig:1to2} (c)).  Connect $z^-$ to $z^+$ by an arc $\zzz$ that goes once over the new handle parallel to $\beta'$ (Figure \ref{fig:1to2} (d)). Remove $\alpha'$ (Figure \ref{fig:1to2} (e)). Call the resulting diagram $\HH'$. Observe that $\zzz$ does not intersect any $\alpha$ or $\beta$ curves. The diagram  $-\HH'$ is a type $2$ tangle Heegaard diagram for $(Y, \T)$.  
\end{proof}

\begin{figure}[h]
 \centering
  \labellist
       \pinlabel $\rotatebox{90}{\textcolor{red}{\ldots}}$ at 36 105
       \pinlabel $\rotatebox{90}{\textcolor{red}{\ldots}}$ at 165 105
       \pinlabel $\rotatebox{90}{\textcolor{red}{\ldots}}$ at 294 105
       \pinlabel $\rotatebox{90}{\textcolor{red}{\ldots}}$ at 422 105
       \pinlabel $\rotatebox{90}{\textcolor{red}{\ldots}}$ at 550 105
       \pinlabel $(a)$ at 25 -20
       \pinlabel $(b)$ at 152 -20
       \pinlabel $(c)$ at 277 -20
       \pinlabel $(d)$ at 402 -20
       \pinlabel $(e)$ at 538 -20
     \endlabellist
       \includegraphics[scale=.6]{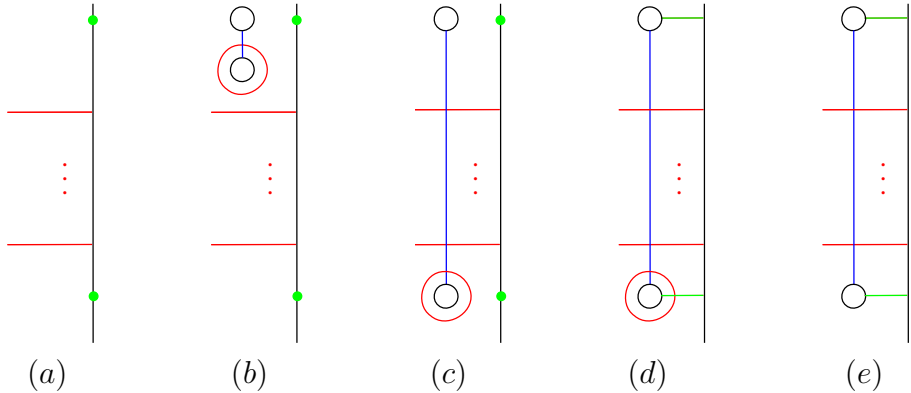} 
       \vskip .5 cm
       \caption[Transforming a type $1$ diagram to a type $2$ diagram.]{\textbf{Transforming a type $1$ diagram to a type $2$ diagram.}}\label{fig:1to2}
\end{figure}

We will say that a  type $2$ diagram like $-\HH'$, obtained from a type $1$ diagram as above, is in \emph{type $1$ position}.

\begin{proposition}\label{type2moves}
Any two type $2$ tangle Heegaard diagrams for a given tangle $(Y, \T)$ are related by a sequence of Heegaard moves:
\begin{itemize}
\item[-] isotopies of the $\alpha$-curves and $\beta$-curves, not crossing $\bdy \Sigma\cup \XX\cup \OO\cup \zzz$
\item[-] handle slides of $\alpha$-curves over $\alpha$-circles and $\beta$-circles over $\beta$-circles 
\item[-] index one/two stabilizations and destabilizations in the interior of $\Sigma$
\item[-] index zero/three stabilizations  and destabilizations in the interior of $\Sigma$
\end{itemize}
\end{proposition}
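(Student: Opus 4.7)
The plan is to reduce the statement to the corresponding one for type $1$ diagrams by inverting the procedure of Proposition \ref{prop:type1to2}. Given a type $2$ diagram $\HH = (\Sigma,\balpha,\bbeta,\XX,\OO,\zzz)$ for $(Y,\T)$, I will associate, via Heegaard moves, a type $1$ diagram $\HH^{\flat}$ for $(-Y,-\T)$, and show that Heegaard moves on the type $1$ side lift to Heegaard moves on the type $2$ side.

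The first step is to bring $\HH$ into \emph{type $1$ position}, by which I mean the configuration produced by the procedure of the proof of Proposition \ref{prop:type1to2}: the arc $\zzz$ lies in a small neighborhood of the segment of $\bdy\Sigma$ between $a_{2n+1}$ and $a_{2n+2}$ with the surrounding $\alpha$-arcs and $\beta$-circles arranged as in Figure \ref{fig:1to2}(e). The key observation is that $\zzz$ is disjoint from $\balpha\cup\bbeta$, so it sits in the (planar) region cut out by these curves from $\Sigma$. Any ``winding'' of $\zzz$ past a $\beta$-circle or $\alpha$-arc can be cancelled by a handleslide of that curve over another curve in the region, and isotopies of $\balpha\cup\bbeta$ rel $\zzz\cup\XX\cup\OO$ then shrink $\zzz$ into an arbitrarily thin collar of $\bdy\Sigma$. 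A bounded number of further $\alpha$-arc handleslides and, if needed, index $0/3$ (de)stabilizations complete the reduction to type $1$ position.

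Once $\HH$ is in type $1$ position, reversing the five steps of the proof of Proposition \ref{prop:type1to2} step by step recovers a type $1$ diagram $\HH^{\flat}$ for $(-Y,-\T)$: reintroduce the circle $\alpha'$ by an index $1/2$ stabilization, undo the $\alpha$-arc slides, delete $\zzz$ in favor of the two basepoints $z^{\pm}$, and index $1/2$ destabilize. Each of these is either a Heegaard move or the trivial replacement of $\zzz$ by $\{z^+, z^-\}$, so at the level of type $2$ diagrams the pair $(\HH,\HH^{\flat})$ is naturally identified via a sequence of Heegaard moves.

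Now given two type $2$ diagrams $\HH_1,\HH_2$ for $(Y,\T)$, the preceding two steps produce type $1$ diagrams $\HH_1^{\flat},\HH_2^{\flat}$ for $(-Y,-\T)$, and the type $1$ invariance proposition supplies a sequence of Heegaard moves between them. Each such move on the type $1$ side lifts to a Heegaard move on the associated type $2$ diagram: $\alpha/\beta$ isotopies and handleslides lift directly because the standard configuration near $\bdy\Sigma$ is preserved, index $1/2$ (de)stabilizations performed away from this configuration lift unchanged, and index $0/3$ (de)stabilizations lift unchanged. Concatenating the three sequences (bring $\HH_1$ into type $1$ position, transport a type $1$ sequence, undo the type $1$ position on the $\HH_2$ side) yields the required sequence of Heegaard moves between $\HH_1$ and $\HH_2$. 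The main obstacle is the first step: establishing that the arc $\zzz$ can always be maneuvered into the collar of $\bdy\Sigma$ by Heegaard moves without having to cross basepoints or $\alpha$/$\beta$ curves. The argument needs to use that $\zzz$ lies in a planar region of $\Sigma$ and that every obstruction to isotoping $\zzz$ into a collar is undone by a handleslide of an appropriate curve that $\zzz$ winds around.
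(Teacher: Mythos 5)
Your top-level strategy coincides with the paper's: both proofs reduce to the type $1$ case by putting each type $2$ diagram into ``type $1$ position'' and then transporting the type $1$ sequence of moves. However, the crucial step — that any type $2$ diagram \emph{can} be put in type $1$ position by Heegaard moves (the paper's Lemma \ref{lemma2to1}) — is exactly where your argument has a genuine gap, and you acknowledge as much in your final sentence without closing it.

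The specific claim that isotopies and handleslides ``shrink $\zzz$ into an arbitrarily thin collar of $\bdy\Sigma$'' cannot be right as stated: the endpoints $\zzz^+\in(a_{4n+2},a_1)$ and $\zzz^-\in(a_{2n+1},a_{2n+2})$ lie on boundary segments separated by the endpoints of all $2n+1$ $\alpha$-arcs, so any arc contained in a collar of $\bdy\Sigma$ joining them would have to cross $\alpha$-arcs. In type $1$ position $\zzz$ does \emph{not} sit in a collar — it runs once over a $1$-handle of $\Sigma$ parallel to a distinguished $\beta$-circle. The actual content of Lemma \ref{lemma2to1} is arranging this handle structure: one constructs a guide circle $\alpha'$ (a push-off of $\alpha^a_{2n+1}\cup(a_{2n+1},a_{2n+2})$) that bounds a disk in the $\alpha$-handlebody and meets $\zzz$ once; one then proves, via a homological argument on the punctured torus neighborhood of $\alpha'\cup\bar\zzz$ together with the hypothesis that $\bbeta$ spans a $g$-dimensional subspace of $H_1(\Sigma)$, that some $\beta$-circle $\beta'$ can be isotoped to meet $\alpha'$ exactly once; finally one slides the remaining $\beta$-circles off $\alpha'$ and pushes $\beta'$ to the boundary using the planarity of the relevant component of $\Sigma\setminus(\bbeta\cup\zzz)$. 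Your assertion that ``every obstruction to isotoping $\zzz$ is undone by a handleslide of an appropriate curve that $\zzz$ winds around'' is precisely the statement that needs proof, and the existence of the single-intersection $\beta'$ is the nontrivial input that makes it work. Without this, the reduction to the type $1$ case does not go through.
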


To prove this proposition, we make use of the following lemma.

\begin{lemma}\label{lemma2to1}
Any type $2$ diagram can be put in type $1$ position. 
\end{lemma}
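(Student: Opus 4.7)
The plan is to start from an arbitrary type 2 tangle Heegaard diagram $\HH = (\Sigma, \balpha, \bbeta, \XX, \OO, \zzz)$ and produce, by a sequence of Heegaard moves, a diagram of the specific form output by the procedure in the proof of Proposition \ref{prop:type1to2}. Recall that that form requires the presence of a distinguished $\beta$-circle which runs parallel to $\zzz$ and which, near $\bdy\Sigma$, crosses each $\alpha$-arc exactly once.

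First, I would perform an index one/two stabilization in a small disk $D\subset \Sigma$ chosen to be disjoint from $\balpha\cup\bbeta\cup\XX\cup\OO$ but bisected by $\zzz$, introducing a new pair $\alpha', \beta'$ meeting transversely in a single point. At this moment $\beta'$ is small and $\alpha'$ meets nothing in $\balpha\cup\bbeta$ except $\beta'$.

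Next I would reshape $\beta'$ via a sequence of isotopies and handleslides over other $\beta$-circles so that, in the final configuration, $\beta'$ is a long circle parallel to $\zzz$: one arc of $\beta'$ tracks $\zzz$ from the region containing $\zzz^-$ to the region containing $\zzz^+$, while the complementary arc of $\beta'$ is pushed into a collar of $\bdy\Sigma$ and is arranged to cross each $\alpha$-arc $\alpha_1^a,\dots,\alpha_{2n+1}^a$ exactly once (near $a_1,\dots,a_{2n+1}$ respectively). Throughout, whenever the reshaping forces $\beta'$ to pick up additional intersections with some $\alpha$-arc $\alpha_i^a$ beyond the one desired crossing, I would handleslide $\alpha_i^a$ over the small circle $\alpha'$ to absorb the extra intersection points, restoring geometric intersection with $\beta'$ to the correct count and keeping $\alpha'$ as a small circle meeting only $\beta'$.

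At the end of this process the diagram manifestly has the form obtained after step (d) of the procedure in the proof of Proposition \ref{prop:type1to2}: $\zzz$ is parallel to the distinguished $\beta$-circle $\beta'$, $\beta'$ crosses each $\alpha$-arc once near the boundary, and $\alpha'$ is a small circle intersecting only $\beta'$. This is by definition a type 2 diagram in type 1 position (it is what one obtains by \emph{not} performing the final destabilization in step (e) of that proof).

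The main obstacle is the middle step, where one must simultaneously control the isotopy class of $\beta'$ and the cleanliness of $\alpha'$. Two things have to be checked carefully: that each handleslide of $\beta'$ over another $\beta$-circle can be arranged in the complement of $\zzz$, $\XX$, $\OO$ (possible because $\zzz$ is an arc disjoint from all $\alpha$ and $\beta$ curves and the basepoint conditions leave enough room in the relevant regions of $\Sigma\setminus\bbeta$), and that the bookkeeping of $\alpha$-arc handleslides over $\alpha'$ can be done without accidentally crossing a basepoint or destroying the matching pattern on $\bdy\Sigma$. Both are routine but slightly tedious combinatorial checks which, together with the Heegaard-move repertoire of Proposition \ref{type2moves} (whose present lemma is the key input), constitute the heart of the argument.
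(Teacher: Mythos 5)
Your strategy has two genuine gaps, and they are connected: you manufacture the distinguished $\beta$-circle by an index one/two stabilization, whereas the counting constraints force it to be found among the \emph{existing} $\beta$-circles. A stabilization adds one $\alpha$-circle and one $\beta$-circle and raises the genus by one. If at the end you keep your $\alpha'$, the diagram is a valid type $2$ diagram but is not in type $1$ position: by definition, type $1$ position is the output of the procedure of Proposition \ref{prop:type1to2} \emph{after} step (e), with no extra $\alpha$-circle present (so your parenthetical ``it is what one obtains by not performing step (e)'' contradicts the definition). If instead you delete $\alpha'$, you destroy the diagram: your $\alpha'$ is the meridian of a genuinely new handle and is homologically essential, so after removing it the $\alpha$-circles span a subspace of $H_1(\Sigma;\Z)$ of dimension two less than the new genus, and moreover you are left with $t+n+2$ $\beta$-circles against $t$ $\alpha$-circles, which violates the defining counts of a type $2$ diagram. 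This is precisely why the paper's $\alpha'$ is taken to be a push-off of $\alpha^a_{2n+1}$ union a boundary arc --- a curve bounding a disk in the $\alpha$-handlebody, hence redundant and freely removable --- and why the distinguished circle must be one of the $\beta$-circles already in $\bbeta$, merely repositioned by handleslides.

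The second gap is that your middle step assumes the conclusion. To drag the small null-homotopic $\beta'$ into a push-off of $\bar\zzz$ by isotopies and handleslides (all required to avoid $\XX\cup\OO\cup\zzz$), you need to know that the relevant homology class is realized by an embedded basepoint-free planar region bounded by $\beta$-circles; this is not a ``routine combinatorial check'' but the actual content of the lemma. The paper proves it in two steps: first, since $\bbeta$ spans a $g$-dimensional subspace of $H_1(\Sigma;\Z)$, the punctured torus neighborhood of $\alpha'\cup\bar\zzz$ must be cut to genus zero by some existing $\beta$-circle, which (being disjoint from $\bar\zzz$) can be isotoped to meet $\alpha'$ exactly once; second, the component of $\Sigma\setminus(\bbeta\cup\zzz)$ containing $\bar\zzz$ is planar, and sliding the chosen circle over the other $\beta$-circles bounding that region carries it to a parallel push-off of $\bar\zzz$. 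Without an argument of this kind your proposal does not establish that the target configuration is reachable, and with it you would in any case be reproducing the paper's proof with the extraneous (and fatal, per the previous paragraph) stabilization attached.
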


\begin{proof}
Let $\HH = (\Sigma, \balpha, \bbeta, \XX, \OO, \zzz)$ be a type $2$ diagram for a pair $(Y, \T)$. The idea is to find a curve on $\Sigma$ which is disjoint from $\balpha$, bounds a disk in the $\alpha$-handlebody, and intersects $\zzz$ exactly once, and use it as a guide to modify the Heegaard diagram.  We exhibit one such curve below.

Let $\alpha'\subset \Sigma$  be an embedded circle which is a push-off of the union of $\alpha^a_{2n+1}$ and $(a_{2n+1}, a_{2n+2})\subset \bdy \Sigma$ into $\Sigma$ and does not intersect $\balpha$, see Figure \ref{fig:alpha_disk} and the more schematic first diagram in Figure \ref{fig:2special}. We will use $\alpha'$ as a guide while performing a series of Heegaard moves.

 Note that  $\alpha'$ bounds a disk in the $\alpha$-handlebody. This disk is a push-off of the disk $D = D_1\cup D_2$, where $D_1$ is the disk on $\bdy Y$ bounded by the interval $(a_{2n+1}, a_{2n+2})$  and the core of the $1$-handle of $\bdy Y$ attached at $a_{2n+1}$ and $a_{2n+2}$, and $D_2$ is the core of the $2$-handle for $\alpha^a_{2n+1}$ from the construction of $Y_2$. So $\HH' = (\Sigma, \balpha\cup \alpha', \bbeta, \XX, \OO, \zzz)$ still specifies the same pair $(Y, \T)$, or, to be more precise, $-(\Sigma, \balpha\cup \alpha', \bbeta, \XX, \OO, \bdy\zzz)$ is a type $1$ diagram for $(-Y, -\T)$. 

\begin{figure}[h]
 \centering
  \labellist
       \pinlabel $\textcolor{red}{\alpha'}$ at 100 150
     \endlabellist
       \includegraphics[scale=.6]{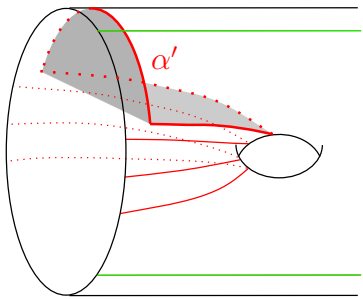} 
       \vskip .2 cm
       \caption[The circle $\alpha'$ and the disk it bounds.]{\textbf{The circle $\alpha'$ and the disk it bounds.}}\label{fig:alpha_disk}
\end{figure}

Also note that $\alpha'$ intersects $\zzz$ exactly once, near $\zzz^-$, so, since $\balpha\cap \zzz = \emptyset$,  no $\alpha$ circle in $\balpha$ is homologous to $\alpha'$ in $H_1(\Sigma; \Z)$. This means that, after sliding $\alpha$ curves over $\alpha'$ if necessary, we can draw $\HH$ in the following way.  Near the boundary we see  $\bdy\HH\times [0,\epsilon)$, where $\bdy\HH\times\{0\}$ is  the boundary of $\HH$. There is a $1$-handle for $\Sigma$ with feet attached at $(\zzz^+, \epsilon)$ and $(\zzz^-,\epsilon)$, $\alpha'$ is a meridian of that $1$-handle, $\zzz$ goes once over the handle. There may also be multiple $\beta$ curves going over the $1$-handle.
See Figure \ref{fig:2special} (b).  We continue the proof with such more schematic pictures drawn in a plane. 

We claim that, after an isotopy of $\bbeta$ if necessary, there is some $\beta'\subset \bbeta$ which intersects $\alpha'$ exactly once. Close $\zzz$ to a circle $\bar\zzz$ by connecting $\zzz^+$ to $\zzz^-$ along $\bdy\Sigma$, going through $a_1, \ldots, a_{2n+1}$. Since $\alpha'$ and $\bar\zzz$ are two circles on $\Sigma$ intersecting transversely in one point,  the  neighborhood of $\alpha'\cup \bar\zzz$ in $\Sigma$ is a punctured torus $T$, see Figure  \ref{fig:2special} (c). Since $\bbeta$ spans a $g$-dimensional subspace of $H_1(\Sigma)$, then $\Sigma\setminus\bbeta$ only contains genus $0$ pieces, so there is at least one $\beta$-circle, pick one and call it $\beta'$, cutting the punctured torus into a genus $0$ surface. Since no $\beta$ can intersect $\zzz$ or $\bdy\Sigma$, then $\beta'$ cannot intersect $\bar\zzz$. So $\beta'\cap T$ is homologous to $\bar\zzz$ in $H_1(T, \bdy T)$, so it can be isotoped to only intersect $\alpha'$ once. 
If any other $\beta$-curves intersect $\alpha'$, slide them over $\beta'$, so that $\beta'$ is the only curve intersecting $\alpha'$. Now the diagram near the boundary looks like what we described in the previous paragraph, except there is exactly one $\beta$ curve going over the $1$-handle. See Figure \ref{fig:2special} (d).

Since $\bbeta$ spans a $g$-dimensional subspace of $H_1(\Sigma)$, then all components of $\Sigma\setminus (\bbeta\cup \zzz)$ have genus zero. In particular, the region of $\Sigma\setminus (\bbeta\cup \zzz)$ that contains $\bar\zzz$ is planar, with boundary components $\bar\zzz$, $\beta'$, and possibly some other $\beta$ circles. See Figure  \ref{fig:2special} (e) ($\alpha$ curves omitted from the picture away from the boundary). Slide $\beta'$ over each $\beta$ circle in that region to move it close to the boundary of the diagram, i.e. so that it is a parallel push-off of $\bar\zzz$ into the interior of $\Sigma$, see Figure  \ref{fig:2special} (f). Remove $\alpha'$, which only served as a guide along the proof. See Figure \ref{fig:2special} (g). The resulting diagram is in type $1$ position.\qedhere
\begin{figure}[h]
 \centering
  \labellist
       \pinlabel $\rotatebox{90}{\textcolor{red}{\ldots}}$ at 30 105
       \pinlabel $\rotatebox{90}{\textcolor{red}{\ldots}}$ at 125 105
       \pinlabel $\rotatebox{90}{\textcolor{red}{\ldots}}$ at 220 105
       \pinlabel $\rotatebox{90}{\textcolor{red}{\ldots}}$ at 320 105
       \pinlabel $\rotatebox{90}{\textcolor{red}{\ldots}}$ at 415 105
       \pinlabel $\rotatebox{90}{\textcolor{blue}{\ldots}}$ at 475 105
       \pinlabel $\rotatebox{90}{\textcolor{red}{\ldots}}$ at 560 105
       \pinlabel $\rotatebox{90}{\textcolor{red}{\ldots}}$ at 660 105
              \pinlabel $\rotatebox{90}{\textcolor{blue}{\small{...}}}$ at 145 172
              \pinlabel $\rotatebox{90}{\textcolor{blue}{\small{...}}}$ at 145 29
              \pinlabel $\rotatebox{90}{\textcolor{blue}{\small{...}}}$ at 248 172
              \pinlabel $\rotatebox{90}{\textcolor{blue}{\small{...}}}$ at 248 29
                     \pinlabel $\rotatebox{90}{\textcolor{red}{\ldots}}$ at 30 105
       \pinlabel $(a)$ at 30 -15
       \pinlabel $(b)$ at 125 -15
       \pinlabel $(c)$ at 220 -15
       \pinlabel $(d)$ at 320 -15
       \pinlabel $(e)$ at 450 -15
       \pinlabel $(f)$ at 572 -15
       \pinlabel $(g)$ at 675 -15
     \endlabellist
       \includegraphics[scale=.6]{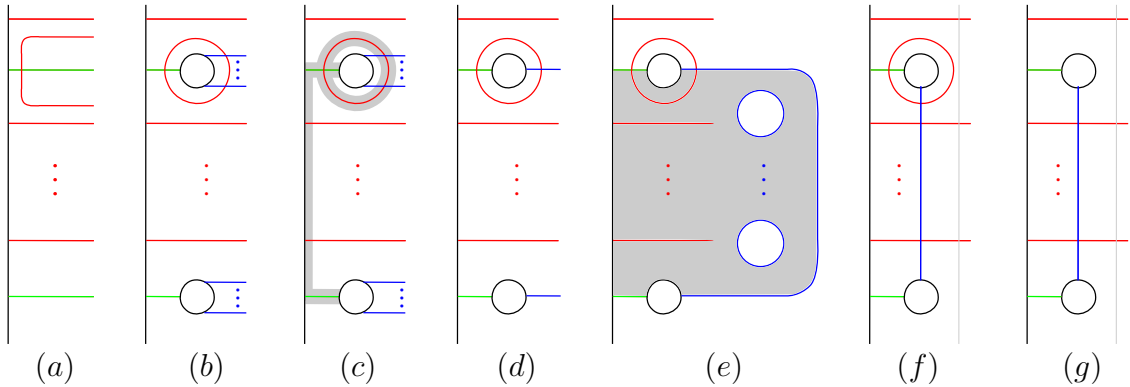} 
       \vskip .5 cm
       \caption[Putting a type $2$ diagram in type $1$ position.]{\textbf{Putting a type $2$ diagram in type $1$ position.} The last diagram is the mirror of the corresponding type $1$ diagram.}\label{fig:2special}
\end{figure}
\end{proof}

 \begin{proof}[Proof of Proposition \ref{type2moves}]
 Let $\HH_1$ and $\HH_2$ be two type $2$ diagrams for the same pair $(Y, \T)$. By Lemma \ref{lemma2to1}, both can be put in type $1$ positions $\HH_1'$ and $\HH_2'$ by a sequence of the moves described in Proposition \ref{type2moves}. Let  $\HH_1''$ and $\HH_2''$ be the corresponding type $1$ diagrams, so that $\HH_i' = -\HH_i''$ away from the boundary and the special $1$-handle from Proposition \ref{prop:type1to2}. 
 
Since $\HH_1''$ and $\HH_2''$ are related by a sequence of moves away from the boundaries, corresponding moves (the reflections of the original moves) can be performed between $\HH_1'$ and $\HH_2'$ away from the ``neighborhood" of the boundary containing $\zzz$ and the special $\beta$ circle from the proof of Lemma \ref{lemma2to1}, i.e. the $\beta$ circle shown in Figure \ref{fig:2special} (g). Thus, $\HH_1$ and $\HH_2$ are related by a sequence of Heegaard moves. 
 \end{proof}

\subsection{$3$-manifolds with two boundary components} \label{ssec:hd2}

For a tangle in a manifold $Y$ with $\bdy Y \cong S^2\coprod S^2$, we describe a Heegaard diagram with two boundary components. We will also want to keep track of a framed arc connecting the two boundary components of $Y$, by means of two arcs $\zzz_1$ and $\zzz_2$ that will connect the two boundary components of the Heegaard diagram.

\begin{definition}
A \emph{multipointed bordered Heegaard diagram with two  boundary components for a tangle} is a sextuple $\HH = (\Sigma, \balpha, \bbeta, \XX, \OO, \zzz)$ where
\begin{itemize}
\item[-] $\Sigma$ is a compact surface of genus $g$ with two boundary components
\item[-] $\balpha = \{\alpha_1^0,\ldots, \alpha_{m+1}^0,\alpha_1^1,\ldots, \alpha_{n+1}^1, \alpha_1^c, \ldots, \alpha_{t}^c\}$ is a set of pairwise disjoint, embedded curves: $m + n + 2$  arcs (where $m$ and $n$ have the same parity), each  with boundary on $\bdy \Sigma$, and $t$ closed curves in the interior of $\Sigma$
\item[-] $\bbeta$ is a set of $t+\frac{m+ n}{2}+1$ pairwise disjoint curves embedded in the interior of $\Sigma$
\item[-] $\XX$ and $\OO$ are two $(t+m + n-g+1)$-tuples of points in  $\Sigma\setminus(\balpha\cup\bbeta)$
\item[-] $\zzz = \{\zzz_1, \zzz_2\}$ is a set of two oriented arcs in $\Sigma\setminus (\balpha\cup \bbeta)$ with boundary on  $\bdy\Sigma \setminus \balpha$ 
\end{itemize}
subject to the conditions
\begin{itemize}
\item[-] $\bbeta$ span a $g$-dimensional subspace of $H_1(\Sigma; \Z)$
\item[-] $\{\alpha_1^c, \ldots, \alpha_t^c\}$ span a $(g-1)$-dimensional subspace of  $H_1(\Sigma; \Z)$, and along with the arcs, $\balpha$ span a $(g+1)$-dimensional subspace of $H_1(\Sigma, \bdy \Sigma; \Z)$
\item[-] $\{\alpha_1^0,\ldots, \alpha_{m+1}^0\}$ induce a concentric matching on one component of $\bdy\Sigma$, and they are labeled so that we can order their endpoints according to the orientation of $-\bdy \Sigma$ as  $a_1^0, \ldots, a_{2m+2}^0$ so that $\bdy\alpha_i^0 = \{a_i^0, a_{2m+3-i}^0\}$; $\{\alpha_1^1,\ldots, \alpha_{n+1}^1\}$ induce a concentric matching on the other component of $\bdy\Sigma$, and they are labeled so that we can order their endpoints according to the orientation of $\bdy \Sigma$ as  $a_1^1, \ldots, a_{2n+2}^1$ so that $\bdy\alpha_i^1 = \{a_i^1, a_{2n+3-i}^1\}$.
\item[-] $\zzz_1^+\coloneqq\bdy^+(\zzz_1)$ lies in the interior of the segment with boundary $a_{2m+2}^0$ and $a_1^0$ of $\bdy\Sigma\setminus \balpha$, and $\zzz_1^- \coloneqq\bdy^-(\zzz_1)$ lies on the segment with boundary $a_{2n+2}^1$ and $a_1^1$;  $\zzz_2^+\coloneqq\bdy^+(\zzz_2)$ lies in the interior of the segment with boundary  $a_{m+1}^0$ and $a_{m+2}^0$ of $\bdy\Sigma\setminus \balpha$, and $\zzz_2^- \coloneqq\bdy^-(\zzz_2)$ lies on the segment with boundary  $a_{n+1}^1$ and $a_{n+2}^1$.
\item[-] Each of the $t-g+1$ components of $\Sigma\setminus \balpha$ that do not meet $\bdy\Sigma$ contains one $X\in \XX$ and one $O\in \OO$, and each of the $m + n$ components of $\Sigma\setminus \balpha$ that meet $\bdy\Sigma$ but do not meet $\zzz$  contains either an $X$ in the interior and an $O$ on the segment of $\bdy\Sigma\setminus \balpha$ with the lower indexed endpoints, or an $O$ in the interior and an $X$ on the segment of $\bdy\Sigma\setminus \balpha$ with the lower indexed endpoints. 
\item[-] Each of the $t+\frac{m+n}{2}-g+1$ components of $\Sigma\setminus \bbeta$ that do not meet $\bdy\Sigma$ contains exactly one $X$ and one $O$. The unique component of $\Sigma\setminus\bbeta$ that meets  $\bdy\Sigma$ contains $\frac{m+n}{2}$ $X$s and $\frac{m+n}{2}$ $O$s on $\bdy \Sigma$.
\end{itemize}
We denote the component of $\bdy\HH$ containing $\alpha^0_i$ by $\bdy^0\HH$, and the component of $\bdy\HH$ containing $\alpha^1_i$ by $\bdy^1\HH$.
\end{definition} 

Figure \ref{fig:hd3} is an example of a tangle Heegaard diagram with two boundary components.

\begin{figure}[h]
 \centering
  \labellist
       \pinlabel $\textcolor{red}{a_1^0}$ at 18 88
       \pinlabel $\textcolor{red}{a_2^0}$ at 20 114
       \pinlabel $\textcolor{red}{a_3^0}$ at 20 140
       \pinlabel $\textcolor{red}{a_4^0}$ at -12 170
       \pinlabel $\textcolor{red}{a_5^0}$ at -14 146
       \pinlabel $\textcolor{red}{a_6^0}$ at -15 120  
       \pinlabel $\textcolor{red}{a_1^1}$ at 478 48
       \pinlabel $\textcolor{red}{a_2^1}$ at 476 80
       \pinlabel $\textcolor{red}{a_3^1}$ at 474 106
       \pinlabel $\textcolor{red}{a_4^1}$ at 474 135
       \pinlabel $\textcolor{red}{a_5^1}$ at 474 170  
       \pinlabel $\textcolor{red}{a_6^1}$ at 508 174
       \pinlabel $\textcolor{red}{a_7^1}$ at 510 150             
       \pinlabel $\textcolor{red}{a_8^1}$ at 512 124
       \pinlabel $\textcolor{red}{a_9^1}$ at 510 98
       \pinlabel $\textcolor{red}{a_{10}^1}$ at 508 70  
       \pinlabel $\textcolor{green}{\zzz_1^+}$ at -2 20
       \pinlabel $\textcolor{green}{\zzz_2^-}$ at 2 236
       \pinlabel $\textcolor{green}{\zzz_1^-}$ at 498 16
       \pinlabel $\textcolor{green}{\zzz_2^+}$ at 496 236
     \endlabellist
       \includegraphics[scale=.6]{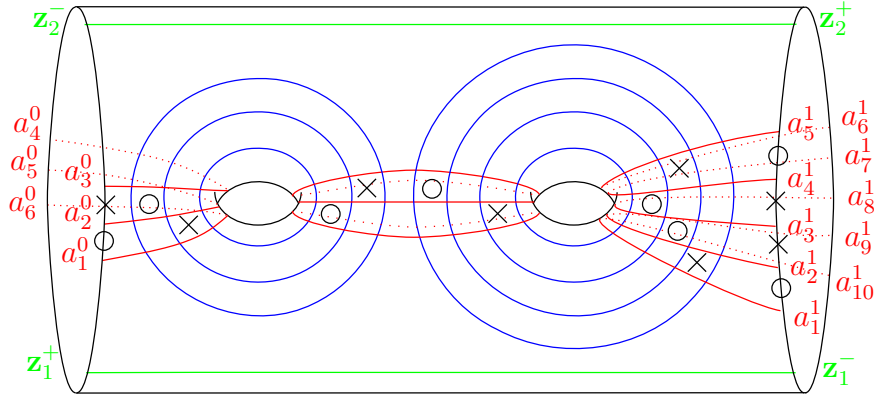} 
       \vskip .2 cm
       \caption[A tangle Heegaard diagram with two boundary components.]{\textbf{A tangle Heegaard diagram with two boundary components.}}\label{fig:hd3}
\end{figure}

A tangle Heegaard diagram with two boundary components  gives rise to a pair $(Y, \T)$ of a $3$-manifold $Y$ with $\bdy Y \cong S^2_0\coprod S^2_1$ and a marked $(m,n)$-tangle $\T$, with $\bdy^0\T\subset S^2_0$ and $\bdy^1\T\subset S^2_1$. We describe the topological construction below.

Let $\HH_{dr}$ be the Heegaard diagram obtained from $\HH$ by deleting a neighborhood of $\zzz_2$ (this process, called \emph{drilling}, was introduced in \cite{bimod}). The boundary of this deleted neighborhood  consists of the neighborhood $\zzz^0$ of $\zzz_2^-$ on $\bdy^0\HH$, the neighborhood $\zzz^1$ of $\zzz_2^+$ on $\bdy^1\HH$, and two disjoint push-offs of $\zzz_2$. Denote the push-off closer to $a^0_{m+1}$ by $\zzz_2^{front}$, and the other one by $\zzz_2^{back}$. The boundary of $\HH_{dr}$ is 
$$\bdy\HH_{dr} = (\bdy^0\HH\setminus\zzz^0)\cup  (\bdy^1\HH\setminus\zzz^1)\cup \zzz_2^{front}\cup \zzz_2^{back}.$$
It inherits the decorations of $(\bdy^0\HH\setminus\zzz^0)$ and $(\bdy^1\HH\setminus\zzz^1)$. We also place a basepoint $z^{front}$ on $\zzz_2^{front}$ and $z^{back}$ on $\zzz_2^{back}$. 

If we ignore $z^{front}$ and $z^{back}$,  $\HH_{dr}$ looks like a type $2$ diagram for an $(m+n)$-tangle, except that there is one extra $\alpha$-arc. 

We first build the pair $(Y_{dr}, \T_{dr})$ for $\HH_{dr}$ as we would for any type $2$ diagram. We obtain $(Y, \T)$ from  $(Y_{dr}, \T_{dr})$ by attaching a $3$-dimensional $2$-handle to the boundary sphere along the connected sum annulus arising from the decomposition $\bdy\HH_{dr} = \bdy^0\HH \# \bdy^1\HH$. More precisely, the attaching circle is the union of the two gradient flow lines from the index $2$ critical point passing through  $z^{front}$ and $z^{back}$.

\begin{proposition}
Every $(Y, \T)$ has a tangle Heegaard diagram with two boundary components. 
\end{proposition}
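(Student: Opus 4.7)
The strategy is to reduce to the one-boundary case (Proposition~\ref{prop:type1to2}) and then reverse drilling. Pick a framed properly embedded arc $\gamma \subset Y$ from a point $p_0 \in \bdy^0 Y$ to a point $p_1 \in \bdy^1 Y$, disjoint from $\T$, with $p_0$ and $p_1$ lying in the handles of $\mathcal S_0$ and $\mathcal S_1$ that will carry $\zzz_2^{+}$ and $\zzz_2^{-}$. Such a $\gamma$ exists since $Y$ is connected. Form $Y_{dr} \coloneqq Y \setminus \Int(\nu(\gamma))$, whose boundary is a single sphere $S \cong \bdy^0 Y \# \bdy^1 Y$; together with $\T$ this is a marked $(m+n)$-tangle, and reattaching a $3$-dimensional $2$-handle to $Y_{dr}$ along the core of the annular neck $\bdy\nu(\gamma) \cap \Int(Y)$ recovers $(Y,\T)$, as in the discussion preceding the statement.

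By Proposition~\ref{prop:type1to2}, $(Y_{dr},\T)$ has a type~$2$ tangle Heegaard diagram. First I would upgrade this to a diagram $\HH_{dr}$ which carries one additional $\alpha$-arc $\alpha'$ realizing a meridian of $\gamma$; equivalently, $\alpha'$ bounds the co-core disk of the reattaching $2$-handle in the $\alpha$-handlebody. To do this, start with a Morse function $f_{dr}$ compatible with $(Y_{dr},\T)$, and arrange, via cancellations of index zero/three critical pairs away from $\T \cup \bdy Y_{dr} \cup \gamma$ and handleslides of $\alpha$-arcs over $\alpha$-circles (in the spirit of the proof of Lemma~\ref{lemma2to1}), that the co-core of the $2$-handle is an ascending gradient trajectory of $f_{dr}$ meeting the Heegaard surface $\Sigma_{dr} = f_{dr}^{-1}(3/2)$ transversely in a single point; the descending manifold through that crossing then sweeps out $\alpha'$.

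With $\HH_{dr}$ in this form, I would \emph{undrill} it: cut $\Sigma_{dr}$ along two parallel pushoffs of $\alpha'$ so that $\Sigma$ acquires a second boundary component. The arc $\alpha'$ splits into a pair of arcs relabelled $\alpha^0_{m+1}$ and $\alpha^1_{n+1}$; all other $\alpha$-curves, the $\bbeta$-curves, and the basepoints $\XX$, $\OO$ transport to $\Sigma$ unchanged; the two pushoffs become the arcs $\zzz_1, \zzz_2$, with the basepoints $z^\pm$ of $\HH_{dr}$ becoming their endpoints on the appropriate boundary components. A direct check against each bullet in the definition of a tangle Heegaard diagram with two boundary components then confirms that the resulting $\HH$ is valid, and by construction the topological reconstruction from $\HH$ reproduces $(Y,\T)$, since drilling and undrilling are inverse at the topological level.

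The main obstacle is the second step: arranging $\HH_{dr}$ so that the meridian of $\gamma$ is realized as an honest $\alpha$-arc on the Heegaard surface, rather than merely as a properly embedded disk in the $\alpha$-handlebody. Once this Morse-theoretic positioning is accomplished, undrilling is a combinatorial operation and the remaining verification reduces to bookkeeping against the definitions in Subsections~\ref{ssec:hd1} and \ref{ssec:hd2}.
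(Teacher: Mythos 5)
Your opening move is the same as the paper's: drill out a neighborhood of an arc $\gamma$ joining $\bdy^0 Y$ to $\bdy^1 Y$ in the complement of $\T$, and invoke Proposition~\ref{prop:type1to2} to get a type~$2$ diagram for the resulting one-boundary tangle. The gap is in your ``undrilling''. Drilling deletes a band neighborhood of the arc $\zzz_2$ from the Heegaard surface; this merges the two boundary circles into one and \emph{preserves the genus}. Its inverse is therefore to \emph{attach} a band (a $2$-dimensional $1$-handle) to $\bdy\Sigma_{dr}$, splitting the single boundary circle back into two, with $\zzz_2$ recovered as the co-core of the attached band. You instead \emph{cut} $\Sigma_{dr}$ along (pushoffs of) an arc. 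Cutting along a properly embedded arc with both endpoints on the one boundary circle raises the Euler characteristic by one and so drops the genus from $g$ to $g-1$, while you keep all the $\beta$-circles and all the basepoints; the resulting counts violate the numerical constraints in the definition of a diagram with two boundary components (which ties $|\XX|$ and $|\bbeta|$ to the genus), and, more fundamentally, the operation is not inverse to drilling, so the reconstructed $3$-manifold is not $(Y,\T)$. The internal inconsistencies in your description are symptoms of this: the arc $\alpha'$ cannot both lie in the removed band and ``split into a pair of arcs''; a pushoff of $\alpha'$ has both endpoints on the same new boundary circle, so it cannot serve as $\zzz_2$, which must run from $\bdy^0\Sigma$ to $\bdy^1\Sigma$; and $\zzz_1$ is not created by this operation at all --- it is the $\zzz$ already present in the type~$2$ diagram.

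The Morse-theoretic middle step is also both unnecessary and not quite right. The meridian of $\gamma$ is a circle on $\bdy Y_{dr}$ bounding the \emph{core} disk of the reattaching $2$-handle (the co-core of a $3$-dimensional $2$-handle is an arc), and it is not what plays the role of the extra $\alpha$-arc. The correct construction requires no repositioning of gradient trajectories: take the $\alpha$-arc of the type~$2$ diagram that separates the $\bdy^0$-part of the boundary data from the $\bdy^1$-part, add a parallel translate $\alpha'$ of it, attach the $1$-handle to the two intervals of $\bdy\Sigma\setminus\balpha$ lying between the two parallel arcs, declare its co-core to be $\zzz_2$, and relabel; the two parallel arcs become the innermost arcs $\alpha^0_{m+1}$ and $\alpha^1_{n+1}$ on the two sides, and by construction drilling the resulting diagram along $\zzz_2$ returns exactly the diagram you started from.
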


\begin{proof}
The idea of the  proof is the same as in the proof of \cite[Proposition 5.8]{bimod}. Choose an arc connecting $\bdy^0Y$ to $\bdy^1Y$ away from $\T$, and remove its neighborhood. Call the result $(Y_{dr}, \T_{dr})$, where the ordering on $\T_{dr}$ inherits the ordering on $\bdy^0\T$ concatenated with the reversed ordering on $\bdy^1\T$. Let $\HH'  = (\Sigma, \balpha, \bbeta, \XX, \OO, \zzz)$ be a type $2$ diagram for  $(Y_{dr}, \T_{dr})$. Add a parallel translate of $\alpha^a_{2m+1}$ by pushing it so that $a_{2m+1}$ is pushed in the negative direction along $\bdy \HH'$, and call this curve  $\alpha'$. Call the resulting diagram $\HH''$.  Add a $1$-handle to the two  intervals of $\bdy\Sigma\setminus \balpha$ between $\alpha^a_{2m+1}$ and $\alpha'$. The resulting surgery on $\bdy\Sigma$ splits it into two circles. Denote the circle containing $a_1$ by $\bdy^0\Sigma$, and the other circle by $\bdy^1\Sigma$. Let $\zzz_2$ be the co-core of the $1$-handle, oriented from $\bdy^0\Sigma$ to $\bdy^1\Sigma$. Relabel $\zzz$ to $\zzz_1$, $\alpha^a_i$ to $\alpha^0_{i}$ for $i\leq 2m+1$,   $\alpha^a_i$ to $\alpha^1_{2m+2n+2-i}$  for $i> 2m+1$, and $\alpha'$ to $\alpha^1_{2n+1}$. The resulting diagram $\HH$ is a diagram for $(Y, \T)$. Note that $\HH'' = \HH_{dr}$.
\end{proof}

In the case of two boundary components, it is no longer true that any two diagrams for a pair $(Y, \T)$ are related by Heegaard moves. However, if we keep better track of the parametrization of the boundary, we can still make this statement. 

\begin{definition}
A \emph{strongly marked $(m,n)$-tangle $(Y, \T, \gamma)$} is a marked $(m,n)$-tangle $(Y, \T)$, along with a framed arc $\gamma$  connecting $\bdy^0 Y$ to $\bdy^1 Y$ in the complement of $\T$ such that  $\gamma$ and its framing $\lambda_{\gamma}$  have ends on the equators of the two marked spheres,  and we see $-\bdy^0\T,  -\bdy^0\gamma, -\bdy^0\lambda_{\gamma}$ and $\bdy^1\T,  \bdy^1\gamma, \bdy^1\lambda_{\gamma}$ in this order along each equator.
\end{definition}

We say that a diagram $\HH$ is compatible with a strongly marked tangle  $(Y, \T, \gamma)$ if $\HH$ describes $(Y, \T)$, and after building $(Y, \T)$ from $\HH$,  the arc $\zzz_1$ with the framing that points into the $\beta$-handlebody yields  $\gamma$.

\begin{proposition}
If $\HH$ and $\HH'$ specify the same triple $(Y, \T, \gamma)$, then they are related by a sequence of  Heegaard moves like the ones described in Proposition \ref{type2moves}.
\end{proposition}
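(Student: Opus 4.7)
The plan is to reduce to the case of a single boundary component by drilling along $\zzz_2$, in the spirit of the bimodule construction of Lipshitz--Ozsv\'ath--Thurston. Given a diagram $\HH$ compatible with a strongly marked tangle $(Y,\T,\gamma)$, the drilled diagram $\HH_{dr}$ (with the extra basepoints $z^{front}$ and $z^{back}$ on the two push-offs $\zzz_2^{front}$ and $\zzz_2^{back}$) is a bordered Heegaard diagram of a shape very close to a type~$2$ diagram for the tangle $(Y_{dr},\T_{dr})$ obtained by removing a neighborhood of $\gamma$ from $(Y,\T)$. Conversely, $\HH$ is recovered from $\HH_{dr}$ by attaching a $2$-handle to the connect-sum annulus with boundary $\zzz_2^{front}\cup\zzz_2^{back}$, and the framing $\lambda_\gamma$ is encoded by precisely this connect-sum annulus.

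First I would verify that if $\HH$ and $\HH'$ both specify $(Y,\T,\gamma)$, then $\HH_{dr}$ and $\HH_{dr}'$ both specify $(Y_{dr},\T_{dr})$, together with the marked pair of basepoints $\{z^{front},z^{back}\}$ recording the co-core of the drilled $1$-handle. I would then apply the Morse-theoretic argument already used for Proposition~\ref{type2moves}: choose compatible self-indexing Morse functions on $Y_{dr}$ realizing the two diagrams, interpolate through a generic one-parameter family, and read off from the resulting Cerf graphic a sequence of isotopies, handleslides, and index~$1/2$ and index~$0/3$ stabilizations carrying $\HH_{dr}$ to $\HH_{dr}'$. The one new ingredient is that these moves must be carried out in the complement of $z^{front}\cup z^{back}$ in addition to $\XX\cup\OO$; this is arranged by a standard general position perturbation of the interpolating data, since $z^{front}$ and $z^{back}$ lie away from $\balpha\cup\bbeta$.

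Next I would lift the resulting sequence of moves on $\HH_{dr}$ back to a sequence of moves on $\HH$. A move supported away from a neighborhood of $\zzz_2^{front}\cup\zzz_2^{back}$ lifts tautologically to a move on $\HH$ of exactly the same type, since the filling $2$-handle is supported in that neighborhood. For moves whose support meets a neighborhood of $\zzz_2^{front}$ or $\zzz_2^{back}$, I would use that no $\alpha$- or $\beta$-curve of $\HH$ crosses $\zzz_2$, so the corresponding curves in $\HH_{dr}$ can be isotoped away from $\zzz_2^{front}\cup\zzz_2^{back}$ before the move and then pushed back, reducing to the previous case.

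The main obstacle will be verifying that the framing of $\gamma$ is unchanged through the whole procedure; a priori a handleslide in $\HH_{dr}$ that drags a curve across $\zzz_2^{front}$ or $\zzz_2^{back}$ could twist the $2$-handle attaching annulus and alter the framing. The key point is exactly that $\zzz_2$ (and hence $\zzz_2^{front}\cup\zzz_2^{back}$) is disjoint from $\balpha\cup\bbeta$ in every diagram in the class under consideration, so handleslides and stabilizations can be realized entirely in the complement of these two arcs, and the attaching annulus of the filling $2$-handle, together with its framing, is preserved canonically. This reduces the framing-preservation question to the one-boundary-component statement already proved, and concludes the proof.
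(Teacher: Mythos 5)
Your proposal is correct and follows essentially the same route as the paper: drill along $\zzz_2$, apply the one-boundary-component uniqueness result (Proposition \ref{type2moves}) to the drilled diagrams, and undo the drilling along the resulting sequence of moves. The extra care you take near $\zzz_2^{front}\cup\zzz_2^{back}$ is not needed, since these arcs become part of $\bdy\HH_{dr}$ and the Heegaard moves of Proposition \ref{type2moves} are supported in the interior, hence automatically avoid the drilling region.
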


\begin{proof}
Let $\HH_{dr}$ and $\HH'_{dr}$ be the corresponding drilled diagrams. By Proposition \ref{type2moves}, they are related by a sequence of moves away from the boundary of the Heegaard surface, hence away from the drilling region. Performing the inverse of the drilling operation to each diagram along the way provides a sequence of moves between $\HH$ and $\HH'$. 
\end{proof}

\subsection{Generators}

Fix a tangle Heegaard diagram $\HH = (\Sigma, \balpha, \bbeta, \XX, \OO, \zzz)$ of some genus $g$ for some pair $(Y, \T)$.  Let $k \coloneqq|\bbeta|$.

\begin{definition}
A \emph{generator} of $\HH$ is a $k$-element subset $\xxx = \{x_1, \ldots, x_k\}$ of points in $\balpha\cap \bbeta$, such that there is exactly one point on each $\beta$-circle, exactly one point on each $\alpha$-circle, and at most one point on each $\alpha$-arc.
\end{definition}

We denote the set of generators of $\HH$ by $\mathfrak S(\HH)$, or simply by $\gens$ when $\HH$ is fixed. 

If $\HH$ is a diagram for a $2n$-tangle, then for a generator $\xxx\in \gens$, let $o(\xxx)\coloneqq \{i | \xxx\cap \alpha_i^a\neq \emptyset\}$, and let $\bar o(\xxx)\coloneqq[2n+1]\setminus o(\xxx)$. Even though $o(\xxx)$ and $\bar o(\xxx)$ are really index sets, we often refer to them as the set of $\alpha$-arcs occupied by $\xxx$, and the set of $\alpha$-arcs not occupied by $\xxx$. 

If $\HH$ is a diagram for an $(m,n)$-tangle, then for $\xx\in \gens$ we define $o^0(\xx)\coloneqq \{i | \xx\cap \alpha_i^0\neq \emptyset\}$, $\bar o^0(\xxx) \coloneqq[m+1]\setminus o^0(\xxx) $, $o^1(\xx)\coloneqq \{i | \xx\cap \alpha_i^1\neq \emptyset\}$, and $\bar o^1(\xxx) \coloneqq[n+1]\setminus o^1(\xxx)$.

\remark If $\HH$ is a type $1$ or a type $2$ diagram, then exactly $n$  or $n+1$ of the $\alpha$-arcs, respectively, are occupied by each generator. If $\HH$ is a diagram with two boundary components, the total number of occupied $\alpha$-arcs on the two sides is $\frac{m+ n}{2}+1$, but the number on each side may vary.

\subsection{Homology classes}
We will soon count pseudoholomorphic curves that connect generators. Each such curve carries a homology class, defined as follows. 

\begin{definition}
Fix generators $\xxx$ and $\yyy$, and let $I$ be the interval $[0,1]$. Let $\pi_2(\xxx, \yyy)$, the \emph{homology classes from $\xxx$ to $\yyy$}, denote the elements of 
$$H_2(\Sigma\times I\times I, ((\balpha\times \{1\}\cup \bbeta\times \{0\}\cup(\bdy\Sigma\setminus \zzz)\times I)\times I)\cup (\xxx\times I\times \{0\})\cup (\yyy\times I\times\{1\}))$$
which map to the relative fundamental class of $\xxx\times I\cup \yyy\times I$ under the composition of the boundary homomorphism and collapsing the rest of the boundary. 
\end{definition}

\begin{definition}
Given a homology class $B\in \pi_2(\xxx, \yyy)$, its \emph{domain} $[B]$ is the projection of $B$ to $H_2(\Sigma, \balpha\cup\bbeta\cup \bdy\Sigma)$. We can interpret the domain of $B$ as a linear combination of the components of $\Sigma\setminus (\balpha\cup \bbeta)$, which we call  \emph{regions}.
\end{definition}

Note that a homology class is uniquely determined by its domain.

\begin{definition}
 The coefficient of each region in a domain is called its \emph{multiplicity}. Given a point $p\in \Sigma\setminus(\balpha\cup \bbeta)$, we denote by $n_p(B)$ the multiplicity of $[B]$ at the region containing $p$. Alternatively, $n_p(B)$ is the intersection number of $B$ and $\{p\}\times I\times I$. 
 \end{definition}
 
By definition, the multiplicity of $[B]$ at  any region $D$ that contains a point in $\zzz$ is zero.
 
 \begin{definition}
 We define the set of \emph{empty homology classes} as
$$\hat\pi_2(\xxx,\yyy)\coloneqq \{B\in \pi_2(\xxx,\yyy)| n_X(B) = 0 \textrm{ and } n_O(B) = 0 \textrm{ for all } X\in \XX \textrm{ and } O\in \OO\}.$$
\end{definition}
 To define our Floer invariants, we will only be interested in this smaller set $\hat\pi_2(\xxx, \yyy)$. 
 
Concatenation at $\yyy\times I$, which corresponds to addition of domains, gives products $\ast:\pi_2(\xxx, \yyy)\times \pi_2(\yyy, \www)\to \pi_2(\xxx, \www)$ and   $\ast:\hat\pi_2(\xxx, \yyy)\times \hat\pi_2(\yyy, \www)\to \hat\pi_2(\xxx, \www)$. This operation turns $\pi_2(\xxx, \xxx)$ and $\hat\pi_2(\xxx, \xxx)$ into  groups called the group of \emph{periodic domains} and the group of \emph{empty periodic domains}, respectively.

We can split the boundary of a domain $[B]$ into three pieces,  $\bdy^{\bdy}B\subset \bdy\Sigma$, $\bdy^{\alpha}B\subset \balpha$, and $\bdy^{\beta}B\subset\bbeta$, oriented so that $\bdy^{\bdy}B+ \bdy^{\alpha}B+ \bdy^{\beta}B$ is the boundary of $[B]$. We can think of $\bdy^{\bdy}B$ as an element of $H_1(\bdy \Sigma, \bdy\balpha)$. For a Heegaard diagram $\HH$ with two boundary components, we can further split $\bdy^{\bdy}B$ into two pieces, $\bdy^i B\subset \bdy^i\HH$, so that $\bdy^{\bdy}B = \bdy^0B+ \bdy^1B$.

\begin{definition}
A homology class $B$ is called \emph{provincial} if $\bdy^{\bdy}B = 0$. 
For a diagram with two boundary components, a homology class $B$ is called \emph{left-provincial} if $\bdy^0B = 0$, and \emph{right-provincial} if $\bdy^1B = 0$. We denote the set of empty provincial homology classes from $\xxx$ to $\yyy$ by $\hat\pi_2^{\bdy}(\xxx, \yyy)$. 
\end{definition}
Observe that concatenation turns $\hat\pi_2^{\bdy}(\xxx, \xxx)$ into a group.

\subsection{Admissibility}

In order to get well-defined Heegaard-Floer invariants, we need to impose  some additional conditions on the tangle Heegaard diagrams. 

\begin{definition}
A tangle Heegaard diagram is called \emph{admissible} if every non-zero empty periodic domain has both positive and negative multiplicities. 

A tangle Heegaard diagram is called \emph{provincially admissible} if every non-zero empty provincial periodic domain has both positive and negative multiplicities. 

A tangle Heegaard diagram with two boundary components is called \emph{left} (respectively \emph{right}) \emph{admissible} if every non-zero empty right-provincial (respectively left-provincial) periodic domain has both positive and negative multiplicities.
\end{definition}

\begin{proposition}
Any tangle Heegaard diagram can be made admissible by performing isotopy on $\bbeta$. Further, any two admissible diagrams for a given $2n$-tangle or  a strongly marked $(m,n)$-tangle  are connected through a sequence of Heegaard moves, so that every intermediate diagram is admissible too. The same is true if we replace ``admissible" by ``provincially admissible".
\end{proposition}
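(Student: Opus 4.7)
The plan is to adapt the classical winding argument of Ozsv\'ath--Szab\'o and its bordered refinement by Lipshitz--Ozsv\'ath--Thurston to the present setting. For the first statement, fix a generator $\xxx$ and note that $\hat\pi_2(\xxx,\xxx)$ is a finitely generated free abelian group, since it embeds in $H_2(\Sigma,\balpha\cup\bbeta\cup\bdy\Sigma)$ with the extra constraint that every region containing a point of $\XX\cup\OO\cup\zzz$ has zero multiplicity. Choose an integral basis $P_1,\ldots,P_l$; the goal is to find a sequence of $\bbeta$-isotopies after which every nonzero integer linear combination $\sum c_j P_j$ (viewed as the new empty periodic domain obtained by transporting the old one across the isotopies) has both positive and negative multiplicities.

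The basic operation is a \emph{finger move}: isotope a small finger of some $\beta$-circle $\beta$ across an $\alpha$-arc or $\alpha$-circle $\alpha_\ast$ through a region of $\Sigma\setminus(\balpha\cup\bbeta\cup\XX\cup\OO\cup\zzz)$. This creates two new bigonal regions $R^+,R^-$ whose multiplicities in any periodic domain $P$ are determined by the coefficient of $\alpha_\ast$ in $\bdy^{\alpha}P$, and differ in sign by that coefficient. For each basis element $P_j$ I select an $\alpha$-curve $\alpha_{i_j}$ whose coefficient in $\bdy^{\alpha}P_j$ is nonzero: such a curve exists because $P_j\neq 0$ forces a nontrivial $\alpha$-boundary somewhere (the $\bbeta$-curves already bound in $H_1(\Sigma,\bdy\Sigma)$ when combined with the $\alpha$-circles). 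In the provincially admissible case I restrict this choice to $\alpha$-\emph{circles}, which is possible because a provincial periodic domain has $\bdy^{\bdy}B=0$ and hence cannot be supported only on $\alpha$-arcs in a nontrivial manner. Performing a finger move of a single $\beta$-circle across $\alpha_{i_j}$ changes the multiplicities of $P_j$ at $R^+,R^-$ by $\pm 1$ times the coefficient; by winding repeatedly one can make these signed multiplicities arbitrarily large. Arranging the finger moves across distinct $\alpha$-curves so that the resulting $(R^+,R^-)$-columns in the matrix expressing multiplicities of $P_j$ at new regions are linearly independent, a standard linear algebra argument (identical to the one used in \cite{bfh2}) shows that after finitely many finger moves every nonzero element of $\hat\pi_2(\xxx,\xxx)$ acquires multiplicities of both signs. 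The same argument restricted to provincial periodic domains yields provincial admissibility.

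For the second statement, I use that any two tangle Heegaard diagrams for $(Y,\T)$ or $(Y,\T,\gamma)$ are connected by a sequence of Heegaard moves from Subsection \ref{ssec:hd1} or \ref{ssec:hd2}. Between consecutive diagrams in the sequence I interleave finger-move isotopies of $\bbeta$ as constructed above to restore (provincial) admissibility. Because these finger moves are themselves isotopies of $\bbeta$ disjoint from $\XX\cup\OO\cup\zzz$, they are permissible Heegaard moves, so the concatenated sequence connects the two originally admissible diagrams through admissible diagrams only. An additional bookkeeping step verifies that the winding needed after an index one/two stabilization or a handle slide can be performed in a neighborhood disjoint from the region modified by the previous move, so the interleaved sequence remains well-defined.

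The main obstacle will be ensuring that the chosen finger moves never cross the points in $\XX\cup\OO$ or the arcs in $\zzz$, so that the newly created regions $R^+,R^-$ contain no basepoints and therefore contribute to \emph{empty} periodic domains. When a basis class $P_j$ is supported in a strip close to $\bdy\Sigma$, the eligible $\alpha$-curves and the routes along which one pushes a finger become constrained, and one must verify — by an explicit local analysis near $\bdy\Sigma$ using the concentric matching structure and the position of $\zzz$ mandated by the definitions in Section \ref{sec:hdiagrams} — that such a route always exists. This local analysis, together with the linear algebra step, is the technical core of the argument.
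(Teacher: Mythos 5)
Your proposal is correct and is essentially the argument the paper invokes: the paper's proof consists of citing the standard winding/finger-move argument for the $\beta$-curves (from Ozsv\'ath--Szab\'o and its bordered refinement), which is exactly what you carry out in detail, including the interleaving of winding isotopies between Heegaard moves for the connectedness statement. The technical points you flag (routing fingers away from $\XX\cup\OO\cup\zzz$, and handling provincial versus full admissibility by winding across $\alpha$-circles versus arbitrary $\alpha$-curves) are precisely the adaptations needed here, and your treatment of them is consistent with the cited references.
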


\begin{proof}
This follows from a winding argument for the $\beta$-curves, just as in the case for closed manifolds. \cite[Section 5]{osz8}. Alternatively, see \cite[Proposition 4.11]{bs}
\end{proof}

\begin{corollary}
Every tangle $(Y, \T)$ has an admissible tangle Heegaard diagram. Similarly, Every tangle $(Y, \T)$ has a provincially admissible tangle Heegaard diagram. The same statements hold for every strongly marked tangle.
\end{corollary}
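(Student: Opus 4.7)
The plan is to assemble the corollary directly from two ingredients already established in the excerpt, so the proof will be very short. First, for any pair $(Y,\T)$ of the relevant type, the existence propositions in Subsections \ref{ssec:hd1} and \ref{ssec:hd2} produce \emph{some} compatible tangle Heegaard diagram $\HH$, either a type $1$ diagram, a type $2$ diagram, or, in the strongly marked case, a diagram with two boundary components whose arc $\zzz_1$ realizes the framed arc $\gamma$. This first step needs no additional argument.

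Second, I would invoke the immediately preceding proposition, which asserts that any tangle Heegaard diagram can be made admissible (respectively provincially admissible) by performing isotopy on the $\bbeta$ curves. Applying this proposition to $\HH$ produces the desired admissible (respectively provincially admissible) diagram $\HH'$ for $(Y,\T)$. Because isotopies of $\bbeta$ do not cross $\bdy\Sigma$, $\XX$, $\OO$, or $\zzz$, the diagram $\HH'$ still represents $(Y,\T)$ — and, in the two-boundary-component case, still has $\zzz_1$ realizing $\gamma$, so it is compatible with the strongly marked tangle $(Y,\T,\gamma)$.

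There is essentially no obstacle: the corollary is a packaging of ``existence of a diagram'' plus ``admissibility can be achieved by $\beta$-isotopy.'' The only point worth flagging is that in the strongly marked setting one should observe that the $\beta$-winding procedure happens in the interior of $\Sigma$, away from $\zzz_1$ and $\zzz_2$, and so preserves both the parametrization of $\bdy\HH$ and the framed arc encoded by $\zzz_1$; hence the resulting admissible diagram remains a diagram for the \emph{strongly} marked tangle, not merely for the underlying marked tangle. This completes the four cases (admissible/provincially admissible, marked/strongly marked) uniformly.
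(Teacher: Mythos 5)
Your proposal is correct and is exactly the argument the paper intends: the corollary is an immediate consequence of combining the existence propositions for the three types of diagrams with the preceding proposition that any diagram can be made (provincially) admissible by isotoping $\bbeta$. Your extra remark that the $\beta$-winding happens in the interior and hence preserves the strongly marked structure is a reasonable point to flag, but it is already implicit in the paper's setup.
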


\subsection{Gluing}\label{ssec:gluing}

Any two multipointed bordered Heegaard diagrams can be glued along a matching boundary component: if $\HH_1$ and $\HH_2$ are diagrams, and $\zz_i$ are boundary components of $\HH_i$ with $\zz_1 = \zz_2^{\ast}$, one can glue $\HH_1$ to $\HH_2$ by identifying $\zz_1$ with $\zz_2^{\ast}$. In this way, one can glue a type $1$ diagram to the left i.e. $\bdy^0$, boundary of a diagram with two boundary components, a type $1$ diagram to a type $2$ diagram, a type $2$ diagram to the $\bdy^1$ boundary of a diagram with two boundary components, or the $\bdy^0$ boundary of a diagram with two boundary components to the  $\bdy^1$ boundary of another diagram with two boundary components. 

By gluing a type $1$ diagram, a sequence of diagrams with two boundary components, and a type $2$ diagram together, removing the union of the $\zzz$ markings, and placing an $X$ and an $O$ in the corresponding region, one obtains a closed Heegaard diagram for  the knot/link that is union of the corresponding tangles, together with an additional split unknot. See Figure \ref{fig:HD} for a schematic example. 

Below we describe in full detail how to glue Heegaard diagrams for tangles, and discuss the basic properties of the resulting diagram. 

For the rest of this section, we fix two Heegaard diagrams as follows. Let $\HH_1 =  (\Sigma_1, \balpha_1, \bbeta_1, \XX_1, \OO_1, \zzz_1)$ be a Heegaard diagram (of type $1$, or with two boundary components) for  some pair $(Y_1, \T_1)$, and if $\HH_1$ is of type $1$, denote its boundary by $\bdy^1\HH_1$. Let $\HH_2 =  (\Sigma_2, \balpha_2, \bbeta_2, \XX_2, \OO_2, \zzz_2)$ be a Heegaard diagram (of type $2$ or with two boundary components) for another pair $(Y_2, \T_2)$, and if $\HH_2$ is of type $2$, denote its boundary by $\bdy^0\HH_2$.  Suppose $\bdy^1 \HH_1=(\bdy^0 \HH_2)^{\ast}$, i.e. $\bdy^1 Y_1$ is identified with a marked sphere $\mathcal S$ and $\bdy^0 Y_2$ is identified with $-\mathcal S$.

\begin{definition} 
 The \emph{union} of $\HH_1$ and $\HH_2$, denoted $\HH_1\cup \HH_2$ is the Heegaard diagram $\HH$ obtained in the following way:
We remove all $\XX$ and $\OO$ markings on the boundaries of the two diagrams. We glue the two surfaces along their boundary, matching the $\alpha$ and $\zzz$ endpoints  and respecting the identification $\bdy^1 \HH_1=(\bdy^0 \HH_2)^{\ast}$, to obtain $\Sigma\coloneqq\Sigma_1\cup_{\bdy}\Sigma_2$.  We take $\balpha$ to be the set of circles $\balpha_1\cup_{\bdy}\balpha_2$, and we take $\bbeta$ to be $\bbeta_1\cup \bbeta_2$. If $\Sigma_1\cup \Sigma_2$ is a closed surface,  we remove $\zzz_1$ and $\zzz_2$, place two points marked $X'$ and $O'$ in the same region, and let $\XX = \XX_1\cup \XX_2\cup X'$ and $\OO = \OO_1\cup \OO_2\cup O'$. We get a closed Heegaard diagram  $(\Sigma, \balpha, \bbeta, \XX, \OO)$.  If $\Sigma_1\cup \Sigma_2$ has boundary, we let $\XX = \XX_1\cup \XX_2$ and $\OO = \OO_1\cup \OO_2$, and we take $\zzz$ to be the oriented arc(s) $\zzz_1\cup_{\bdy} \zzz_2$. We get a tangle Heegaard diagram  $\HH =  (\Sigma, \balpha, \bbeta, \XX, \OO, \zzz)$.
\end{definition}

Gluing Heegaard diagrams corresponds to gluing tangles. In the lemma below, all unions are formed by following the identifications with $\mathcal S$ given by the tangles. 

\begin{lemma}\label{lemma:gluehd}
When the union $\HH_1\cup \HH_2$ is a diagram with one  boundary component, it represents the pair $(Y_1\cup Y_2, \T_1\cup \T_2)$. 

When $\HH_1\cup \HH_2$ is a diagram with two  boundary components, it represents $(Y_1\cup Y_2, \T_1\cup \T_2, \gamma_1\cup \gamma_2)$. 

When $\HH_1\cup \HH_2$  is a  closed Heegaard diagram, it represents the link $(\T_1\cup \T_2)\cup U$ in $Y_1\cup Y_2$, where $U$ is an unknot unlinked from $\T_1\cup \T_2$.
\end{lemma}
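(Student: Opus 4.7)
The plan is to proceed by unraveling the topological constructions from Subsections \ref{ssec:hd1} and \ref{ssec:hd2} and showing that these constructions commute with the gluing operation on Heegaard diagrams. First I would set up the global Morse-theoretic/handle-decomposition picture: each $(Y_i, \T_i)$ is built from $\HH_i$ by taking $\Sigma_i \times I$, attaching $2$-handles along the $\balpha_i^c\times\{1\}$ and $\bbeta_i\times\{2\}$ curves, attaching $2$-handles along circles formed by the $\alpha$-arcs and the $1$-handles of the boundary matched spheres, and capping off all but the designated boundary spheres with $3$-balls. The tangle then appears as arcs joining the $X$s and $O$s. The key observation is that the relevant matched sphere boundary $\zz(\mathcal S)$ of $\HH_1$ and $\zz(-\mathcal S) = \zz(\mathcal S)^*$ of $\HH_2$ are identified in precisely the way that realizes the identification $\partial^1 Y_1 \cong \mathcal S \cong -\partial^0 Y_2$.

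Next I would check case by case that the gluing of Heegaard data matches the gluing of $3$-manifolds and tangles. When we glue $\Sigma_1$ to $\Sigma_2$ along the boundary, the $\alpha$-arcs from the two sides combine with the $1$-handles (the matched pairs in ${\bf a}$) to produce the $\alpha$-circles of $\HH$; on the topological side, these circles now bound compressing disks made up of the two original half-disks on either side, which together realize the $2$-handle attachments needed to construct $Y_1 \cup Y_2$. The $\beta$-curves and interior $\alpha$-circles are undisturbed. The tangle arcs constructed in each piece (between the $X$s and $O$s, including those on the boundary) concatenate along the boundary sphere exactly along the cores of the $1$-handles of the matched circles, giving $\T_1 \cup \T_2$. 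This handles case (1) directly; for case (2), the same argument applies away from the drilled neighborhoods of the $\zzz_2$ arcs, and the $\zzz_1\cup_\partial\zzz_1'$ arc from one $\zzz$ to the other provides the combined framed arc $\gamma_1 \cup \gamma_2$, with framing again inherited from ``pointing into the $\beta$-handlebody''.

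For case (3), the most delicate point is the appearance of the split unknot $U$. Recall that in the type $1$ construction, the arc from $z^-$ to $z^+$ on $\Sigma_1$, together with an arc in the added $3$-handle, bounds a disk disjoint from $\T_1$; the type $2$ story is symmetric. When we glue $\HH_1$ to $\HH_2$ to get a closed diagram, the two $\zzz$-arcs fit together into a single circle on the Heegaard surface; removing $\zzz_1, \zzz_2$ and placing an $X'$ and $O'$ in a chosen region along this circle promotes this circle to a one-component link. This component lies in the region away from all $\balpha \cup \bbeta$, so it projects to the standard unknot, and by the ``bounding disk away from $\T$'' observation from Subsection \ref{ssec:hd1} it is indeed split from $\T_1 \cup \T_2$. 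The resulting closed multipointed Heegaard diagram then represents $(\T_1 \cup \T_2) \cup U$ in $Y_1 \cup Y_2$ by the standard correspondence for multipointed Heegaard diagrams of links (compare \cite{oszlink, most}).

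The main obstacle is keeping the bookkeeping around the $\alpha$-arc endpoints, the $\XX/\OO$ markings on the boundary, and the orientations consistent across the gluing. In particular, one must verify that the $\XX$ placed on the boundary segment with the lower indexed endpoints on one side matches the $\OO$ placed (in the interior) of the opposite region on the other side, so that the concatenated arcs really form the oriented tangle $\T_1\cup\T_2$ and not a disconnected or misoriented piece; this is precisely where the condition $\bdy^1\HH_1 = (\bdy^0\HH_2)^*$, which swaps $\XX$ and $\OO$ and reverses $z^\pm$, is used. Once this combinatorial matching is checked, the rest of the proof is a straightforward assembly of the Morse-theoretic handle pictures.
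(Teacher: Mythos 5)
Your proposal is correct and takes essentially the same route as the paper: the paper's proof simply asserts that the lemma "follows directly from the topological constructions" of Sections \ref{ssec:hd1} and \ref{ssec:hd2}, together with the observation that placing an $X$ and an $O$ in the same region introduces a split unknot, which is exactly the argument you have expanded. Your additional bookkeeping about the $\alpha$-arc/$1$-handle matching, the $\XX/\OO$ swap under $\zz\mapsto\zz^*$, and the fate of the $\zzz$-arcs is a faithful (and more detailed) unpacking of what the paper leaves implicit.
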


\begin{proof}
This follows directly from the topological constructions of tangles from the 3 types of Heegaard diagrams described in Sections \ref{ssec:hd1} and \ref{ssec:hd2}, along with the fact that adding an $X$ and an $O$ in one and the same region introduces an unlinked unknot. 
\end{proof}

Generators and homology classes behave nicely under gluing. Let $\HH_1$ and $\HH_2$ be two tangle Heegaard diagrams  which agree along a boundary component. Note that given $\xx_1 \in\mathfrak S(\HH_1)$ and $\xx_2\in\mathfrak S(\HH_2)$ such that $\xx_1$ and $\xx_2$ occupy complementary sets of the new $\alpha$ circles obtained by gluing  $\alpha$ arcs, the union $\xx_1\cup\xx_2$ is a generator in $\mathfrak S(\HH_1\cup \HH_2)$. 

\begin{lemma}
Given  $\xx_1, \yy_1 \in\mathfrak S(\HH_1)$ and $\xx_2, \yy_2 \in\mathfrak S(\HH_2)$, there is a natural identification of $\pi_2(\xx_1\cup\xx_2, \yy_1\cup\yy_2)$ with the set of pairs $(B_1, B_2)$ in $\pi_2(\xx_1, \yy_1)\times \pi_2(\xx_2, \yy_2)$ such that $\bdy^1B_1 = -\bdy^0B_2$. The same statement holds if we replace $\pi_2$ with $\hat\pi_2$. 
\end{lemma}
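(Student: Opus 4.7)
The plan is to set up a Mayer--Vietoris type correspondence at the level of domains. The key geometric observation is that every region of $\Sigma = \Sigma_1\cup_\bdy \Sigma_2$ is either contained entirely in the interior of some $\Sigma_i$, or is a merger of one region of $\Sigma_1$ meeting $\bdy^1\HH_1$ and one region of $\Sigma_2$ meeting $\bdy^0\HH_2$ along a common interval of $\bdy^1\HH_1 = (\bdy^0\HH_2)^*$ between consecutive $\alpha$-endpoints (here the $\alpha$-arcs of $\HH_1$ and $\HH_2$ are glued into $\alpha$-circles, so only the intervals of $\bdy \Sigma_i\setminus \bdy\balpha_i$ contribute to mergers). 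Each region of $\Sigma$ therefore has a canonical preimage that is a union of at most one region of $\Sigma_1$ and at most one region of $\Sigma_2$.

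The forward map is defined by concatenation of domains. Given a pair $(B_1, B_2)\in \pi_2(\xx_1,\yy_1)\times \pi_2(\xx_2,\yy_2)$ with $\bdy^1B_1 = -\bdy^0B_2$, define $\Phi(B_1,B_2)$ to be the $2$-chain on $\Sigma$ whose multiplicity on a region $D$ is $n_{p_1}([B_1]) + n_{p_2}([B_2])$, where $p_i$ is any point in $D\cap \Sigma_i$ (at most one term is nonzero unless $D$ straddles the gluing). The boundary condition is exactly what makes this a well-defined $2$-cycle relative to $\balpha\cup \bbeta\cup \xx\cup\yy$ in $\Sigma\times I\times I$: along the gluing curve, the $\alpha$-portions of $\bdy [B_1]$ and $\bdy [B_2]$ get identified into pieces of $\balpha$, so only the $\bdy$-components could contribute to the interior boundary, and they cancel by assumption. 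One checks that under the projection to the relative fundamental class of $(\xx_1\cup\xx_2)\times I\cup (\yy_1\cup\yy_2)\times I$, the image of $\Phi(B_1,B_2)$ is correct, so $\Phi(B_1,B_2)\in \pi_2(\xx_1\cup\xx_2,\yy_1\cup\yy_2)$.

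The inverse map $\Psi$ is given by restriction. For $B\in \pi_2(\xx_1\cup\xx_2,\yy_1\cup\yy_2)$, define the multiplicity of $[B_i]$ on a region $D'\subset \Sigma_i$ to be the multiplicity of $[B]$ on the region of $\Sigma$ containing $D'$. The multiplicities on either side of the gluing interval then define an element of $H_1(\bdy^1\HH_1, \bdy\balpha_1)$ and one of $H_1(\bdy^0\HH_2, \bdy\balpha_2)$, which are negatives of each other because the two multiplicities agree (they describe the same region of $\Sigma$). A direct verification shows $\Phi\circ\Psi = \id$ and $\Psi\circ\Phi = \id$, since a homology class is determined by its domain.

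For the $\hat\pi_2$ statement, observe that since $\XX$ and $\OO$ markings lie in the interior of each $\Sigma_i$ (the boundary $\XX, \OO$ decorations are removed in the gluing construction), $n_p(B) = n_p(B_i)$ for every $p\in \XX_i\cup \OO_i$. Thus $B$ is empty if and only if both $B_1$ and $B_2$ are empty, and the identification restricts to the empty classes. The main technical nuisance is simply checking that the matching condition $\bdy^1 B_1 = -\bdy^0 B_2$ in $H_1(\bdy^1\HH_1,\bdy\balpha_1)$ is precisely equivalent to saying that the pair of multiplicity functions descends to a well-defined multiplicity function on $\Sigma$, but this reduces to a finite combinatorial check on the intervals of $\bdy \Sigma_1\setminus \bdy\balpha_1$.
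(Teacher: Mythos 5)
Your approach is the standard one and matches what the paper intends (its own proof is just the remark that it is straightforward): a domain on the glued surface is the same data as a pair of domains whose multiplicities along the gluing circle agree, which is exactly the condition $\bdy^1B_1=-\bdy^0B_2$, and since the basepoints relevant to emptiness sit in the interiors of the two pieces, the bijection restricts to $\hat\pi_2$. One correction: the multiplicity of $\Phi(B_1,B_2)$ on a region $D$ that straddles the gluing should be the \emph{common value} $n_{p_1}([B_1])=n_{p_2}([B_2])$ (their equality being precisely what the boundary-matching hypothesis guarantees), not the sum $n_{p_1}([B_1])+n_{p_2}([B_2])$, which would double the coefficient on such regions and fail to be inverse to your restriction map $\Psi$.
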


\begin{proof}
The proof is straightforward. 
\end{proof}

Following notation from \cite{bfh2}, for $B_1$ and $B_2$ which agree along the boundary as above, we denote the corresponding homology class in  $\pi_2(\xx_1, \yy_1)\times \pi_2(\xx_2, \yy_2)$ by $B_1\natural B_2$.   Under this identification, the local multiplicity of $B_i$ at a point $p\in \Sigma_i\setminus(\balpha_i\cup \bbeta_i)$ agrees with the local multiplicity of $B_1\natural B_2$ at $p$ thought of as a point in $\Sigma_1\cup \Sigma_2$. 

\begin{lemma}
Suppose $\HH_1$ and $\HH_2$ are of type $1$ and type $2$, respectively. If one diagram is admissible, and the other one is provincially admissible, then $\HH_1\cup \HH_2$ is admissible.
\end{lemma}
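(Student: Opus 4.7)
The plan is to show that any non-zero empty periodic domain in $\HH_1\cup \HH_2$ decomposes into empty periodic domains on the two pieces in a way that forces a violation of either admissibility or provincial admissibility.

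First I would set up the decomposition. The glued diagram $\HH = \HH_1\cup \HH_2$ is closed (type $1$ glued to type $2$ along the single boundary component), with an extra $X'$ and $O'$ placed in the region that absorbed $\zzz_1$ and $\zzz_2$. Let $B$ be an empty periodic domain of $\HH$ based at a generator $\xxx = \xxx_1\cup \xxx_2$. Since $B$ is empty, its multiplicity vanishes at all of $\XX\cup \OO$, including at $X'$ and $O'$, so the multiplicity of $B$ vanishes in the region formerly containing $\zzz_1\cup \zzz_2$. Thus restricting the domain of $B$ to $\Sigma_1$ and $\Sigma_2$ produces well-defined domains $B_1$ and $B_2$ on $\HH_1$ and $\HH_2$ respectively, each vanishing near its boundary component, and $B = B_1\natural B_2$ with $\bdy^1 B_1 = -\bdy^0 B_2$ (using the gluing lemma for homology classes). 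Each $B_i$ is an empty periodic domain for $\xxx_i$ on $\HH_i$, and the multiplicities of $B_i$ agree pointwise with those of $B$ on $\Sigma_i$.

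Next I would run the case analysis. Assume without loss of generality that $\HH_1$ is admissible and $\HH_2$ is provincially admissible (the other case is symmetric). Suppose for contradiction that $B\neq 0$ has all non-negative (resp. all non-positive) multiplicities; then $B_1$ and $B_2$ inherit this sign condition. If $B_1\neq 0$, then $B_1$ is a non-zero empty periodic domain on $\HH_1$ with multiplicities of one sign, contradicting the admissibility of $\HH_1$. If $B_1 = 0$, then $\bdy^1 B_1 = 0$, so $\bdy^0 B_2 = 0$; since $\HH_2$ has a single boundary component $\bdy^0\HH_2$, this means $B_2$ is provincial. Moreover $B_2\neq 0$ (since $B\neq 0$), and $B_2$ again has multiplicities of one sign, contradicting provincial admissibility of $\HH_2$. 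Either way we reach a contradiction, so $B = 0$.

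The only mildly subtle point is verifying that the restrictions $B_i$ really are empty periodic domains in the sense of Section~\ref{sec:hdiagrams} (and not merely abstract relative chains): one needs to check that $\bdy^\alpha B_i$ and $\bdy^\beta B_i$ close up as multiples of the $\balpha_i$- and $\bbeta_i$-curves and arcs, which follows because the corresponding boundary of $B$ closes up on $\HH$ and the decomposition respects the gluing of $\alpha$-arcs into $\alpha$-circles. This is the analogue of \cite[Lemma 4.33]{bfh2}, and I expect the main (minor) obstacle to be writing it out cleanly rather than finding a genuinely new idea. Once the decomposition is established, the case analysis above concludes the argument.
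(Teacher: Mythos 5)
Your argument is correct and is essentially the paper's proof: decompose an empty periodic domain as $B_1\natural B_2$, use admissibility of one piece to force $B_1=0$ (hence $\bdy^0 B_2=0$), and then provincial admissibility of the other piece to force $B_2=0$. The extra care you take with the region containing $X'$ and $O'$ and with the boundary of the restricted domains is a fine (if slightly more detailed) writeup of the same idea.
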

 
\begin{proof}
The proof is identical to the proof of \cite[Lemma 4.33]{bfh2}, and we recall the argument here. Let $B_1\natural B_2$ be a positive periodic domain. If $\HH_1$ is admissible, then $B_1 = 0$, so $\bdy^{\bdy}B_1 = 0$, and since $\HH_2$ is provincially admissible, it follows that $B_2 =0$. Similarly, if $B_2$ is admissible, it follows that $B_1 =0$ and $B_2 = 0$.
\end{proof}

\begin{lemma}(Compare to \cite[Lemma 5.22]{bimod})
Suppose $\HH_1$ and $\HH_2$ are provincially admissible multipointed bordered Heegaard diagrams with two boundary components with $\bdy^1 \HH_1=(\bdy^0 \HH_2)^{\ast}$, and let $\HH = \HH_1\cup \HH_2$. If $\HH_1$ is right admissible, or $\HH_2$ is left admissible, then $\HH$ is provincially admissible. Furthermore: 
\begin{enumerate}
\item If $\HH_1$ and $\HH_2$ are both  left admissible (respectively right admissible), then  $\HH$ is left admissible (respectively right admissible).
\item If $\HH_1$ is admissible, then $\HH$ is left admissible. If $\HH_2$ is admissible, then $\HH$ is   right admissible.
\item If $\HH_1$ is admissible and $\HH_2$ is right admissible, or if $\HH_1$ is left admissible and $\HH_2$ is admissible, then $\HH$ is admissible.
\end{enumerate}
Analogous statements hold when one of the two Heegaard diagrams has one boundary component.
\end{lemma}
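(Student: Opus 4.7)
The entire statement is proved by a single decomposition argument applied repeatedly in each case. Given any empty periodic domain $B$ on $\HH = \HH_1 \cup \HH_2$, I will write $B = B_1 \natural B_2$, where $B_i$ is an empty homology class on $\HH_i$ with multiplicities inherited from $B$ (in particular, if $B$ has only non-negative multiplicities, so do $B_1$ and $B_2$), and where the matching condition $\bdy^1 B_1 = -\bdy^0 B_2$ holds along the glued boundary component. Since $\bdy \Sigma = \bdy^0\HH_1 \sqcup \bdy^1\HH_2 = \bdy^0\HH \sqcup \bdy^1\HH$, one has $\bdy^0 B = \bdy^0 B_1$ and $\bdy^1 B = \bdy^1 B_2$, so the various admissibility hypotheses on $B$ translate directly into vanishing-of-boundary conditions on $B_1$ and $B_2$.

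The proof then proceeds by applying the hypotheses to kill $B_1$ first and $B_2$ second (or vice versa), using the fact that periodic domains are allowed to have boundary on $\bdy\Sigma$. For the provincial admissibility of $\HH$: if $B$ is an empty provincial periodic domain on $\HH$ with non-negative multiplicities, then $B_1$ is an empty left-provincial periodic domain on $\HH_1$ and $B_2$ is an empty right-provincial periodic domain on $\HH_2$, both with non-negative multiplicities. If $\HH_1$ is right admissible, then $B_1 = 0$ directly; the matching condition then forces $B_2$ to be fully provincial, and provincial admissibility of $\HH_2$ gives $B_2 = 0$. The case where $\HH_2$ is left admissible is symmetric. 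For item (1), the hypothesis on $B$ being right-provincial (say) on $\HH$ means $B_2$ is right-provincial on $\HH_2$; left admissibility of $\HH_2$ kills $B_2$, the matching then makes $B_1$ right-provincial on $\HH_1$, and left admissibility of $\HH_1$ kills $B_1$.

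For items (2) and (3), the extra strength of full admissibility (which controls periodic domains with arbitrary boundary on $\bdy\Sigma_i$, not just provincial ones) is what kills the piece that still carries potentially nonzero boundary on the glued side. Explicitly, in item (2), if $\HH_1$ is admissible and $B$ is right-provincial on $\HH$, then $B_1$ is a priori only left-provincial on $\HH_1$ (it may still have boundary on $\bdy^1 \HH_1$), but being an empty periodic domain of $\HH_1$ with non-negative multiplicities it must vanish; the matching condition then makes $B_2$ both left- and right-provincial on $\HH_2$, and provincial admissibility of $\HH_2$ finishes the job. Item (3) combines these arguments: full admissibility of $\HH_1$ (resp.\ $\HH_2$) kills the corresponding $B_i$ immediately regardless of its boundary on the glued side, after which the other piece becomes fully provincial and is killed by the provincial or one-sided admissibility of the other diagram.

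The only point that requires care (and is the main bookkeeping obstacle) is keeping straight which admissibility condition constrains which type of $B_i$: the definitions of left and right admissibility refer to right-provincial and left-provincial domains respectively, so one must be vigilant not to swap these when working out the cascade. The ``analogous statements'' for a diagram with one boundary component are handled by the same proof after dropping one of the two boundary labels: in that case $\HH_1$ (if of type $1$) has only a $\bdy^1$ boundary, so ``left admissibility'' of $\HH_1$ is the same as admissibility, and the analogous proof reduces to (and in fact was proved as) the preceding lemma of \cite{bfh2}-type for the type $1$/type $2$ gluing.
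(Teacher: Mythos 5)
Your decomposition-and-cascade argument is correct and is essentially the paper's (implicit) proof: the paper simply defers to the analogous \cite[Lemma 5.22]{bimod}, whose proof — like that of the preceding type~$1$/type~$2$ gluing lemma reproduced in the paper — is exactly the splitting $B = B_1\natural B_2$ with $\bdy^1 B_1 = -\bdy^0 B_2$, followed by killing $B_1$ and $B_2$ in the appropriate order.

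Two bookkeeping slips are worth fixing, though neither invalidates the argument, since the hypothesis you actually invoke at each step is strong enough. In item (2), if $B$ is right-provincial on $\HH$ then the only boundary constraint is $\bdy^1 B_2 = 0$; you know nothing about $\bdy^0 B = \bdy^0 B_1$, so $B_1$ is \emph{not} a priori left-provincial — it is an arbitrary empty periodic domain of $\HH_1$, which is fine because full admissibility of $\HH_1$ kills it regardless. In item (3), after $B_1$ is killed by admissibility of $\HH_1$, the matching only gives $\bdy^0 B_2 = 0$, so $B_2$ is merely left-provincial (its $\bdy^1$ remains unconstrained when proving full admissibility of $\HH$); it is killed by the \emph{right} admissibility of $\HH_2$, not by $B_2$ "becoming fully provincial." Since you flagged exactly this bookkeeping as the delicate point, these two spots should be restated precisely.
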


\begin{proof}
The proof is analogous to that of \cite[Lemma 5.22]{bimod}
\end{proof}

\section{Moduli spaces}
\label{sec:moduli}

In this section, we describe the holomorphic curves that will be considered in the definitions of the various invariants associated to tangle Heegaard diagrams.

Most of this discussion is a straightforward generalization of the one for  bordered Floer homology \cite{bfh2}. We count pseudoholomorphic curves in $\Sigma\times I\times \R$. In the bordered Floer setting, one counts curves that avoid a basepoint $z\in \bdy\Sigma$. Here, we avoid multiple basepoints, both in the interior, and on the boundary of $\Sigma$, as well as the arcs (or points) that we denote by $\zzz$.   

\subsection{Moduli spaces of holomorphic curves} \label{ssec:moduli}

Let $\HH = (\Sigma, \balpha, \bbeta, \XX, \OO, \zzz)$ be a tangle Heegaard diagram (with one boundary component and of type $1$ or type $2$, or with two boundary components).  We can think of the open surface $\Int(\Sigma)$  as a surface with a set of punctures $\bf{p}$ (one puncture for each boundary component of $\Sigma$).  
Choose a symplectic form $\omega_{\Sigma}$ such that the boundary $\bdy \Sigma$ is a cylindrical end, and let $j_{\Sigma}$ be a compatible almost complex structure. We will assume  that $\balpha^a$ is cylindrical near $\bdy\Sigma$, in the following sense. There is a neighborhood $U_{\bf p}$ of the punctures symplectomorphic to $\bdy \Sigma\times (0,\infty)\subset T^{\ast}(\bdy \Sigma)$, such that $j_{\Sigma}$ and $\balpha^a\cap U_{\bf p}$ are invariant with respect to the $\R$-action on $\bdy \Sigma\times (0,\infty)$. We write $\mathbb D = I\times \R$, and let $\omega_{\mathbb D}$ and $j_{\mathbb D}$ be the standard symplectic form and almost complex structure on $\mathbb D\subset \C$. Consider the projections
\begin{align*}
\pi_{\Sigma}:\Sigma\times \mathbb D &\to \Sigma\\
\pi_{\mathbb D}: \Sigma\times \mathbb D &\to \mathbb D\\
s: \Sigma\times \mathbb D &\to I\\
t: \Sigma\times \mathbb D &\to \R 
\end{align*}

\begin{definition}
We say that an almost complex structure $J$ on $\Sigma\times \mathbb D$ is \emph{admissible} if the following conditions hold:
\begin{itemize}
\item[-] $\pi_{\mathbb D}$ is $J$-holomorphic
\item[-] $J(\bdy_s) = \bdy_t$ for the vector fields tangent to the fibers of $\pi_{\Sigma}$
\item[-] The $\R$-action is $J$-holomorphic
\item[-] $J$ splits as $J = j_{\Sigma}\times j_{\mathbb D}$ near $\bf{p}\times \mathbb D$
\end{itemize}
\end{definition}

\begin{definition}
A \emph{decorated source} $S^{\triangleright}$ consists of
\begin{itemize}
\item[-] a topological type of smooth Riemann surface $S$ with boundary, and a finite number of punctures on the boundary
\item[-] a labeling of each puncture of $S$ by $+$, $-$, or $e$
\item[-] a labeling of each $e$ puncture by a Reeb chord $\rho$ in $(\bdy\Sigma, \bdy\balpha)$
\end{itemize}
\end{definition}

Given a decorated source $S^{\triangleright}$, we denote by $S_{\bar e}$ the result of filling in the $e$ punctures of $S$.

We consider maps
$$u:(S, \bdy S)\to (\Sigma\times \mathbb D, (\balpha\times \{1\}\times \R)\cup (\bbeta\times\{0\}\times \R))$$
such that
\begin{enumerate}
\item $u$ is $(j, J)$ holomorphic for some almost complex structure $j$ on $S$.\label{1}
\item $u:S\to \Sigma\times \mathbb D$ is proper.
\item $u$ extends to a proper map $u_{\bar e}:S_{\bar e}\to \Sigma_{\bar e}\times \mathbb D$
\item $u$ has finite energy in the sense of Bourgeois, Eliashberg, Hofer,
Wysocki and Zehnder \cite{behwz}.
\item $\pi_{\mathbb D}\circ u$ is a $g$-fold branched cover.
\item At each $+$ puncture $q$ of $S$, $\lim_{z\to q}t\circ u(z) = +\infty$.
\item At each $-$ puncture $q$ of $S$, $\lim_{z\to q}t\circ u(z) = -\infty$.
\item At each $e$ puncture $q$ of $S$, $\lim_{z\to q}\pi_{\Sigma}\circ u(z)$ is the Reeb chord $\rho$ labeling $q$.
\item $\pi_{\Sigma}\circ u:S\to \Int(\Sigma)$ does not cover any of the regions of $\Sigma\setminus (\balpha\cup\bbeta)$ that intersect $\XX\cup \OO\cup\zzz$.
\item For each $t\in \R$ and $\beta_i\in \bbeta$, $u^{-1}(\beta_i\times \{0\}\times \{t\})$ consists of exactly one point; for each $t\in \R$ and $\alpha^c_i\in \balpha$, $u^{-1}(\alpha^c_i\times \{1\}\times \{t\})$ consists of exactly one point; for each $t\in \R$ and $\alpha^a_i\in \balpha$, $u^{-1}(\alpha^a_i\times \{1\}\times \{t\})$ consists of at most one point. \label{10}
\item $u$ is embedded.\label{11}
\end{enumerate}

Under these conditions, at $-\infty$, $u$ is asymptotic to a $g$-tuple of arcs $x_i\times I\times \{-\infty\}$, and at $+\infty$, $u$ is asymptotic to a $g$-tuple of arcs $y_i\times I\times \{+\infty\}$, so that $\xxx \coloneqq\{x_1, \ldots, x_g\}$ and $\yyy \coloneqq\{y_1, \ldots, y_g\}$ are generators of $\HH$. We call $\xxx$ the \emph{incoming} generator, and $\yyy$ the \emph{outgoing} generator for $u$. Such a curve $u$ has an associated homology class $B=[u]\in \pi_2(\xxx, \yyy)$.

\begin{definition}
Given a map $u$ from a decorated source $S^{\triangleright}$, the \emph{height} of an $e$ puncture $q$ is the evaluation $\mathrm{ev}(q) = t\circ u_{\bar e}(q)\in \R$.
\end{definition}

\begin{definition}
Let $E(S^{\triangleright})$ be the set of $e$ punctures of $S$. Let $\vec P = (P_1, \ldots, P_m)$ be a partition of $E(S^{\triangleright})$ with $P_i$ nonempty. We say a map $u$ is \emph{$\vec P$-compatible} if for any $i$, all the punctures in $P_i$ have the same height, and $\mathrm{ev}(P_i)  <\mathrm{ev}(P_j)$ whenever $i<j$.
\end{definition}

To a partition $\vec P = (P_1, \ldots, P_m)$ we  associate a sequence of sets of Reeb chords $\vec{\brho}(\vec P) = (\brho_1, \ldots, \brho_m)$ where $\brho_i \coloneqq \{\rho | \rho \textrm{ labels } q, q\in P_i\}$. To such a sequence $\vec{\brho}$ we can associate a homology class
 $$[\vec{\brho}] = [\brho_1] + \cdots + [\brho_m]\in H_1(\bdy\Sigma,\bdy\balpha)$$
 and an algebra element
 $$a(\vec{\brho}) = a(\brho_1)\cdots a(\brho_m)\in \cala(\bdy\HH).$$
Note that $[a(\vec{\brho})] = [\vec{\brho}]$, and also if $u$ is a $\vec P$-compatible map satisfying (\ref{1})-(\ref{10}) with homology class $[u] = B$, then $[\vec{\brho}(\vec P)] = \bdy^{\bdy}B$. 

\begin{definition}
Given generators $\xxx$ and $\yyy$, a homology class $B\in \pi_2(\xxx, \yyy)$,  and a decorated source $S^{\triangleright}$, we let
$$\widetilde{\mathcal M}^B(\xxx,\yyy, S^{\triangleright})$$
denote the moduli space of curves $u$ with source $S^{\triangleright}$ satisfying (\ref{1})-(\ref{10}), asymptotic to $\xxx$ at $-\infty$ and to $\yyy$ at $+\infty$, and with homology class $[u]=B$.
Given also a partition $\vec P$ of $E( S^{\triangleright})$, we let
$$\widetilde{\mathcal M}^B(\xxx,\yyy, S^{\triangleright}, \vec P)$$
denote  the space of $\vec P$-compatible maps in $\widetilde{\mathcal M}^B(\xxx,\yyy; S^{\triangleright})$, and we let
$$\widetilde{\mathcal M}^B_{\textrm{emb}}(\xxx,\yyy, S^{\triangleright}, \vec P)$$ denote the space of maps in $\widetilde{\mathcal M}^B(\xxx,\yyy, S^{\triangleright}, \vec P)$ that also satisfy (\ref{11}).
\end{definition}

Many results carry over directly from the ones in \cite{bfh2} and \cite{bs}.

\begin{proposition}(Compare to \cite[Proposition 5.6]{bfh2})
There is a dense set of admissible $J$ for which the spaces $\widetilde{\mathcal M}^B(\xxx,\yyy, S^{\triangleright}, \vec P)$ are transversally cut out by the $\bar\bdy$ equations.
\end{proposition}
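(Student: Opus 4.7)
The plan is to follow the standard Sard--Smale / universal moduli space strategy, adapted from the cylindrical transversality proofs of Lipshitz and from \cite[Proposition 5.6]{bfh2}. I first introduce the Banach space $\mathcal{J}$ of admissible almost complex structures of class $C^{\ell}$ (for some large $\ell$, to be sent to $\infty$ at the end via a standard Taubes trick), and a Banach manifold $\mathcal{B}(S^{\triangleright}, \vec{P})$ of $W^{k,p}$ maps from $S^{\triangleright}$ into $\Sigma \times \mathbb{D}$ with the asymptotic decay, boundary conditions on $\balpha \times \{1\} \cup \bbeta \times \{0\}$, and $\vec{P}$-matching conditions built in (the last enforced by cutting down by the matching evaluation map $\mathrm{ev}$). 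I then consider the universal moduli space
\[
\widetilde{\mathcal M}^{B}_{\mathrm{univ}}(\xxx,\yyy,S^{\triangleright},\vec P)
= \{(J,u) \in \mathcal{J} \times \mathcal{B}(S^{\triangleright},\vec P) \mid \bar\partial_{J} u = 0\}.
\]

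The key step is to show that this universal moduli space is a smooth Banach manifold, i.e.\ that the linearization
\[
D_{(J,u)} : T_{J}\mathcal{J} \oplus T_{u}\mathcal{B} \longrightarrow \mathcal{E}_{u}
\]
of the section $(J,u) \mapsto \bar\partial_{J} u$ is surjective at every zero. The $J$-derivative contributes a term of the form $Y \circ du \circ j$, where $Y$ is a tangential variation of $J$ compatible with the admissibility axioms. Because the admissibility axioms only pin down $J$ along the fibers of $\pi_{\Sigma}$, near the cylindrical end over $\bdy \Sigma$, and along the $\R$-direction, we have a fiberwise free choice of $Y$ in the $\Sigma$-direction at each point of $\Sigma \setminus (U_{\mathbf p} \cup \text{fixed regions})$. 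Condition (9) in the definition of a holomorphic curve guarantees that $u$ does not cover the excluded regions containing $\XX \cup \OO \cup \zzz$, so these constraints impose no obstruction. Surjectivity then reduces, by a standard duality argument, to showing that if $\eta$ lies in the cokernel and pairs to zero with every such $Y$-variation, then $\eta \equiv 0$; this follows once we establish that $u$ has an open dense set of injective points in $\Sigma \setminus (\balpha \cup \bbeta \cup \XX \cup \OO \cup \zzz)$ at which the cokernel element can be localized.

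Once the universal moduli space is smooth, transversality follows by Sard--Smale: the projection $\widetilde{\mathcal M}^{B}_{\mathrm{univ}} \to \mathcal{J}$ is Fredholm (its linearization at $(J,u)$ has the same index as $D_{u}\bar\partial_J$ on the fixed-$J$ slice), so the set of regular values $\mathcal{J}_{\mathrm{reg}}(\xxx,\yyy,B,S^{\triangleright},\vec P) \subset \mathcal{J}$ is comeager, hence dense. Intersecting the resulting countable collection over all choices of $(\xxx,\yyy,B,S^{\triangleright},\vec P)$ and passing from $C^{\ell}$ to $C^{\infty}$ via Taubes's trick gives a dense set of admissible smooth $J$ achieving simultaneous transversality for all the listed moduli spaces.

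The main obstacle will be the proof of surjectivity of the universal linearization, because the admissibility axioms are genuinely restrictive: $J$ must be $\R$-invariant, must make $\pi_{\mathbb{D}}$ holomorphic, must satisfy $J(\partial_s) = \partial_t$, and must split near $\mathbf{p} \times \mathbb{D}$. The standard workaround, which I would follow, is to exploit injective points of $\pi_{\Sigma} \circ u$ rather than of $u$ itself (so that $\R$-invariant variations still hit an open neighborhood in $\Sigma$), to keep the support of the test variation $Y$ inside the complement of $U_{\mathbf p}$ and the forbidden regions, and to handle the branched cover condition on $\pi_{\mathbb{D}}$ separately using the fact that $\pi_{\mathbb{D}}$ is already $J$-holomorphic for every admissible $J$ so that the branching contributes only finite-dimensional obstructions that are automatically regular. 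The $\vec P$-matching condition is handled by verifying that the evaluation map $\mathrm{ev}$ on the unconstrained universal moduli space is submersive in the fiber directions we need, which again reduces to the same fiberwise freedom argument.
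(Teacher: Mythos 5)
Your proposal is correct and follows essentially the same route the paper takes: the paper gives no independent argument here but defers to \cite[Proposition 5.6]{bfh2}, whose proof is exactly the Sard--Smale/universal-moduli-space argument you outline, with $J$ perturbed only in the $\Sigma$-fiber directions and the cokernel localized at injective points of $\pi_\Sigma\circ u$. The one refinement worth noting is that the injective points are found on the $\bbeta$-boundary (condition (10), that $u$ meets each $\beta_i\times\{0\}\times\{t\}$ exactly once, rules out multiple covering there) rather than in the interior of $\Sigma\setminus(\balpha\cup\bbeta)$, where $\pi_\Sigma\circ u$ could a priori be multiply covered.
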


\begin{proposition}(Compare to \cite[Proposition 5.8]{bfh2})
The expected dimension $\mathrm{ind}(B, S^{\triangleright}, \vec P)$ of   $\widetilde{\mathcal M}^B(\xxx,\yyy, S^{\triangleright}, \vec P)$ is
$$\mathrm{ind}(B, S^{\triangleright}, \vec P) = g - \chi(S) + 2e(B)+ |\vec P|.$$
Here $e(B)$ is the Euler measure of the domain of $B$, and $|\vec P|$ is the number of parts in the partition $\vec P$.
\end{proposition}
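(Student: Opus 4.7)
The plan is to reduce the claim to the analogous index computation in bordered Floer homology, since the Heegaard diagrams studied here differ from the ones in \cite{bfh2} only in the $\XX$, $\OO$, $\zzz$ data, which enters the moduli space definition only through the regional constraint (9) and therefore does not affect the Fredholm theory. In particular, the conditions (1)--(8), (10), (11) are formally identical to those used by Lipshitz--Ozsv\'ath--Thurston, so the linearized $\bar\partial$ operator, its asymptotic behavior, and its boundary conditions are the same.

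First, I would compute the index of $\widetilde{\M}^B(\xxx,\yyy,S^{\triangleright})$ without the partition constraint. By applying the Riemann--Roch theorem to the linearized $\bar\partial$ operator on $S$ with Lagrangian boundary conditions on $\balpha\times\{1\}\times\R$ and $\bbeta\times\{0\}\times\R$, asymptotic conditions at $\pm\infty$ given by $\xxx$ and $\yyy$, and Reeb-chord asymptotics at the $e$ punctures, one finds that the expected dimension equals $g-\chi(S)+2e(B)+|E(S^{\triangleright})|$. The factor of $2e(B)$ comes from the Maslov index of the totally real boundary conditions on the product Lagrangians, computed via the same additivity arguments over regions of $\Sigma\setminus(\balpha\cup\bbeta)$ as in \cite[Proposition 5.8]{bfh2} (itself adapted from Lipshitz's cylindrical formulation); the term $|E(S^{\triangleright})|$ counts one unit of mobility per $e$ puncture, since each puncture can slide independently in the $\R$-direction before any height constraint is imposed.

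Second, I would impose the partition $\vec P=(P_1,\dots,P_m)$ by cutting down the unconstrained moduli space by the map sending $u$ to the vector of height differences within each $P_i$. This is a submersion at a generic point, and the constraint that all punctures in $P_i$ share a common height contributes codimension $|P_i|-1$. Summing, the total codimension is
\[
\sum_{i=1}^{m}(|P_i|-1) \;=\; |E(S^{\triangleright})| - |\vec P|,
\]
so the expected dimension of $\widetilde{\M}^B(\xxx,\yyy,S^{\triangleright},\vec P)$ becomes
\[
\bigl(g-\chi(S)+2e(B)+|E(S^{\triangleright})|\bigr) - \bigl(|E(S^{\triangleright})|-|\vec P|\bigr) \;=\; g-\chi(S)+2e(B)+|\vec P|,
\]
as claimed.

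The main technical obstacle is verifying that the height-matching constraints are genuinely transverse and that the Riemann--Roch computation for the Maslov index with mixed boundary conditions (Lagrangian arcs reaching to Reeb chords on $\bdy\Sigma$) gives exactly $2e(B)$. Both points are handled in the bordered setting by \cite[Chapter 5]{bfh2}, and their arguments carry over verbatim because they use only the cylindrical end structure near $\bdy\Sigma$, the Lagrangian nature of the boundary conditions, and the requirement that $\pi_{\mathbb{D}}\circ u$ be a $g$-fold branched cover---all of which we have imposed identically. The markings $\XX$, $\OO$, $\zzz$ contribute nothing to the index, affecting only which homology classes $B$ admit holomorphic representatives (via condition (9)) and the compactness theory (via admissibility).
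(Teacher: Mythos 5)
Your proposal is correct and follows exactly the route the paper intends: the paper offers no independent proof, simply citing \cite[Proposition 5.8]{bfh2}, and your argument is precisely the one given there — Riemann--Roch/Lipshitz's index formula yielding $g-\chi(S)+2e(B)+|E(S^{\triangleright})|$ for the unconstrained space, followed by cutting down by the codimension-$(|E(S^{\triangleright})|-|\vec P|)$ height-matching conditions. Your observation that the $\XX$, $\OO$, $\zzz$ data enter only through condition (9) and hence do not affect the Fredholm index is exactly the justification the paper relies on for carrying the result over verbatim.
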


Whether a curve in $\widetilde{\mathcal M}^B(\xxx,\yyy, S^{\triangleright}, \vec P)$ is embedded depends only on the topological data of $B$, $S^{\triangleright}$, and $\vec P$, i.e. there are entire components of embedded and of non-embedded curves. For embedded curves, there is another index formula that only depends on $B$ and $\vec P$. Before we state this formula, we make a couple of definitions regarding Reeb chords. Even though our matched circles are different, these definitions are identical to the ones in  \cite[Sections 3.3.1, 5.7.1]{bfh2}.

Let $m:H_1(\bdy\Sigma\setminus\zzz, \bdy\balpha; \Z)\times H_0(\bdy\balpha; \Z)\to \frac{1}{2}\Z$ be the map that counts local multiplicities. Specifically, for $a\in H_1(\bdy\Sigma\setminus\zzz, \bdy\balpha; \Z)$ and $p\in \bdy\balpha$, we define  the \emph{multiplicity} $m(a,p)$ of $p$ in $a$ as the average multiplicity with which $a$ covers the regions on either side of $p$, and extend bilinearly.

For $a,b\in H_1(\bdy\Sigma\setminus\zzz, \bdy\balpha; \Z)$, define
$$L(a,b) \coloneqq m(b, \bdy a),$$
where $\bdy$ is the connecting homomorphism from the homology long exact sequence. Note that $L(a,b) = -L(b,a)$ for any $a,b$.

For a set of Reeb chords $\brho$ in  $(\bdy\Sigma\setminus\zzz, \bdy\balpha)$,  define
$$\iota(\brho) \coloneqq -\sum_{\{\rho_i,\rho_j\}\subset \brho}|L([\rho_i],[\rho_j])| - \frac{|\brho|}{2}.$$

For a sequence of sets of Reeb chords $\vec{\brho} = (\brho_1, \ldots, \brho_m)$, define
$$\iota(\vec{\brho})\coloneqq \sum_{i}\iota(\brho_i)+ \sum_{i<j}L([\brho_i], [\brho_j]).$$

 Finally, we come to the index formula.

\begin{definition}
Let $B\in \pi_2(\xx, \yy)$ and $\vec{\brho}$ be a sequence of sets of Reeb chords. We define
\begin{align*}
\chi_{\textrm{emb}}(B, \vec{\brho}) &\coloneqq g + e(B) - n_{\xx}(B) - n_{\yy}(B) - \iota(\vec{\brho}),\\
\mathrm{ind}(B, \vec{\brho}) &\coloneqq e(B) + n_{\xx}(B) + n_{\yy}(B) + |\vec{\brho}| +  \iota(\vec{\brho}).
\end{align*}
\end{definition}

\begin{proposition}(Compare to \cite[Proposition 5.62]{bfh2} and \cite[Proposition 5.9]{bs})
For $u\in \widetilde{\mathcal M}^B(\xxx,\yyy, S^{\triangleright}, \vec P)$, either $u$ is embedded, and 
\begin{align*}
\chi(S^{\triangleright}) &=\chi_{\mathrm{emb}}(B, \vec{\brho}(\vec P)) ,\\
\mathrm{ind}(B, S^{\triangleright}, \vec P) &= \mathrm{ind}(B, \vec{\brho}(\vec P)),\\
\widetilde{\mathcal M}^B_{\textrm{emb}}(\xxx,\yyy, S^{\triangleright}, \vec P) &= \widetilde{\mathcal M}^B(\xxx,\yyy, S^{\triangleright}, \vec P),
\end{align*}
or $u$ is not embedded, and 
\begin{align*}
\chi(S^{\triangleright}) &>\chi_{\mathrm{emb}}(B, \vec{\brho}(\vec P)) ,\\
\mathrm{ind}(B, S^{\triangleright}, \vec P) &< \mathrm{ind}(B, \vec{\brho}(\vec P)),\\
\widetilde{\mathcal M}^B_{\textrm{emb}}(\xxx,\yyy, S^{\triangleright}, \vec P) &= \emptyset.
\end{align*}
\end{proposition}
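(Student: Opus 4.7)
The plan is to adapt the proofs of the embedded index formulas in \cite[Proposition 5.62]{bfh2} and \cite[Proposition 5.9]{bs} to our setting. The combinatorics of our marked matched circles differ, but the overall structure of the argument is local near $\bdy\Sigma$ and near the punctures of $S$, and the basepoints $\XX\cup\OO\cup\zzz$ only restrict the admissible domains without affecting the local index/Euler-characteristic computation.

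First I would apply the Riemann--Hurwitz formula to the $g$-fold branched cover $\pi_{\mathbb D}\circ u_{\bar e}\colon S_{\bar e}\to \mathbb D$ to write $\chi(S^{\triangleright})$ in terms of the total branching number of this projection, the number $|\vec{\brho}(\vec P)|$ of $e$-punctures filled in to pass from $S$ to $S_{\bar e}$, and $g$. Next, using intersection theory in $\Sigma\times \mathbb D$, I would compute the algebraic self-intersection number of $u$ by pairing $u$ with a generic small pushoff constructed so as to agree with $u$ in the cylindrical ends at $\pm\infty$ and in the neighborhoods $U_{\mathbf p}\times \mathbb D$ where $J$ splits. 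Because the pushoff is homotopic to $u$ through proper maps with fixed asymptotics, this number depends only on the topological data $(B,\vec{\brho}(\vec P))$: the ends at $\pm\infty$ contribute $n_\xxx(B)+n_\yyy(B)$ together with a term $e(B)$ from the Euler measure, and the boundary ends contribute $-\iota(\vec{\brho}(\vec P))$ via the local linking calculation of \cite[Section~5.7]{bfh2}.

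Following Lipshitz, this algebraic self-intersection equals the branching number of $\pi_{\mathbb D}\circ u$ plus $2\delta(u)$, where $\delta(u)\geq 0$ counts the singularities of $u$ (interior double points, boundary double points, and higher-order contributions) and vanishes precisely when $u$ is embedded. Combining this with Riemann--Hurwitz yields
\[
\chi(S^{\triangleright}) \;=\; \chi_{\mathrm{emb}}(B,\vec{\brho}(\vec P)) + 2\delta(u),
\]
so $\chi(S^{\triangleright})\geq \chi_{\mathrm{emb}}$ with equality iff $u$ is embedded. Substituting into the expected dimension formula $\mathrm{ind}(B,S^{\triangleright},\vec P)=g-\chi(S)+2e(B)+|\vec P|$ and using the identity
\[
g-\chi_{\mathrm{emb}}(B,\vec{\brho})+2e(B)+|\vec P| \;=\; \mathrm{ind}(B,\vec{\brho}),
\]
which follows directly from the definitions, immediately gives $\mathrm{ind}(B,S^{\triangleright},\vec P)\leq \mathrm{ind}(B,\vec{\brho}(\vec P))$ with equality iff $u$ is embedded. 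The final bullet of the statement (emptiness of the embedded moduli space when $u$ is non-embedded) is then just a restatement of the embeddedness dichotomy.

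The main obstacle is verifying that the boundary self-intersection contribution really produces $-\iota(\vec{\brho})$, which amounts to evaluating the linking of the asymptotic Reeb slices of $u$ with those of its pushoff inside the cylindrical model $\bdy\Sigma\times(0,\infty)\times \mathbb D$. The computation is done one boundary component at a time: the ordering of Reeb chords within each $\brho_i$ produces the $|L([\rho_i],[\rho_j])|$ terms, while the relative ordering between successive $\brho_i$ and $\brho_j$ produces the $L([\brho_i],[\brho_j])$ terms; for diagrams with two boundary components the contributions from each component simply add. Some care is required to check that the $\zzz$-basepoints do not introduce spurious contributions, but since Reeb chords are forbidden from crossing $\zzz$ the formula is unaffected. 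Once signs and conventions are fixed consistently with Section~\ref{sec:reeb_alg}, the verification is mechanical and mirrors \cite[Section 5.7.1]{bfh2}.
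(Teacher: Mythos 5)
Your proposal is correct and follows exactly the route the paper intends: the paper gives no proof of this proposition, stating only that it carries over from \cite[Proposition 5.62]{bfh2} and \cite[Proposition 5.9]{bs}, and your sketch is a faithful reconstruction of that argument (Riemann--Hurwitz for $\pi_{\mathbb D}\circ u$, self-intersection via a pushoff with controlled asymptotics, the $2\delta(u)\geq 0$ correction vanishing iff $u$ is embedded, and the boundary linking computation producing $-\iota(\vec{\brho})$). The arithmetic identity you use to pass from the $\chi$ statement to the index statement checks out against the paper's definitions of $\mathrm{ind}(B,S^{\triangleright},\vec P)$, $\chi_{\mathrm{emb}}$, and $\mathrm{ind}(B,\vec{\brho})$, and your remarks about the $\zzz$-basepoints and the additivity over boundary components are consistent with how the paper sets up $\iota$ on $H_1(\bdy\Sigma\setminus\zzz,\bdy\balpha)$.
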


Each of the moduli spaces has an $\R$-action by translation in the $t$ factor. For \emph{stable} curves, i.e. except when the moduli space consists of a single curve $u$ with $\pi_{\mathbb D}\circ u$ a trivial $g$-fold cover of $\mathbb D$ and $B=0$, this action is free. For moduli spaces of stable curves, we quotient by this  action.

\begin{definition} Given $\xxx$, $\yyy$, $S^{\triangleright}$, and $\vec P$, let
\begin{align*}
\mathcal M^B(\xxx,\yyy, S^{\triangleright}, \vec P) &\coloneqq \widetilde{\mathcal M}^B(\xxx,\yyy, S^{\triangleright}, \vec P)/\R\\
\mathcal M^B_{\textrm{emb}}(\xxx,\yyy, S^{\triangleright}, \vec P) &\coloneqq \widetilde{\mathcal M}^B_{\textrm{emb}}(\xxx,\yyy, S^{\triangleright}, \vec P)/\R
\end{align*}
\end{definition}

\subsection{Degenerations} The properties of moduli spaces that are needed in order to show that the invariants  are well-defined are the same as in \cite{bfh2}. To understand the compactifications of moduli spaces, one studies \emph{holomorphic combs}, i.e. 
trees of homomorphic curves in $\Sigma\times \mathbb D$ and in $\bdy \Sigma\times \R\times \mathbb D$. In the \emph{tilde} version (when one does not allow domains that cover $\XX\cup \OO$), most types of degenerations are the same as in \cite{bfh2}, and most results carry over. 

The only difference is in the homological assumptions on $\Sigma$, $\balpha$, $\bbeta$. Even though the $\alpha$ or the $\beta$ circles are not linearly independent, there are no new boundary degenerations, as every region of $\Sigma\setminus\balpha$, as well as every region of $\Sigma\setminus\bbeta$, contains an $X$ or an $O$,

\section{The modules associated to tangle Heegaard diagrams}
\label{sec:hd_modules}

In this section, we associate algebraic structures to tangle Heegaard diagrams. Before we proceed, recall that for any pointed matched circle $\zz$, the algebra $\az$ does not depend on the $\XX$ and $\OO$ markings on the circle. 

For the remainder of this paper, we let $V$ denote $\F_2\otimes \F_2$.

\subsection{The type $D$ structure}

We define type $D$ structures for type $2$ mutipointed  bordered Heegaard diagrams for tangles. The construction and results for  type $1$ diagrams are identical.

Suppose $\HH = (\Sigma, \balpha, \bbeta, \XX, \OO, \zzz)$ is a provincially admissible Heegaard diagram of type $2$ for some $2n$-tangle $(Y, \T)$. Let $J$ be an admissible almost complex structure.
We define a left type $D$ structure $\cftdt (\mathcal H, J)$ over $\mathcal A(-\bdy\mathcal H)$, as follows. 

Let $X(\mathcal H)$ be the $\F_2$ vector space spanned by $\mathfrak S(\mathcal H)$. Define $I_D(\xx)=I(\bar o(\xx))\in \mathcal I(-\bdy\HH)$. We define an action  on $X(\mathcal H)$ of $\mathcal I(-\bdy\mathcal H)$ by
$$I(\sss)\cdot \xx = \left\{  \begin{array}{ll}
\xx & \textrm{ if } I(\sss) = I_D(\xx)\\ 
0 & \textrm{ otherwise. } 
\end{array}\right.$$
Then $\cftdt(\mathcal H, J)$ is defined as an $\mathcal A(-\bdy\mathcal H)$-module by 
$$\cftdt (\mathcal H, J) = \mathcal A(-\bdy\mathcal H)\otimes_{\mathcal I(-\bdy\mathcal H)}X(\mathcal H).$$
Given $\xx, \yy\in \mathfrak S(\HH)$, we define
$$a_{\xx,\yy}\coloneqq \sum_{\substack{B\in \hat\pi_2(\xxx, \yyy)\\ \vec P  \textrm{ discrete}\\ \ind(B, \vec{\brho}(\vec P)) = 1}} \#\mathcal M^B_{\textrm{emb}}(\xxx,\yyy, S^{\triangleright}, \vec P)\cdot a(-P_1)\cdots a(-P_m).$$
Here all $P$ are discrete partitions, i.e. partitions $P=(P_1, \ldots, P_m)$ where $|P_i| = 1$.

The map $\delta:\cftdt(\mathcal H, J)\to\cala(-\bdy\HH)\otimes \cftdt(\mathcal H, J)$ is defined as
$$\delta(\xx) = \sum_{\yy\in \mathfrak S(\mathcal H)}a_{\xx,\yy}\otimes \yy.$$

\begin{theorem}
Let $\HH$ be a tangle Heegaard diagram of type $2$ for a marked tangle $\T$ in a $3$-manifold $Y$, equipped with an admissible almost complex structure $J$. If $\HH$ is provincially admissible, then $\cftdt(\HH, J)$ is a type $D$ structure over $\cala(-\bdy \HH)$. Moreover, if $\HH$ is admissible, then  $\cftdt(\HH, J)$ is bounded. 
\end{theorem}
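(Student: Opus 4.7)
The plan is to verify the type $D$ structure identity
\[
(d \otimes \id) \circ \delta + (\mu \otimes \id) \circ (\id \otimes \delta) \circ \delta = 0
\]
by the standard holomorphic-curve analysis of the ends of one-dimensional moduli spaces, following the scheme of \cite[Chapter 6]{bfh2}. First, I would verify that $\delta$ is well-defined: for any fixed generator $\xx$, only finitely many pairs $(\yy, B)$ with $B \in \hat\pi_2(\xx, \yy)$ supporting a non-empty index-one embedded moduli space with discrete $\vec P$ can contribute a nonzero term. Provincial admissibility excludes all non-zero empty provincial periodic domains with uniformly non-negative multiplicities, which pins down the supply of non-negative $B$ and makes each $a_{\xx,\yy}$ a finite sum in $\cala(-\bdy\HH)$. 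A separate check that $\delta$ restricts correctly with respect to $I_D$ confirms that it respects the idempotent decomposition.

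Next I would prove the structure relation by analyzing the compactifications of index-two embedded moduli spaces. Fixing $\xx$, $\zz$ and $B \in \hat\pi_2(\xx, \zz)$, I would consider the one-dimensional moduli spaces $\mathcal{M}^B_{\textrm{emb}}(\xx, \zz, S^\triangleright, \vec P)$ with $\vec P$ discrete and $\ind(B, \vec\brho(\vec P)) = 2$, and count their ends modulo two. The ends fall into three types: (i) \emph{two-story limits}, splitting at an intermediate generator $\yy$ into index-one pieces whose algebra outputs concatenate under the multiplication $\mu$, contributing the $(\mu \otimes \id)(\id \otimes \delta)\delta$ term; (ii) \emph{join-curve degenerations at East infinity}, where two consecutive height-separated Reeb chords collide to form a longer Reeb chord, contributing the $(d \otimes \id)\delta$ term by the definition of $d$ on $\cala(-\bdy\HH)$; (iii) \emph{split-curve degenerations and the related obstructed-pair degenerations} in the sense of \cite{bfh2}, which cancel in pairs. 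Summing over $B$ and $S^\triangleright$ then yields the desired identity modulo two.

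The main obstacle is excluding all remaining potential ends, in particular $\balpha$- and $\bbeta$-boundary degenerations. Unlike the bordered Floer setup of \cite{bfh2}, our $\balpha$ and $\bbeta$ are not homologically independent; however, the defining conditions on $\HH$ force every region of $\Sigma \setminus \balpha$ that does not meet $\bdy\Sigma$ and every region of $\Sigma \setminus \bbeta$ that does not meet $\bdy\Sigma$ to contain a point of $\XX \cup \OO$. Hence any nontrivial positive $\balpha$- or $\bbeta$-boundary domain acquires a positive multiplicity at some basepoint and is thus excluded by our use of $\hat\pi_2$ in the definition of $\delta$. Finally, under full admissibility, the set of $B \in \hat\pi_2(\xx, \yy)$ admitting a non-negative representative is finite across all pairs $(\xx, \yy)$; in particular there is a uniform bound on the length $|\vec P|$ of Reeb chord sequences that can appear, so $\delta_i = 0$ for all sufficiently large $i$, proving boundedness.
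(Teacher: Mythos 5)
Your proposal is correct and follows essentially the same route as the paper, which simply cites the arguments of \cite[Chapter 6]{bfh2}: finiteness of the counts from provincial admissibility together with the finiteness of nonzero algebra elements $a(\vec\brho(\vec P))$, the structure identity from counting ends of index-two moduli spaces, and boundedness from admissibility. Your observation that boundary degenerations are excluded because every region of $\Sigma\setminus\balpha$ and of $\Sigma\setminus\bbeta$ meets $\XX\cup\OO\cup\zzz$ is exactly the point the paper makes in its section on degenerations.
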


\begin{proof}
The proof follows directly from the arguments for $\cfdhat$ in \cite[Chapter 6]{bfh2}. We outline the main steps. To show that the counts of holomorphic curves are finite, we observe that in a provincially admissible diagram there are only finitely many domains that contribute to the counts, and for any diagram there are only finitely many sequences $\vec P$ with nonzero $a(\vec{\brho}(\vec P))\in \cala(-\bdy\HH)$. To show that the compatibility condition for a type $D$ structure is satisfied, we count  possible degenerations of holomorphic curves. 
\end{proof}

\begin{theorem}\label{thm:dinv}
Up to homotopy equivalence and tensoring with $V$, $\cftdt(\HH, J)$ is independent of the choice of sufficiently generic admissible almost complex structure, and provincially admissible type $2$ tangle Heegaard diagram for $(Y, \T)$. Namely, if $\HH_1$ and $\HH_2$ are provincially admissible type $2$ diagrams for $(Y, \T)$ with almost complex structures $J_1$ and $J_2$, and $|\X_1| = |\X_2|+k$, then 
$$\cftdt(\HH_1, J_1)\simeq \cftdt(\HH_2, J_2)\otimes V^{\otimes k}.$$
\end{theorem}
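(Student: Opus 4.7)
The plan is to follow the standard invariance strategy of bordered Heegaard Floer homology \cite{bfh2}, adapted to the extra markings $\XX\cup\OO\cup\zzz$. Invariance has two layers: independence of the almost complex structure $J$ for a fixed admissible diagram, and independence under the Heegaard moves of Proposition \ref{type2moves} relating any two type $2$ diagrams for $(Y,\T)$.

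First I would handle $J$-invariance. Given two generic admissible $J_0, J_1$, choose a generic path $J_t$ of admissible almost complex structures and, by counting index $0$ embedded holomorphic curves with respect to this time-varying structure (with forbidden regions those containing $\XX\cup\OO\cup\zzz$, exactly as in the static definition), construct morphisms $F_{01}\colon \cftdt(\HH,J_0)\to\cftdt(\HH,J_1)$ of type $D$ structures over $\cala(-\bdy\HH)$, and similarly $F_{10}$. Provincial admissibility guarantees that only finitely many homology classes contribute. A one-parameter family of paths then produces a homotopy showing $F_{10}\circ F_{01}\simeq \id$ and $F_{01}\circ F_{10}\simeq \id$, exactly as in the closed or bordered cases.

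Next I would run through each Heegaard move. For isotopies of $\alpha$- or $\beta$-curves not crossing $\bdy\Sigma\cup\XX\cup\OO\cup\zzz$, a small isotopy is handled by a Hamiltonian isotopy of $J$, and a large isotopy that introduces new intersection points is reduced to handle slides plus small isotopies. Handle slides of $\alpha$-arcs over $\alpha$-circles and of $\beta$-circles over $\beta$-circles are controlled by counting pseudoholomorphic triangles in the associated triple diagrams $(\Sigma,\balpha,\bbeta,\bbeta')$ or $(\Sigma,\balpha,\balpha',\bbeta)$, with the top-graded generator $\Theta$ of the new $(\bbeta,\bbeta')$- or $(\balpha,\balpha')$-diagram providing the canonical cycle; the provincial admissibility can be arranged throughout so that all moduli spaces are compact and the resulting pentagon/hexagon count proves that the triangle map and its inverse are mutual homotopy inverses. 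Index $1/2$ stabilization is a purely local move: the new pair of $\alpha$- and $\beta$-circles intersects once, contributing a single extra intersection to every generator, and a neck-stretching argument identifies $\cftdt$ of the stabilized diagram with $\cftdt(\HH,J)$.

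The main step, and the source of the $V^{\otimes k}$ factor, is index $0/3$ stabilization at an $X\in\XX$ or $O\in\OO$; this is where the hypothesis $|\X_1|=|\X_2|+k$ enters. The new $\alpha$- and $\beta$-circles cobound two bigons and each generator of the stabilized diagram is a pair $(\xx,y)$, where $\xx$ is a generator of the unstabilized diagram and $y$ is one of two intersection points of the new $\alpha$- and $\beta$-circles. The local rectangles cancel against the horizontal/vertical annuli (as in the closed Heegaard Floer computation of $\hfhat(S^1\times S^2)$), so after a change of basis the resulting type $D$ structure is isomorphic to $\cftdt(\HH,J)\otimes V$, where $V=\F_2\oplus\F_2$. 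Iterating one stabilization per unit difference in $|\XX|$ yields the claimed tensor factor $V^{\otimes k}$. Assembling these pieces via Proposition \ref{type2moves} and preserving provincial admissibility along the chain of moves (which can always be arranged by a winding trick on $\bbeta$) completes the argument. The most delicate point will be the neck-stretching / basis-change step for the $0/3$ stabilization in the presence of a boundary component, where one must verify that no new broken curves appear along $\bdy\Sigma$ to spoil the local identification.
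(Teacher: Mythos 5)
Your proposal is correct and follows essentially the same route as the paper: the paper's proof simply invokes the chain maps for change of almost complex structure and the standard Heegaard moves from the bordered Floer setting, and handles the one new move (index zero/three stabilization) by the local two-intersection-point argument from the closed multi-pointed case, which produces the $V^{\otimes k}$ factor exactly as you describe. The only remark worth adding is that your worry about boundary degenerations at the $0/3$ stabilization is moot, since that move by definition takes place in the interior of $\Sigma$, so the closed-case local argument applies verbatim.
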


\begin{proof}
To show invariance, we construct chain maps corresponding to a change of almost complex structure or the various Heegaard moves. We have two Heegaard moves that do not occur in \cite{bfh2} -- index zero/three stabilization and destabilization. Those always occur in the interior of the diagram, and result in the extra $V$, by the same argument as in the closed case (see, for  example, \cite{mos}), i.e. if $\HH'$ is obtained from $\HH$ by an index zero/three stabilization, then $\cftdt(\HH)\simeq \cftdt(\HH')\otimes V$.
\end{proof}

When we write $\cftdhat(Y, \T)$, we mean the type $D$ structure without the extra $V$s, i.e. what we get from a tangle Heegaard diagram with the minimum number of basepoints, which is  $|\X\cap \Int(\Sigma)| = |\T| = |\O\cap \Int(\Sigma)|$, or equivalently $|\X| = 2 |\T| = |\O|$.

\subsection{The type $A$ structure}

We define type $A$ structures for type $1$ mutipointed  bordered Heegaard diagrams for tangles. The construction and results for  type $2$ diagrams are identical.

Let $\HH = (\Sigma, \balpha, \bbeta, \XX, \OO, \zzz)$ be a provincially admissible type $1$ Heegaard diagram for a $2n$-tangle $(Y, \T)$, and let $J$ be an admissible almost complex structure. We define a type $A$ structure $\cftat(\HH, J)$ over $\cala(\HH)$. 

Define $I_A(\xx)=  I(o(\xx))$. The module $\cftat(\mathcal H, J)$ is generated over $\F_2$ by $X(\mathcal H)$, and the right action of $\mathcal I(\bdy \mathcal H)$ on $\cftat(\mathcal H, J)$ is defined on the generators by
$$\xx\cdot I(\sss) = \left\{  \begin{array}{ll}
\xx & \textrm{ if } I(\sss) = I_A(\xx)\\
0 & \textrm{ otherwise. } 
\end{array}\right.$$
For the $\ainf$ multiplication maps, we consider partitions $P = (P_1, \ldots, P_m)$ that are not necessarily discrete. When $I_A(\xx)\otimes a(\brho_1)\otimes \cdots\otimes a(\brho_n)\neq 0$, we define $m_{n+1}:\cftat(\HH, J)\otimes\az^{\otimes n}\to \cftat(\HH, J)$ by

$$m_{n+1}(\xx, a(\brho_1), \ldots, a(\brho_n)) \coloneqq\sum_{\yy\in \mathfrak S(\HH)} \sum_{\substack{B\in \hat\pi_2(\xxx, \yyy)\\ \{\vec P| \vec{\brho}(\vec P) =(\brho_1, \ldots, \brho_n)\}\\\ind(B, \vec{\brho}(\vec P)) = 1}}  \#\mathcal M^B_{\textrm{emb}}(\xxx,\yyy, S^{\triangleright}, \vec P)\cdot \yy.$$

\begin{theorem}
Let $\HH$ be a tangle Heegaard diagram of type $1$ for a marked tangle $\T$ in a $3$-manifold $Y$, equipped with an admissible almost complex structure $J$. If $\HH$ is provincially admissible, then $\cftat(\HH, J)$ is an $\ainf$-module over $\cala(\bdy \HH)$. Moreover, if $\HH$ is admissible,  then $\cftat(\HH, J)$ is bounded. 
\end{theorem}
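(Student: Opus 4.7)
The plan is to mirror the proof for $\cftdt(\HH,J)$ given just above, replacing the discrete-partition argument with the non-discrete partition/sequence argument that produces an $\ainf$ rather than a type~$D$ structure, exactly parallel to how \cite[Theorem 7.17]{bfh2} is proved for $\cfahat$. Throughout, we invoke the transversality, index, gluing and compactness results of Section~\ref{sec:moduli}, which carry over verbatim from \cite{bfh2} because our moduli spaces differ only in the homological setup and in the avoidance of the extra basepoints $\XX\cup\OO\cup \zzz$, a constraint that is stable under all degenerations.

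First, I would check that each operation $m_{n+1}$ is a finite sum. For a fixed input tuple $\xxx,a(\brho_1),\dots,a(\brho_n)$, the sequence $\vec{\brho}=(\brho_1,\dots,\brho_n)$ determines $\bdy^{\bdy}B\in H_1(\bdy\Sigma,\bdy\balpha)$, and together with the fixed output generator $\yyy$ this pins down $B$ modulo the provincial empty periodic domains. Provincial admissibility forces any two such $B$'s differing by a non-trivial empty provincial periodic domain to have a negative local multiplicity, hence no holomorphic representative. Only finitely many $\yyy$ and finitely many compatible partitions $\vec P$ with $\vec{\brho}(\vec P)=\vec{\brho}$ occur, so $m_{n+1}$ is well defined. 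Strict unitality follows because a nonzero $a(\brho)$ cannot have an empty Reeb chord set, and the action of $I(o(\xxx))$ fixes $\xxx$ by construction.

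Next I would verify the $\ainf$ relations. For each pair $\xxx,\yyy$, each homology class $B\in\hat\pi_2(\xxx,\yyy)$, and each sequence $\vec{\brho}$ with $\ind(B,\vec{\brho})=2$, one considers the one-dimensional moduli space $\mathcal M^B_{\mathrm{emb}}(\xxx,\yyy,S^\triangleright,\vec P)$ and identifies the ends of its compactification. Because our diagram is \emph{nice} in the sense of having no $\XX\cup\OO$-free regions touching $\bdy\Sigma$ that would allow $\alpha$- or $\beta$-boundary degenerations (any such boundary degeneration must cover an entire component of $\Sigma\setminus\balpha$ or $\Sigma\setminus\bbeta$, each of which contains at least one $\X$ or $\O$, and is therefore ruled out in the tilde theory), the only codimension-one ends are: two-story buildings (giving rise to terms $m_i\circ m_j$), join curves (giving the term involving $\mu_2$ in the algebra, i.e.\ collision of two consecutive Reeb chord sets $\brho_i,\brho_{i+1}$ into $a(\brho_i)\cdot a(\brho_{i+1})$), split curves (giving the term involving $d$ of a single $a(\brho_i)$), and odd shuffle curves, which as in \cite[Chapter~5]{bfh2} cancel in pairs. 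Summing these contributions yields precisely the strictly unital $\ainf$ relation from Section~\ref{sec:preliminaries}; this is the main obstacle and is the only genuinely analytic step, but it is entirely parallel to \cite[Proposition~7.18]{bfh2} since our Reeb chord algebra is a restriction of the strands algebra and the ambient moduli-space machinery is identical.

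Finally, for boundedness under full admissibility, the same standard area-argument works: on an admissible diagram, there are only finitely many homology classes $B\in\hat\pi_2(\xxx,\yyy)$ for all pairs $(\xxx,\yyy)$ with nonnegative local multiplicities, and each such $B$ can contribute only to $m_{n+1}$ with $n$ bounded by $|\bdy^{\bdy}B|$. Hence all but finitely many $m_{n+1}$ vanish on any given input, and $\cftat(\HH,J)$ is bounded. This finishes the proof once the degeneration analysis is transported verbatim from the bordered setting.
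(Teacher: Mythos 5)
Your proposal is correct and follows essentially the same route as the paper, which simply defers to the degeneration analysis of Lipshitz--Ozsv\'ath--Thurston together with the finiteness supplied by provincial admissibility; your account of the codimension-one ends (two-story buildings, join, split, and shuffle curves) and the exclusion of boundary degenerations is exactly what is intended. One small caveat: the exclusion of boundary degenerations has nothing to do with the diagram being \emph{nice} (which is neither assumed nor needed in this theorem) --- the correct reason is precisely the one in your parenthetical, namely that every component of $\Sigma\setminus\balpha$ and of $\Sigma\setminus\bbeta$ contains a basepoint, so no empty boundary degeneration exists.
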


\begin{theorem}\label{thm:ainv}
Up to $\ainf$ homotopy equivalence and tensoring with $V$, $\cftat(\HH, J)$ is independent of the choice of sufficiently generic admissible almost complex structure, and provincially admissible type $1$  tangle Heegaard diagram for $(Y, \T)$. Namely, if $\HH_1$ and $\HH_2$ are provincially admissible type $1$ diagrams for $(Y, \T)$ with almost complex structures $J_1$ and $J_2$, and $|\X_1| = |\X_2|+k$, then 
$$\cftat(\HH_1, J_1)\simeq \cftat(\HH_2, J_2)\otimes V^{\otimes k}.$$
\end{theorem}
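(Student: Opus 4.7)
The plan is to mirror the proof of the type $D$ invariance (Theorem \ref{thm:dinv}) and, more fundamentally, to adapt the invariance arguments for $\cfahat$ in bordered Floer homology \cite[Chapter 7]{bfh2} to our multipointed setting. By the classification of Heegaard moves for type $1$ tangle diagrams, it suffices to establish the invariance under (i) change of admissible almost complex structure, (ii) isotopies of $\balpha$ and $\bbeta$ in $\Sigma\setminus(\bdy\Sigma\cup\XX\cup\OO)$, (iii) handleslides of $\alpha$-arcs over $\alpha$-circles and of $\beta$-circles over $\beta$-circles, (iv) index one/two stabilizations and destabilizations in $\Int(\Sigma)$, and (v) index zero/three stabilizations and destabilizations in $\Int(\Sigma)$, where (v) is the move that accounts for the $V^{\otimes k}$ factor.

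First I would handle (i) and (ii) simultaneously. For a generic path $J_t$ of admissible almost complex structures, the standard cylindrical moduli space argument produces an $\ainf$-morphism $\Phi\co \cftat(\HH,J_0)\to\cftat(\HH,J_1)$ by counting index-zero embedded curves in $\Sigma\times\mathbb D$ with a time-dependent complex structure; composing with the reverse path and using a one-parameter family of paths yields the required $\ainf$-homotopy equivalence. The key input is that the degeneration analysis of Section \ref{sec:moduli} extends verbatim to the time-dependent setting, since every region of $\Sigma\setminus\balpha$ and $\Sigma\setminus\bbeta$ contains a point of $\XX\cup\OO\cup\zzz$ and so no $\alpha$- or $\beta$-boundary degenerations can occur in the tilde theory. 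Isotopies of $\balpha$ or $\bbeta$ disjoint from the basepoints are handled by the same continuation argument, or equivalently by triangle maps for a triple diagram in which one set of curves is a small Hamiltonian push-off of another.

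For (iii), handleslides, I would construct the expected $\ainf$-morphism by counting embedded holomorphic triangles in the associated triple diagram $(\Sigma,\balpha,\bbeta,\bgamma,\XX,\OO,\zzz)$, where $\bgamma$ is obtained from $\bbeta$ (resp.\ $\balpha$) by performing the handleslide followed by a small Hamiltonian isotopy. The count of triangles with one vertex at the canonical top generator $\Theta\in\cftat^{\bgamma,\bbeta}$ (coming from the fact that $(\Sigma,\bbeta,\bgamma)$ represents a connected sum of copies of $S^1\times S^2$) defines an $\ainf$-morphism, and a standard associativity-of-triangle-maps argument (using quadrilateral counts on a four-tuple diagram) shows that the composition with the inverse morphism is $\ainf$-homotopic to the identity. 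The only subtlety here, beyond the bordered Floer case, is checking that the relevant triangle and rectangle domains avoid $\XX\cup\OO\cup\zzz$, which follows because the Hamiltonian push-offs can be chosen small and supported away from these markings. Index one/two stabilization (iv) is the easiest: the extra $\alpha$-circle and $\beta$-circle intersect in a single point, so the generators of $\cftat(\HH')$ are in bijection with those of $\cftat(\HH)$, and the usual neck-stretching argument gives an $\ainf$-isomorphism.

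The main obstacle, and the step where the statement differs from the bordered Floer analog, is (v), the index zero/three stabilization. Such a stabilization introduces a new $\alpha$-circle and $\beta$-circle isotopic to each other and meeting in two points, and splits one basepoint region into three regions each containing a basepoint (see Figure \ref{fig:stab}). The resulting module gains a tensor factor of $V=\F_2\oplus\F_2$, exactly as in the closed knot Floer setting \cite{mos,most}. Concretely, the stabilized diagram $\HH'$ has twice as many generators as $\HH$, split into two $\ainf$-submodule summands according to which of the two new intersection points is occupied, and I would show that the empty bigon between these two intersection points, together with the absence of any other new differential or action, realizes the isomorphism $\cftat(\HH',J')\cong\cftat(\HH,J)\otimes V$ as $\ainf$-modules over $\Aa(\bdy\HH)=\Aa(\bdy\HH')$. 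The hard point is verifying that no new $\ainf$-operation involving the boundary algebra is introduced: any holomorphic curve in $\HH'$ entering the stabilization neighborhood would have to cover one of the new basepoints, which is forbidden in the tilde theory, and a neck-stretching argument at the connect-sum neck matches the remaining curves bijectively with curves in $\HH$. Combining all five invariance statements, a sequence of Heegaard moves between two provincially admissible type $1$ diagrams $\HH_1$ and $\HH_2$ for $(Y,\T)$ (chosen so that every intermediate diagram is provincially admissible, as guaranteed by the winding argument) produces an $\ainf$-homotopy equivalence $\cftat(\HH_1,J_1)\simeq\cftat(\HH_2,J_2)\otimes V^{\otimes k}$, where $k=|\X_1|-|\X_2|$ equals the net number of index zero/three stabilizations along the sequence.
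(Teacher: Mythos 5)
Your proposal is correct and follows essentially the same route as the paper, which simply asserts that the argument is analogous to the $\cfahat$ invariance proof in \cite[Chapter 7]{bfh2} and notes that the one genuinely new move, index zero/three (de)stabilization, contributes the $V^{\otimes k}$ factor exactly as in the closed multipointed setting of \cite{mos}. Your write-up supplies the details (continuation maps, triangle maps for handleslides, the neck-stretching/basepoint argument ruling out new boundary operations at an index zero/three stabilization) that the paper leaves implicit.
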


\begin{proof}
The proofs of the two theorems are analogous to those for $\cftdhat$, except that we consider more degenerations, since we also consider sequences of sets of Reeb chords. 
\end{proof}

When we write $\cftahat(Y, \T)$, we mean the $\ainf$-module that we get from a diagram with $|\X\cap \Int(\Sigma)| = |\T|$.

\subsection{The type $\DA$ bimodule}

We define type $\DA$ structures for tangle Heegaard diagrams with two boundary components. One can similarly define type $\AA$, $\DD$, and $\AD$ structures. 

Suppose $\HH = (\Sigma, \balpha, \bbeta, \XX, \OO, \zzz)$ is a provincially admissible diagram with two boundary components $\bdy^0\HH$ and $\bdy^1\HH$ for a strongly marked $(m,n)$-tangle $(Y, \T, \gamma)$. Let $J$ be an admissible almost complex structure. We will define a  type $\DA$ bimodule $\cftdat(\HH, J)$  over $\Aa(-\bdy^0\HH)$ and $\Aa(\bdy^1\HH)$.

As a left-right $(\mathcal I(-\bdy^0 \HH),\mathcal I(\bdy^1 \HH))$-bimodule, $\cftdat(\HH, J)$ is freely generated over $\F_2$ by $\mathfrak S(\HH)$, with actions of $\mathcal I(-\bdy^0 \HH)$ and $\mathcal I(\bdy^1 \HH)$ defined on the generators by 

\[I(\sss_0)\cdot \xx\cdot I(\sss_1) =  \left\{  \begin{array}{ll}
\xx & \textrm{ if } \sss_0 = \bar o^0(\xx) \text{ and } \sss_1 = o^1(\xx)\\
0 & \textrm{ otherwise. } 
\end{array}\right.\]

To define the type $\DA$ structure maps, we need to study slightly different moduli spaces than before. Given a decorated source $S^{\triangleright}$, let $E_i$  be the set $e$ punctures labeled by Reeb chords in $\bdy^i\HH$. We need to forget the relative heights of the punctures in $E_0$ to those in $E_1$.

\begin{definition}
Define the moduli space
\[ \mathcal M^B_{\textrm{emb}}(\xxx,\yyy, S^{\triangleright}, \vec P_0, \vec P_1) = \bigcup_{\vec P|_{E_i} = \vec P_i}    \mathcal M^B_{\textrm{emb}}(\xxx,\yyy, S^{\triangleright}, \vec P),  \]
and define the index
\[\ind(B, \vec\brho_0, \vec\brho_1) = e(B) + n_{\xx}(B) + n_{\yy}(B) + |\vec\brho_0| + |\vec\brho_1| + \iota(\vec\brho_0)+ \iota(\vec\brho_1), \]
where $\vec\brho_i$ is a sequence of sets of Reeb chords in $\bdy^i\HH$.
\end{definition}

On the $\bdy^0\HH$ side we will only allow discrete partitions. If $\vec P_0$ is discrete, and labeled by the sequence of Reeb chords $\vec\brho(\vec P_0) = (\rho_1, \ldots, \rho_i)$, define
\[a_0(\xx,\yy, \vec P_0)\coloneqq I( \bar o^0(\xx)) \cdot a(-\rho_1)\cdot \cdots\cdot a(-\rho_i)\cdot I( \bar o^0(\yy))\in \Aa(-\bdy^0\HH)\]

On the $\bdy^1\HH$ side we  allow arbitrary partitions, and define
\[a_1(\xx, \yy, P_1) \coloneqq I( o^1(\xx)) \cdot a(\brho_1)\otimes \cdots\otimes a(\brho_j)\cdot I(o^1(\yy))\in \Aa(\bdy^1\HH)^{\otimes j},\]
where $\vec\brho(\vec P_1) = (\brho_1, \ldots, \brho_j)$.

Finally, the structure maps are defined by 

\[\delta^1_{k}(\xx, a_1, \ldots, a_{k-1}) \coloneqq\sum_{\yy\in \mathfrak S(\HH)} \sum_{\substack{B\in \hat\pi_2(\xxx, \yyy)\\\ind(B, \vec{\brho}(\vec P_0), \vec{\brho}(\vec P_1)) = 1\\ a_1(\xx, \yy, P_1)  = a_1\otimes \cdots\otimes a_{k-1}}}  \#\mathcal M^B_{\textrm{emb}}(\xxx,\yyy, S^{\triangleright}, \vec P_1, \vec P_2)\cdot a_0(\xx,\yy, \vec P_0)\otimes \yy.\]

\begin{theorem}
Let $\HH$ be a diagram with two boundary components for a strongly marked $(m,n)$-tangle $(Y, \T, \gamma)$, equipped with an admissible almost complex structure $J$. If $\HH$ is provincially admissible, then $\cftdat(\HH, J)$ is a type $\DA$ bimodule over $\Aa(-\bdy^0 \HH)$ and $\Aa(\bdy^1 \HH)$. Moreover, if $\HH$ is admissible,  then $\cftat(\HH, J)$ is bounded. 
\end{theorem}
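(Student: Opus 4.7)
The plan is to follow essentially the same template used for $\cftdt$ and $\cftat$, adapted to the two-boundary setting, with the main new feature being that structure maps combine discrete partitions at $\bdy^0\HH$ (producing an algebra output on the $D$-side) with arbitrary partitions at $\bdy^1\HH$ (corresponding to the $A$-side inputs). So the argument splits into three parts: finiteness, verification of the $\DA$ identity, and boundedness.

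First, I would verify that each $\delta^1_k(\xx, a_1, \ldots, a_{k-1})$ is a finite sum. For fixed inputs, the sequence $\vec\brho(\vec P_1)$ is determined by $a_1, \ldots, a_{k-1}$, hence there are only finitely many such $\vec P_1$. Once $\xx,\yy,\vec P_1$ and the discrete partition $\vec P_0$ are specified, the algebra element $a_0(\xx,\yy,\vec P_0)$ determines $\bdy^0 B$, so $B$ is pinned down modulo empty periodic domains. Provincial admissibility together with positivity (required for a curve to exist) then forces only finitely many such $B$ to contribute. This is the exact two-boundary analogue of \cite[Lemma 6.5]{bfh2} used in the proof for $\cftdt$.

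Next, and this is the main obstacle, I would verify the type $\DA$ structure identity by counting ends of the one-dimensional moduli spaces $\mathcal M^B_{\mathrm{emb}}(\xx,\yy,S^\triangleright,\vec P_0,\vec P_1)$ with $\ind(B,\vec\brho_0,\vec\brho_1)=2$. By the compactness and gluing results in Section~\ref{sec:moduli} (which are essentially inherited from \cite{bfh2} because all new degenerations are ruled out by the presence of $\XX,\OO,\zzz$), the codimension-one ends fall into the following types: two-story holomorphic buildings corresponding to the composition $\delta^1\circ\delta^1$; join curve degenerations at the east infinity of $\bdy^1\HH$, which account for the differential of the algebra elements $a_i\in \Aa(\bdy^1\HH)$; split curve degenerations at $\bdy^0\HH$, which account for the differential on $\Aa(-\bdy^0\HH)$ applied to the output; collisions of levels in $\vec P_1$, which account for the product $\mu_2$ of consecutive inputs $a_i\cdot a_{i+1}$; and collisions of levels in $\vec P_0$, which combined with the discrete partitions produce the multiplication $\mu_2$ on the outgoing $\Aa(-\bdy^0\HH)$ factor. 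No boundary degenerations appear, because every region of $\Sigma\setminus\balpha$ and $\Sigma\setminus\bbeta$ contains a marking from $\XX\cup\OO\cup\zzz$, which is forbidden for holomorphic representatives; this is precisely why we can carry over the boundary-degeneration-free arguments from the one-sided setting. Summing over all ends yields exactly the $\DA$ compatibility relation
\[
\sum_{i+j=k+1} \mu_2\bigl(\delta^1_i,\, \id\bigr)\circ\bigl(\id,\,\delta^1_j\bigr) + \cdots = 0,
\]
with the higher $\mu$'s on $\Aa(-\bdy^0\HH)$ vanishing because partitions at $\bdy^0$ are forced to be discrete.

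Finally, for boundedness under full admissibility, I would note that admissibility forbids any nonzero empty periodic domain (provincial or not) with only non-negative multiplicities, so the total number of empty positive domains between any two generators is finite. Since the boundary data $(\vec\brho_0,\vec\brho_1)$ of any contributing $B$ is bounded by $B$ itself, only finitely many sequences of Reeb chords occur in the structure maps, bounding the arity $k$ as well. This is the direct two-boundary analogue of the boundedness statement for $\cftdt$. The only step requiring genuine care is the bookkeeping of degenerations in the middle argument; everything else is a routine translation of the arguments already used earlier in the section.
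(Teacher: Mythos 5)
Your proposal is correct and follows essentially the same route as the paper, which simply declares the proof "analogous to those for type $D$ and type $A$ structures" and in turn defers those to the degeneration-counting and admissibility arguments of bordered Floer homology; your three-part breakdown (finiteness from provincial admissibility, the $\DA$ identity from ends of index-$2$ moduli spaces with no boundary degenerations, boundedness from full admissibility) is exactly the intended argument, just spelled out in more detail than the paper provides.
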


\begin{theorem}\label{thm:dainv}
Up to homotopy equivalence and tensoring with $V\coloneqq\F_2\oplus \F_2$, the bimodule $\cftdat(\HH, J)$ is independent of the choice of sufficiently generic admissible almost complex structure, and provincially admissible tangle Heegaard diagram for $(Y, \T, \gamma)$. Namely, if $\HH_1$ and $\HH_2$ are provincially admissible diagrams for $(Y, \T, \gamma)$ with almost complex structures $J_1$ and $J_2$, and $|\X_1| = |\X_2|+k$, then 
$$\cftdat(\HH_1, J_1)\simeq \cftdat(\HH_2, J_2)\otimes V^{\otimes k}.$$
\end{theorem}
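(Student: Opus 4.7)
The plan is to follow the template of the proofs of Theorems \ref{thm:dinv} and \ref{thm:ainv}, adapted to the bilateral setting. First I reduce the statement to showing that each individual Heegaard move (and change of admissible almost complex structure) preserves the $\DA$ homotopy type, up to a tensor factor of $V$ for index zero/three stabilizations. By the analogue of Proposition \ref{type2moves} for diagrams with two boundary components (where we established that any two provincially admissible diagrams for a fixed $(Y,\T,\gamma)$ are connected by a sequence of Heegaard moves through provincially admissible diagrams), this reduction suffices.

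Next, for each type of move I construct the appropriate $\DA$ morphism, in each case working in an arced setting where the relevant moduli spaces split the counted Reeb chords into a $\bdy^0$-part (which must come in discrete partitions and is recorded on the type $D$ side) and a $\bdy^1$-part (which admits arbitrary partitions and is recorded on the type $A$ side). Specifically: (i) for a change of $J$ or an isotopy of $\balpha,\bbeta$ that does not cross $\XX\cup\OO\cup\zzz$, I use the continuation map defined by counting embedded curves for a suitable path of admissible almost complex structures, following the bordered analogue in \cite{bfh2,bimod}; (ii) for a handleslide of one $\beta$-circle over another, of an $\alpha$-circle over another $\alpha$-circle, or of an $\alpha$-arc over an $\alpha$-circle, I take the $\DA$-morphism defined by counting embedded holomorphic triangles in a suitable triple diagram with two boundary components, following the construction of the triangle map in \cite[Sec.\ 6]{bfh2} but now recording both $\bdy^0$ and $\bdy^1$ asymptotics; (iii) for an index one/two stabilization in the interior, I use the nearest-point identification to produce an $\ainf$-isomorphism with no extra tensor factor; (iv) for an index zero/three stabilization, exactly as in the closed case of knot Floer homology (cf.\ \cite{mos}) and in Theorems \ref{thm:dinv}--\ref{thm:ainv}, the resulting $\DA$ bimodule is $\cftdat(\HH)\otimes V$, since the stabilized generators split into two parallel copies linked by a length-one differential that is independent of both algebra actions.

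The main obstacle is verifying that each proposed map is actually a $\DA$-morphism and that compositions of a move with its inverse are homotopic to the identity $\DA$-morphism. This requires analyzing one-parameter families of embedded holomorphic curves in $\Sigma\times\mathbb D$ and showing that all ends of the relevant $1$-dimensional moduli spaces either cancel in pairs or contribute to the $\DA$-relation $\sum(d\otimes\id)\circ\delta^1 + \mu\circ(\id\otimes\delta^1)\circ\cdots + (\id\otimes\delta^1)\circ\mu_{\mathcal{A}(\bdy^1\HH)} = 0$. In the bilateral case, a degenerating sequence may break off either a holomorphic comb at $\bdy^0\HH$ (giving a type $D$ output algebra element on the left, with discrete partition constraint) or a story at $\bdy^1\HH$ (giving sequential $\ainf$-inputs on the right), or both simultaneously. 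Because our admissibility hypothesis is provincial on both sides, the standard compactness results of \cite{bfh2} extend, and the two kinds of degenerations cannot interfere with each other — the heights of $E_0$-punctures are forgotten relative to those of $E_1$-punctures, so the boundary of the relevant compactified moduli space decomposes as a product of $\bdy^0$- and $\bdy^1$-degenerations. This yields the desired relation.

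Finally, after checking that the continuation, triangle, and stabilization $\DA$-morphisms are homotopy equivalences (with an extra $V$ factor in the zero/three case) by the usual composition arguments (counting curves in a $1$-parameter family of diagrams and producing an explicit $\DA$-homotopy from two successive continuation maps to the identity), I conclude that $\cftdat(\HH_1,J_1)\simeq \cftdat(\HH_2,J_2)\otimes V^{\otimes k}$, where $k=|\X_1|-|\X_2|$ counts the net number of index zero/three stabilizations performed along any sequence of moves relating the two diagrams. Since the $\XX$-count is invariant under the other moves, $k$ depends only on the two diagrams, completing the proof.
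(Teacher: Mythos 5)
Your proposal is correct and follows essentially the same route as the paper, which proves this theorem by declaring the argument analogous to the type $D$ and type $A$ cases — i.e.\ the standard bordered invariance machinery of \cite{bfh2, bimod} (continuation maps, triangle maps for handleslides, stabilization identifications), with the extra $V$ factor arising from index zero/three stabilizations exactly as in the closed case \cite{mos}. Your expanded treatment of the bilateral degeneration analysis and the independence of $k$ from the chosen sequence of moves matches what the paper leaves implicit.
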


\begin{proof}
The proofs  are analogous to those for type $D$ and type $A$ structures. 
\end{proof}

When we write $\cftdahat(Y, \T, \gamma)$, we mean the bimodule that  we get from a diagram with $|\X\cap \Int(\Sigma)| = |\T|$. For a tangle $\T$ in $S^2\times I$, there is a canonical framed arc $\gamma$ determined by the product structure on the $3$-manifold, With this framed arc, $\T$ becomes a strongly marked tangle, and we simply write $\cftdat(\T)$.

For here on, we suppress the almost complex structure $J$ from the notation, and write $\cftat(\HH)$, $\cftdt(\HH)$, and $\cftdat(\HH)$.

\subsection{Other diagrams and modules.} \label{ssec:other} Similarly, one can associate a type $A$ structure to a type $2$ diagram, a type $D$ structure to a type $1$ diagram, or a type $\AA$, $\DD$, or $\AD$ structure to a diagram with two boundary components. 

One can also define $\beta$-bordered or $\alpha$-$\beta$-bordered mutipointed Heegaard diagrams for tangles, in the spirit of \cite{hfmor}, and associate modules or bimodules, respectively. The bordered grid diagrams of Section \ref{sec:borderedgrid} are examples of such diagrams. 

\section{Gradings} 
\label{ssec:morse_gradings}

For now, we only discuss gradings when the tangle lies in $B^3$ or $S^2\times I$. In those cases, one can define a homological grading by $\Z$, which we call the \emph{Maslov} grading, and a second (internal) grading by $\frac 1 2 \Z$ which  we call the \emph{Alexander} grading. In this section, all domains are assumed to avoid $\zzz$.

\subsection{Algebra} 

Fix a (right type) marked matched circle $\zz$ for an $n$-marked sphere, and let $\E =  (n+1, n+1, \id_{\SX},\id_{\TO})$ be the corresponding shadow, as in Section \ref{sec:reeb_alg}. Recall that the algebra $\az$ does not depend on the $\XX$ and $\OO$ markings on $\zz$, and equivalently, as an ungraded algebra, $\widehat\Aa(\E)=\Aa(\E)/(U_i= 0)$ does not depend on the sets $\SX$ and $\TO$. However, $\XX$ and $\OO$ markings play an important role in the bigrading on $\Aa(\E)$ defined in Section  \ref{ssec:grading}. In fact, to be consistent with  the algebra action for the bordered grid diagrams from Section \ref{sec:borderedgrid}, we need to reverse the roles of the markings, i.e. from now on we work with $\bar\E =  (n+1, n+1, \id_{\bar \SX},\id_{\bar \TO})$, where $\bar \SX = \{1\frac 1 2, \ldots, n\frac 1 2\}\setminus \SX$ and $\bar \TO = \{1\frac 1 2, \ldots, n\frac 1 2\}\setminus \TO$. 

There is an  obvious ungraded isomorphism $\widehat\Aa(\E)\cong \widehat\Aa(\bar \E)$ (preserve a strand diagram and just replace the types of horizontal strands in the underlying shadow), so instead of thinking about 
the isomorphism $\Aa(\zz)\cong\widehat\Aa(\E)$  from Section \ref{sec:reeb_alg}, we can equivalently think of $\az$ as isomorphic to $\widehat\Aa(\bar \E)$, using the same correspondence of strand diagrams as in Section \ref{sec:reeb_alg}. 
Namely,  a generator $\stphi$  for $\widehat\Aa(\bar\E)$ is mapped to the generator $I({\bf s})a(\brho)\in \Aa(\zz(\mathcal S))$ with  starting idempotent ${\bf s} = S$ and the following set of Reeb chords $\brho$: the Reeb chord from $i$ to $\phi(i)$ if $\phi(i)>i$, and the Reeb chord from $2n+3-i$ to $2n+3 - \phi(i)$  if $\phi(i)<i$. 

 The Maslov and Alexander gradings on $\Aa(\bar\E)$ defined in  Section \ref{ssec:grading} descend to gradings on $\widehat\Aa(\bar\E)$, and thus to gradings $M$ and $A$ on $\Aa(\zz)$ under the isomorphism discussed above. The Maslov grading turns $\Aa(\zz)$ into a differential graded algebra, and the Alexander grading is preserved by the differential and multiplication. We caution the reader that while the generators  $I(\sss)a(\brho)$ are homogeneous with respect to the gradings, $a(\brho)$ are not. 
 
 \subsection{Domains}
Let $\HH = (\Sigma, \balpha, \bbeta, \XX, \OO, \zzz)$ be a tangle diagram of any type.  To define the Maslov and Alexander grading of a domain $B\in \pi_2(\xx, \yy)$, we will make use of the  multiplicity map $m:H_1(\bdy\Sigma\setminus\zzz, \bdy\balpha; \Z)\times H_0(\bdy\balpha; \Z)\to \frac{1}{2}\Z$ from Section \ref{ssec:moduli} to record how $\bdy^{\bdy}B$ interacts with $\xx$, $\yy$, $\OO$, and $\XX$.

If $\HH$ is a diagram with one boundary component (of type $1$ or $2$) for a $2n$-tangle, define the following sets. 
For $1\leq i\leq 2n$, if there is an $X$ on $\bdy \Sigma$ between $a_i$ and $a_{i+1}$, then we place an $\OO$ marking in the interior of the segment $(a_i, a_{i+1})\subset \bdy \Sigma$ and another $\OO$ marking in the segment $(a_{4n+2-i}, a_{4n+3-i})$. In other words, there is a new $O$ on every component of $\bdy \Sigma\setminus \balpha$ that is on the boundary of a component of $\Sigma\setminus\balpha$ with an $O$ in the interior. Denote the set of all new $\OO$ markings by $\SO^{\bdy}$. Define a set $\SX^{\bdy}$ analogously. 
Given a generator $\xx\in \gens(\HH)$,  define $S_{\xx}$ to be the set of points in $\bdy\balpha$ that lie on $\alpha$-arcs occupied by $\xx$, and define  $\bar{S_{\xx}} \coloneqq \bdy\balpha\setminus S_{\xx}$. 

If $\HH$ is a diagram with two boundary components $\bdy^i\HH$ for $i= 0,1$, define sets $\SO^i$, $\SX^i$,$S_{\xx}^i$, and $\bar{S_{\xx}^i}$  analogously. 

When counting multiplicities below, we view a subset $S\subset \bdy \balpha$  as the element of $H_0(\bdy \balpha; \Z)$ for which each point of $S$ comes with coefficient $+1$, so that we can add and subtract sets. 

Note that even though $\SO^{\bdy}$ is not a subset of $\bdy \balpha$, defining $m([\bdy^{\bdy}B], \SO^{\bdy})$ makes sense as a generalization of the multiplicity function $m$. Precisely,  think of an interval $(a_i, a_{i+1})$ as a generator of  $H_1(\bdy\Sigma\setminus\zzz, \bdy\balpha; \Z)$, and define $m([\bdy^{\bdy}B], \SO^{\bdy})$  as the sum of the coefficients  in $[\bdy^{\bdy}B]$ of all intervals $(a_i, a_{i+1})$ that contain an $O\in \SO^{\bdy}$. Define  multiplicity counts for $\SX^{\bdy}$, $\SO^i$, and $\SX^i$ similarly. 

Let $B\in \pi_2(\xx,\yy)$ be a domain for a  diagram with two boundary components. Define
 \begin{align*}
M(B) &= -e(B) - n_{\xx}(B) - n_{\yy}(B) + \frac 1 2 m([\bdy^{\bdy}B], \bar{S^0_{\xx}} + \bar{S^0_{\yy}} + S^1_{\xx} + S^1_{\yy})  \\
 & - m([\bdy^{\bdy} B], \SX^0+\SO^1) + 2n_{\OO}(B)\\ 
A(B) &= \frac 1 2 m([\bdy^{\bdy} B], \SO^0 - \SX^0+ \SX^1 - \SO^1)+ n_{\OO}(B) - n_{\XX}(B).
\end{align*}

For a domain  $B\in \pi_2(\xx,\yy)$ on a type $1$ diagram, define
 \begin{align*}
M(B) &= -e(B) - n_{\xx}(B) - n_{\yy}(B) + \frac 1 2 m([\bdy^{\bdy}B], S_{\xx} + S_{\yy}) - m([\bdy^{\bdy}B], \SO^{\bdy})+ 2n_{\OO}(B)\\ 
A(B) &= \frac 1 2 m([\bdy^{\bdy}B], \SX^{\bdy}-\SO^{\bdy})+ n_{\OO}(B) - n_{\XX}(B).
\end{align*}

For a domain  $B\in \pi_2(\xx,\yy)$ on a type $2$ diagram, define
 \begin{align*}
M(B) &= -e(B) - n_{\xx}(B) - n_{\yy}(B) + \frac 1 2 m([\bdy^{\bdy}B], \bar{S_{\xx}} + \bar{S_{\yy}}) - m([\bdy^{\bdy}B], \SX^{\bdy})+ 2n_{\OO}(B)\\ 
A(B) &= \frac 1 2 m([\bdy^{\bdy}B], \SO^{\bdy}-\SX^{\bdy})+ n_{\OO}(B) - n_{\XX}(B).
\end{align*}

Note that the bigrading is additive under union (when we continue to view the sum of regions as a domain between the same two generators $\xx$ and $\yy$).

We will soon define the bigrading on the modules and bimodules. To show it is well-defined, we need to show that the bigrading on domains is additive under composition, and that it is zero on periodic domains.

\begin{proposition}\label{prop:gr_per}
For any periodic domain  $B\in \pi_2(\xx,\xx)$, we have $M(B) = 0$ and $A(B)=0$.
\end{proposition}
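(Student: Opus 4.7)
My plan is to use additivity of $M$ and $A$ to reduce to a generating set of $\pi_2(\xx,\xx)$, handle provincial domains by reduction to the closed case, and handle the boundary-carrying generators by a direct local calculation.

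First I would observe that $M$ and $A$ are linear in $B$: each constituent term — $e(B)$, $n_{\xx}(B)$, $n_{\yy}(B)$, $m([\bdy^{\bdy}B],\cdot)$, $n_{\OO}(B)$, and $n_{\XX}(B)$ — is additive under addition of domains. Since concatenation in $\pi_2(\xx,\xx)$ corresponds to addition of domains, it suffices to verify vanishing on a chosen generating set. Using the map $\bdy^{\bdy}:\pi_2(\xx,\xx)\to H_1(\bdy\Sigma\setminus\zzz,\bdy\balpha)$, I would split generators into provincial domains (with $\bdy^{\bdy}B = 0$) and a finite collection of representatives for the image of $\bdy^{\bdy}$, one for each elementary class in the target.

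For provincial $B$ all boundary multiplicity terms drop out, and the formulas collapse to the closed-case expressions $M(B) = -e(B) - 2n_{\xx}(B) + 2n_{\OO}(B)$ and $A(B) = n_{\OO}(B) - n_{\XX}(B)$. I would glue $\HH$ to a partner tangle diagram $\HH'$ along $\bdy\HH$ and note that, because $\bdy^{\bdy}B=0$, the zero domain on $\HH'$ assembles with $B$ into a bona fide closed periodic domain $\tilde B$ on $\HH\cup\HH'$, which by Lemma~\ref{lemma:gluehd} is a closed multipointed Heegaard diagram for a link. Since $B$ avoids $\zzz$, the extra $\XX$ and $\OO$ markings placed in the gluing region have zero multiplicity in $\tilde B$, so $e$, $n_{\xx}$, $n_{\OO}$, $n_{\XX}$ are all preserved by the extension. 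The vanishing of the same expressions on closed periodic domains is the standard consistency statement that makes the Maslov and Alexander gradings well-defined on $\cfkm$ (see, e.g., \cite{oszlink}), giving $M(B)=A(B)=0$ in the provincial case.

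For the boundary-carrying generators I would choose elementary representatives — ``half-disk'' periodic domains whose boundary on $\bdy\Sigma$ traverses a single matched pair $\{a_i, a_{4n+3-i}\}$ (on each boundary component, for two-boundary diagrams) — and check $M = A = 0$ by a direct local computation. The boundary multiplicity terms pick up exactly the $\XX$ and $\OO$ markings on $\bdy\Sigma$ in the matched-pair segments, while the interior terms $e$, $n_{\xx}$, $n_{\OO}$, $n_{\XX}$ are determined by the combinatorics of how the half-disk meets $\balpha\cup\bbeta$ together with the interior markings it covers. The prescribed relation between the $\XX/\OO$ placement in the interior and on $\bdy\Sigma$ — specifically that a boundary $O$ sits on the lower-indexed segment exactly when an $X$ sits in the interior and vice versa — should force the boundary and interior contributions to cancel.

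The main obstacle will be this last step. The half-disk representatives are not canonical; their interiors depend on $\xx$ and on the placement of the $\beta$-circles, and the three diagram types (type~$1$, type~$2$, and two-boundary) each impose slightly different conventions for $\SX^{\bdy}$, $\SO^{\bdy}$ and for the special segments containing $\zzz$. Organizing the local calculation so that all three cases are handled uniformly — and so that the cancellation between boundary and interior contributions is transparent — is the delicate part. Once this is completed, combining with the provincial case via additivity gives Proposition~\ref{prop:gr_per}.
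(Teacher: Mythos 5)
Your overall strategy coincides with the paper's: reduce by additivity to (i) provincial periodic domains and (ii) a generating set of boundary-carrying domains, handle (i) by identifying the reduced formulas with the closed-case Maslov/Alexander gradings, and handle (ii) by a local computation. Part (i) of your argument is correct and is exactly what the paper does.

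The problem is in part (ii), and it is concrete: the ``half-disk'' generators you propose do not exist as allowable domains. A region cobounded by a single $\alpha$-arc $\alpha^a_i$ and an arc of $\bdy\Sigma$ joining $a_i$ to $a_{4n+3-i}$ must contain one of the two points of $\zzz$ (these sit in $(a_{4n+2},a_1)$ and $(a_{2n+1},a_{2n+2})$, one on each side of every matched pair), and domains are required to have multiplicity zero there. The correct generators are the components $D_i$ of $\Sigma\setminus\balpha$ meeting $\bdy\Sigma$, each cobounded by \emph{two} consecutive arcs $\alpha^a_i,\alpha^a_{i+1}$ and the two boundary intervals $(a_i,a_{i+1})$ and $(a_{4n+2-i},a_{4n+3-i})$; these, together with the provincial domains, do generate $\pi_2(\xx,\xx)$. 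With this fix the computation you defer as ``delicate'' is routine and, contrary to your worry, involves neither the $\beta$-circles nor any non-canonical choices: $D_i$ is a rectangle with $t$ disks removed, so $e(D_i)=-t$ cancels against the contribution of the $t$ points of $\xx$ on the enclosed $\alpha$-circles to $2n_{\xx}(D_i)$; each occupied boundary arc contributes equally to $2n_{\xx}(D_i)$ and to $m([\bdy^{\bdy}D_i],S_{\xx})$; and the construction of $\SO^{\bdy}$ and $\SX^{\bdy}$ (a new marking on \emph{both} boundary intervals of $D_i$, of the same type as the interior marking of $D_i$) gives $m([\bdy^{\bdy}D_i],\SO^{\bdy})=2n_{\OO}(D_i)$ and $m([\bdy^{\bdy}D_i],\SX^{\bdy})=2n_{\XX}(D_i)$, whence $M(D_i)=A(D_i)=0$.
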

\begin{proof}
Since the bigrading is additive under union, we only need to show it is zero on provincial periodic domains, and on the regions of $\Sigma\setminus \balpha$ that intersect the boundary $\bdy \Sigma$. 
The proofs for all three types of diagrams are identical. The writeup below is for a type $1$ diagram.

For a periodic domain, the bigrading simplifies to 
 \begin{align*}
M(B) &= -e(B) - 2n_{\xx}(B) +  m([\bdy^{\bdy}B], S_{\xx}) - m([\bdy^{\bdy}B], \SO^{\bdy})+ 2n_{\OO}(B)\\ 
A(B) &= \frac 1 2 m([\bdy^{\bdy}B], \SX^{\bdy}-\SO^{\bdy})+ n_{\OO}(B) - n_{\XX}(B).
\end{align*}

Let $D_i$ be the region of  $\Sigma\setminus \balpha$ whose $\alpha$-arcs boundary consists of $\alpha_i^a$ and $\alpha^a_{i+1}$. Geometrically, $D_i$ is a rectangle with $t\geq 0$ disks removed from its interior, so it has Euler measure $e(D_i) = -t$. Each of the $t$ circle boundary components is an  $\alpha$-circle, hence it contains a point of $\xxx$ on it, and contributes $1$ to the count of $2n_{\xxx}(D_i)$. Each of the arcs $\alpha_i$ and $\alpha_{i+1}$ that is occupied by $\xxx$ contributes $1$ to $n_{\xxx}(D_i)$ and $1$ to $m([\bdy^{\bdy}B], S_{\xx})$. There are no other contributions to $m([\bdy^{\bdy}D_i], S_{\xx})$, so $ -e(D_i) - 2n_{\xx}(D_i) +  m([\bdy^{\bdy}D_i], S_{\xx})=0$. Last, $D_i$ contains exactly one $O$ or exactly one $X$. In  either case, $m([\bdy^{\bdy}D_i], \SO^{\bdy})  = 2n_{\OO}(D_i)$, and $m([\bdy^{\bdy}D_i], \SX^{\bdy})  = 2n_{\XX}(D_i)$.  It follows that $M(D_i) = 0$ and $A(D_i) = 0$.

For a provincial periodic domain $B$, the bigrading becomes 
 \begin{align*}
M(B) &= -e(B) - 2n_{\xx}(B) + 2n_{\OO}(B)\\ 
A(B) &= n_{\OO}(B) - n_{\XX}(B),
\end{align*}
which agrees with the bigrading for knot Floer homology, and has been shown to be zero in the case of knots and links in $S^3$ (note that a bordered diagram for a tangle in $B^3$ or $S^2\times I$ can be completed to a diagram for a knot or a link in $S^3$, so we can think of $B$ as a domain in the closed diagram). 
\end{proof}

The proof of additivity under composition  is a bit trickier, as there is linking information we need to consider.

\begin{proposition}\label{prop:b12gr}
If $B_1\in \pi_2(\xx, \yy)$ and $B_2\in \pi_2(\yy, \www)$, then $M(B_1\ast B_2) = M(B_1) + M(B_2)$ and $A(B_1\ast B_2) = A(B_1) + A(B_2)$.
\end{proposition}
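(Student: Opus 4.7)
The plan is to establish additivity by separating the terms in $M(B)$ and $A(B)$ into those that are manifestly additive under the concatenation operation and those that are not, and then showing that the non-additive terms combine correctly by invoking a key local identity relating point multiplicities to boundary multiplicities.

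First, observe that $e(B)$, $n_{\OO}(B)$, $n_{\XX}(B)$, and $[\bdy^{\bdy}B]$ are all linear functionals of the underlying $2$-chain in $H_2(\Sigma, \balpha\cup\bbeta\cup \bdy\Sigma)$, and are therefore additive under $\ast$, while the sets $\SO^{\bdy}$, $\SX^{\bdy}$, $\SO^i$, $\SX^i$ depend only on the Heegaard diagram. Consequently, $A(B)$ is a linear combination of additive quantities in each of the three diagram types, so $A(B_1\ast B_2) = A(B_1) + A(B_2)$ is immediate. For $M(B)$, the same reasoning disposes of the terms $-e(B) + 2n_{\OO}(B) - m([\bdy^{\bdy}B], \SO^{\bdy})$ (and their analogues in the type~$2$ and two-boundary cases), leaving only the generator-dependent contributions $-n_{\xx}(B) - n_{\yy}(B)$ together with the boundary multiplicities involving $S_{\xx}$, $S_{\yy}$ (or, in the other cases, $\bar S_{\xx}$, $\bar S_{\yy}$, $S^0_{\xx}$, $S^1_{\xx}$, etc.) to analyze.

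Second, I would compute the discrepancy $M(B_1\ast B_2) - M(B_1) - M(B_2)$ directly. In the type~$1$ case this reduces to
\[
(n_{\yy}(B_1) - n_{\www}(B_1)) + (n_{\yy}(B_2) - n_{\xx}(B_2)) + \tfrac{1}{2} m([\bdy^{\bdy}B_1], S_{\www} - S_{\yy}) + \tfrac{1}{2} m([\bdy^{\bdy}B_2], S_{\xx} - S_{\yy}),
\]
and since the $B_1$- and $B_2$-dependent pieces are formally independent, this splits into two instances of a single key identity: for $B \in \pi_2(\aa, \bb)$ and any other generator $\cc$,
\[
n_{\aa}(B) - n_{\cc}(B) = \tfrac{1}{2} m([\bdy^{\bdy}B], S_{\aa} - S_{\cc}).
\]
The analogous identities in the type~$2$ and two-boundary-component settings involve $\bar S_{\aa} - \bar S_{\cc}$ and $(S^0_{\aa} - S^0_{\cc}) - (S^1_{\aa} - S^1_{\cc})$ respectively, with the sign conventions dictated by the orientation of each boundary component relative to the Heegaard surface.

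To prove the key identity, I would argue by linearity in $B$. Any domain decomposes (up to the addition of a periodic domain) as a sum of elementary regions, namely rectangles in the interior, provincial half-rectangles touching $\bdy\Sigma$, and annular ``boundary'' regions; on each such elementary piece the identity can be checked directly by comparing local multiplicities at intersection points with the corresponding multiplicities of $\bdy^{\alpha}B$ at the endpoints on $\bdy\Sigma$. Crucially, Proposition~\ref{prop:gr_per} shows that the bigrading vanishes on periodic domains, so adding a periodic domain to $B$ does not affect either side of the identity, reducing verification to a single representative per class. The main obstacle will be tracking the various sign conventions uniformly across the three diagram types, since in the two-boundary-component case the appearance of $\bar S^0$ versus $S^1$ reflects the opposite orientation of $\bdy^0\HH$ used when gluing, and this sign flip must match exactly the sign appearing in the key identity when applied separately to each boundary component.
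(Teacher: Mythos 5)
Your reduction is sound up to and including the displayed formula for the discrepancy $M(B_1\ast B_2)-M(B_1)-M(B_2)$: the Euler measure, $n_{\OO}$, $n_{\XX}$, and $[\bdy^{\bdy}B]$ are indeed additive, so the Alexander half of the statement is immediate, exactly as in the paper. The gap is the next step. Your ``key identity'' $n_{\xx}(B)-n_{\ccc}(B)=\tfrac12\, m([\bdy^{\bdy}B],S_{\xx}-S_{\ccc})$ is false, and the discrepancy does not split into two independently vanishing pieces. For a counterexample take $B\in\pi_2(\xx,\yy)$ provincial (say an empty interior rectangle), so $[\bdy^{\bdy}B]=0$ and the right-hand side vanishes for every $\ccc$; then $n_{\xx}(B)=\tfrac12$, while $n_{\ccc}(B)$ can be $0$ (if $\ccc$ lies away from $[B]$) or arbitrarily large (if $\ccc$ has points in high-multiplicity regions of $[B]$), even when $S_{\ccc}=S_{\xx}$. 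The multiplicity of a domain at a generator unrelated to its endpoints simply is not determined by $\bdy^{\bdy}B$ and the occupied arcs, so no local or elementary-region decomposition will prove the identity.

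What is true --- and what the paper's proof rests on --- is a statement about the pair $(B_1,B_2)$ jointly: by \cite[Lemma 10.4]{bfh2} the cross terms satisfy
\[
\bigl(n_{\yy}(B_1)-n_{\www}(B_1)\bigr)+\bigl(n_{\yy}(B_2)-n_{\xx}(B_2)\bigr)=L\bigl(\bdy^{\bdy}B_1,\bdy^{\bdy}B_2\bigr),
\]
i.e.\ the failure of additivity of the point multiplicities is a linking number of the two boundary traces, which genuinely couples $B_1$ and $B_2$ and cannot be computed from either domain alone. One must then match $L(a,b)=\tfrac12\bigl(m(b,\bdy a)-m(a,\bdy b)\bigr)$ against the remaining $S_{\xx},S_{\yy},S_{\www}$ terms, and this requires identifying which section of $o(\yy)-o(\xx)$ the class $\bdy a\in H_0(\bdy\balpha)$ is; the paper arranges this by first adding the boundary regions $D_i$ (harmless by Proposition \ref{prop:gr_per}) so that $B_1$ and $B_2$ cover $\bdy\Sigma$ only inside $[a_1,a_{2n+1}]$, which forces $\bdy a$ to be the bottom section of $S_{\yy}-S_{\xx}$ and hence $m(b,\bdy a)=m(b,S_{\yy}-S_{\xx})$. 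Your use of periodic domains to normalize representatives is in the right spirit, but without the linking-number lemma the argument cannot close.
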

\begin{proof}
Once again we write up the proof for a type $1$ diagram, as the notation in this case is the lightest. The other two cases are identical. 

First observe that $m([\bdy^{\bdy}B], \SO^{\bdy})$, $n_{\OO}(B)$, $m([\bdy^{\bdy}B], \SX^{\bdy})$, $n_{\XX}(B)$ are all clearly additive under composition, so the statement follows for the Alexander grading.  

Let $B = B_1\ast B_2$. Let $R_1$ be a union of  the regions $D_i$ as in Proposition \ref{prop:gr_per} with multiplicity, so that $B_1' = B_1 + R_1\in \pi_2(\xx, \yy)$  only covers $\bdy \Sigma$ inside the interval $[a_1, a_{2n+1}]$. Similarly, let $R_2$ be a union of regions $D_i$ so that $B_2' = B_2+ R_2\in \pi_2(\yy, \www)$ only covers $\bdy \Sigma$ inside the interval $[a_1, a_{2n+1}]$. Let $B' = B_1'\ast B_2'$, and note that  $B' = B + R_1+ R_2$.   Since the Maslov grading is additive under union, and by Proposition \ref{prop:gr_per}, we have that $M(B_i') = M(B_i)$ and $M(B') = M(B)$. So it suffices to show that $M(B') = M(B_1') + M(B_2')$.

To simplify notation, write $a = \bdy^{\bdy}B_1'$,  $b = \bdy^{\bdy}B_2'$, and note that $\bdy^{\bdy}B' = a+b$.  By \cite[Lemma 10.4]{bfh2} and since $m([\bdy^{\bdy}B], \SO^{\bdy})$ and $n_{\OO}(B)$ are additive under composition,
\begin{align*}
M(B') - M(B_1') + M(B_2') &= L(a, b) +  \frac 1 2\big( m(a+b, S_{\xx}) + m(a+b, S_{\www})   - m(a, S_{\xx})  \\
&\hspace{.5cm}-m(a, S_{\yy}) -    m(b, S_{\yy}) -m(b, S_{\www})\big) \\
 &=  L(a, b) +  \frac 1 2\big( m(b, S_{\xx}) + m(a, S_{\www}) -m(a, S_{\yy}) -    m(b, S_{\yy}) \big)
\end{align*}
Recall that $L(a,b) = m(b, \bdy a) = -m(a, \bdy b)$, so we can write $L(a, b) = \frac 1 2 (m(b, \bdy a) - m(a, \bdy b))$. Thus, showing that $M(B') = M(B_1') + M(B_2')$ is equivalent to showing that 
$$m(b, \bdy a) - m(a, \bdy b) + m(b, S_{\xx}) + m(a, S_{\www}) -m(a, S_{\yy}) -    m(b, S_{\yy}) =0.$$

Extend the matching $\mu:\{a_1, \ldots, a_{4n+2}\}\to [2n+1]$ linearly to a function $\mu_{\Z}:H_0(\bdy \balpha; \Z)\to \Z^{2n+1}$. For a generator $\xx$, think of $o(\xx)$ as an element of $\Z^{2n+1}$ where each occupied arc comes with coefficient $+1$. Since $B_1$ is a homology class in $\pi_2(\xx, \yy)$, we have $\bdy a = o(\yy) - o(\xx)$. Similarly, $\bdy b = o(\www) - o(\yy)$. 

Let $S_{\xx}^{bottom} = S_{\xx}\cap \{a_1, \ldots, a_{2n+1}\}$, and let $S_{\xx}^{top} = S_{\xx}\cap \{a_{2n+2}, \ldots, a_{4n+2}\}$. Recall that  we view any subset $S\subset \bdy \balpha$  as the element of $H_0(\bdy \balpha; \Z)$ where each point of $S$ comes with coefficient $+1$.

Since $[a_1, a_{2n+1}]\subset \bdy \Sigma$ only contains one endpoint of each $\alpha$ arc, and since $\mu_{\Z}(\bdy a) = o(\yy) - o(\xx)$, it follows that $\bdy a$ can only be the section  $S_{\yy}^{bottom} -  S_{\xx}^{bottom}$  of $o(\yy) - o(\xx)$. Then
$m(b, \bdy a) = m(b, S_{\yy}^{bottom} -  S_{\xx}^{bottom})$. Since $b$ only covers the ``bottom" of $\bdy \Sigma$, i.e. $[a_1, \ldots, a_{2n+1}]$, the multiplicity of $b$ at $a_i$ is zero whenever $i\geq 2n+2$, so $m(b, \bdy a) = m(b, S_{\yy} - S_{\xx})$. Similarly, $m(a, \bdy b) = m(a, S_{\www} - S_{\yy})$. This completes the proof.
\end{proof}

\subsection{Modules and bimodules} Let $\HH$ be a diagram  of type $1$ or type $2$ for some pair $(B^3, \T)$. 

 \begin{proposition}
 Given $\xx, \yy \in \gens(\HH)$, $\pi_2(\xx, \yy)$ is nonempty. 
 \end{proposition}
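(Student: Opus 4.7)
The plan is to reduce the statement to the well-known case of a closed Heegaard diagram for $S^3$, exploiting the fact that $B^3$ is simply connected. Without loss of generality, I will assume $\HH$ is of type $2$ (the type $1$ case is symmetric, or can be reduced to type $2$ by Proposition \ref{prop:type1to2}).

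First, choose a ``standard'' type $1$ tangle Heegaard diagram $\HH'$ for the trivial $2n$-tangle in $B^3$ whose boundary agrees with $\bdy\HH$, so that $\bdy^1\HH' = (\bdy^0\HH)^*$ in the sense needed for gluing (Subsection \ref{ssec:gluing}). By Lemma \ref{lemma:gluehd}, the closure $\overline\HH = \HH' \cup \HH$ is a closed multipointed Heegaard diagram for the link $L\cup U$ inside $B^3 \cup B^3 \cong S^3$, where $U$ is an unknot split from $L$. I will arrange $\HH'$ to be simple enough that every idempotent of $\Aa(\bdy^1\HH')$ is realized by a generator of $\HH'$; this is straightforward for, e.g., a diagram built from a concentric arrangement of $\alpha$-arcs, one $\alpha$-circle and one $\beta$-circle per boundary basepoint pair, with one extra $\beta$-circle to capture the boundary orientation data.

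Next, given $\xx,\yy \in \gens(\HH)$, pick any $\xx',\yy' \in \gens(\HH')$ whose occupied $\alpha$-arc sets are complementary to $o^0(\xx)$ and $o^0(\yy)$ respectively; by the choice of $\HH'$ above, such $\xx'$ and $\yy'$ exist. Then $\overline{\xx} \coloneqq \xx' \cup \xx$ and $\overline{\yy} \coloneqq \yy' \cup \yy$ are honest generators of the closed diagram $\overline\HH$. The domain-theoretic counterpart of the classical fact that closed Heegaard diagrams for $S^3$ have nonempty $\pi_2$ between any two generators may be proved directly: choose any $1$-chain $a$ on $\balpha$ and any $1$-chain $b$ on $\bbeta$ in $\overline{\HH}$ with $\bdy a = \overline\yy - \overline\xx = -\bdy b$ (these exist because $\overline\xx$ and $\overline\yy$ occupy the same $\alpha$-circles and the same $\beta$-circles). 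Then $a+b$ is a $1$-cycle in $\Sigma$; since $\overline\HH$ represents $S^3$, we have $H_1(\Sigma) = H_1(\balpha)\oplus H_1(\bbeta)$, so after adjusting $a$ by an integer combination of $\alpha$-circles and $b$ by an integer combination of $\beta$-circles, we may arrange $[a+b]=0$ in $H_1(\Sigma)$. Hence $a+b$ bounds a $2$-chain $\overline B$ on $\Sigma$, which is the domain of an element of $\pi_2(\overline\xx,\overline\yy)$.

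Finally, by construction $\overline B \natural $-decomposes as $\overline B = B' \natural B$ with $B'$ a domain on $\Sigma'$ (the surface of $\HH'$) and $B$ a domain on $\Sigma$. Since $\bdy^{\balpha}\overline B + \bdy^{\bbeta}\overline B$ has multiplicities $\overline\yy - \overline\xx$ and $\overline\xx - \overline\yy$ respectively at the intersection points, and these multiplicities at points of $\Sigma\cap (\balpha\cap\bbeta)$ agree with $\yy-\xx$ and $\xx-\yy$ respectively, the restricted domain $B$ lies in $\pi_2(\xx,\yy)$; its boundary on $\bdy\Sigma$ equals the portion of $\bdy^{\bdy^1}B'$ inherited from the gluing and lies in $H_1(\bdy\Sigma,\bdy\balpha)$ as required. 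The main obstacle is really the realization of idempotents step, i.e.\ producing matching $\xx',\yy' \in \gens(\HH')$; everything else follows from the standard topology of $S^3$ and straightforward bookkeeping along the gluing interface.
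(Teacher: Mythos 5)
Your strategy --- close up $\HH$ with a standard diagram $\HH'$ for the trivial tangle, solve the problem on the resulting closed diagram for $S^3$, and restrict back --- is genuinely different from the paper's argument, which works entirely on the bordered diagram: one connects $\xx$ to $\yy$ by paths $\gamma_{\alpha}\subset \balpha\cup(\bdy\Sigma\setminus\zzz)$ and $\gamma_{\beta}\subset\bbeta$ and observes that the obstruction to $\gamma_{\alpha}-\gamma_{\beta}$ bounding (after adding entire $\alpha$- and $\beta$-curves) lives in $H_1(B^3,\bdy B^3)=0$. The paper's route needs no auxiliary diagram and no gluing.

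Two points in your argument need repair before it is a proof. First, and most concretely: the restriction $B$ of $\overline{B}$ to $\Sigma$ need not lie in $\pi_2(\xx,\yy)$ as defined in this paper, because by definition every class in $\pi_2(\xx,\yy)$ has multiplicity zero on the region of $\Sigma\setminus(\balpha\cup\bbeta)$ containing $\zzz$ (equivalently, the homology is taken rel $(\bdy\Sigma\setminus\zzz)\times I$, so $\bdy^{\bdy}B$ must avoid the endpoints of $\zzz$). Your $\overline{B}$ is an arbitrary $2$-chain on the closed surface and can have any multiplicity on the region of $\overline{\HH}$ containing the new basepoints $X'$ and $O'$ --- which is exactly the region into which the $\zzz$-region of $\HH$ is absorbed under gluing. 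This is fixable by subtracting the appropriate multiple of the fundamental class $[\Sigma_{\overline{\HH}}]$, a periodic domain of the closed diagram, before restricting; but as written the restricted domain may simply fail to be in $\pi_2(\xx,\yy)$. Second, the realization-of-idempotents step is true but not as immediate as ``straightforward'' suggests: you need a single type $1$ diagram for the trivial tangle in which every $n$-element subset of the $2n+1$ $\alpha$-arcs (or at least every complement of an occupied set of $\HH$) occurs as $o(\xx')$ for some generator $\xx'$. This does hold for the Heegaard diagrams associated to bordered grid diagrams of the trivial tangle, where generators correspond to arbitrary partial matchings, but your sketched ``concentric'' diagram would need to be exhibited and checked. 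With both repairs the argument goes through, at the cost of considerably more machinery than the paper's short direct computation.
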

 \begin{proof}
 The proof is identical to that of \cite[Lemma 4.21]{bfh2}. Connect $\xx$ to $\yy$ by a union of paths $\gamma_{\alpha}\subset \balpha\cup (\bdy\Sigma\setminus\zzz)$ and $\gamma_{\beta}\subset \bbeta$. Then $\xx$ and $\yy$ are connected by a domain if and only if $\gamma_{\alpha}-\gamma_{\beta}$ can be made null-homologous in $\Sigma$ by adding or subtracting entire $\alpha$-curves and $\beta$-circles, if and only if the image of $\gamma_{\alpha}-\gamma_{\beta}$ in 
 $H_1(\Sigma\times I, \balpha\times \{1\}\cup \bbeta\times \{0\}\cup(\bdy\Sigma\setminus \zzz)\times I) \cong H_1(B^3, \bdy B^3)$ is zero. But $H_1(B^3, \bdy ^3)=0$, so this is always the case.
  \end{proof}

Since any two generators $\xx, \yy\in\gens(\HH)$ are connected by a domain, we can define relative gradings

\begin{align*}
M(\yy)-M(\xx) &= M(B)\\
A(\yy)-A(\xx) &= A(B),
\end{align*}
where $B\in \pi_2(\xx,\yy)$. We can assume $B$ does not cross $\zzz$: if any domain $B'$ intersects $\zzz$, we can add copies of the periodic domain(s) that are the region(s) of $\Sigma\setminus\balpha$ containing the points/arc $\zzz$, to obtain a domain $B\in\pi_2(\xx,\yy)$ that avoids $\zzz$.

When $\HH$ is a diagram for $(S^2\times I, \T)$, it is no longer true that any two generators are connected by a domain. However, the $\DA$ bimodule splits as
 \[\cftdat(\HH)\cong \bigoplus_{i=0}^{2m+1}\cftdat_i(\HH),\]
where $\cftdat_i(\HH)$ is is generated by $\gens_i\coloneqq\{\xx\in \gens| |o^0(\xx)| = i\}$. 

\begin{lemma}
For a fixed $i$, and for any  $\xx, \yy\in \cftdat_i(\HH)$, we have $\pi_2(\xx, \yy)\neq \emptyset$.
\end{lemma}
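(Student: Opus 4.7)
The plan is to mimic the argument for the one-boundary-component case, but now keep careful track of the extra $\Z$-valued obstruction coming from the topology of $S^2\times I$. Connect $\xx$ to $\yy$ by a $1$-cycle $\gamma = \gamma_\alpha - \gamma_\beta$ where $\gamma_\alpha$ is a union of paths in $\balpha \cup (\bdy\Sigma\setminus \zzz)$ and $\gamma_\beta$ is a union of paths in $\bbeta$. As in the proof of non-emptiness of $\pi_2(\xx,\yy)$ for diagrams with one boundary component, $\xx$ and $\yy$ are connected by a domain if and only if the class of $\gamma$ in
\[
H \;\coloneqq\; H_1\bigl(\Sigma\times I,\ \balpha\times\{1\}\cup \bbeta\times\{0\}\cup (\bdy\Sigma\setminus\zzz)\times I\bigr)
\]
vanishes, since adding entire $\alpha$-circles or $\beta$-circles to $\gamma_\alpha$ or $\gamma_\beta$ corresponds to adding a periodic domain.

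First I would identify $H$ with $H_1(Y,\bdy Y)$ via the construction that builds $(Y,\T)$ from $\HH$ in Subsection \ref{ssec:hd2}. For $Y = S^2\times I$, the long exact sequence of the pair together with $H_1(S^2\times I)=0$ and the map $H_0(\bdy Y) = \Z\oplus\Z \to H_0(Y) = \Z$ identifies $H_1(Y,\bdy Y) \cong \Z$, generated (for instance) by the class of the arc $\zzz_1$ running from $\bdy^0\HH$ to $\bdy^1\HH$.

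Next I would compute the image of $[\gamma]$ under this isomorphism. Concretely, take a path $\delta$ in $Y$ from $\bdy^0 Y$ to $\bdy^1 Y$ whose intersection with $\Sigma$ is transverse and disjoint from $\balpha\cup\bbeta$, and Poincar\'e--Lefschetz dually pair $[\gamma]$ with $[\delta]$; equivalently, push $\gamma_\alpha$ and $\gamma_\beta$ into $\Sigma$ and count signed intersection with a suitable arc. Using the $\alpha$-handlebody picture (where each $\alpha^0_j$ and $\alpha^1_j$ arc bounds a compressing disk in $Y$ whose core connects across the corresponding $1$-handle of $\bdy Y$), a short calculation shows this intersection number is exactly $|o^0(\xx)|-|o^0(\yy)|$ (equivalently $|o^1(\yy)|-|o^1(\xx)|$, which must agree because the total number of occupied arcs is fixed by the fact that each $\beta$-circle contains exactly one point of each generator and the $\alpha^c$-circles are all occupied).

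Since $\xx,\yy\in \cftdat_i(\HH)$ have $|o^0(\xx)|=|o^0(\yy)|=i$, the class $[\gamma]\in H_1(Y,\bdy Y)$ is zero, so $\gamma$ can be made null-homologous by adding $\alpha$-circles and $\beta$-circles, producing a domain $B\in \pi_2(\xx,\yy)$. The main (minor) obstacle is verifying the identification of the intersection number with $|o^0(\xx)|-|o^0(\yy)|$; I expect this to reduce to the observation that sliding an endpoint of a path across a single $\alpha$-arc $\alpha^0_j$ changes occupation of that arc and contributes $\pm 1$ to the intersection with $\delta$, which is the standard bordered Floer argument adapted to the two-boundary setting.
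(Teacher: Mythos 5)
Your plan is correct, but it follows a different route from the paper's. The paper does not compute the obstruction group directly on $\HH$; instead it passes to the drilled diagram $\HH_{dr}$ (a one-boundary-component diagram for $B^3$, where $\pi_2(\xx_{dr},\yy_{dr})\neq\emptyset$ is automatic by the already-established lemma), adds copies of the two boundary regions of $\Sigma_{dr}\setminus\balpha$ to kill the multiplicities at $\zzz_1$ and $z^{back}$, and then shows that the remaining multiplicity $p$ at $z^{front}$ -- which is exactly the obstruction to undrilling the domain back onto $\HH$ -- vanishes. The vanishing is extracted from the identity $\mu_{\ast}(S)=o(\yy_{dr})-o(\xx_{dr})$ for the boundary of the domain, using that $[a_1,a_{2m+1}]$ contains only one endpoint of each $\alpha$-arc and that $|o^0(\xx)|=|o^0(\yy)|=i$. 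Your approach instead identifies the obstruction group $H_1(\Sigma\times I,\,\balpha\times\{1\}\cup\bbeta\times\{0\}\cup(\bdy\Sigma\setminus\zzz)\times I)\cong H_1(S^2\times I,\bdy)\cong\Z$ and evaluates the class of $\gamma_\alpha-\gamma_\beta$ as $|o^0(\xx)|-|o^0(\yy)|$ by intersecting with a dual arc. Both arguments hinge on the same numerical coincidence, and the paper's drilling computation is in effect a concrete bookkeeping of the intersection number you describe. What your route buys is conceptual transparency (the obstruction is visibly $\Z$-valued and measured by occupation numbers); what the paper's route buys is that the only step requiring verification is an algebraic identity about $\mu_{\ast}$, with no need to set up the Poincar\'e--Lefschetz pairing. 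The one place you should firm up is the assertion that the pairing of $[\gamma]$ with $[\delta]$ equals $|o^0(\xx)|-|o^0(\yy)|$: as written it is a plausibility argument ("I expect this to reduce to\ldots"), and making it precise amounts to essentially the same endpoint-counting that the paper performs via the matching $\mu_{\ast}$.
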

\begin{proof}
Let $\xx_{dr}$ and $\yy_{dr}$ be the generators  corresponding to $\xx$ and $\yy$ in $\HH_{dr}$. There is some domain $B_{dr}\in \pi_2(\xx_{dr}, \yy_{dr})$, since $\HH_{dr}$ is a diagram for $B^3$.  Add copies of  the two periodic regions of $\Sigma_{dr}\setminus\balpha$ containing $\zzz_1$ and $\{z^{front}, z^{back}\}$, to obtain a domain $B_{dr}'\in\pi_2(\xx, \yy)$ with zero multiplicity at $\zzz_1$ and $z^{back}$, resulting in some multiplicity $p$ at $z^{front}$. Write $\balpha^0$ for the set $\{\alpha_1^0, \ldots, \alpha_{m+1}^0\}$, and $\balpha^1$ for the set $\{\alpha_1^1, \ldots, \alpha_{n+1}^1\}$.   Let $S\in \Z\left<\bdy(\balpha^0\cup \balpha^1)\right>$ be the set of points (with sign and multiplicity) in the boundary of $\bdy^{\bdy}B_{dr}'$. The matching $\mu$  for the pointed matched circle $\bdy \HH_{dr}$ extends bilinearly to a map $\mu_{\ast}:\Z\left<\bdy\balpha\right>\to \Z\left<[m+n+2]\right>$. Since  $\mu_{\ast}(S) = o(\yy_{dr})-o(\xx_{dr})$,  and $o^0(\xx)$ and $o^0(\yy)$ both have  cardinality $i$, it follows that $p=0$, so after attaching a $1$-handle at $\{z^{front}, z^{back}\}$,  $B_{dr}'$ becomes a domain $B'$ on $\HH$.
\end{proof}
 Define relative gradings on  $\cftdat_i(\HH)$ by
 \begin{align*}
M(\yy)-M(\xx) &= M(B)\\
A(\yy)-A(\xx) &= A(B),
\end{align*}
where $B\in \pi_2(\xx,\yy)$ (again arrange for $B$ to have zero multiplicities at $\zzz_1$ and $\zzz_2$ by adding copies of the corresponding regions of $\Sigma\setminus\balpha$, if necessary).

By Proposition \ref{prop:gr_per}, the relative bigrading on the modules and bimodules is well-defined. 

\begin{proposition}\label{prop:bigr_str}
The various structures defined in this section are graded ($A$, $D$, or $\DA$) (bi)modules with respect to the grading $M$. Further, the internal grading $A$ is preserved by all structure maps. 
\end{proposition}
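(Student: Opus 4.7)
The plan is as follows. Since the bigrading on domains is additive under union and composition (Proposition \ref{prop:b12gr}) and vanishes on periodic domains (Proposition \ref{prop:gr_per}), the relative gradings $M(\yy)-M(\xx) = M(B)$ and $A(\yy)-A(\xx) = A(B)$ on each connected summand $\cftdat_i(\HH)$ (or on $\cftdt(\HH)$, $\cftat(\HH)$) are well-defined. So to prove the proposition it is enough to check, for each term appearing in $\delta$, in $m_{n+1}$, or in $\delta^1_k$, the identities
\[
M(B) + M\bigl(a(\vec{\brho})\bigr) = -1,
\qquad
A(B) + A\bigl(a(\vec{\brho})\bigr) = 0,
\]
where $a(\vec{\brho})$ is the algebra element assembled from the Reeb-chord data of $\vec P$ (with appropriate sign conventions, and with the idempotents $I_A(\xx)$, $I_D(\xx)$, $I_A(\xx)I_D(\yy)$, etc. on its ends). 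The hypothesis for the term to appear is that $\ind(B,\vec{\brho}) = 1$, so the task is to express both sides purely in terms of Reeb-chord and domain data.

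First I would compute explicit formulas for $M$ and $A$ of a homogeneous algebra generator $I(\sss)a(\brho)$, using the identification of $\Aa(\zz)$ with $\widehat\Aa(\bar\E)$ and the combinatorial grading from Subsection \ref{ssec:grading}. The outcome I expect is
\[
M\bigl(I(\sss)a(\brho)\bigr) = \iota(\brho) - m([\brho], \SO^{\bdy}) + \tfrac12 m([\brho], \bar S_{\sss} + \bar S_{\mu(\brho)\sss}),
\qquad
2A\bigl(I(\sss)a(\brho)\bigr) = m([\brho], \SO^{\bdy} - \SX^{\bdy}),
\]
where one then checks multiplicativity under concatenation of Reeb chords using the compatibility of the concatenation with $\iota$ and $L$. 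With these formulas in hand, the identities $M(B) + M(a(\vec{\brho})) = -1$ and $A(B) + A(a(\vec{\brho})) = 0$ become equalities between quantities defined purely from the domain $B$ and from the Reeb chord sequence $\vec{\brho}$. The Alexander identity then reduces to
\[
\tfrac12 m([\bdy^{\bdy}B], \SO^{\bdy} - \SX^{\bdy}) + n_{\OO}(B) - n_{\XX}(B) + \tfrac12 m([\vec\brho], \SO^{\bdy} - \SX^{\bdy}) = 0,
\]
which follows because $\bdy^{\bdy}B$ and $\vec\brho$ carry opposite sign conventions under the $\Aa(-\bdy\HH)$ versus $\Aa(\bdy\HH)$ identification, and because $n_{\OO}(B) = 0 = n_{\XX}(B)$ for an empty homology class. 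The Maslov identity
\[
-e(B) - n_{\xx}(B) - n_{\yy}(B) + \tfrac12 m([\bdy^{\bdy}B], \bar S_{\xx} + \bar S_{\yy}) - m([\bdy^{\bdy}B], \SX^{\bdy}) + \iota(\vec\brho) = -1
\]
is exactly the rearrangement of $\ind(B,\vec\brho) = 1$ after substituting the index formula from Section \ref{sec:moduli} and handling the boundary idempotent corrections; this is the computation to perform.

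The main obstacle will be the Maslov case: the combinatorial grading from Subsection \ref{ssec:grading} counts inversions in strand diagrams, while $\iota(\vec\brho)$ is a linking-theoretic quantity arising from the analytic index. Matching them requires translating inversion counts of the permutation $\phi$ associated to $\brho$ into $-\sum |L([\rho_i],[\rho_j])| - |\brho|/2$, and tracking how the $|\TX|$-correction in $2A$ and the horizontal-strand contribution in $M$ interact with the boundary multiplicities $\SO^{\bdy}$ and $\SX^{\bdy}$ introduced in Subsection \ref{ssec:morse_gradings}. Once the single-boundary identity is established for type $1$ and type $2$ diagrams (yielding the graded statements for $\cftat(\HH)$ and $\cftdt(\HH)$), the two-boundary case for $\cftdat(\HH)$ follows by splitting $\vec P$ as $\vec P_0\sqcup \vec P_1$, applying the identity on each side, and noting that the index formula $\ind(B,\vec\brho_0,\vec\brho_1)$ is a sum of the one-sided contributions. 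The boundedness and finiteness arguments needed to make this run through are already guaranteed by provincial admissibility.
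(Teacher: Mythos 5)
Your proposal follows essentially the same route as the paper: establish well-definedness of the relative bigrading via the vanishing on periodic domains and additivity under composition, prove a lemma expressing $M$ of an algebra generator in terms of $\iota(\brho)$ and boundary multiplicities (the paper's Lemma~\ref{lemma:miota}, whose proof is exactly the inversion-count-to-$L$ translation you flag as the main obstacle), and then substitute $\ind=1$ into the resulting grading formulas for $(B,\vec{\brho})$ (the paper's Propositions~\ref{prop:domaingr}--\ref{prop:domaingr1}). The only caution is that your uniform target identity $M(B)+M(a(\vec{\brho}))=-1$ is the type-$D$ (output) form; on the type-$A$ side the algebra elements are inputs carrying $[1]$-shifts, so the correct relation is $M(B)=\sum_i M(a_i)+|\vec{\brho}|-1$ (and $A(B)=\sum_i A(a_i)$), as in Proposition~\ref{prop:domaingr1} — a bookkeeping point your "appropriate sign conventions" hedge would resolve once the computation is carried out.
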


The proof is based on understanding the relation between the bigrading on a domain with a compatible sequence of sets of Reeb chords and the bigrading on the algebra elements associated to the Reeb chords. We start by relating the Maslov grading of algebra generators to $\iota$. 
\begin{lemma}\label{lemma:miota}
Let $a = I(\sss)a(\brho)I(\ttt)$ be a generator for $\az$. Then 
$$M(a) - \iota(\brho) = \frac 1 2 m([\brho], S+T) - m([\brho], \SO^{\bdy}),$$
where $S = \mu^{-1}(\sss)$ and $T = \mu^{-1}(\ttt)$. 
\end{lemma}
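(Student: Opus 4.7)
The plan is to prove the identity by unpacking both sides through the strand-diagram model of $\widehat{\Aa}(\bar\E) \cong \Aa(\zz)$ introduced in Section~\ref{sec:reeb_alg} and comparing them piece by piece.

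First I would fix a lift of $a = I(\sss)a(\brho)I(\ttt)$ to a sum of strand-algebra generators $\sum_k (S_k, T_k, \phi_k)$ in $\widehat{\Aa}(\bar\E)$, where $S_k$ and $T_k$ run over sections of $\sss$ and $\ttt$ compatible with $\brho$. Since $a$ is homogeneous, every summand has the same Maslov grading, so it suffices to compute $M$ for a single $\phi = \phi_k$. I would also subdivide the Reeb chords in $\brho$ into three geometric types under the folding $\mu$: those whose two endpoints both lie in the ``lower'' arc $(a_1,\dots,a_{2n+1})$, those whose endpoints both lie in the ``upper'' arc, and the U-shaped chords that cross the fold. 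In the shadow picture, these translate into upward-veering, downward-veering, and ``turn-around'' moving strands, respectively.

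Next I would expand $M(\phi) = \inv(\phi) - \inv(\phi, \omega) + \inv(\omega)$ by partitioning crossings according to which kinds of strands they involve: moving-moving, moving-horizontal (the horizontal strands encoded by the idempotents $I(\sss)$, $I(\ttt)$), and moving-$\omega$. In parallel, I would rewrite
\[
\iota(\brho) = -\sum_{\{\rho_i,\rho_j\}\subset\brho} |L([\rho_i],[\rho_j])| - \tfrac{|\brho|}{2}
\]
geometrically: $|L([\rho_i],[\rho_j])|$ equals the number of crossings between the moving strands produced by $\rho_i$ and $\rho_j$ after folding, while the constant $|\brho|/2$ is exactly the self-crossing contribution of U-shaped chords (each such chord contributes $\tfrac12$ to its own ``inversion count'' when the strand wraps back on itself in the folded picture).

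I would then interpret the two multiplicity terms on the right hand side combinatorially. By unwinding the definition of $m$, the quantity $\frac12 m([\brho], S+T)$ is precisely the number of crossings between moving strands and horizontal strands in the shadow diagram of $\phi$: each endpoint of a horizontal strand that lies in the support of a Reeb chord contributes $1/2$, which tallies to a single crossing contribution in $\inv(\phi)$ per horizontal strand swept out. Similarly, $m([\brho], \SO^{\bdy})$ counts the $\omega$-strands (the dashed strands corresponding to $\SO$) that lie under the support of some chord in $\brho$, and this count is exactly $\inv(\phi,\omega) - \inv(\omega)$ restricted to the moving strands of $\phi$ (the $\inv(\omega)$ contribution being independent of $\brho$). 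Assembling these identifications, the $\inv(\omega)$ terms cancel, the horizontal-moving contributions match $\tfrac12 m([\brho], S+T)$, the moving-versus-$\omega$ crossings match $m([\brho], \SO^{\bdy})$, and the remaining moving-moving self-crossings plus the $|\brho|/2$ constant reproduce $-\iota(\brho)$, which rearranges to the claimed equality.

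The main obstacle will be the careful bookkeeping around U-shaped Reeb chords: those with one endpoint in each half of the folded circle. These are the chords that (i) self-link after folding, accounting for the $|\brho|/2$ correction, (ii) contribute differently to $|L([\rho_i],[\rho_j])|$ depending on whether another chord is nested inside them on one side of the fold but linked on the other, and (iii) produce both an up- and a down-strand contribution to $\inv(\phi,\omega)$. I would handle this by setting up an explicit local model for a U-shaped chord and verifying case by case that the contributions to all four quantities $M(a)$, $\iota(\brho)$, $\tfrac12 m([\brho],S+T)$, and $m([\brho],\SO^{\bdy})$ balance, which reduces the general case to straight chords where the identity is essentially immediate from the definitions of $\iota$ and $M$.
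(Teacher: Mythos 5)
Your overall strategy---translating to the strand-diagram model of $\widehat{\Aa}(\bar\E)$, decomposing $\inv(\phi)$ by the types of strands involved, and matching the pieces against $\iota(\brho)$ and the two multiplicity terms---is the same as the paper's. But the proposal is organized around a case that does not occur, and as a result it misses the step that actually requires care.

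There are no ``U-shaped'' Reeb chords. The marked matched circle $\zz(\mathcal S)$ carries two basepoints $z^{\pm}$, one in $(a_{2n+2},a_1)$ and one separating the lower arc $[a_1,\dots]$ from the upper arc; Reeb chords live in $(Z\setminus\zzz,\mathbf a)$, so every chord is contained entirely in one of the two arcs. Under the correspondence with $\widehat\Aa(\bar\E)$, an upward-veering strand from $i$ to $\phi(i)>i$ gives a chord from $a_i$ to $a_{\phi(i)}$ in one arc, and a downward-veering strand gives a chord from $a_{2n+3-i}$ to $a_{2n+3-\phi(i)}$ in the other arc---never a chord crossing between them. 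Consequently your explanation of the $-|\brho|/2$ term as ``self-crossing of U-shaped chords'' is unfounded; that term is simply part of the definition of $\iota$, and the paper disposes of it by quoting the identity $\iota(\brho)=\inv(\brho)-m([\brho],S(\brho))$ from \cite[Lemma 5.57]{bfh2}. Your proposed local-model analysis of U-shaped chords would have nothing to analyze.

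The genuine difficulty, which your proposal does not address, is the term $\inv(\phi^+,\phi^-)$: an upward-veering and a downward-veering strand may cross in the shadow diagram (contributing to $\inv(\phi)$, hence to $M(a)$), yet the corresponding Reeb chords lie in opposite arcs of the circle and so satisfy $L([\rho^+],[\rho^-])=0$---they contribute nothing to $\iota(\brho)$. So your claim that $|L([\rho_i],[\rho_j])|$ counts the crossings of the corresponding strands is false for exactly these pairs, and the matching you describe between ``moving--moving self-crossings plus $|\brho|/2$'' and $-\iota(\brho)$ breaks down. The paper resolves this by a case analysis showing $\inv(\phi^+,\phi^-)=m([\brho^-],S^+)+m([\brho^+],S^-)$, i.e.\ these crossings are absorbed into the multiplicity term $m([\brho],S)$ rather than into $\iota$; together with $m([\brho],S)=m([\brho],T)$ this produces the $\tfrac12 m([\brho],S+T)$ on the right-hand side. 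Without this step (or an equivalent one) the proposed proof does not close. A minor additional point: a generator $I(\sss)a(\brho)I(\ttt)$ corresponds to a single strand diagram of $\widehat\Aa(\bar\E)$ with dashed horizontal strands, not to a sum over sections, so the opening reduction ``it suffices to compute $M$ for a single $\phi_k$'' is solving a non-issue.
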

\begin{proof}
Let $a' = (\sss, \ttt, \phi)$ be the element in $\widehat\Aa(\bar\E)$ corresponding to $a$ under the isomorphism $\az\cong \widehat\Aa(\bar\E)$ discussed earlier. 
Recall that $M(a') = \inv(\phi)-\inv(\phi,\omega) + \inv(\omega)$. 
Decompose $\sss$ as $\sss^+\sqcup \sss^-\sqcup \sss^0$, so that $\phi^+\coloneqq\phi|_{\sss^+}$ is increasing,  $\phi^- \coloneqq\phi|_{\sss^-}$ is decreasing, and  $\phi^0\coloneqq \phi|_{\sss^0}$ is the identity. Then 
\begin{align*}
M(a') &= \inv(\phi)-\inv(\phi,\omega) \\
& = \inv(\phi^+) + \inv(\phi^-) + \inv(\phi^+, \phi^0)+ \inv(\phi^-, \phi^0)   + \inv(\phi^+, \phi^-)-\inv(\phi,\omega)\\
& = \inv(\brho)+ \inv(\phi^+, \phi^0)+ \inv(\phi^-, \phi^0)   + \inv(\phi^+, \phi^-)-\inv(\phi,\omega)
\end{align*}
By \cite[Lemma 5.57]{bfh2}, $\iota(\brho)$ can be written as
$$\iota(\brho) = \inv(\brho) - m([\brho], S(\brho)),$$
where $S(\brho)$ is the set of initial endpoints of $\brho$.

The upward-veering strands in $\phi$, i.e. the strands for $\phi^+$, correspond to the set of Reeb chords $\brho^+\subset\brho$ contained in $[a_{2n+2}, a_{4n+2}]$, and the downward-veering strands in $\phi$ correspond to the Reeb chords $\brho^-\subset\brho$ contained in $[a_1, a_{2n+1}]$. The horizontal strands of $\phi$ correspond to the projection under the matching $\mu$ of the dashed  horizontal strands in the strands diagram for $a$. Let $S(\brho^+)$ and $S(\brho^-)$ be the sets of initial endpoints of $\brho^+$ and $\brho^-$, respectively. Note that $S(\brho^+)$ is the section of $\sss^+$ contained in $[a_1, a_{2n+1}]$, and $S(\brho^-)$ is the section of $\sss^-$ contained in $[a_{2n+2}, a_{4n+2}]$. Equivalently, $S(\brho^+) = \mu^{-1}(\sss^+)\cap [a_1, a_{2n+1}]$ and $S(\brho^-) = \mu^{-1}(\sss^-)\cap [a_{2n+2}, a_{4n+2}]$.

Decompose $S$ as $S = S^+\sqcup S^-\sqcup S^0$, where $S^+ = \mu^{-1}(\sss^+)$,  $S^+ = \mu^{-1}(\sss^-)$, and $S^0$ is the set of initial points for the dashed horizontal strands. Decompose $T$ similarly  by the type of final endpoints as $T =  T^+\sqcup T^-\sqcup T^0$. Note that the multiplicity of a Reeb chord in $[a_1, a_{2n+1}]$ is zero at any point in $[a_{2n+2}, a_{4n+2}]$, and similarly the multiplicity of a Reeb chord in $[a_{2n+2}, a_{4n+2}]$ is zero at any point in $[a_1, a_{2n+1}]$, so $m([\brho^+], S(\brho)) = m([\brho^+], S(\brho^+)) = m([\brho^+], S^+)$, and similarly  $m([\brho^-], S(\brho)) = m([\brho^-], S^-)$. Since $[\brho] = [\brho^+]+ [\brho^-]$, 
$$\iota(\brho) = \inv(\brho) - m([\brho^+], S^+) - m([\brho^-], S^-).$$

Next,we express $M(a')$ in terms of $\brho$. Observe that 
$$\inv(\phi^+, \phi^0)+ \inv(\phi^-, \phi^0) = m([\brho^+], S^0) + m([\brho^-], S^0).$$
and 
$$\inv(\phi, \omega) = m([\brho], \SO^{\bdy}).$$
It remains to understand $ \inv(\phi^+, \phi^-)$. Let $s^-$ and $s^+$ be a downward-veering and an upward-veering strand, and let $\rho^-$ and $\rho^+$ be the corresponding Reeb chords on $\zz$. The strands $s^-$ and $s^+$ cross exactly when one of the following happens:
\begin{itemize}
\item[-] The initial endpoint of $s^+$ is between the initial and final endpoints of $s^-$. This happens exactly when $m([\rho^-], \mu^{-1}(\mu(S(\rho^+)))=1$ and $m([\rho^+], \mu^{-1}(\mu(S(\rho^-)))=0$. 

\item[-] The initial endpoint of $s^-$ is between the initial and final endpoints of $s^+$. Equivalently,  $m([\rho^+], \mu^{-1}(\mu(S(\rho^-)))=1$ and $m([\rho^-], \mu^{-1}(\mu(S(\rho^+)))=0$. 

\item[-] The initial endpoint of $s^+$ is the final endpoint of $s^-$, i.e. $m([\rho^+], \mu^{-1}(\mu(S(\rho^-)))=\frac 1 2$ and $m([\rho^-], \mu^{-1}(\mu(S(\rho^+)))=\frac 1 2$. 
\end{itemize}
The strands do not cross if and only if $m([\rho^+], \mu^{-1}(\mu(S(\rho^-)))=0=m([\rho^-], \mu^{-1}(\mu(S(\rho^+)))$.
By linearity then, 
\begin{align*}\inv(\phi^+, \phi^-) &= \sum_{\rho^-\in \brho^-, \rho^+\in \brho^+} (m([\rho^-], \mu^{-1}(\mu(S(\rho^+))) + m([\rho^+], \mu^{-1}(\mu(S(\rho^-))))\\
&= m([\brho^-], S^+) +  m([\brho^+], S^-).
\end{align*}
So,
\begin{align*}
M(a') & = \inv(\brho)+ \inv(\phi^+, \phi^0)+ \inv(\phi^-, \phi^0)   + \inv(\phi^+, \phi^-)-\inv(\phi,\omega)\\
& =  \inv(\brho)+ m([\brho^+], S^0) + m([\brho^-], S^0)   +m([\brho^-], S^+) +  m([\brho^+], S^-) -m ([\brho], \SO^{\bdy})\\
& =  \inv(\brho)+ m([\brho^+], S) + m([\brho^-], S)  - m([\brho^-], S^-) -  m([\brho^+], S^+) -m ([\brho], \SO^{\bdy})\\
& = \iota(\brho)+ m([\brho^+], S) + m([\brho^-], S)   -m ([\brho], \SO^{\bdy})\\
& = \iota(\brho)+ m([\brho], S)   -m ([\brho], \SO^{\bdy}).
\end{align*}
It is not hard to see that $m([\brho], S)  = m([\brho], T)$, so 
\begin{equation*}
M(a') - \iota(\brho) =   \frac 1 2 m([\brho], S+T)-m([\brho], \SO^{\bdy}).\qedhere
\end{equation*}
\end{proof}

Let $B\in \pi_2(\xx,\yy)$ be a domain for a diagram $\HH$ with two boundary components, let $\vec\brho_0 = (\rho_1, \ldots, \rho_i)$ be a sequence of Reeb chords on $\bdy^0\HH$, and let $\vec\brho_1 = (\brho_1, \ldots, \brho_j)$ be a sequence of sets of Reeb chords on $\bdy^1\HH$, both compatible with $B$. 
Recall that we write
\[a_0\coloneqq  a_0(\xx,\yy, \vec P_0)= I( \bar o^0(\xx)) \cdot a(-\rho_1)\cdot \cdots\cdot a(-\rho_i)\cdot I( \bar o^0(\yy))\in \Aa(-\bdy^0\HH)\]
and
\[a_1(\xx, \yy, P_1) = I( o^1(\xx)) \cdot a(\brho_1)\otimes \cdots\otimes a(\brho_j)\cdot I(o^1(\yy))\in \Aa(\bdy^1\HH)^{\otimes j},\]
and observe that we can equivalently write $a_1(\xx, \yy, P_1)$ as 
\[a_1(\xx, \yy, P_1) = I( o^1(\xx))  a(\brho_1)I_1\otimes I_1a(\brho_2)I_2\cdots\otimes I_{j-1} a(\brho_j)I(o^1(\yy)).\]
 Denote $I( o^1(\xx))  a(\brho_1)I_1, \ldots,  I_{j-1} a(\brho_j)I(o^1(\yy))$ by $a_1, \ldots, a_j$. 

\begin{proposition}\label{prop:domaingr}
For the triple $(B, \vec\brho_0, \vec\brho_1)$ we have 
\begin{align*}
 M(B, \vec\brho_0, \vec\brho_1) &= |\vec\brho_1|  -\ind(B, \vec\brho_0, \vec\brho_1) +\sum_{t=1}^j M(a_t) - M(a_0) + 2n_{\OO}(B)\\ 
A(B, \vec\brho_0, \vec\brho_1) &= \sum_{t=1}^j A(a_t) - A(a_0) + n_{\OO}(B) - n_{\XX}(B).
\end{align*}
\end{proposition}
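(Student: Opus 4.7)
The plan is to attack the Maslov and Alexander identities separately, with the Alexander one serving as a warm-up. Both identities will follow from comparing, term by term, the grading formulas for domains given just before Proposition \ref{prop:gr_per} with the algebra grading formulas from Subsection \ref{ssec:grading}, the bridge being provided by Lemma \ref{lemma:miota} and the index formula for $\ind(B,\vec\brho_0,\vec\brho_1)$.

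First I would establish an Alexander analogue of Lemma \ref{lemma:miota}: for an algebra generator $a=I(\sss)\,a(\brho)\,I(\ttt)\in\Aa(\zz)$ in a marked matched circle with markings $\SX^{\bdy}$, $\SO^{\bdy}$, a short direct computation with the definition in Subsection \ref{ssec:grading} (analogous to the manipulation in the proof of Lemma \ref{lemma:miota}, but using $\inv(\phi,\xi^{-1})$ and $\inv(\xi^{-1})$ in place of the Maslov-style ingredients) should yield
\[
A(a)=\tfrac12\, m([\brho],\SX^{\bdy}-\SO^{\bdy}).
\]
Granting this, extending by additivity over the factors of $a_1\otimes\cdots\otimes a_j$ gives $\sum_t A(a_t)=\tfrac12 m([\bdy^1 B],\SX^1-\SO^1)$, since $\sum_t[\brho_t]=[\bdy^1 B]$. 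Applying the same formula on the mirror circle $-\bdy^0\HH$ (where $\SX$ and $\SO$ swap, and $[\vec\brho_0]=\bdy^0 B$ picks up a sign from the orientation reversal) computes $A(a_0)$ as $\tfrac12 m([\bdy^0 B],\SO^0-\SX^0)$ up to sign bookkeeping. Comparing with the definition of $A(B)$ (where $n_{\OO}(B)-n_{\XX}(B)$ appears explicitly and all boundary contributions live in $\bdy^{\bdy}B=\bdy^0 B+\bdy^1 B$), the Alexander identity falls out.

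For the Maslov identity, I would substitute the index formula
\[
\ind(B,\vec\brho_0,\vec\brho_1)=e(B)+n_{\xx}(B)+n_{\yy}(B)+|\vec\brho_0|+|\vec\brho_1|+\iota(\vec\brho_0)+\iota(\vec\brho_1)
\]
into the right-hand side, which collapses the claim to the identity
\[
\tfrac12\, m([\bdy^{\bdy}B],\bar{S^0_{\xx}}+\bar{S^0_{\yy}}+S^1_{\xx}+S^1_{\yy})-m([\bdy^{\bdy}B],\SX^0+\SO^1)
=-|\vec\brho_0|-\iota(\vec\brho_0)-\iota(\vec\brho_1)+\sum_{t=1}^{j}M(a_t)-M(a_0).
\]
Applying Lemma \ref{lemma:miota} to each $a_t$ (with $\sss_{t-1}=\mu(\xx)$-type/$\ttt_t=\mu(\yy)$-type idempotents telescoping across the sequence) and expanding $\iota(\vec\brho_1)=\sum_t\iota(\brho_t)+\sum_{s<t}L([\brho_s],[\brho_t])$, the mixed $L$-terms on $\bdy^1$ should cancel with the corresponding terms that arise when replacing $m([\brho_t],S_t+T_t)$ by $m([\bdy^1 B],S^1_{\xx}+S^1_{\yy})$ plus error; this is the standard telescoping/linking calculation that also appeared in the proof of Proposition \ref{prop:b12gr}. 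The $\bdy^0$ contribution is handled by the analogue on the mirrored circle, where the discreteness of $\vec P_0$ makes the $-|\vec\brho_0|-\iota(\vec\brho_0)$ term clean: for a single Reeb chord $\rho$, $\iota(\{\rho\})=-\tfrac12$, which correctly produces the $|\vec\brho_0|$ shift, and the mirror reversal swaps the roles of $\SX$ and $\SO$ and of $S$ and $\bar S$, precisely turning the $S^0_{\xx}+S^0_{\yy}$ piece from $\bdy^0$ Reeb-chord multiplicities into the required $\bar{S^0_{\xx}}+\bar{S^0_{\yy}}$ piece in $M(B)$.

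The main obstacle will be the sign and mirroring bookkeeping on the $\bdy^0$-side, together with ensuring the telescoping $\sum_t m([\brho_t],S_t+T_t)$ indeed reassembles as $m([\bdy^1 B],S^1_{\xx}+S^1_{\yy})$ after accounting for the intermediate idempotents $I_1,\dots,I_{j-1}$; the $L$-terms that appear in this reassembly must cancel exactly with the cross-terms in $\iota(\vec\brho_1)$, which is the same cancellation (now happening inside a single domain) that drove Proposition \ref{prop:b12gr}. Once these bookkeeping points are in place, both identities will reduce to matching two expressions built from the single primitive quantity $m([\bdy^{\bdy}B],\cdot)$.
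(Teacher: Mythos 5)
Your proposal is correct and follows essentially the same route as the paper: substitute the index formula and reduce both identities to Lemma \ref{lemma:miota} (plus its Alexander analogue, which the paper treats as immediate from the definitions), with the $\iota$-versus-$L$ bookkeeping absorbing the cross-terms. The only organizational difference is on the $\bdy^1$ side: the paper first resolves crossings to replace the possibly vanishing product $a_1\cdots a_j$ by a nonzero $a'$ (using that $M-\iota$ is unchanged by resolving a crossing) and applies Lemma \ref{lemma:miota} once to $a'$ and once to $a_0$, citing $\iota(a_0)=-|\vec\brho_0|-\iota(\vec\brho_0)$, whereas you apply the lemma factor-by-factor and telescope through the intermediate idempotents by hand --- the cancellation of the $L$-terms you describe is exactly the computation in Proposition \ref{prop:b12gr}, so both versions go through.
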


\begin{proof}
The equality for the Alexander grading follows immediately from the definition. 

For the Maslov grading, denote the right hand side of the equation by $R$. 
 Note that while $a_1\otimes \cdots\otimes a_j\neq 0$, it may be that $a_1\cdots  a_j = 0$. Resolve crossings in each $a_t$ if necessary to get a nonzero product $a'=a_1'\cdots a_j'$. Note that $\iota(a_t') = \iota(a_t)-c_t$ and $M(a_t') = M(a_t)- c_t$, where $c_t$ is the number of resolved crossings to get from $a_t$ to $a_t'$, and  $L([a_s], [a_t]) = L([a_s'], [a_t'])$, since resolving crossings does not change the homology class. Then 
 \begin{align*}\sum_{t=1}^j M(a_j) -\iota(\vec\brho_1) &= \sum_{t=1}^j M(a_j)  - \sum_{t=1}^j\iota(\brho_t)- \sum_{s<t}L([\brho_s], [\brho_t])\\
 &= \sum_{t=1}^j M(a_j)  - \sum_{t=1}^j\iota(a_t)- \sum_{s<t}L([a_s], [a_t])\\
  &= \sum_{t=1}^j M(a_j')  - \sum_{t=1}^j\iota(a_t')- \sum_{s<t}L([a_s'], [a_t'])\\
  &= M(a') - \iota(a').
 \end{align*}
 By \cite[Lemma 18]{absgr}, $\iota(a_0) = -|\vec\brho_0| - \iota(\vec\brho_0)$.
 
Substituting the definition of $\ind$ in $R$, we get
\begin{align*}
R &=-e(B) - n_{\xx}(B) -n_{\yy}(B) - |\vec\brho_0|  - \iota(\vec\brho_0)-\iota(\vec\brho_1)+\sum_{t=1}^j M(a_t) - M(a_0) + 2n_{\OO}(B)\\
&= -e(B) - n_{\xx}(B) -n_{\yy}(B) - |\vec\brho_0|  - \iota(\vec\brho_0) +   M(a') - \iota(a')   - M(a_0) + 2n_{\OO}(B)\\
&= -e(B) - n_{\xx}(B) -n_{\yy}(B) +\iota(a_0) +   M(a') - \iota(a')   - M(a_0) + 2n_{\OO}(B).
\end{align*}
Applying Lemma \ref{lemma:miota} to $a'$ and $a_0$, and since $[a'] = [\vec\brho_1]$, we get  $R = M(B, \vec\brho_0, \vec\brho_1)$.
\end{proof}

The equalities below for a type $1$ or  a type $2$ diagram are a special case of Proposition \ref{prop:domaingr}, and follow immediately. 

\begin{proposition}\label{prop:domaingr2}
For a domain $B\in \pi_2(\xx,\yy)$ on a type $2$ diagram, and   a sequence of Reeb chords $\vec\brho$, 
\begin{align*}
M(B, \vec\brho) &= -\ind(B, \vec\brho) - M(-\vec\brho) + 2n_{\OO}(B)\\
A(B, \vec\brho) &=  - A(-\vec\brho) + n_{\OO}(B) - n_{\XX}(B).
\end{align*}
\end{proposition}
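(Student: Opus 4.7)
The plan is to derive both equalities by specializing Proposition~\ref{prop:domaingr} to the case of a diagram with a single boundary component. Formally, a type~$2$ diagram can be regarded as a degenerate diagram with two boundary components in which $\bdy^1\HH$ is empty; setting $\vec\brho_1 = \emptyset$ and $j=0$ in Proposition~\ref{prop:domaingr} then yields precisely the stated formulas, with the algebra element $a_0 = I(\bar o(\xx))\cdot a(-\rho_1)\cdot \cdots\cdot a(-\rho_i)\cdot I(\bar o(\yy))\in \Aa(-\bdy\HH)$ playing the role of $-\vec\brho$ and the factors $a_1, \ldots, a_j$ absent, so that $\sum_t M(a_t) = 0 = \sum_t A(a_t)$.

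If one prefers a self-contained derivation without invoking this reinterpretation, the argument follows the same template as the proof of Proposition~\ref{prop:domaingr} but with fewer terms. For the Alexander identity, unfold the explicit formula for $A(B)$ on a type~$2$ diagram given in the previous subsection and recognize $\tfrac12 m([\bdy^{\bdy}B], \SO^{\bdy} - \SX^{\bdy}) = -A(-\vec\brho)$ after accounting for the sign reversal on $-\bdy\HH$ (which swaps the roles of $\SO^{\bdy}$ and $\SX^{\bdy}$); the interior contributions $n_{\OO}(B) - n_{\XX}(B)$ then match on the nose. For the Maslov identity, substitute the definition $\ind(B, \vec\brho) = e(B) + n_{\xx}(B) + n_{\yy}(B) + |\vec\brho| + \iota(\vec\brho)$, apply \cite[Lemma~18]{absgr} to rewrite $\iota(a_0) = -|\vec\brho| - \iota(\vec\brho)$, and invoke Lemma~\ref{lemma:miota} to convert $M(a_0) - \iota(a_0)$ into boundary multiplicities that, combined with the explicit type~$2$ formula for $M(B)$, reassemble into the claimed equality.

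The only genuine bookkeeping issue is the orientation reversal involved in passing from $\bdy\HH$ to $-\bdy\HH$: it swaps the $\XX$ and $\OO$ markings on the boundary (so the analogue of $\SO^{\bdy}$ for $-\bdy\HH$ is $\SX^{\bdy}$) and sends each Reeb chord $\rho_i$ to $-\rho_i$. The initial and final idempotents $I(\bar o(\xx))$ and $I(\bar o(\yy))$ correspond under Lemma~\ref{lemma:miota} to the sets $\mu^{-1}(\bar o(\xx))$ and $\mu^{-1}(\bar o(\yy))$, which are precisely the sets $\bar S_{\xx}$ and $\bar S_{\yy}$ appearing in the type~$2$ formula for $M(B)$. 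Once these sign conventions are absorbed into the definitions of $M(-\vec\brho)$ and $A(-\vec\brho)$, no step demands more than what already appears in the proof of Proposition~\ref{prop:domaingr}; hence the proposition follows immediately, as the surrounding text remarks.
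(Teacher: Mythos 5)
Your proposal is correct and follows the paper's own route: the paper proves Proposition~\ref{prop:domaingr2} precisely by observing that it is the special case of Proposition~\ref{prop:domaingr} with $\vec\brho_1=\emptyset$ and $j=0$, so that $a_0$ is the algebra element associated to $-\vec\brho$ and the $\sum_t M(a_t)$, $\sum_t A(a_t)$, and $|\vec\brho_1|$ terms vanish. Your additional self-contained sketch and the orientation-reversal bookkeeping are consistent with the conventions in the surrounding text, but they are not needed beyond the one-line specialization.
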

\begin{proposition}\label{prop:domaingr1}
For a domain $B\in \pi_2(\xx,\yy)$ on a type $1$ diagram, and  a sequence of sets of Reeb chords $\vec{\brho} = (\brho_1, \ldots, \brho_l)$, 
 \begin{align*}
M(B, \vec{\brho}) &= |\vec\brho|  -\ind(B, \vec{\brho}) +\sum_{i=1}^l M(\brho_i) + 2n_{\OO}(B)\\ 
A(B, \vec{\brho}) &= \sum_{i=1}^l A(\brho_i) + n_{\OO}(B) - n_{\XX}(B).
\end{align*}
 \end{proposition}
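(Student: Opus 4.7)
The plan is to view Proposition \ref{prop:domaingr1} as the degeneration of Proposition \ref{prop:domaingr} in which the ``left'' boundary of a two-boundary diagram is absent. A type $1$ tangle Heegaard diagram may be identified with a two-boundary diagram whose $\bdy^0\HH$ side has been removed; under this identification, all data supported on that side vanishes, and the sole boundary of $\HH$ plays the role of $\bdy^1\HH$.

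First I would match up the relevant quantities. Identify $\SO^1 = \SO^{\bdy}$, $\SX^1 = \SX^{\bdy}$, $S^1_{\xx} = S_{\xx}$, $S^1_{\yy} = S_{\yy}$, and set $\SO^0 = \SX^0 = S^0_{\xx} = S^0_{\yy} = \emptyset$, $\bar{S^0_{\xx}} = \bar{S^0_{\yy}} = \emptyset$. Under these substitutions the two-boundary definitions of $M(B)$ and $A(B)$ collapse exactly to the type $1$ definitions, so the notion of grading on a domain is truly a special case of the two-boundary grading. Next I would observe that on the ``empty'' side one has $a_0 = I(\bar o^0(\xx))$, a pure idempotent with $M(a_0) = A(a_0) = 0$, $|\vec\brho_0| = 0$, and $\iota(\vec\brho_0) = 0$. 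The index $\ind(B,\vec\brho_0,\vec\brho_1)$ reduces to $\ind(B,\vec\brho)$, where $\vec\brho = (\brho_1,\ldots,\brho_l)$ is the sole sequence of sets of Reeb chords on $\bdy\HH$, and the algebra elements $a_1,\ldots,a_j$ of Proposition \ref{prop:domaingr} become $a(\brho_1),\ldots,a(\brho_l)$.

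Substituting these simplifications into the formulas of Proposition \ref{prop:domaingr}, and recalling the convention $M(\brho_i) = M(a(\brho_i))$, $A(\brho_i) = A(a(\brho_i))$, the identities
\[ M(B,\vec\brho) \;=\; |\vec\brho| - \ind(B,\vec\brho) + \sum_{i=1}^{l} M(\brho_i) + 2n_{\OO}(B) \]
and
\[ A(B,\vec\brho) \;=\; \sum_{i=1}^{l} A(\brho_i) + n_{\OO}(B) - n_{\XX}(B) \]
drop out immediately.

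The only mildly delicate point — the main obstacle, such as there is one — is the bookkeeping check that the type $1$ grading formula for domains is literally the specialization of the two-boundary formula under the vanishing of all $\bdy^0$-data; this is the step carried out in the first paragraph. Once this is in place no further argument is required. Should one wish to avoid invoking Proposition \ref{prop:domaingr} altogether, the same conclusion can be reached by a direct repetition of its proof in the type $1$ setting: apply Lemma \ref{lemma:miota} to the product $a(\brho_1)\cdots a(\brho_l)$ (first resolving crossings to obtain a nonzero representative $a'$), use the identity $M(a')-\iota(a') = \sum_{i=1}^l M(\brho_i) - \iota(\vec\brho)$ extracted in the proof of Proposition \ref{prop:domaingr}, and combine with the definition of $\ind(B,\vec\brho)$.
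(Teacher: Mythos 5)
Your proposal is correct and takes essentially the same route as the paper: Propositions \ref{prop:domaingr2} and \ref{prop:domaingr1} are stated there to be special cases of Proposition \ref{prop:domaingr} that ``follow immediately,'' exactly by setting all $\bdy^0$-data to be empty so that $a_0$ is an idempotent with vanishing bigrading. Your explicit check that the type $1$ grading formulas on domains are the specialization of the two-boundary formulas is the only content required, and it is carried out correctly.
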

 
 Proposition  \ref{prop:bigr_str} follows.
 
 \begin{proof} [Proof of Proposition \ref{prop:bigr_str}]
 All algebraic structures here are defined by counting curves of index $1$. The claim follows directly by substituting $1$ for the index in the grading formulas from Propositions \ref{prop:domaingr}, \ref{prop:domaingr2}, and \ref{prop:domaingr1}.
 \end{proof}
 
 \subsection{Tensor products}
It is easy to see that the bigrading on domains is additive under gluing. 

\begin{proposition}
If $\HH_1$ and $\HH_2$ are diagrams with $\bdy^1\HH_1 = -\bdy^0\HH_2$, and $B$ is a domain on $\HH = \HH_1\cup \HH_2$ that decomposes as $B = B_1\times B_2$, with $B_i$ a domain on $\HH_i$, then $M(B) = M(B_1)+M(B_2)$ and $A(B) = A(B_1)+A(B_2)$.
\end{proposition}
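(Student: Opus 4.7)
The plan is to prove the proposition by verifying additivity term by term in the grading formulas of the preceding subsection.

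First, I would observe that the ``interior'' terms $e(B)$, $n_{\xxx}(B)$, $n_{\yyy}(B)$, $n_{\OO}(B)$, and $n_{\XX}(B)$ are manifestly additive under the decomposition $B = B_1 \natural B_2$. The regions of $\HH \setminus (\balpha \cup \bbeta)$ come in three kinds: those lying wholly in $\Sigma_1$, those lying wholly in $\Sigma_2$, and the new ``merged'' regions formed by pairing a boundary-adjacent region of $\HH_1$ with one of $\HH_2$ across the gluing circle. The matching condition $\bdy^1 B_1 = -\bdy^0 B_2$ forces continuity of the multiplicities of $B$ across the gluing, so $B$ restricts to $B_i$ on $\Sigma_i$ with consistent multiplicities at the merged regions. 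The interior markings of $\HH_i$ (including those in boundary-adjacent regions of $\HH_i$, which remain well-defined interior markings of $\HH$) contribute additively, and if $\HH$ is closed the extra points $X', O'$ lie in a common region of $\HH$ and so cancel in $n_{\OO} - n_{\XX}$; the generators decompose as $\xxx = \xxx_1 \sqcup \xxx_2$ via the idempotent matching on the gluing boundary.

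The main step is verifying the boundary multiplicity terms $m([\bdy^{\bdy} B], \cdot)$. The boundary $\bdy^{\bdy} B$ consists of precisely the parts of $\bdy^{\bdy} B_1$ and $\bdy^{\bdy} B_2$ that do not lie on the gluing circle, so the $m$-contributions coming from any \emph{remaining} (non-gluing) boundary components of $\HH_i$ are preserved verbatim in $m([\bdy^{\bdy} B], \cdot)$. On the gluing side, the virtual markings $\SX^{\bdy}, \SO^{\bdy}$ of $\HH_1$ and $\HH_2$ occur at the same segments of the shared gluing circle, but with the roles of $\XX$ and $\OO$ swapped because of the mirror identification $\bdy^1 \HH_1 = (\bdy^0 \HH_2)^{\ast}$. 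The signs in the definitions of $M$ and $A$ are chosen with a specific asymmetry between type $1$ and type $2$ (respectively, between $\bdy^0$ and $\bdy^1$ in the two-boundary formula: $\SO^0 - \SX^0$ versus $\SX^1 - \SO^1$) precisely so that these virtual contributions from the two sides of the gluing circle cancel, producing exactly the contribution from the markings of $\HH_i$ that sit in the newly merged regions. A parallel argument handles the generator-dependent terms in the Maslov grading: the idempotent compatibility $I_D(\xxx_1) = I_A(\xxx_2)$ ensures that on the gluing boundary $S_{\xxx_1}$ and $\bar{S}_{\xxx_2}$ (respectively $S_{\yyy_1}$ and $\bar{S}_{\yyy_2}$) coincide as subsets of $\bdy\balpha$, and again the sign asymmetry between the $S_{\xxx}+S_{\yyy}$ term in type $1$ and the $\bar{S}_{\xxx}+\bar{S}_{\yyy}$ term in type $2$ arranges for cancellation along the gluing.

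The main obstacle will be careful bookkeeping of the several gluing configurations (type $1$ with type $2$, type $1$ with two-boundary, two-boundary with type $2$, and two pieces each with two boundary components), and in each case verifying that the sign conventions for $\SX^{\bdy}$ versus $\SO^{\bdy}$ and for $S_{\xxx}$ versus $\bar{S}_{\xxx}$ on the two sides of the gluing really do produce the needed cancellation. The arguments are essentially parallel in all four cases, and the heart of the computation is the short identity, for any interior boundary segment on the gluing circle adjacent to merged regions $R_1 \subset \Sigma_1$ and $R_2 \subset \Sigma_2$, that the mult of $B_1$ in $R_1$ equals the mult of $B_2$ in $R_2$ equals the mult of $B$ in the merged region---which is precisely the geometric content of the matching condition $\bdy^1 B_1 = -\bdy^0 B_2$.
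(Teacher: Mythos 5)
Your proposal takes the same approach as the paper, whose entire proof of this proposition is the single sentence ``This follows directly from the definitions of $M$ and $A$''; your term-by-term verification plan is a correct (and considerably more detailed) expansion of that. You correctly isolate the only nontrivial points: additivity of $e$, $n_{\xxx}$, $n_{\yyy}$, $n_{\OO}$, $n_{\XX}$, the matching of multiplicities forced by $\bdy^1 B_1 = -\bdy^0 B_2$, and the cancellation along the gluing circle of the $m([\bdy^{\bdy}B_i],\cdot)$ terms coming from the complementary conventions ($S$ versus $\bar S$ via the idempotent matching, and $\SX^{\bdy}$ versus $\SO^{\bdy}$ via the $\XX\leftrightarrow\OO$ swap in the mirror identification).
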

\begin{proof}
This follows directly from the definitions of $M$ and $A$.
\end{proof}

Thus, for a generator $\xxx = \xxx_1\cup \xxx_2\in\mathfrak S(\HH)$, where $\xxx_i\in \mathfrak S(\HH_i)$,  the bigrading on  $\xxx$ agrees with the bigrading on $\xxx_1\boxtimes\xxx_2$.

\subsection{Absolute gradings} We finish this section by turning the relative grading into an absolute one. 

First, for any type of diagram, it is straightforward to verify that the homotopy equivalences from Theorems \ref{thm:dinv}, \ref{thm:ainv}, and \ref{thm:dainv} preserve the relative bigrading. 

Next, recall that under the correspondence between bordered grid diagrams and shadows, bordered grid diagrams inherit the bigrading defined  in Section \ref{ssec:grading}. A plumbing $G$ of a sequence of grid diagrams can be completed to a mutipointed bordered Heegaard diagram $\HH_G$ in a natural way, by embedding it on a smooth surface, as in Figure \ref{fig:HD}, and adding the appropriate $\zzz$ decoration in the region(s) outside the image of the embedding.  Under the natural correspondence of generators and maps, the resulting diagram $\HH_G$ has an associated type $A$, $D$, or $\DA$ structure, which we simply call $\cftt(\HH_G)$, identical to $\widetilde{\mathit{CT}}(G)$. The bigrading that $\cftt(\HH_G)$ inherits from  $\widetilde{\mathit{CT}}(G)$ agrees with the relative bigrading on $\cftt(\HH_G)$ defined in this section. 
We turn the bigrading from this section into an absolute one by requiring that it agrees with the one on $\widetilde{\mathit{CT}}$ for a chosen plumbing of grid diagrams.

\begin{definition}\label{def:absgr}
Given a tangle Heegaard diagram $\HH$ of any type for a tangle $\T$, let $\HH_G$ be a Heegaard diagram of the same type arising from a plumbing $G$ of grid diagrams representing  $\T$, and let $h:\cftt(\HH_G)\to \cftt(\HH)$ be the homotopy equivalence corresponding to a chosen sequence of Heegaard moves between $\HH_G$ and $\HH$. Define the absolute bigrading on $\HH$ by requiring that $h$ preserves gradings.
\end{definition}

We need to show that the absolute grading in Definition \ref{def:absgr} is independent of the choice of grid decomposition $G$, and also  independent of the choice of sequence of Heegaard moves, i.e. of $h$. 

\begin{lemma}\label{lem:indep_moves}
Fix $\HH$ and $\HH_G$ as in Definition \ref{def:absgr}, let $s$ and $s'$ be two sequences of Heegaard moves from $\HH_G$ to $\HH$, and let $h', h'':\cftt(\HH_G)\to \cftt(\HH)$ be the homotopy equivalences corresponding to $s'$ and $s''$. The two bigradings $\gr'$ and $\gr''$ induced by $h'$ and $h''$ according to Definition \ref{def:absgr} agree.
\end{lemma}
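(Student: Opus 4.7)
The plan is to show that each Heegaard move listed in Proposition \ref{type2moves} (and its type $1$ and two-boundary-component analogues) induces a chain homotopy equivalence which is compatible with the absolute bigrading, in the sense that up to the natural $V$-tensor factor introduced by index $0$-$3$ (de)stabilization, it identifies the bigrading on its source with the bigrading on its target. Once this is established, $h'$ and $h''$ are both compositions of such grading-compatible equivalences from $\HH_G$, and both therefore induce the same absolute bigrading on $\cftt(\HH)$.

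First I would verify grading-compatibility for each of the four elementary moves. For isotopies of $\balpha$ or $\bbeta$ (not crossing $\XX\cup\OO\cup\zzz$), the bijective correspondence on generators is manifestly compatible with the domain-based bigrading formulas of Section \ref{ssec:morse_gradings}, so the associated continuation map is grading-preserving. For $\alpha$-over-$\alpha$ and $\beta$-over-$\beta$ handleslides, and for index $1$-$2$ (de)stabilization, the equivalence is given by a triangle count; combining additivity of the bigrading under domain composition (Proposition \ref{prop:b12gr}) with the index-versus-grading identities of Propositions \ref{prop:domaingr}, \ref{prop:domaingr2}, and \ref{prop:domaingr1}, one sees that the distinguished small triangle representing the chain map has bigrading $(0,0)$, so the induced map preserves both $M$ and $A$. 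For index $0$-$3$ (de)stabilization, the equivalence $\cftt(\HH)\simeq \cftt(\HH')\otimes V$ with $V$ concentrated in bigradings $(0,0)$ and $(-1,-1)$ can be checked by a direct local computation near the stabilized region, matching the grid-diagram bigrading used in the proof of Theorem \ref{recover_hfk}.

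Given grading-compatibility for each elementary move, a short induction on the lengths of the sequences $s'$ and $s''$ shows that both $h'$ and $h''$ carry the bigrading $\gr_G$, possibly tensored with a controlled number of $V$-factors determined by the difference $|\XX(\HH)|-|\XX(\HH_G)|$, to the same bigrading on $\cftt(\HH)$. Hence $\gr'=\gr''$. The main obstacle is the grading analysis for the analytically-defined equivalences (handleslides and index $1$-$2$ stabilization), where the chain maps count holomorphic triangles rather than rectangles; the argument carries through because the grading formulas of Section \ref{ssec:morse_gradings} depend only on Euler measure, local multiplicities at $\XX\cup\OO$ and along $\bdy\HH$, and the algebraic data $\iota(\vec\brho)$, all of which behave additively under the domain decompositions used in the invariance proofs.
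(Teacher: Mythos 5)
Your strategy is genuinely different from the paper's, and as written it has a gap at its central step. You reduce the lemma to the claim that each elementary Heegaard move induces a bigrading-preserving equivalence, and you justify this for handleslides and index one/two stabilizations by asserting that ``the distinguished small triangle representing the chain map has bigrading $(0,0)$,'' citing Propositions \ref{prop:b12gr}, \ref{prop:domaingr}, \ref{prop:domaingr2}, and \ref{prop:domaingr1}. But those results concern domains $B\in\pi_2(\xx,\yy)$ connecting generators of a \emph{single} diagram; they say nothing about triangle domains in a triple diagram $(\Sigma,\balpha,\bbeta,\bgamma)$. The bigrading of Section \ref{ssec:morse_gradings} is not defined on such domains, so the statement that a triangle ``has bigrading $(0,0)$'' is not yet meaningful. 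To carry out your plan you would need to extend the Maslov/Alexander formulas to triangle domains (including their interaction with Reeb chords on the boundary and with the sets $\SO^{\bdy}$, $\SX^{\bdy}$), prove additivity under juxtaposition with rectangles at each corner, and then compute the shift for a distinguished triangle for every type of move. None of this is supplied by the cited propositions, and a similar (milder) issue affects your treatment of isotopies, which can create cancelling pairs of generators rather than inducing a bijection. In short: any homotopy equivalence arising from a move shifts the \emph{relative} bigrading by a constant, and the entire content of the lemma is pinning down that constant; your proposal defers exactly this point to machinery the paper does not develop.

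The paper avoids the triangle analysis entirely. It glues a fixed complementary plumbing of grid diagrams $\HH_H$ onto $\HH_G$ and onto every intermediate diagram of $s'$ and $s''$, producing closed diagrams and extended move sequences $\bar s'$, $\bar s''$ whose associated equivalences are $h'\boxtimes\id$ and $h''\boxtimes\id$. By Theorem \ref{recover_hfk} the combinatorial grading on $\cftt(\bar\HH_G)$ coincides with the knot Floer grading on $\cfkt(\bar\HH_G)$, and since absolute gradings in the closed theory are invariant under Heegaard moves, both $\gr'\boxtimes\gr$ and $\gr''\boxtimes\gr$ must equal the knot Floer grading on the closed-up target; cancelling the $\gr$ factor gives $\gr'=\gr''$. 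If you want to salvage your approach, you would be proving a strictly stronger statement (an intrinsic absolute grading preserved move-by-move), but you must first build the grading theory for triple diagrams that your argument silently assumes.
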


\begin{proof} We simplify notation and denote the bigrading $(M,A)$ from Section \ref{ssec:grading} by $\gr$.

We will complete $\HH_G$ to a closed Heegaard diagram $\bar\HH_G$ for a link, by gluing to it one (if $\HH$ is of type $1$ or $2$) or two (if $\HH$ had two boundary components) plumbings  of bordered grid diagrams. The proof in each case is analogous, so from here on we assume that $\HH$ is a type $1$ diagram. Let $H$ be some plumbing of grids so that $G\cup H$ represents a closed knot or link.  Let $\HH_H$ be the type $2$ Heegaard diagram corresponding to $H$, and let $\bar \HH_G = \HH_G\cup \HH_H$. 

Complete each diagram obtained along the sequences  $s'$ and $s''$  to a closed one by gluing to it $\HH_H$. The sequences of moves $s'$ and $s''$ extend to sequences of moves $\bar s'$ and $\bar s''$ connecting $\bar\HH_G$ to $\bar\HH\coloneqq \HH\cup \HH_H$, by fixing the $\HH_H$ area of each closed diagram and performing the moves specified by $s'$ and $s''$ outside the $\HH_H$ area. Observe that the resulting homotopy equivalences $\bar h',\bar h'':\cftt(\bar\HH_G)\to \cftt(\bar\HH)$ are exactly the maps $h'\boxtimes \id_{\HH_H}, h''\boxtimes \id_{\HH_H}:\cftt(\HH_G)\boxtimes \cftt(\HH_H)\to \cftt(\HH)\boxtimes \cftt(\HH_H)$. So the gradings induced by $\bar h'$ and $\bar h''$ are exactly the gradings $\gr'\boxtimes \gr$ and $\gr''\boxtimes \gr$.

By Theorem \ref{recover_hfk}, the grading on $\cftt(\bar\HH_G)\cong \widetilde{\mathit{CT}}(G\cup H)$  from Section \ref{ssec:grading}, which is given by $\gr(\xxx_G\cup \xxx_H) = \gr(\xxx_G) + \gr(\xxx_H)$,  agrees with the grading on $\cfkt(\bar \HH_G)$. Since $\bar h'$ and $\bar h''$ are homotopy equivalences arising from sequences of Heegaard moves, it follows that the gradings they induce on $\cftt(\bar \HH)$ agree with the grading on $\cfkt(\bar\HH)$ too. In particular, $\gr'\boxtimes \gr = \gr''\boxtimes \gr$, so $\gr' = \gr''$.
\end{proof}

\begin{lemma}\label{lem:indep_grid}
Let $\HH$ be a Heegaard diagram for a tangle $\T$.
Let   $\Pp=\{\P_1^\circ,\dots,\P_p^\circ\}$ and $\boldsymbol{\mathcal{Q}}=\{\mathcal{Q}_1^\circ,\dots,\mathcal{Q}_q^\circ\}$  be two sequences of shadows for  $\T$,  let $G'$ and $G''$ be the corresponding plumbings of bordered grid diagrams, and let $h':\cftt(\HH_{G'})\to \cftt(\HH)$ and $h'':\cftt(\HH_{G''})\to \cftt(\HH)$ be the homotopy equivalences corresponding to 
some two sequences of Heegaard moves $s'$ and $s''$ from $\HH_{G'}$ and $\HH_{G''}$, respectively, to $\HH$. The two bigradings $\gr'$ and $\gr''$ induced by $h'$ and $h''$ according to Definition \ref{def:absgr} agree.
\end{lemma}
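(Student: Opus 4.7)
The strategy is to mimic the proof of Lemma~\ref{lem:indep_moves}: I will close off $\HH$ with a common auxiliary diagram on the other side to obtain closed Heegaard diagrams for a link, and then appeal to the topological invariance of the knot Floer bigrading together with Theorem~\ref{recover_hfk}. The point is that both $G'$ and $G''$ have the same boundary data (determined by $\partial\T$), so a single auxiliary piece works for both.

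First, I would choose a third sequence of shadows and mirror-shadows $\mathbf{H}=(\mathcal H_1^\circ,\dots,\mathcal H_r^\circ)$ whose associated tangle $\T(\mathbf{H})$ glues to $\T$ to give a closed link $L$; such a sequence exists by capping off $\partial\T$ using Examples~\ref{exm:cap} and \ref{exm:cup}. Let $H$ be the corresponding plumbing of bordered grid diagrams and $\HH_H$ the associated multipointed bordered Heegaard diagram. Form the closed Heegaard diagrams
\[
\bar\HH_{G'}=\HH_{G'}\cup\HH_H,\qquad \bar\HH_{G''}=\HH_{G''}\cup\HH_H,\qquad \bar\HH=\HH\cup\HH_H,
\]
all of which represent the link $L\cup U$ by Lemma~\ref{lemma:gluehd}. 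Next, extend the sequences $s'$ and $s''$ to sequences of Heegaard moves $\bar s'$ and $\bar s''$ between the closed diagrams by keeping the $\HH_H$ region fixed throughout. The associated homotopy equivalences factor as $\bar h'=h'\boxtimes \id_{\cftt(\HH_H)}$ and $\bar h''=h''\boxtimes \id_{\cftt(\HH_H)}$ on the box tensor products, so they preserve the product bigradings $\gr'\boxtimes\gr_H$ and $\gr''\boxtimes\gr_H$, respectively, where $\gr_H$ is the bigrading on $\cftt(\HH_H)$ coming from Section~\ref{ssec:grading}.

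By Theorem~\ref{recover_hfk}, the bigradings on $\cftt(\bar\HH_{G'})$ and $\cftt(\bar\HH_{G''})$ defined in Section~\ref{ssec:grading} both agree, up to the same fixed shift by $|L|/2$ in each of $M$ and $A$, with the intrinsic knot Floer bigrading on $g\cfkt(\bar\HH_{G'})$ and $g\cfkt(\bar\HH_{G''})$. Since the knot Floer bigrading is a topological invariant of $L\cup U$ and is preserved by any sequence of Heegaard moves, the maps $\bar h'$ and $\bar h''$ both carry their respective gradings to the knot Floer bigrading on $\bar\HH$, again up to the same fixed shift. Consequently $\gr'\boxtimes\gr_H=\gr''\boxtimes\gr_H$ on $\cftt(\bar\HH)$, and since $\gr_H$ is a fixed grading on $\cftt(\HH_H)$, restricting to either tensor factor yields $\gr'=\gr''$.

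The only real obstacle is verifying the factorization $\bar h'=h'\boxtimes\id$ (and similarly for $\bar h''$), which amounts to checking that Heegaard moves performed entirely in the $\HH_{G'}$ side of $\bar\HH_{G'}$ induce the tensor product of the corresponding bordered chain maps with the identity on the fixed side. This is a standard naturality statement for the invariance proofs of Theorems~\ref{thm:dinv}, \ref{thm:ainv}, and \ref{thm:dainv}, and can be handled with the same holomorphic curve techniques used to prove those invariance results; once this is in place, the rest of the argument reduces to the topological invariance of the knot Floer bigrading, as sketched above.
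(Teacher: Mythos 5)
Your proposal is correct and follows essentially the same route as the paper: close off with a fixed auxiliary plumbing $H$, extend the move sequences while fixing the $\HH_H$ region so the induced maps factor as $h'\boxtimes\id$ and $h''\boxtimes\id$, and then invoke Theorem~\ref{recover_hfk} together with the invariance of the knot Floer bigrading to conclude $\gr'\boxtimes\gr=\gr''\boxtimes\gr$ and hence $\gr'=\gr''$. The factorization step you flag is exactly what the paper establishes in the proof of Lemma~\ref{lem:indep_moves} and then reuses here.
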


\begin{proof}
Assume $\HH$ is a type $1$ diagram. The other cases are analogous. 

Again denote the bigrading $(M,A)$ from Section \ref{ssec:grading} by $\gr$.

Fix a plumbing $H$ of bordered grid diagrams, as  in the proof of Lemma \ref{lem:indep_moves}, so that $G'\cup H$ and $G''\cup H$ represent a closed knot or link. Let $\bar\HH_{G'} = \HH_{G'}\cup \HH_H$, $\bar\HH_{G''} = \HH_{G''}\cup \HH_H$, $\bar\HH= \HH\cup \HH_H$. 

We  now apply the same reasoning as in the proof of Lemma \ref{lem:indep_moves}. We get homotopy equivalences $\bar h':\cftt(\bar\HH_{G'})\to \cftt(\bar\HH)$ and $\bar h'':\cftt(\bar\HH_{G''})\to \cftt(\bar\HH)$. By Theorem \ref{recover_hfk}, the grading on $\cftt(\bar\HH_{G'})\cong \widetilde{\mathit{CT}}(G'\cup H)$  from Section \ref{ssec:grading}  agrees with the grading on $\cfkt(\bar \HH_{G'})$, so the grading $\gr'\boxtimes \gr$ induced by $\bar h'$ on $\cftt(\bar \HH)$ agrees with the grading on $\cfkt(\bar \HH)$ too. Similarly,  the grading $\gr''\boxtimes \gr$ induced by $\bar h''$ on $\cftt(\bar \HH)$ agrees with the grading on $\cfkt(\bar \HH)$. Thus, $\gr'\boxtimes \gr = \gr''\boxtimes \gr$, so $\gr' = \gr''$.
\end{proof}

\begin{proposition}\label{prop:absgr}
The bigrading  from Definition \ref{def:absgr} is well-defined. 
\end{proposition}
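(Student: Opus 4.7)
The plan is to observe that Proposition \ref{prop:absgr} is essentially an immediate corollary of the two lemmas that precede it. Unraveling Definition \ref{def:absgr}, the absolute bigrading on $\cftt(\HH)$ depends on two auxiliary choices: a plumbing $G$ of bordered grid diagrams representing the tangle $\T$ (together with the resulting diagram $\HH_G$), and a sequence of Heegaard moves from $\HH_G$ to $\HH$ (which produces the homotopy equivalence $h$). To show the bigrading is well-defined, I need to verify that any two choices $(G', s')$ and $(G'', s'')$ induce the same grading $\gr'=\gr''$ on $\cftt(\HH)$.

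First I would address the existence of such choices, so that the definition is non-vacuous. The existence of the plumbing $G$ follows from Lemma \ref{lem:decomp}, which guarantees a sequence of shadows representing any tangle $\T$; the associated $\HH_G$ is a valid multipointed bordered Heegaard diagram for $\T$. Then, since $\HH_G$ and $\HH$ represent the same tangle, the invariance results (Theorems \ref{thm:dinv}, \ref{thm:ainv}, \ref{thm:dainv}, together with Propositions \ref{type2moves} and its analogs in Section \ref{sec:hdiagrams} classifying Heegaard moves) produce a sequence $s$ of Heegaard moves connecting $\HH_G$ to $\HH$, and hence a homotopy equivalence $h \co \cftt(\HH_G) \to \cftt(\HH)$.

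Second I would observe that Lemma \ref{lem:indep_grid} already covers the full generality of this independence statement: given any two triples $(G', s', h')$ and $(G'', s'', h'')$ (where $G'$ and $G''$ are possibly distinct plumbings for $\T$, and $s'$ and $s''$ are possibly distinct sequences of Heegaard moves ending at $\HH$), Lemma \ref{lem:indep_grid} asserts that the induced absolute gradings $\gr'$ and $\gr''$ on $\cftt(\HH)$ coincide. Thus the grading is independent of all choices, and Proposition \ref{prop:absgr} follows. As a sanity check, the special case $G'=G''$ of Lemma \ref{lem:indep_grid} recovers Lemma \ref{lem:indep_moves}, so the two earlier lemmas together cover all possible variations in the data of Definition \ref{def:absgr}.

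The proof of the proposition itself is not where the real work lies; the hard part is already encapsulated in Lemma \ref{lem:indep_grid}. That lemma rests on completing a bordered diagram to a closed one by gluing on a fixed auxiliary plumbing $H$, so that the problem reduces to comparing the combinatorial bigrading on a closed plumbing of grids with the classical Heegaard--Floer bigrading on the corresponding closed Heegaard diagram. This reduction is made possible by Theorem \ref{recover_hfk}, which identifies the two gradings up to the expected shift. Once this identification is invoked, the well-definedness of the absolute grading on $\cftt(\HH)$ becomes a formal consequence of the well-definedness of the bigrading on $\cfkt$ under Heegaard moves, and Proposition \ref{prop:absgr} follows with no further computation.
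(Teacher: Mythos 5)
Your proof is correct and takes the same route as the paper: the paper's own proof simply cites Lemmas \ref{lem:indep_moves} and \ref{lem:indep_grid} as showing independence of all choices in Definition \ref{def:absgr}. Your additional remarks on existence of the choices and on the role of Theorem \ref{recover_hfk} are accurate elaborations of what those lemmas already establish.
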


\begin{proof}
Lemmas \ref{lem:indep_moves} and \ref{lem:indep_grid} show that Definition \ref{def:absgr} is independent of the choices made. This completes the proof.
\end{proof}

We can now conclude that for  tangles in $B^3$ or $S^2\times I$, the homotopy equivalences from Theorems \ref{thm:dinv}, \ref{thm:ainv}, and \ref{thm:dainv} are graded. In other words, $\cftat$, $\cftdt$, and $\cftdat$ are graded tangle invariants. Below,  $V=\F_2\oplus \F_2$, with  one summand in grading $(0,0)$ and the other summand in grading $(-1, -1)$.

\begin{theorem}\label{thm:invariance}
Up to graded  homotopy equivalence and tensoring with $V$, the modules  defined in Section \ref{sec:hd_modules} are independent of the choices made in their definitions.  Namely:

If $\HH_1$ and $\HH_2$ are provincially admissible type $2$ diagrams for a $2n$-tangle $\T$ in $B^3$ with almost complex structures $J_1$ and $J_2$, and $|\X_1| = |\X_2|+k$, then there is a graded type $D$ homotopy equivalence
$$\cftdt(\HH_1, J_1)\simeq \cftdt(\HH_2, J_2)\otimes V^{\otimes k}.$$

If $\HH_1$ and $\HH_2$ are provincially admissible type $1$ diagrams for a $2n$-tangle $\T$ in $B^3$ with almost complex structures $J_1$ and $J_2$, and $|\X_1| = |\X_2|+k$, then there is a graded type $A$ homotopy equivalence
$$\cftat(\HH_1, J_1)\simeq \cftat(\HH_2, J_2)\otimes V^{\otimes k}.$$

If $\HH_1$ and $\HH_2$ are provincially admissible  diagrams for an $(m,n)$-tangle $\T$ in $S^2\times I$ with almost complex structures $J_1$ and $J_2$, and $|\X_1| = |\X_2|+k$, then there is a graded type $\DA$ homotopy equivalence
$$\cftdat(\HH_1, J_1)\simeq \cftdat(\HH_2, J_2)\otimes V^{\otimes k}.$$
\end{theorem}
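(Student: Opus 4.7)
The ungraded versions of the three equivalences in Theorem \ref{thm:invariance} are precisely Theorems \ref{thm:dinv}, \ref{thm:ainv}, and \ref{thm:dainv}, so the only remaining content is that these equivalences preserve the absolute bigradings $(M,A)$. My plan is to extract gradedness directly from the well-definedness of the absolute bigrading in Definition \ref{def:absgr} and Proposition \ref{prop:absgr}, reducing everything to the grid case where the grading is already understood via Theorem \ref{recover_hfk}.

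First, I would check that the chain maps realizing the ungraded homotopy equivalences from the earlier theorems can be made to respect the relative bigradings on the (bi)modules. For isotopies, handleslides, and index one/two stabilizations, this is a routine domain-counting argument using Propositions \ref{prop:gr_per}, \ref{prop:b12gr}, and the grading formulas \ref{prop:domaingr}, \ref{prop:domaingr2}, \ref{prop:domaingr1}: the maps associated to these moves count index $1$ curves and the calculations show they shift $(M,A)$ by $(0,0)$, exactly as on the differential. For index zero/three stabilizations, the new $V = \F_2\oplus\F_2$ summand is generated locally by a canonical pair of intersection points whose Alexander and Maslov differences $(0,0)$ and $(-1,-1)$ are computed from the two bigons in the new genus-$0$ region, identically to the closed case of \cite{mos}. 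Together with the absolute normalization from Definition \ref{def:absgr}, this promotes the move-by-move homotopy equivalences to graded ones.

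Given this, the main argument proceeds as follows. Let $\HH_1$ and $\HH_2$ be provincially admissible diagrams for $(Y,\T)$ with $|\X_1| = |\X_2| + k$. Choose a plumbing $G$ of bordered grid diagrams of the appropriate type for $\T$, with enough strands so that $|\X_G| \geq \max(|\X_1|, |\X_2|)$, and set $k_i = |\X_G| - |\X_i|$, so that $k_1 - k_2 = -k$. By the ungraded invariance theorems, there are sequences of Heegaard moves (with stabilizations) giving homotopy equivalences
\[
h_i \colon \widetilde{\mathit{CT}}(G) \simeq \cftt(\HH_i) \otimes V^{\otimes k_i}.
\]
By the previous paragraph, both $h_1$ and $h_2$ preserve $(M,A)$ when each $\cftt(\HH_i)$ carries its absolute bigrading of Definition \ref{def:absgr} and each $V$ carries the standard bigrading $\{(0,0),(-1,-1)\}$; the independence of these gradings from all intermediate choices is guaranteed by Lemmas \ref{lem:indep_moves} and \ref{lem:indep_grid}. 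Composing $h_2 \circ h_1^{-1}$ yields a graded homotopy equivalence
\[
\cftt(\HH_1) \otimes V^{\otimes k_1} \simeq \cftt(\HH_2) \otimes V^{\otimes k_2}.
\]

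The step where I expect the main obstacle to lie is the cancellation of the common $V^{\otimes k_1}$ factor from both sides to arrive at the stated form $\cftt(\HH_1) \simeq \cftt(\HH_2) \otimes V^{\otimes k}$. One way I would handle this is to arrange the sequences of Heegaard moves so that the common stabilizations are performed first in each sequence; then the maps $h_1$ and $h_2$ factor as $h_i = h_i' \otimes \id_{V^{\otimes k_1}}$, after which the cancellation is tautological. Justifying this factorization amounts to showing that two index zero/three stabilizations performed in disjoint regions of a Heegaard diagram commute up to a graded homotopy that is the identity on the corresponding $V$ tensor factor, which is again a local computation. Once this local step is in hand, the theorem follows from the composition above.
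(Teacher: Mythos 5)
Your proposal follows the paper's route: the ungraded equivalences are Theorems \ref{thm:dinv}, \ref{thm:ainv}, and \ref{thm:dainv}; one checks that the move-by-move chain maps preserve the relative bigrading (your sketch via Propositions \ref{prop:gr_per}, \ref{prop:b12gr} and the index–grading formulas is exactly how the paper's assertion that this is ``straightforward'' gets verified); and the absolute statement is then extracted from Definition \ref{def:absgr} together with Lemmas \ref{lem:indep_moves} and \ref{lem:indep_grid}. The one place you depart from the paper is the final step, which you flag as the main obstacle: cancelling the common $V^{\otimes k_1}$ from $\cftt(\HH_1)\otimes V^{\otimes k_1}\simeq \cftt(\HH_2)\otimes V^{\otimes k_2}$. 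That step is avoidable. The theorem does not require you to manufacture the equivalence as $h_2\circ h_1^{-1}$: a homotopy equivalence $F\colon \cftt(\HH_1)\to\cftt(\HH_2)\otimes V^{\otimes k}$ already exists by the ungraded theorems, realized by a direct sequence of Heegaard moves from $\HH_1$ to $\HH_2$, and all that remains is to check that this $F$ respects the absolute bigradings. For that, tensor $F$ with $\id_{V^{\otimes k_1}}$ and precompose with $h_1$; the composite corresponds to the concatenated sequence of moves $\HH_G\to\HH_1\to\HH_2$, so by Lemmas \ref{lem:indep_moves} and \ref{lem:indep_grid} it induces the same absolute grading on $\cftt(\HH_2)$ as $h_2$ does. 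Since $h_1$ is graded by the very definition of the absolute grading on $\HH_1$, it follows that $F\otimes\id$, and hence $F$, is graded. This sidesteps both the need to commute the index zero/three (de)stabilizations to the front of the move sequences and the (true over $\F_2$, but fiddly to justify for bimodules) cancellation of $V$-factors up to homotopy equivalence.
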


Thus, given a marked  $2n$-tangle $\T$ in $B^3$, if $\HH$ is a type $1$ or a type $2$ diagram  for $\T$ with $|\X\cap \Int \Sigma| = |\T|$, we get an invariant of the tangle 
$$\cftahat(\T) \coloneqq \cftat(\HH)$$
up to type $A$ homotopy equivalence, or
$$\cftdhat(\T) \coloneqq \cftdt(\HH)$$
up to type $D$ homotopy equivalence, respectively. 

Similarly, given an $(m,n)$-tangle $\T$ in $S^2\times I$, if $\HH$ is a diagram with two boundary components for $\T$, we get an invariant of the tangle
$$\cftdahat(\T) \coloneqq \cftdat(\HH)$$
up to type $\DA$ homotopy equivalence.

Similar results hold for the various other modules from Subsection \ref{ssec:other}.

\section{Pairing (Nice diagrams)} 
\label{sec:hd_pairing}

In \cite{sarkarwang}, Sarkar and Wang introduced a class of Heegaard diagrams for three-manifolds called {\em nice}. These were used in \cite{LOT} to prove a pairing theorem in bordered Floer homology. In a similar vein, here we define nice Heegaard diagrams for tangles, and use them to prove a pairing theorem.

\begin{definition}\label{def:nice}
A tangle Heegaard diagram is called \emph{nice} if every region that does not contain an interior $X$ or $O$ and does not intersect $\zzz$  is a disk with at most $4$ corners. 
\end{definition}

\begin{proposition}
Any tangle Heegaard diagram can be turned into a nice diagram via a sequence of isotopies and handleslides of the $\beta$ curves in the interior of the Heegaard surface. 
\end{proposition}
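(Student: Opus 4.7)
The plan is to adapt the algorithm of Sarkar--Wang~\cite{sarkarwang}, as generalized to the bordered setting by Lipshitz--Ozsv\'ath--Thurston~\cite[Proposition 8.2]{bfh2}, to our tangle Heegaard diagrams. Call a region of $\Sigma \setminus (\balpha \cup \bbeta)$ \emph{good} if it contains a point of $\XX \cup \OO$ in its interior, or meets $\zzz$, or is a disk with at most $4$ corners; otherwise call it \emph{bad}. We need to eliminate all bad regions by beta isotopies and beta handleslides in the interior of $\Sigma$. First, I would verify that every component of $\Sigma\setminus\balpha$ meets a good region: by the definition of a tangle Heegaard diagram, every such component either contains an interior $X$ and $O$, or meets $\bdy\Sigma$ in an interval carrying either an $\XX$/$\OO$ marking or a piece of $\zzz$. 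This ensures that from any bad region $R$ we can travel through the $\alpha$-complementary component containing it to reach a good region by crossing finitely many $\beta$-arcs.

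Second, following Sarkar--Wang, I would define a \emph{distance} $d(R)$ for each bad region $R$ as the minimum length of a sequence of regions $R = R_0, R_1, \ldots, R_n = G$ with $G$ good and with $R_i$ and $R_{i+1}$ sharing a $\beta$-edge, all lying in the same component of $\Sigma \setminus \balpha$. Then the \emph{complexity} of the diagram is the lexicographically ordered pair
\[
\Bigl(\sum_{R \text{ bad}} d(R),\ \sum_{R \text{ bad}}(\#\mathrm{corners}(R) - 4)\Bigr),
\]
with the convention that the corner count of a non-disk region is $+\infty$ (or is replaced by genus-based terms as in~\cite{sarkarwang}). The key local step is that if $R$ is a bad region with $d(R) = d \geq 1$ adjacent across a $\beta$-arc to a region $R'$ with $d(R') = d-1$, one can perform a finger move on the relevant $\beta$-curve, pushing a tongue of $R'$ into $R$. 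This either converts $R$ into a disk with fewer than $4$ corners, or subdivides $R$ into regions that are either good or have strictly smaller distance, while the new small bigon/triangle created next to $R'$ is nice.

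Third, I would verify that this finger move keeps the diagram a valid multipointed bordered Heegaard diagram for the same tangle: it is an isotopy of a $\beta$-curve in the interior of $\Sigma$ avoiding $\XX \cup \OO \cup \zzz \cup \bdy\Sigma$, which is exactly a permissible Heegaard move. When $R$ is not simply connected, or the above finger move would cross a basepoint or $\zzz$, one instead uses a $\beta$-handleslide (again, entirely in the interior) to cut $R$ into pieces in which the finger-move procedure applies, exactly as in~\cite[Section 4]{sarkarwang}. The complexity strictly decreases in the lexicographic order, so the process terminates.

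The main obstacle is verifying the procedure near the boundary and near $\zzz$. Specifically, one must confirm that for bad regions touching $\bdy \Sigma$, the nearest good region can always be reached without the distance-reducing path having to cross $\bdy\Sigma$ itself, and that the finger moves can be chosen to avoid the $\alpha$-arcs meeting $\bdy\Sigma$. This is where the hypothesis that each boundary-adjacent component of $\Sigma \setminus \balpha$ already carries a boundary basepoint, or is met by $\zzz$, is essential: it guarantees that every bad region has finite distance to a good region via a path staying inside a single $\alpha$-complementary component, so no $\alpha$-move is ever required and the algorithm runs entirely via interior $\beta$-moves.
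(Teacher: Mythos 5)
Your proposal is correct and is essentially the paper's argument: the paper's proof consists of a single sentence deferring to the finger-move algorithm of Sarkar--Wang as adapted in \cite[Proposition 8.2]{bfh2}, which is exactly the strategy you spell out (good/bad regions, distance within an $\alpha$-complementary component, lexicographic complexity, interior $\beta$-isotopies and handleslides). Your additional checks --- that every component of $\Sigma\setminus\balpha$ contains a good region because of the interior $\XX\cup\OO$ markings, the boundary markings, or $\zzz$, and that all moves stay in the interior away from $\bdy\Sigma\cup\XX\cup\OO\cup\zzz$ --- are precisely the points where the tangle setting differs from the bordered one, and they are handled appropriately.
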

\begin{proof}
The proof uses ``finger moves" and is analogous to that of \cite[Proposition 8.2]{bfh2}. 
\end{proof}

Nice diagrams are admissible:

\begin{lemma}
If $\HH$ is nice, then $\HH$ is admissible.
\end{lemma}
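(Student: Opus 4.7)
The plan is to argue by contradiction using a standard maximum-principle style propagation. Suppose $B$ is a non-zero empty periodic domain, and (replacing $B$ by $-B$ if necessary) assume all multiplicities of $B$ are non-negative. By definition, an empty periodic domain has zero multiplicity on every region that contains a point of $\XX$ or $\OO$, and by our convention domains have zero multiplicity at regions meeting $\zzz$ as well. So the regions where $B$ has positive multiplicity are precisely the ones the niceness hypothesis controls: each such region is a bigon or a rectangle in $\Sigma$. Let $R_{0}$ be a region with maximum positive multiplicity $k>0$; it is a bigon or a rectangle.

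The main step is the local balance relation at corners. At any intersection point $x\in\balpha\cap\bbeta$ that does not lie in the reference generator $\xx$ (recall $B\in\pi_{2}(\xx,\xx)$), the four multiplicities $m_{NE},m_{NW},m_{SW},m_{SE}$ of the regions adjacent to $x$ satisfy
\[
m_{NE}+m_{SW}=m_{NW}+m_{SE},
\]
because otherwise the local contribution of $\partial B$ at $x$ is non-zero, violating $B\in\pi_{2}(\xx,\xx)$. Combined with maximality of $k$, this forces that whenever one of the four regions at such a corner has multiplicity $k$, at least one of the two regions sharing an edge with it and meeting at $x$ also has multiplicity $k$ (the diagonally opposite one being $\le k$ automatically). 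Iterating this observation, starting from $R_{0}$ and following edges to adjacent regions of the same maximum multiplicity $k$, I get a non-empty, connected set $\mathcal{U}$ of regions all of which carry multiplicity $k$.

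The contradiction comes from the fact that $\mathcal{U}$ must meet a region of multiplicity $0$. Since $\Sigma$ has only finitely many regions, the propagation cannot run forever; it must terminate at a region whose neighbors have smaller multiplicity or along the boundary of $\Sigma$. But nice-diagram rectangles and bigons cannot form a closed component of regions all of the same positive multiplicity that avoid every $\X$, $\O$ and $\zzz$: together these markings meet every region of $\Sigma\setminus(\balpha\cup\bbeta\cup\zzz)$ whose multiplicity is not forced to be $0$ by the niceness local structure (indeed, after propagation every region on the frontier of $\mathcal{U}$ is adjacent across an $\alpha$- or $\beta$-arc to a region of strictly smaller multiplicity, which by the balance equation at corners forces the frontier region itself to have a neighbor of multiplicity $k$ in $\mathcal U$ unless it abuts a region of maximal multiplicity across every edge). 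So $\mathcal{U}$ must contain a region that carries an $\X$, $\O$ or meets $\zzz$, contradicting $B$ being empty.

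The chief obstacle is dealing cleanly with intersection points that do lie in $\xx$: at these corners the balance relation acquires a $\pm1$ correction, so the naïve propagation could stall. I expect to handle this by exploiting the fact that each $\alpha$- and $\beta$-curve contains at most one point of $\xx$, so starting from $R_{0}$ one can always propagate along an edge of $R_{0}$ to find a chain of $k$-regions that either avoids $\xx$-corners altogether, or crosses them only in a way that the $\pm1$ corrections cancel when combined with the maximality $m\le k$ on the adjoining regions. A secondary technical point is the behavior near $\bdy\Sigma$: the $\alpha$-arcs have endpoints on $\bdy\Sigma$, and the region adjacent to $\zzz$ has multiplicity $0$, so the propagation cannot leak through $\bdy\Sigma$, and the argument remains confined to the bounded combinatorics of the interior regions.
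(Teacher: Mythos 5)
Your key propagation step is false. At a corner where the balance relation $m_{NE}+m_{SW}=m_{NW}+m_{SE}$ holds and $m_{NE}=k$ is the maximal multiplicity, it does \emph{not} follow that one of the two edge-adjacent regions also has multiplicity $k$: take $k=2$ with $(m_{NE},m_{SW},m_{NW},m_{SE})=(2,0,1,1)$, which satisfies the relation and the bound $m\le k$ but stalls your propagation immediately. So the connected set $\mathcal U$ of maximal-multiplicity regions you construct need not grow at all, and the final paragraph — where you assert that $\mathcal U$ must eventually contain a marked region — is essentially a restatement of the conclusion rather than an argument; niceness by itself says nothing about which regions carry basepoints. (A smaller point: your worry about $\pm1$ corrections at corners in $\xx$ is moot, since for a periodic domain $B\in\pi_2(\xx,\xx)$ the incoming and outgoing corner contributions cancel and the balance relation holds at every intersection point; equivalently, $\bdy^\alpha B$ and $\bdy^\beta B$ are cycles.)

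That last observation is exactly the input you are missing: the boundary of a periodic domain is a $\Z$-linear combination of \emph{entire} $\alpha$- and $\beta$-curves (circles, or arcs with endpoints on $\bdy\Sigma$), not just something satisfying a local corner identity. The paper's proof exploits this. Choose a curve appearing in $\bdy D$ with nonzero multiplicity and orient it so that every region immediately to its left has strictly positive multiplicity; by niceness each such region is a bigon or square. For an $\alpha$- or $\beta$-circle one runs the argument of the closed case: the chain of bigons and squares to the left of the entire circle closes up into a full component of $\Sigma\setminus\balpha$ or $\Sigma\setminus\bbeta$, which contains a basepoint. The genuinely new case — and the one your sketch does not engage with — is an $\alpha$-arc $\alpha_i$: here the chain of regions to its left either contains a bigon, in which case following the squares across successive $\beta$-circles one reaches $\bdy\Sigma$ and the union is a two-cornered component of $\Sigma\setminus\balpha$, or consists entirely of squares forming a chain from $\bdy\Sigma$ to $\bdy\Sigma$ whose far edges assemble into a complete $\alpha$-arc, so the union is a four-cornered component of $\Sigma\setminus\balpha$. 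Either way one gets a component of $\Sigma\setminus\balpha$, and every such component contains a point of $\XX\cup\OO\cup\zzz$ by the definition of a tangle Heegaard diagram — the desired contradiction. I would rework your proof around the global structure of $\bdy D$ rather than a corner-by-corner maximum principle.
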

\begin{proof}
The proof is a straightforward generalization of the one for the closed case \cite[Corollary 3.2]{lmw}. Suppose $D$ is a nontrivial  domain in $\Sigma\setminus(\XX\cup \OO\cup\zzz)$ with only nonnegative multiplicities, and its  boundary is a linear combination of entire $\alpha$ and $\beta$ curves. Consider a curve that appears in $\bdy D$ with nonzero multiplicity, and orient it so that all regions directly to its left have positive multiplicity. If that curve is an $\alpha$ circle or a $\beta$ circle, then \cite[Lemma 3.1]{lmw} applies, i.e. one of these regions contains a basepoint, which gives a contradiction. So suppose that curve is an $\alpha$ arc, call it $\alpha_i$.  We verify that the argument in  \cite[Lemma 3.1]{lmw} can be used again to show that  one of these regions contains a basepoint. 

Suppose one of the regions directly to the left of $\alpha_i$ is a bigon. Then the other edge of that region is part of a $\beta$ circle, call it $\beta_j$. On the other side of $\beta_j$ there is a square (a bigon would imply $\alpha_i$ is a circle, not an arc) and the edge of that square across from $\beta_j$ is either part of a $\beta$ circle again, or part of $\bdy\Sigma$. In the first case, there is yet another square on the other side, and we look at that square. Eventually we reach a square with an edge on $\bdy\Sigma$. The union of all these regions forms a component of $\Sigma\setminus \balpha$ (with two corners), so we reach a contradiction, since every component of $\Sigma\setminus\balpha$ contains a point in $\XX\cup \OO\cup\zzz$. 

Now suppose there are no bigon regions directly to the left of $\alpha_i$. Then all those regions are squares, and they must form a chain that starts and ends at $\bdy\Sigma$. The edges across from $\alpha_i$ on those squares form a complete $\alpha$ arc, and the union of the squares is a component of $\Sigma\setminus \balpha$ (with four corners). This again is a contradiction.
\end{proof}

Since nice diagrams are admissible, there are only a few types of holomorphic curves, as one only counts domains that are squares or bigons. Specifically, for $\cftahat$, all multiplication maps $m_n$ for $n>2$ are zero, and for $\cftdahat$ all structure maps $\delta^1_{1+j}$ for $j>1$ are zero. 

We are now ready to state and prove a pairing theorem. By invariance (Theorem \ref{thm:invariance}),  assume that all diagrams below are nice. 

\begin{theorem}\label{thm:cft_pairing}
The following equivalences hold:

\begin{enumerate}
\item  If $\HH_1\cup \HH_2$ is the union of a type $1$ Heegaard diagram $\HH_1$ and a Heegaard diagram with two boundary components $\HH_2$ along $\bdy\HH_1$ and $(\bdy^0\HH_2)^*$, then
$$\cftat(\HH_1)\boxtimes \cftdat(\HH_2)\simeq \cftat(\HH_1\cup \HH_2).$$

\item If $\HH_1\cup \HH_2$ is the union of Heegaard diagrams  $\HH_1$ and $\HH_2$ with two boundary components  along $\bdy^1\HH_1$ and $(\bdy^0\HH_2)^*$, then
$$\cftdat(\HH_1)\boxtimes \cftdat(\HH_2)\simeq \cftdat(\HH_1\cup \HH_2).$$

\item  If $\HH_1\cup \HH_2$ is the union of a Heegaard diagram $\HH_1$ with two boundary components  and a Heegaard diagram  $\HH_2$ of type $2$  along $\bdy^1\HH_1$ and $(\bdy\HH_2)^*$, then
$$\cftdat(\HH_1)\boxtimes \cftdt(\HH_2)\simeq \cftdt(\HH_1\cup \HH_2).$$

\item  If $\HH_1\cup \HH_2$ is the union of a Heegaard diagram $\HH_1$ of type $1$  and a Heegaard diagram  $\HH_2$ of type $2$  along $\bdy\HH_1$ and $(\bdy\HH_2)^*$, then
$$\cftat(\HH_1)\boxtimes \cftdt(\HH_2)\simeq \cfkt(\HH_1\cup \HH_2).$$
\end{enumerate}

Moreover, when the underlying manifolds are $B^3$, $S^2\times I$, or $S^3$, the homotopy equivalences respect the bigrading. 
\end{theorem}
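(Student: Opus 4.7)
The plan is to exploit the simple combinatorics of nice diagrams, where all structure maps count empty embedded bigons and squares, to match holomorphic curves in $\HH_1 \cup \HH_2$ with pairs of holomorphic curves on the two sides. First I would verify that the union $\HH_1 \cup \HH_2$ of two nice tangle Heegaard diagrams along compatible boundary components is again a nice diagram: each region of $\HH_1\cup\HH_2$ is either a region of one of the two pieces, or is obtained by gluing a region of $\HH_1$ touching the boundary to a region of $\HH_2$ touching the boundary along a shared interval of $\bdy\HH_i$. Since the regions abutting the boundary are bigons or squares with one edge on $\bdy$, the glued regions are again bigons or squares. By invariance (Theorem \ref{thm:invariance}) one can arrange for $\HH_1$ and $\HH_2$ to be nice, provincially admissible, and such that at least one is admissible, so that the required tensor products and structure maps are all well-defined.

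Next I would set up the bijection of generators: as noted in Section \ref{ssec:gluing}, a generator $\xx\in \gens(\HH_1\cup\HH_2)$ is exactly a pair $\xx_1\cup\xx_2$ with $\xx_i \in \gens(\HH_i)$ whose occupied $\alpha$-arcs on the common boundary are complementary. This is precisely the data of a generator of $M_1 \boxtimes M_2$ surviving the idempotent matching. Then I would analyze empty embedded bigons and squares $R$ in $\HH_1 \cup \HH_2$: either $R$ is entirely contained in one of the two pieces, contributing to the provincial part of $\delta^1$ or $m_1$ of the corresponding factor, or $R$ crosses $\bdy^1\HH_1 = -\bdy^0\HH_2$. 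In the latter case $R$ decomposes along the gluing into a collection of partial rectangles on the $\HH_1$ side and a collection on the $\HH_2$ side, with matching Reeb chords on $\bdy\HH_i$. Mirroring the analysis of Sections \ref{ssec:typeA-strand}, \ref{subsection:wedge} and the grid-diagram discussion in Section \ref{ssec:glueing}, the partial rectangles on the type $A$ side correspond to a (possibly non-discrete) partition $\vec P_A$, while those on the type $D$ side correspond to a discrete partition $\vec P_D$, and the Reeb-chord labels are matched up so that the associated algebra elements pair nontrivially under $\boxtimes$. The count of such $R$ is therefore precisely a term contributed by $m_n(\xx_1, a_1, \ldots, a_{n-1}) \otimes \delta_n(\xx_2)$ in the definition of $M_1 \boxtimes M_2$.

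For the bigrading statement when $Y_1\cup Y_2$ is $B^3$, $S^2\times I$, or $S^3$, I would invoke the additivity of the Maslov and Alexander gradings on domains under gluing, together with Proposition \ref{prop:bigr_str}. Under the bijection of generators, $\xx_1 \cup \xx_2$ and $\xx_1 \boxtimes \xx_2$ receive identical bigradings; since the box product preserves bigradings and the identification of differentials just established is a graded isomorphism, the pairing is graded. The absolute normalization of the grading, Definition \ref{def:absgr}, was fixed using a plumbing of bordered grid diagrams, and the pairing for grid diagrams is compatible with the one we just described (see the pairing statement at the end of Section \ref{ssec:glueing}), so the absolute normalizations match as well.

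The main obstacle will be the bookkeeping in the second step: while for empty bigons/squares contained in one piece the assignment is transparent, for squares that cross the boundary one must carefully enumerate the possible ways that empty half-rectangles on the two sides can glue and check that the compatibility constraints (matching Reeb chord labels, idempotent conditions, the ``allowed'' conditions for sets of partial rectangles as in Figure \ref{fig:forbidden_rect}) are exactly the conditions that make the corresponding term in $M_1 \boxtimes M_2$ nonzero. Once this combinatorial matching is verified in each of the four configurations (type $1$ with $\DA$, two $\DA$'s, $\DA$ with type $2$, and type $1$ with type $2$), the identifications of structure maps follow uniformly, and the closed case (4) specializes to give the stated $\cfkt$ equivalence.
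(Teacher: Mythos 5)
Your proposal is correct and follows essentially the same route as the paper: match generators, match provincial bigons/rectangles in each piece with the internal structure maps, match rectangles crossing the common boundary with the $m_2\otimes\delta^1$ terms of the box product, and match boundary half-rectangles on the remaining free boundary. The only simplification you could make explicit is that niceness forces $m_n=0$ for $n>2$ and $\delta^1_{1+j}=0$ for $j>1$, so each boundary-crossing rectangle carries a single Reeb chord and the elaborate bookkeeping with non-discrete partitions you anticipate in your last paragraph never actually arises.
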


\begin{proof}
The proof  is analogous to that for bordered Heegaard Floer homology \cite[Theorem 11]{bimod}. We outline it for the first case. First note that $\HH_1\cup\HH_2$ is automatically a type 1 Heegaard diagram. Since $\HH_1$ and $\HH_2$ are nice diagrams, then both diagrams are admissible, so the corresponding type $A$ and type $\DA$ structures are bounded, and their box tensor product is well-defined. There is a correspondence of generators of $\cftat(\HH_1)\boxtimes \cftdat(\HH_2)$ and $\cftat(\HH_1\cup\HH_2)$. 

The differential on $\cftat(\HH_1)\boxtimes \cftdat(\HH_2)$ counts bigons and rectangles that are provincial in $\HH_1$ (corresponding to the differential $m_1$ on $\cftat(\HH_1)$), provincial in $\HH_2$ (corresponding to  the ``differential" on $\cftdat(\HH_2)$, i.e. the part of $\delta_1^1$ that outputs an idempotent algebra element), or provincial in $\HH_1\cup \HH_2$ but crossing the common boundary of $\HH_1$ and $\HH_2$ (for $(m_2\otimes \id)\circ(\id\otimes \delta_1^1)$ when $\delta_1^1$ outputs a nonidempotent algebra element).  The third kind can only be a rectangle. These are exactly all the provincial domains for $\cftat(\HH_1\cup \HH_2)$. So the differentials on $\cftat(\HH_1)\boxtimes \cftdat(\HH_2)$ and $\cftat(\HH_1\cup\HH_2)$ agree. 

Half-rectangles on $\HH_1\cup \HH_2$ that cross $\bdy^1\HH_2$ are entirely contained (left provincial) in $\HH_2$, and the same sets of these half-rectangles are counted for the right multiplications $m_2$ on $\cftat(\HH_1)\boxtimes \cftdat(\HH_2)$ and on $\cftat(\HH_1\cup\HH_2)$. 

Thus, the type $A$ structures $\cftat(\HH_1)\boxtimes \cftdat(\HH_2)$ and $\cftat(\HH_1\cup\HH_2)$ are isomorphic. 

The other cases are analogous. 
\end{proof}

In particular,  tangle Floer homology recovers knot Floer homology. For tangles in $B^3$ and $S^2\times I$,  this result is simply a restatement of Theorem \ref{recover_hfk}.

If $\HH_1$ or $\HH_2$ is not a nice diagram, the corresponding structure may not be bounded. In that case, the box tensor product is not defined, and we need to look at the $\cala_{\infty}$ tensor product  $\widetilde{\mathit{CFT}}(\HH_1) \widetilde{\otimes}( \cala((\bdy^0\HH_2)^*)\boxtimes \widetilde{\mathit{CFT}}(\HH_2))$. So by  \cite[Proposition 2.3.18]{bimod}, invariance,  and the above theorem,  $\widetilde{\mathit{CFT}}(\HH_1) \widetilde{\otimes}( \cala((\bdy^0\HH_2)^*)\boxtimes \widetilde{\mathit{CFT}}(\HH_2))\simeq \widetilde{\mathit{CFT}}(\HH_1\cup \HH_2)$, or using the shorter notation,  $\widetilde{\mathit{CFT}}(\HH_1) \widetilde{\otimes} \widetilde{\mathit{CFT}}(\HH_2)\simeq \widetilde{\mathit{CFT}}(\HH_1\cup \HH_2)$. Here $\widetilde{\mathit{CFT}}$ stands for any of the structures in Theorem \ref{thm:cft_pairing}.

\begin{corollary}\label{cor:hat_pairing}
The following equivalences hold:
\begin{enumerate}
\item  If $(Y_1, \T_1)$ is a $2m$-tangle with $\bdy Y_1$ identified with a marked sphere $\mathcal S$,   $(Y_2, \T_2,\gamma)$ is a strongly marked $(2m, 2n)$-tangle with $\bdy^0Y_2$ identified with $-\mathcal S$, and $(Y, \T)$ is their union along $\mathcal S$, then
$$\cftahat(Y_1, \T_1)\boxtimes \cftdahat(Y_2, \T_2,\gamma)  \simeq \cftahat(Y,\T)\otimes V^{\otimes(|\T_1|+|\T_2| - |\T|)}.$$

\item If $(Y_1, \T_1, \gamma_1)$ is a strongly marked $(m,n)$-tangle with $\bdy^1 Y_1$ identified with a marked sphere $\mathcal S$,   $(Y_2, \T_2, \gamma_2)$ is a strongly marked $(n, l)$-tangle with $\bdy^0 Y_2$ identified with $-\mathcal S$, and $(Y, \T, \gamma)$ is their union along $\mathcal S$, then
$$\cftdahat(Y_1, \T_1, \gamma_1)\boxtimes \cftdahat(Y_2, \T_2, \gamma_2)  \simeq \cftdahat(Y,\T, \gamma)\otimes V^{\otimes(|\T_1|+|\T_2| - |\T|)}.$$

\item If $(Y_1, \T_1, \gamma)$ is a strongly marked $(2m,2n)$-tangle with $\bdy^1 Y_1$ identified with a marked sphere $\mathcal S$,  $(Y_2, \T_2)$ is a $2n$-tangle with $\bdy Y_2$ identified with $-\mathcal S$, and  $(Y, \T)$ is their union along $\mathcal S$, then
$$\cftdahat(Y_1, \T_1, \gamma)\boxtimes \cftdhat(Y_2, \T_2)  \simeq \cftdhat(Y,\T)\otimes V^{\otimes(|\T_1|+|\T_2| - |\T|)}.$$

\item If $(Y_1, \T_1)$ is a $2n$-tangle with $\bdy Y_1$ identified with a marked sphere $\mathcal S$,  $(Y_2, \T_2)$ is a $2n$-tangle with $\bdy Y_2$ identified with $-\mathcal S$, and  $(Y, \T)$ is their union along $\mathcal S$, then
$$\cftahat(Y_1, \T_1)\boxtimes \cftdhat(Y_2, \T_2)  \simeq \cfkhat(Y,\T)\otimes V^{\otimes(|\T_1|+|\T_2| - |\T|)}\otimes W,$$
where $W =  \F_2\oplus \F_2$.
\end{enumerate}
Moreover, when the underlying manifolds are $B^3$, $S^2\times I$, or $S^3$, the homotopy equivalences respect the bigrading, where the two summands of $V$ are in $(M, A)$ bigradings $(0,0)$ and $(-1, -1)$, and the two summands of $W$ are in  bigradings $(0,0)$ and $(-1, 0)$. 
\end{corollary}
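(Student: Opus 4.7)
The plan is to deduce the hat-version pairings from the tilde-version pairings in Theorem \ref{thm:cft_pairing} by choosing diagrams that realize the hat theories on the nose, and then accounting for the discrepancy between the glued diagram and a minimal diagram for the glued tangle via the stabilization invariance of Theorem \ref{thm:invariance}.

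For each of the four cases, I would start by choosing Heegaard diagrams $\HH_i$ for the tangle $\T_i$ (of the appropriate type) with $|\X_i \cap \Int(\Sigma_i)| = |\T_i|$, so that by definition $\cftat(\HH_i) = \cftahat(Y_i,\T_i)$ (and analogously for the $D$ and $DA$ variants). Replacing each $\HH_i$ by a nice (hence admissible) diagram if needed, without adding interior basepoints, Theorem \ref{thm:cft_pairing} applies and yields a tilde-version equivalence with $\cftt(\HH_1 \cup \HH_2)$. The key counting step is that in cases (1)--(3) the glued diagram carries $|\T_1|+|\T_2|$ interior $X$s while representing a tangle $\T$ with only $|\T|$ components, so Theorem \ref{thm:invariance}, applied between $\HH_1 \cup \HH_2$ and a minimal diagram for $\T$, gives
\[ \cftt(\HH_1 \cup \HH_2) \simeq \widehat{\mathit{CFT}}(Y,\T) \otimes V^{\otimes(|\T_1|+|\T_2|-|\T|)}, \]
which, combined with the tilde pairing and the identifications $\cftat(\HH_i) = \cftahat(Y_i,\T_i)$ etc., yields cases (1), (2), and (3).

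For case (4), Lemma \ref{lemma:gluehd} tells us that $\HH_1 \cup \HH_2$ is a closed Heegaard diagram representing $\T \cup U$, where $U$ is a split unknot, and carries $|\T_1|+|\T_2|+1$ interior $X$s (the extra one from the basepoint $X'$ added in the closing region). Since a minimal diagram for $\T \cup U$ has $|\T|+1$ interior $X$s, Theorem \ref{thm:invariance} gives
\[ \cfkt(\HH_1 \cup \HH_2) \simeq \cfkhat(\T \cup U) \otimes V^{\otimes(|\T_1|+|\T_2|-|\T|)}, \]
after which it remains to identify $\cfkhat(\T \cup U)$ with $\cfkhat(Y,\T) \otimes W$. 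This last identification is a direct consequence of the K\"unneth principle for split unions in knot Floer homology, noting that the unknot contributes a two-dimensional $\F_2$-summand whose bigradings are forced to be $(0,0)$ and $(-1,0)$: unlike in the $V$ factor coming from index $0/3$ stabilization (which increases the number of basepoints $k$ while keeping $l$ fixed, thereby shifting the Alexander normalization $\frac{k-l}{2}$), adding a split unknot increases $k$ and $l$ simultaneously, so the Alexander grading of the new $(-1)$-graded summand is not shifted.

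The main obstacle is the grading bookkeeping in the last step: distinguishing the $V$ factor produced by index $0/3$ stabilizations from the $W$ factor produced by connect-summing a split unknot is subtle because the two are ungraded-equivalent, and only the explicit normalization \eqref{mna} of the Alexander grading in terms of $M'-N'-(k-l)$ separates them. Verifying that the homotopy equivalences of Theorems \ref{thm:invariance} and \ref{thm:cft_pairing} respect the absolute gradings established in Definition \ref{def:absgr} and Proposition \ref{prop:absgr} -- rather than merely the relative bigradings -- will require checking the result on a fixed grid model, where the gradings on both sides can be computed explicitly from the formulas in Section \ref{ssec:grading} and shown to agree.
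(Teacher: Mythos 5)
Your proposal is correct and follows essentially the same route as the paper: pick nice diagrams with $|\X_i\cap\Int\Sigma_i|=|\T_i|$ so the tilde modules are the hat modules, apply Theorem \ref{thm:cft_pairing}, count the surplus interior basepoints on the union to produce the $V^{\otimes(|\T_1|+|\T_2|-|\T|)}$ factor via Theorem \ref{thm:invariance}, and in case (4) use Lemma \ref{lemma:gluehd} together with the standard split-unknot identification $\cfkhat(Y,\T\cup U)\simeq\cfkhat(Y,\T)\otimes W$. The paper simply cites this last identification (and its bigrading) as a known fact in Heegaard Floer theory, so your extra care with the $(k,l)$-normalization of the Alexander grading is a refinement rather than a divergence.
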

\begin{proof}
In each case, 
for a choice of nice Heegaard diagrams, we have an equivalence of ``tilde" modules as in the proof of Theorem \ref{thm:cft_pairing}.
To have precisely the ``hat" modules for $\T_1$ and $\T_2$, pick nice Heegaard diagrams $\HH_i$ with $|\X_i\cap \Int\Sigma_i| =|\T_i|$.  Note that on $\HH = \HH_1\cup \HH_2$ we have $|\X\cap \Int(\Sigma_1\cup \Sigma_2)| = |\T_1| + |\T_2|$, and we need a diagram such that $|\X\cap \Int\Sigma| = |\T|$ to obtain the ``hat" module for $\T$, so $\HH$ produces a module equivalent to the ``hat" module tensored with $|\T_1|+|\T_2| - |\T|$ copies of $V$.

Note that in the fourth case $\HH_1\cup \HH_2$ is a Heegaard diagram for the link $\T = \T_1\cup \T_2$ union a split unknot  $U$ in $Y$  (see Lemma \ref{lemma:gluehd}), so
$$\cfkt(\HH_1\cup \HH_2)\simeq \cfkhat(Y, \T \cup U)\otimes V^{\otimes(|\T_1|+|\T_2| - |\T|)} \simeq\cfkhat(Y, \T \cup U)\otimes V^{\otimes(|\T_1|+|\T_2| - |\T|)}\otimes W.$$
The second equivalence is a known fact in Heegaard Floer theory.
\end{proof}

Similar results hold for the various other modules from Subsection \ref{ssec:other}.

%%%%%%%%%%%%%%%%%%%%%%%%%%%%%%%%%%%%%%%%%%%%%%%%%%%%%%%
% section MorseTheory (end)
%%%%%%%%%%%%%%%%%%%%%%%%%%%%%%%%%%%%%%%%%%%%%%%%%%%%%%%

%%%%%%%%%%%%%%%%%%%%%%%%%%%%%%%%%%%%%%%%%%%%%%%%%%%%%%%

\bibliographystyle{hamsplain2}
%\nocite{*}
\bibliography{master}

\end{document}